\theoremstyle{definition}
\newtheorem{theorem}{Theorem}[section]
\newtheorem{lemma}[theorem]{Lemma}
\newtheorem{proposition}[theorem]{Proposition}
\newtheorem{definition}[theorem]{Definition}
\newtheorem{remark}[theorem]{Remark}
\numberwithin{equation}{section}
\newtheorem{algorithm}[theorem]{Algorithm}
\newcommand{\cal}{\mathcal}
\newcommand{\bff}{\boldsymbol}
\newcommand{\bb}{\mathbb}
\newcommand{\dt}{\mathrm{d}t}
\newcommand{\ddt}{\frac{\mathrm{d}}{\mathrm{d}t}}
\newcommand{\dx}{\mathrm{d}x}
\newcommand{\ds}{\mathrm{d}s}
\newcommand{\norm}[2]{\left\|{#1}\right\|_{#2}}
\newcommand{\inpro}[2]{\left\langle#1,#2\right\rangle}
\newcommand{\abs}[1]{\left|{#1}\right|}
\begin{document}
	\setcounter{page}{1}
	
	\title[Numerical analysis of the LLB equation with spin-torques]
	{Numerical analysis of the Landau--Lifshitz--Bloch equation with spin-torques}
	
	\author[Agus L. Soenjaya]{Agus L. Soenjaya}
	\address{School of Mathematics and Statistics, The University of New South Wales, Sydney 2052, Australia}
	\email{\textcolor[rgb]{0.00,0.00,0.84}{a.soenjaya@unsw.edu.au}}
	
	\date{\today}
	
	\keywords{}
	\subjclass{}
	
	\begin{abstract}
		The current-induced magnetisation dynamics in a ferromagnet at elevated temperatures can be described by the Landau--Lifshitz--Bloch (LLB) equation with spin-torque terms. In this paper, we focus on the regime above the Curie temperature. We first establish the existence and uniqueness of a global strong solution to the model in spatial dimensions $d=1,2,3$, under an additional smallness assumption on the initial data if $d=3$. Relevant smoothing and decay estimates are also derived. We then propose a fully discrete, linearly implicit finite element scheme for the problem and prove that it achieves optimal-order convergence, assuming adequate regularity of the exact solution. In addition, we introduce an unconditionally energy-stable finite element method for the case of negligible non-adiabatic torque. This scheme is also shown to converge optimally and, in the absence of current, preserves energy dissipation at the discrete level. Finally, we present numerical simulations that support the theoretical analysis and demonstrate the performance of the proposed methods.
	\end{abstract}
	\maketitle
	
\section{Introduction}

The theory of micromagnetism is a widely used framework to analyse the behaviour of a ferromagnetic material on the micron scale. In contrast with the atomistic approach, continuum models in micromagnetics can be solved numerically for a much larger system. A standard model to describe the time evolution of magnetic configuration of a ferromagnetic body subject to some non-steady conditions is given by the Landau--Lifshitz equation~\cite{Cim08, GuoDing08, LL35}. However, numerous theoretical and experimental studies show that this model is not suitable at elevated temperatures since it cuts off all contributions from high frequency spin waves responsible for longitudinal magnetisation fluctuations~\cite{ChuNie20, GriKoc03}. This problem is especially crucial in applications since many modern magnetic devices such as the HAMR (heat-assisted magnetic recording) works at very high temperatures, often exceeding the so-called Curie temperature~\cite{MeoPan20, VogAbeBru16, ZhuLi13}.

A variant of the Landau--Lifshitz model widely used in the literature to describe the magnetisation dynamics at high temperatures is the Landau--Lifshitz--Bloch (LLB) equation. This model takes into account longitudinal dynamics of the magnetisation and is valid below and above the critical temperature (the Curie temperature $T_c$)~\cite{AtxHinNow16, ChuNowChaGar06, Gar97}. Let $\mathscr{D}\subset \bb{R}^d$ ($d=1,2,3$) be a bounded domain. The general LLB equation describing the dynamics of magnetisation $\bff{u}:[0,T]\times \mathscr{D}\to \bb{R}^3$ is
\begin{align}\label{equ:general LLB}
	\partial_t \bff{u}= -\gamma \bff{u}\times \bff{H} + \frac{\alpha_1}{|\bff{u}|^2} (\bff{u}\cdot \bff{H})\bff{u} - \frac{\alpha_2}{|\bff{u}|^2} \bff{u}\times (\bff{u}\times \bff{H}),
\end{align}
where $\gamma>0$ is the gyromagnetic ratio, $\alpha_1>0$ is the longitudinal damping coefficient, and $\alpha_2>0$ is the transverse damping coefficient. This equation is equipped with an initial data and a homogeneous Neumann boundary condition. In~\eqref{equ:general LLB}, the effective field $\bff{H}:[0,T]\times \mathscr{D}\to \bb{R}^3$ is the negative variational derivative of the micromagnetic energy functional $\mathcal{E}$ to be specified later, i.e. $\bff{H}=-\nabla_{\bff{u}} \mathcal{E}(\bff{u})$. It is known that above the Curie temperature~\cite{AtxHinNow16, ChuNowChaGar06}, we have $\alpha_1=\alpha_2:=\alpha$. Thus by means of the vector identity
\begin{align*}
	\bff{u}\times (\bff{u}\times \bff{H})= (\bff{u}\cdot \bff{H})\bff{u}- |\bff{u}|^2 \bff{H},
\end{align*}
equation~\eqref{equ:general LLB} simplifies to
\begin{align}\label{equ:above Tc LLB}
	\partial_t \bff{u}= -\gamma\bff{u}\times \bff{H} + \alpha \bff{H}.
\end{align}

Here, we consider a standard micromagnetic energy functional
\begin{equation}\label{equ:energy}
	\mathcal{E}(\bff{u})
	:=
	\underbrace{\frac{\sigma}{2} \norm{\nabla \bff{u}}{\bb{L}^2}^2}_{\text{exchange}}
	+
	\underbrace{\frac{\kappa\mu}{2} \norm{\bff{u}}{\bb{L}^2}^2
		+\frac{\kappa}{4} \norm{\bff{u}}{\bb{L}^4}^4}_{\text{internal exchange}}
	+
	\underbrace{\frac{\lambda}{2} \int_\mathscr{D} (\bff{e}\cdot \bff{u})^2\, \dx}_{\text{anisotropy}},
\end{equation}
which consists of the exchange energy, the internal exchange energy of Ginzburg--Landau type above the Curie temperature (so $\mu>0$), and the anisotropy energy~\cite{ChuNowChaGar06, Le16}. This gives rise to an effective field
\begin{equation}\label{equ:eff field}
	\bff{H}
	= \underbrace{\sigma \Delta \bff{u}}_{\text{exchange}} - \underbrace{\kappa \mu \bff{u}- \kappa |\bff{u}|^2 \bff{u}}_{\text{internal exchange}}
	- \underbrace{\lambda \bff{e}(\bff{e}\cdot \bff{u})}_{\text{anisotropy}},
\end{equation}
where $\sigma>0$ is the exchange damping constant, $\kappa=(2\chi)^{-1}$ is a positive constant related to the longitudinal susceptibility of the material, $\mu>0$ is a constant related to the equilibrium magnetisation, $\lambda$ is the uniaxial anisotropy constant, and $\bff{e}$ is a given unit vector denoting the anisotropy axis of the material. The constant $\lambda$ in \eqref{equ:eff field} determines the ease of magnetising in certain directions, which is positive in the presence of an `easy' plane, or negative in the presence of an `easy' axis. We assume $\lambda>0$ in \eqref{equ:eff field} for simplicity.
The analysis done in this paper will continue to hold if we add a time-varying applied magnetic field $\bff{B}(t)$ to $\bff{H}$, but for ease of presentation we take $\bff{B}(t)\equiv \bff{0}$. Note that with this choice of $\bff{H}$, the simplified problem~\eqref{equ:above Tc LLB} which holds above the Curie temperature is still a quasilinear vector-valued PDE.

In this paper, we focus on the LLB equation above $T_c$. More generally, we also include spin-torque terms due to currents~\cite{Ans04, StiSasDon07, ThiNakMilSuz05, TseBraBau08} in our analysis, which are important for applications in spintronics~\cite{AyoKotMouZak21, YasFasIvaMak22}. Numerical analysis of the general LLB problem~\eqref{equ:general LLB} will be addressed in a forthcoming paper. 
All in all, for a given current density $\bff{\nu}:[0,T]\times \mathscr{D} \to \bb{R}^d$, the LLB equation with spin-torques above the Curie temperature reads:
\begin{subequations}\label{equ:llb a}
	\begin{alignat}{2}
		\label{equ:llb eq1}
		&\partial_t \bff{u}
		=
		-
		\gamma \bff{u} \times \bff{H}
		+
		\alpha \bff{H}
		+
		\beta_1 (\bff{\nu}\cdot\nabla) \bff{u}
		+
		\beta_2 \bff{u}\times (\bff{\nu}\cdot\nabla) \bff{u}
		\,
		\qquad && \text{for $(t,\bff{x})\in(0,T)\times\mathscr{D}$,}
		\\
		\label{equ:llb eq2}
		&\bff{H}=\sigma \Delta \bff{u} - \kappa\mu \bff{u} - \kappa |\bff{u}|^2 \bff{u} - \lambda \bff{e} (\bff{e}\cdot \bff{u}),
		\qquad && \text{for $(t,\bff{x})\in(0,T)\times\mathscr{D}$,}
		\\
		\label{equ:llb init}
		&\bff{u}(0,\bff{x})= \bff{u_0}(\bff{x}) 
		\qquad && \text{for } \bff{x}\in \mathscr{D},
		\\
		\label{equ:llb bound}
		&\displaystyle{
			\frac{\partial \bff{u}}{\partial \bff{n}}= \bff{0}}
		\qquad && \text{for } (t,\bff{x})\in (0,T) \times \partial \mathscr{D},
	\end{alignat}
\end{subequations}
where $\beta_1$ and $\beta_2$ are, respectively, the adiabatic and the non-adiabatic torques constants. Physical experiments~\cite{BurMih_etal10, EltWot_etal10} and theoretical studies~\cite{StiSasDon07, TseBraBau08} suggest that $|\beta_2| \ll |\beta_1|$, thus $\beta_2$ is sometimes neglected in certain cases. 
Note that a sufficiently regular solution of \eqref{equ:llb a} satisfies an energy identity~\cite{ChuNie20}. Indeed, formally taking the inner product of \eqref{equ:llb eq1} and \eqref{equ:llb eq2} with $\bff{H}$ and $-\partial_t \bff{u}$, respectively, gives
\begin{align}
	\label{equ:dtu H}
	\inpro{\partial_t \bff{u}}{\bff{H}} &= \alpha \norm{\bff{H}}{\bb{L}^2}^2 + \beta_1 \inpro{(\bff{\nu}\cdot\nabla) \bff{u}}{\bff{H}} + \beta_2 \inpro{\bff{u}\times (\bff{\nu}\cdot\nabla)\bff{u}}{\bff{H}},
	\\
	\label{equ:min H dtu}
	-\inpro{\bff{H}}{\partial_t \bff{u}} &= \ddt \left( \frac{\sigma}{2}  \norm{\nabla \bff{u}}{\bb{L}^2}^2 + \frac{\kappa\mu}{2} \norm{\bff{u}}{\bb{L}^2}^2 + \frac{\kappa}{4} \norm{\bff{u}}{\bb{L}^4}^4 + \frac{\lambda}{2} \norm{\bff{e}\cdot \bff{u}}{\bb{L}^2}^2 \right).
\end{align}
Adding both equations above and integrating over $t\in (t_1,t_2)$, we obtain
\begin{align}\label{equ:energy identity}
	&\mathcal{E}\big(\bff{u}(t_2)\big)+ \alpha \int_{t_1}^{t_2} \norm{\bff{H}(s)}{\bb{L}^2}^2
	\nonumber\\
	&=
	\mathcal{E}\big(\bff{u}(t_1)\big)+ \beta_1 \int_{t_1}^{t_2} \inpro{(\bff{\nu}(s)\cdot \nabla) \bff{u}(s)}{\bff{H}(s)} \ds + \beta_2 \int_{t_1}^{t_2} \inpro{\bff{u}(s)\times (\bff{\nu}(s)\cdot\nabla) \bff{u}(s)}{\bff{H}(s)} \ds,
\end{align}
where $\mathcal{E}$ is the micromagnetic energy functional given in~\eqref{equ:energy}. If no current is present ($\bff{\nu}=\bff{0}$ in~\eqref{equ:energy identity}), then the system dissipates energy, i.e. $\mathcal{E}\big(\bff{u}(t_2)\big)\leq \mathcal{E}\big(\bff{u}(t_1)\big)$. We remark that the spin-torque terms cannot be written as a variational derivative of some energy functional~\cite{StiSasDon07}, thus adding complications to the analysis of the problem with these terms.

Existing results in the literature which are relevant to the present paper will be reviewed next. We remark that many variants of the standard Landau--Lifshitz equation attract significant interest in the mathematical literature, a non-exhaustive list includes~\cite{AbeHrkPagPra14, DavDiPraRug22, FeiTra17a, MelPta13, Rug22, SoeTra23}, which take into account various effects not included in the standard model. Specifically for the LLB equation, a rigorous analysis of the model (without spin-torques) is initiated in~\cite{Le16}, where the existence of a weak solution is obtained. The existence, uniqueness, and decay of the strong solution are discussed in~\cite{LeSoeTra24}. These papers, however, do not take into account the spin-torque terms in the analysis. The existence of a weak solution (but without uniqueness in the general case) for the LLB equation with spin-torques is shown in~\cite{AyoKotMouZak21}, but the existence of a strong solution is not known yet. Spin-torque effects have also been considered for the Landau--Lifshitz--Gilbert equation~\cite{VinTra24}, where a local-in-time strong solution is obtained in that case.

On the aspect of numerical analysis, several fully discrete finite element methods to approximate the solution of the LLB equation without spin-torques are proposed in~\cite{LeSoeTra24} and~\cite{Soe24}. However, numerical schemes developed in these papers actually approximate the \emph{regularised} version of the equation, \emph{not} the LLB equation itself. The strong solution of the regularised problem is shown to converge to that of~\eqref{equ:llb a} as the regularisation parameter goes to zero. As such, the order of convergence of the numerical scheme is also dependent on this parameter, which theoretically blow up as the regularisation parameter vanishes. Similar approach using regularisation is also taken in the stochastic case~\cite{GolJiaLe24}, where a suboptimal order of convergence in probability is obtained for $d=1$ and $2$. In~\cite{BenEssAyo24}, a finite element scheme is proposed to approximate the LLB equation without spin-torques directly, but convergence is only shown conditionally along a subsequence in $\ell^\infty(0,T;\bb{H}^1)$ without rate, while energy dissipativity at the discrete level is not guaranteed by the scheme. Moreover, this scheme is only conditionally stable and is nonlinear, in that a system of nonlinear equations needs to be solved at each time step. In any case, numerical discretisation of the LLB equation with spin-torques~\eqref{equ:llb a} has not been explored yet in the literature.

This paper aims to address the aforementioned gaps. Our primary objectives are to establish the global well-posedness of~\eqref{equ:llb a} and to develop a numerical scheme that converges unconditionally in $\ell^\infty(0,T;\bb{H}^1)$ at a provably optimal rate, without requiring regularisation. To this end, we first show the unique existence of global strong solution to~\eqref{equ:llb a} for $d=1,2,3$. We then propose a fully discrete, linearly implicit scheme which applies directly to~\eqref{equ:llb a} without the need for regularisation. This scheme is stable and convergent in $\ell^\infty(0,T;\bb{L}^2) \cap \ell^2(0,T;\bb{H}^1)$. However, due to difficulties arising from nonlinear cross-product terms, we are unable to prove convergence -- or even stability -- in $\ell^\infty(0,T;\bb{H}^1)$ for this scheme. To achieve provably unconditional convergence in $\ell^\infty(0,T;\bb{H}^1)$, we are led to consider a second fully discrete scheme to solve~\eqref{equ:llb a}. For the case $\beta_2=0$, corresponding to negligible non-adiabatic torque (which is a valid approximation for transition-metal ferromagnets dominated by exchange fields~\cite{Ans04, TseBraBau08}), we are able to prove unconditional stability and convergence in $\ell^\infty(0,T;\bb{H}^1)$ for this scheme at an optimal rate. Additionally, this second scheme preserves energy dissipation at the discrete level in the absence of spin current ($\bff{\nu}=\bff{0}$), a desirable feature consistent with the underlying physical model.

As far as we know, this is the first time a linear scheme is proposed to directly approximate the LLB equation (with or without spin-torques) above the Curie temperature, for which unconditional convergence with an optimal rate is shown rigorously. An attempt to maintain energy dissipativity of the LLB equation without spin-torques at the discrete level, while still attaining convergence at the optimal rate, is also addressed for the first time by our second numerical scheme. Some numerical experiments to corroborate the analysis are also provided.
To summarise, our main contributions in this paper are:
\begin{enumerate}
	\item Proving the existence and uniqueness of global strong solution to the LLB equation with spin-torques, unconditionally when $d=1$ or $2$, and for small initial data when $d=3$, which build on the results in~\cite{AyoKotMouZak21, Le16}. Physically relevant smoothing and decay estimates are also provided (cf. Theorem~\ref{the:main existence} and \ref{the:decay}).
	\item Proposing a fully discrete linearly implicit finite element scheme to approximate~\eqref{equ:llb a} with an unconditionally optimal order of convergence, namely with a spatial rate of $r+1$ and a temporal rate of $1$ in the $\ell^\infty(0,T;\bb{L}^2) \cap \ell^2(0,T;\bb{H}^1)$ norm for a finite element space of degree $r$ (cf. Theorem~\ref{the:spin torq error}). This removes the need for regularisation in~\cite{LeSoeTra24}, while also taking spin-torque terms into account.
	\item Proposing a fully discrete (but nonlinear) finite element scheme to approximate the LLB equation for the case $\beta_2=0$, which maintains energy dissipativity at the discrete level in the absence of spin current. An optimal order of convergence in $\ell^\infty(0,T;\bb{H}^1)$ is also shown unconditionally (cf. Theorem~\ref{the:without spin error}), improving the results of~\cite{BenEssAyo24} in several aspects.
\end{enumerate}

The existence of a global strong solution is shown in Section~\ref{sec:exist} by deriving further a priori estimates in stronger norms. These estimates, in turn, depend on a uniform bound in the $\bb{L}^\infty$-norm, which -- unlike in the case of the standard Landau--Lifshitz equation -- is nontrivial to obtain due to the lack of magnetisation length conservation and the added complexity introduced by the spin-torque terms. We then proceed to propose some finite element schemes. In Section~\ref{sec:llb spin}, by using an elliptic projection adapted to the problem at hand, we show an optimal order of convergence for the linear finite element scheme to approximate~\eqref{equ:llb a}. In Section~\ref{sec:scheme 2}, we devise a nonlinear scheme (for the equation~\eqref{equ:llb a} with $\beta_2=0$) that converges unconditionally in $\ell^\infty(0,T;\bb{H}^1)$ and preserves energy dissipativity in the absence of spin current. A detailed analysis is carried out to establish the claimed optimal order of convergence. This analysis relies on a stability estimate in $\ell^2(0,T;\bb{L}^\infty)$, which, in our setting, is again not immediate. Several numerical simulations are presented in Section~\ref{sec:num exp} to support the theoretical results and to compare the two proposed schemes. Finally, Section~\ref{sec:conclusion} offers concluding remarks and outlines potential directions for future research. The discussion of a fixed-point iteration method for solving the nonlinear scheme at each time step is deferred to Appendix~\ref{sec:linearisation}.

\section{Preliminaries}

\subsection{Notations}
We begin by defining some notations used in this paper. Let $\mathscr{D}$ be a bounded open domain. The function space $\bb{L}^p := \bb{L}^p(\mathscr{D}; \bb{R}^3)$ denotes the usual space of $p$-th integrable functions taking values in $\bb{R}^3$ and $\bb{W}^{k,p} := \bb{W}^{k,p}(\mathscr{D}; \bb{R}^3)$ denotes the usual Sobolev space of 
functions on $\mathscr{D} \subset \bb{R}^d$ taking values in $\bb{R}^3$. The function space $\mathcal{C}^0:= \mathcal{C}^0(\mathscr{D};\bb{R}^3)$ is the space of continuous functions on $\mathscr{D}$ taking values in $\bb{R}^3$. We
write $\bb{H}^k := \bb{W}^{k,2}$. The Laplacian operator acting on $\bb{R}^3$-valued functions is denoted by $\Delta$ with domain $\bb{H}^2_{\bff{n}}:= \bb{H}^2_{\bff{n}}(\mathscr{D})$ given by
\begin{equation}\label{equ:H2n}
	\bb{H}^2_{\bff{n}}:= \left\{\bff{v}\in \bb{H}^2 : \frac{\partial \bff{v}}{\partial \bff{n}} = \bff{0} \text{ on } \partial\mathscr{D} \right\}.
\end{equation}

If $X$ is a Banach space, the spaces $L^p(0,T; X)$ and $W^{k,p}(0,T;X)$ denote respectively the usual Lebesgue and Sobolev spaces of functions on $(0,T)$ taking values in $X$. The space $C([0,T];X)$ denotes the space of continuous functions on $[0,T]$ taking values in $X$. For simplicity, we will write $L^p(\bb{W}^{m,r}) := L^p(0,T; \bb{W}^{m,r})$ and $L^p(\bb{L}^q) := L^p(0,T; \bb{L}^q)$. The norm in a Banach space $X$ is denoted by $\|\cdot\|_X$. We denote the scalar product in a Hilbert space $H$ by $\inpro{\cdot}{\cdot}_H$ and its corresponding norm by $\|\cdot\|_H$. We will not distinguish between the scalar product of $\bb{L}^2$ vector-valued functions taking values in $\bb{R}^3$ and the scalar product of $\bb{L}^2$ matrix-valued functions taking values in $\bb{R}^{3\times 3}$, and denote them by $\langle\cdot,\cdot\rangle$.

Finally, the constant $C$ in the estimates denotes a generic constant which may take different values at different occurrences. If
the dependence of $C$ on some variable, e.g.~$T$, is highlighted, we will write
$C(T)$.

\subsection{Formulations and assumptions}

Here, we formulate the problem more precisely and state several assumptions which will be used throughout the paper. For simplicity of presentation and to study the influence of the damping constant and spin-torques in the analysis, we only keep the coefficients $\alpha,\beta_1,\beta_2$ in the equation, and set the rest of the coefficients, namely~$\gamma, \sigma, \kappa, \mu, \lambda$ to $1$ in~\eqref{equ:llb a}, unless otherwise specified. We assume that $\bff{\nu}$ is a given vector field such that $\bff{\nu}\cdot\bff{n}=0$ on $\partial\mathscr{D}$ and $\norm{\bff{\nu}}{L^\infty(0,T;\bb{H}^4(\mathscr{D};\,\bb{R}^d))} \leq \nu_\infty$, where $\bff{n}$ is the outward pointing normal vector and $\nu_\infty$ is a positive constant. Without loss of generality, we assume the damping parameter $\alpha<1$ in the estimates.

\begin{definition}\label{def:strong sol}
Given $T>0$ and $\bff{u}_0\in \bb{H}^2(\mathscr{D})$, a \emph{strong solution} to \eqref{equ:llb a} is a function
\[
\bff{u}\in H^1(0,T;\bb{L}^2) \cap L^\infty(0,T;\bb{H}^2)
\]
such that $\bff{u}(0)=\bff{u}_0$, and that for all $\bff{\chi}\in \bb{L}^2$ and $t\in [0,T]$, 
\begin{align}\label{equ:weakform}
	\inpro{\partial_t \bff{u}(t)}{\bff{\chi}}
	&=
	-\inpro{\bff{u}(t)\times \big(\Delta \bff{u}(t)- \bff{w}(t) \big)}{\bff{\chi}}
	+
	\alpha \inpro{\big(\Delta \bff{u}(t)- \bff{w}(t)\big)}{\bff{\chi}}
	\nonumber \\
	&\quad
	+
	\beta_1 \inpro{(\bff{\nu}(t) \cdot\nabla) \bff{u}(t)}{\bff{\chi}}
	+
	\beta_2 \inpro{\bff{u}(t)\times (\bff{\nu}(t)\cdot \nabla) \bff{u}(t)}{\bff{\chi}},
\end{align}
where for almost every $t\in [0,T]$, 
\begin{align}\label{equ:w}
	\bff{w}(t)
	= 
	\bff{u}(t) 
	+
	 |\bff{u}(t)|^2 \bff{u}(t) 
	+ 
	\bff{e} \big(\bff{e}\cdot \bff{u}(t) \big).
\end{align}
In this case, \eqref{equ:llb a} is satisfied for almost every $(t,x)\in (0,T)\times \mathscr{D}$.
\end{definition}
At times, we write $\bff{H}:=\Delta \bff{u} - \bff{w}$, where $\bff{w}$ in~\eqref{equ:w} represents the remaining terms in the effective field.
Note that by the vector identity~\eqref{equ:div ab}, we can write
\begin{align}
	\label{equ:otimes 1}
	\beta_1 (\bff{\nu}\cdot \nabla) \bff{u}
	&=
	\beta_1 \nabla \cdot (\bff{u} \otimes \bff{\nu}) 
	- \beta_1 (\nabla \cdot \bff{\nu}) \bff{u},
	\\
	\label{equ:otimes 2}
	\beta_2 \bff{u}\times (\bff{\nu}\cdot \nabla)\bff{u} 
	&=
	\beta_2 \bff{u} \times \nabla \cdot (\bff{u} \otimes \bff{\nu})
	- \beta_2 \bff{u} \times (\nabla \cdot \bff{\nu}) \bff{u},
\end{align}
where the last term on the right-hand side of~\eqref{equ:otimes 2} is a zero vector.
Thus, if we assume that $\bff{\nu}$ is tangential to the boundary ($\bff{\nu}\cdot \bff{n}=0$ on $\partial \mathscr{D}$, which is physically reasonable), then by~\eqref{equ:otimes 1}, \eqref{equ:otimes 2}, and the divergence theorem,
\begin{align}
	\label{equ:div thm beta1}
	\beta_1 \inpro{(\bff{\nu}(t) \cdot\nabla) \bff{u}(t)}{\bff{\chi}}
	&=
	-
	\beta_1 \inpro{\bff{u}(t) \otimes \bff{\nu}(t)}{\nabla \bff{\chi}}
	-
	\beta_1 \inpro{(\nabla \cdot \bff{\nu}(t)) \bff{u}(t)}{\bff{\chi}}
	\\
	\label{equ:div thm beta2}
	\beta_2 \inpro{\bff{u}(t)\times (\bff{\nu}(t)\cdot \nabla) \bff{u}(t)}{\bff{\chi}}
	&=
	\beta_2 \inpro{\bff{u}(t) \otimes \bff{\nu}(t)}{\nabla(\bff{u}(t) \times \bff{\chi})}.
\end{align}
Therefore, integrating by parts and applying the above identities, we can also write~\eqref{equ:weakform} as
\begin{align}\label{equ:weak special}
	\inpro{\partial_t \bff{u}(t)}{\bff{\chi}}
	&=
	\inpro{\bff{u}(t) \times \nabla \bff{u}(t)}{\nabla \bff{\chi}}
	+
	\inpro{\bff{u}(t)\times \bff{e}(\bff{e}\cdot \bff{u}(t))}{\bff{\chi}}
	-
	\alpha \inpro{\nabla \bff{u}(t)}{\nabla \bff{\chi}}
	-
	\alpha \inpro{\bff{w}(t)}{\bff{\chi}} 
	\nonumber \\
	&\quad
	-
	\beta_1 \inpro{\bff{u}\otimes \bff{\nu}}{\nabla \bff{\chi}}
	-
	\beta_1 \inpro{(\nabla \cdot \bff{\nu}) \bff{u}}{\bff{\chi}}
	+
	\beta_2 \inpro{\bff{u}\otimes \bff{\nu}}{\nabla(\bff{u}\times \bff{\chi})},
\end{align}
for all $\bff{\chi}\in \bb{H}^1$, which is the weak formulation that we will use in Section~\ref{sec:llb spin}.

\subsection{Auxiliary results}

Our analysis requires several auxiliary results. We begin by recalling the following vector identities: For all $\bff{a},\bff{b}\in \bb{R}^3$,
\begin{align}
	\label{equ:div ab}
	\nabla \cdot (\bff{a}\otimes \bff{b})
	&=
	(\nabla \cdot \bff{b}) \bff{a}
	+
	(\bff{b}\cdot \nabla) \bff{a},
	\\
	\label{equ:nab uuv}
	\nabla \left(|\bff{a}|^2 \bff{b}\right) 
	&=
	2\bff{b}(\bff{a}\cdot\nabla\bff{a}) + |\bff{a}|^2 \nabla \bff{b},
	\\
	\label{equ:aab}
	2\bff{a} \cdot (\bff{a}-\bff{b}) 
	&= |\bff{a}|^2 - |\bff{b}|^2 + |\bff{a}-\bff{b}|^2,
	\\
	\label{equ:a2aab}
	4|\bff{a}|^2 \bff{a}\cdot (\bff{a}-\bff{b})
	&=
	|\bff{a}|^4 - |\bff{b}|^4 + \left(|\bff{a}|^2 - |\bff{b}|^2\right)^2 + 2|\bff{a}|^2 |\bff{a}-\bff{b}|^2.
\end{align}

Next, we state the following elliptic regularity result.

\begin{lemma}[Elliptic regularity]\label{lem:elliptic}
	Let $\mathscr{D}$ be a bounded domain with $C^2$-smooth boundary or a convex polyhedral domain. There exists a constant $C:=C(\mathscr{D})$ such that for all $\bff{v}\in \bb{H}^2_{\bff{n}}$, where $\bb{H}^2_{\bff{n}}$ is defined in~\eqref{equ:H2n}, we have
	\begin{align}
		\label{equ:D2L2}
		\norm{D^2 \bff{v}}{\bb{L}^2} &\leq C \norm{\Delta \bff{v}}{\bb{L}^2},
		\\
		\label{equ:v H2 elliptic}
		\norm{\bff{v}}{\bb{H}^2} &\leq C\left( \norm{\bff{v}}{\bb{L}^2} + \norm{\Delta \bff{v}}{\bb{L}^2} \right),
	\end{align}
	where $D^2 \bff{v}$ is the Hessian of $\bff{v}$.
	
	More generally, let $\mathscr{D}$ be a bounded domain with $C^{k+2}$-smooth boundary, where $k$ is a non-negative integer. There exists a constant $C:=C(k, \mathscr{D})$ such that for all $\bff{v}\in \bb{H}^{k+2} \cap \bb{H}^2_{\bff{n}}$,
	\begin{align}\label{equ:v Hk elliptic}
		\norm{\bff{v}}{\bb{H}^{k+2}} \leq C \left(\norm{\bff{v}}{\bb{L}^2}+ \norm{\Delta \bff{v}}{\bb{H}^k} \right).
	\end{align}
\end{lemma}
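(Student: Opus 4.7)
The plan is to reduce everything to the scalar case (since $\Delta$ acts componentwise on $\bb{R}^3$-valued functions and the Neumann boundary condition decouples), then invoke standard elliptic regularity theory for the Neumann Laplacian on each component. The three estimates are essentially three familiar facts from the PDE literature, so the proof is mostly a matter of assembling them in the right order.

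For~\eqref{equ:D2L2}, I would use the Bochner--Reilly integration-by-parts identity: for a scalar $v\in H^2(\mathscr{D})$ with $\partial v/\partial \bff{n}=0$ on $\partial \mathscr{D}$, two applications of the divergence theorem give
\begin{equation*}
	\int_{\mathscr{D}} |\Delta v|^2\, \dx
	=
	\int_{\mathscr{D}} |D^2 v|^2\, \dx
	+
	\int_{\partial \mathscr{D}} \mathrm{II}(\nabla_T v, \nabla_T v)\, d\sigma,
\end{equation*}
where $\mathrm{II}$ is the second fundamental form of $\partial \mathscr{D}$ and $\nabla_T$ is the tangential gradient. When $\mathscr{D}$ is a convex polyhedral domain, Grisvard's analysis of the Neumann Laplacian on convex domains ensures the boundary contribution is nonpositive, and~\eqref{equ:D2L2} holds with $C=1$. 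For a $C^2$ boundary, the boundary term is controlled using the trace inequality together with a standard interpolation argument (absorbing into the left-hand side), which alternatively can be bypassed by invoking the Agmon--Douglis--Nirenberg theory directly.

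For~\eqref{equ:v H2 elliptic}, I would combine~\eqref{equ:D2L2} with a simple $H^1$ bound. Testing the identity $\Delta \bff{v}=\Delta\bff{v}$ against $\bff{v}$ and integrating by parts using the Neumann boundary condition yields
\begin{equation*}
	\norm{\nabla \bff{v}}{\bb{L}^2}^2 = -\inpro{\bff{v}}{\Delta \bff{v}} \leq \norm{\bff{v}}{\bb{L}^2}\norm{\Delta \bff{v}}{\bb{L}^2},
\end{equation*}
whence Young's inequality and~\eqref{equ:D2L2} give~\eqref{equ:v H2 elliptic}.

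For the higher-order estimate~\eqref{equ:v Hk elliptic}, I would bootstrap on $k$. Given $\bff{v}\in \bb{H}^{k+2}\cap \bb{H}^2_{\bff{n}}$, I apply tangential difference quotients (or equivalently, the ADN shift theorem for elliptic boundary value problems on $C^{k+2}$ domains) to the Neumann problem $\Delta \bff{v}=\bff{f}$: each tangential derivative of $\bff{v}$ satisfies a Neumann problem with right-hand side controlled by $\norm{\bff{f}}{\bb{H}^k}$, and normal derivatives are recovered from the equation. Induction on $k$ using the base case~\eqref{equ:v H2 elliptic} then yields~\eqref{equ:v Hk elliptic}. The main obstacle is the convex polyhedral case of~\eqref{equ:D2L2}, where classical smooth-boundary techniques fail at the corners and one must rely on Grisvard's delicate analysis to obtain $H^2$ regularity up to the boundary with the correct constant; for the remaining parts, the arguments are routine once the base estimate is in hand.
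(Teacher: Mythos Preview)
Your proposal is correct and in fact goes considerably further than the paper itself: the paper's ``proof'' consists solely of the citation ``See~[Gri11, Chapter~2--3]'' (Grisvard's book on elliptic problems in nonsmooth domains), with no argument given. Your sketch---componentwise reduction, the Bochner--Reilly identity with Grisvard's sign analysis for the convex polyhedral case, the $\bb{H}^1$ bound via testing against $\bff{v}$, and the bootstrap/ADN shift for higher regularity---is exactly the content one extracts from that reference, so you have essentially reconstructed the cited material rather than deviated from it.
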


\begin{proof}
	See~\cite[Chapter~2--3]{Gri11}.
\end{proof}

Some frequently used interpolation inequalities are collected in the following lemma.

\begin{lemma}
Let $\epsilon>0$ be given and $d\in \{1,2,3\}$. Then there exists a positive constant $C$ such that the following inequalities hold.
\begin{enumerate}
	\renewcommand{\labelenumi}{\theenumi}
	\renewcommand{\theenumi}{{\rm (\roman{enumi})}}	
	\item There exists a constant $C:=C(\mathscr{D})$ such that for any $\bff{v}\in \bb{H}^1$,
	\begin{align}\label{equ:gal nir uh L3}
		\norm{\bff{v}}{\bb{L}^3}
		&\leq
		C \norm{\bff{v}}{\bb{L}^2}^{1-\frac{d}{6}} \norm{\bff{v}}{\bb{H}^1}^{\frac{d}{6}}
		\leq 
		C \norm{\bff{v}}{\bb{L}^2}^2 + \epsilon \norm{\nabla \bff{v}}{\bb{L}^2}^2,
		\\
		\label{equ:gal nir uh L4}
		\norm{\bff{v}}{\bb{L}^4}
		&\leq
		C \norm{\bff{v}}{\bb{L}^2}^{1-\frac{d}{4}} \norm{\bff{v}}{\bb{H}^1}^{\frac{d}{4}}
		\leq
		C \norm{\bff{v}}{\bb{L}^2}^2
		+
		\epsilon \norm{\nabla \bff{v}}{\bb{L}^2}^2.
	\end{align}
	\item Let
	\begin{equation}\label{equ:p}
		p \in 
		\begin{cases}
			[1,\infty], &\text{ if $d=1$}, \\
			[1,\infty), &\text{ if $d=2$}, \\
			[1,6], &\text{ if $d=3$}.
		\end{cases}
	\end{equation}
	There exists a constant $C:=C(\mathscr{D},p)$ such that for any $\bff{v}\in \bb{H}^2_{\bff{n}}$, we have
	\begin{align}\label{equ:nab v less Delta v}
		\norm{\nabla \bff{v}}{\bb{L}^p}
		\leq
		C\norm{\Delta \bff{v}}{\bb{L}^2}.
	\end{align}
	If, furthermore, $\Delta \bff{v}\in \bb{H}^1$, then we also have
	\begin{align}\label{equ:Delta v less nabdelta v}
		\norm{\Delta \bff{v}}{\bb{L}^p}
		&\leq
		C\norm{\nabla\Delta \bff{v}}{\bb{L}^2}
	\end{align}
	\item For any $\bff{v}\in \bb{H}^3 \cap \bb{H}^2_{\bff{n}}$, we have
	\begin{align}\label{equ:v H3 ellip reg}
		\norm{\bff{v}}{\bb{H}^3}
		&\leq
		C\left(\norm{\bff{v}}{\bb{L}^2} + \norm{\nabla\Delta \bff{v}}{\bb{L}^2} \right).
	\end{align}
	\item Let $\mathscr{D}$ be a convex polyhedral domain. There exists a constant $C:=C(\mathscr{D},q)$ such that for any $\bff{v}\in \bb{H}^2_{\bff{n}}$ satisfying $\Delta \bff{v}\in \bb{L}^q$ for some $q>d$, we have
	\begin{align}\label{equ:nab v L infty}
		\norm{\nabla \bff{v}}{\bb{L}^\infty} \leq C\norm{\Delta \bff{v}}{\bb{L}^q}.
	\end{align}
\end{enumerate}
\end{lemma}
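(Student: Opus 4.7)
All four items follow by combining Sobolev and Gagliardo--Nirenberg embeddings with the Neumann elliptic regularity of Lemma~\ref{lem:elliptic}. A device I will use repeatedly in (ii)--(iv) is the splitting $v = \bar v + \tilde v$, where $\bar v := |\mathscr{D}|^{-1}\int_\mathscr{D} v\,\dx$ is the spatial mean and $\tilde v$ has vanishing mean; this is needed because~\eqref{equ:v H2 elliptic} leaves a $\|v\|_{\bb{L}^2}$ on the right-hand side which must be removed to obtain bounds purely in terms of $\|\Delta v\|$.

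For (i), the plan is to apply the standard Gagliardo--Nirenberg interpolation $\|v\|_{\bb{L}^r}\leq C\|v\|_{\bb{L}^2}^{1-\theta}\|v\|_{\bb{H}^1}^{\theta}$ with $\theta = d(\tfrac{1}{2}-\tfrac{1}{r})$, specialised to $r\in\{3,4\}$, which is valid for $d\leq 3$. Squaring so that the two exponents sum to~$2$ and applying Young's inequality with conjugate exponents chosen to match the powers produces $C(\epsilon)\|v\|_{\bb{L}^2}^2 + \epsilon\|v\|_{\bb{H}^1}^2$; expanding $\|v\|_{\bb{H}^1}^2\leq \|v\|_{\bb{L}^2}^2 + \|\nabla v\|_{\bb{L}^2}^2$ and absorbing constants then gives the stated form.

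For (ii), I would first upgrade~\eqref{equ:v H2 elliptic} to the sharper bound $\|\tilde v\|_{\bb{H}^2}\leq C\|\Delta \tilde v\|_{\bb{L}^2}$ for every mean-zero $\tilde v\in \bb{H}^2_{\bff{n}}$. Integration by parts under the Neumann boundary condition gives $\|\nabla \tilde v\|_{\bb{L}^2}^2 = -\inpro{\tilde v}{\Delta \tilde v}$; combined with Cauchy--Schwarz and the Neumann Poincar\'e inequality $\|\tilde v\|_{\bb{L}^2}\leq C\|\nabla \tilde v\|_{\bb{L}^2}$, this yields $\|\tilde v\|_{\bb{L}^2}+\|\nabla \tilde v\|_{\bb{L}^2}\leq C\|\Delta \tilde v\|_{\bb{L}^2}$, and substituting back into~\eqref{equ:v H2 elliptic} produces the sharper bound. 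Writing $v = \bar v+\tilde v$, so that $\nabla v = \nabla \tilde v$ and $\Delta v = \Delta \tilde v$, the Sobolev embedding $\bb{H}^1\hookrightarrow \bb{L}^p$ in the range~\eqref{equ:p} applied to $\nabla \tilde v$ yields~\eqref{equ:nab v less Delta v}. For~\eqref{equ:Delta v less nabdelta v}, the key observation is that $\Delta v$ already has zero mean by the divergence theorem and the Neumann condition, so Poincar\'e and the same Sobolev embedding applied directly to $\Delta v$ give the claim.

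For (iii), I would use~\eqref{equ:v Hk elliptic} with $k=1$ to write $\|v\|_{\bb{H}^3}\leq C(\|v\|_{\bb{L}^2}+\|\Delta v\|_{\bb{L}^2}+\|\nabla\Delta v\|_{\bb{L}^2})$ and then absorb $\|\Delta v\|_{\bb{L}^2}$ via Poincar\'e on the mean-zero function $\Delta v$. For (iv), I would combine the Sobolev embedding $\bb{W}^{2,q}\hookrightarrow \mathcal{C}^1$ (valid for $q>d$ on convex polyhedra) with the Neumann $L^q$-elliptic regularity $\|\tilde v\|_{\bb{W}^{2,q}}\leq C\|\Delta v\|_{\bb{L}^q}$ applied to $\tilde v = v-\bar v$, obtaining $\|\nabla v\|_{\bb{L}^\infty}=\|\nabla \tilde v\|_{\bb{L}^\infty}\leq C\|\tilde v\|_{\bb{W}^{2,q}}\leq C\|\Delta v\|_{\bb{L}^q}$. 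The only mildly non-routine ingredient in the whole lemma is this $L^q$-regularity on convex polyhedra (again from~\cite{Gri11}); everything else is a direct chain of Sobolev embeddings, integration by parts, and Young/Poincar\'e manipulations, so I do not anticipate a serious obstacle.
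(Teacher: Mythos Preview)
Your proposal is correct and follows essentially the same route as the paper. The only notable stylistic differences are: in (ii) the paper obtains $\|\nabla \bff{v}\|_{\bb{L}^2}\leq \lambda_1^{-1/2}\|\Delta \bff{v}\|_{\bb{L}^2}$ via the eigenfunction expansion of the Neumann Laplacian rather than your mean-zero splitting plus Poincar\'e (these are of course the same thing, since $\lambda_1^{-1}$ \emph{is} the Poincar\'e constant), and for the $p=2$ case of~\eqref{equ:Delta v less nabdelta v} it integrates by parts $\|\Delta\bff{v}\|_{\bb{L}^2}^2=-\inpro{\nabla\bff{v}}{\nabla\Delta\bff{v}}$ and feeds~\eqref{equ:nab v less Delta v} back in, whereas you apply Poincar\'e directly to the mean-zero function $\Delta\bff{v}$; and in (iv) the paper simply cites the $\bb{W}^{2,q}$ regularity on convex polyhedra as a black box from~\cite{Maz09} rather than~\cite{Gri11}, without unpacking the $\bb{W}^{2,q}\hookrightarrow \mathcal{C}^1$ embedding as you do.
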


\begin{proof}
Estimates \eqref{equ:gal nir uh L3} and \eqref{equ:gal nir uh L4} follow from the Gagliardo--Nirenberg and the Young inequalities. 

Next, we show~\eqref{equ:nab v less Delta v}. Let $\{\bff{\phi}_k\}_{k\geq 0}$ be a complete countable set in $\bb{L}^2(\mathscr{D})$ of orthonormal eigenfunctions of the negative Neumann Laplacian $-\Delta$, with corresponding eigenvalues $\{\lambda_k\}_{k\geq 0}$. The eigenvalues can be arranged so that $0=\lambda_0<\lambda_1\leq \lambda_2\leq \lambda_3\leq \ldots$, with $\bff{\phi}_0= |\mathscr{D}|^{-1}$. Therefore, for any $\bff{v}\in \bb{H}^2_{\bff{n}}$, we can write $\Delta \bff{v}= \sum_{k=1}^\infty a_k \bff{\phi}_k$, where $a_k=\inpro{\Delta \bff{v}}{\bff{\phi}_k}$ for $k\in \bb{N}$ and $a_0=0$. This means~$\bff{v}= b_0\bff{\phi}_0- \sum_{k=1}^\infty a_k \lambda_k^{-1} \bff{\phi}_k$, where $b_0:=\inpro{\bff{w}}{\bff{\phi}_0}$. We then have
\begin{align*}
	\norm{\nabla \bff{v}}{\bb{L}^2}^2 = -\inpro{\Delta \bff{v}}{\bff{v}} = \sum_{k=1}^\infty a_k^2 \lambda_k^{-1} \leq \lambda_1^{-1} \sum_{k=1}^\infty a_k^2 = \lambda_1^{-1} \norm{\Delta \bff{v}}{\bb{L}^2}^2,
\end{align*}
thus showing~\eqref{equ:nab v less Delta v} for $p=2$. For $p\in [1,2)$, we have $\bb{L}^2\hookrightarrow \bb{L}^p$ since $\mathscr{D}$ is bounded. For $p\in (2,p^\ast]$, where $p^\ast$ is any number (or $\infty$) such that the embedding $\bb{H}^1\hookrightarrow \bb{L}^{p^\ast}$ holds, we have by this embedding, \begin{align*}
	\norm{\nabla \bff{v}}{\bb{L}^p}
	\leq 
	\norm{\nabla \bff{v}}{\bb{H}^1}
	\leq
	\norm{\nabla \bff{v}}{\bb{L}^2} + \norm{D^2 \bff{v}}{\bb{L}^2}
	\leq
	C\norm{\Delta \bff{v}}{\bb{L}^2},
\end{align*}
where we used \eqref{equ:D2L2} and \eqref{equ:nab v less Delta v} for $p=2$ proven earlier. This proves \eqref{equ:nab v less Delta v} for all $p$ satisfying~\eqref{equ:p}.

Similarly, integrating by parts we have
\begin{align*}
	\norm{\Delta \bff{v}}{\bb{L}^2}^2
	=
	-\inpro{\nabla \bff{v}}{\nabla\Delta \bff{v}}
	\leq
	\norm{\nabla \bff{v}}{\bb{L}^2} \norm{\nabla\Delta \bff{v}}{\bb{L}^2}
	\leq
	C\norm{\Delta \bff{v}}{\bb{L}^2} \norm{\nabla\Delta \bff{v}}{\bb{L}^2},
\end{align*}
where in the last step we used~\eqref{equ:nab v less Delta v}. This implies~\eqref{equ:Delta v less nabdelta v} for $p=2$. For $p\in (2,p^\ast]$, with $p^\ast$ as defined above, by Sobolev embedding we then have
\begin{align*}
	\norm{\Delta \bff{v}}{\bb{L}^p} 
	\leq
	\norm{\Delta \bff{v}}{\bb{H}^1}
	\leq
	\norm{\Delta \bff{v}}{\bb{L}^2} + \norm{\nabla\Delta \bff{v}}{\bb{L}^2}
	\leq 
	C \norm{\nabla\Delta \bff{v}}{\bb{L}^2},
\end{align*}
thus showing \eqref{equ:Delta v less nabdelta v} for all $p$ satisfying \eqref{equ:p}.

Inequality~\eqref{equ:v H3 ellip reg} follows from~\eqref{equ:v H3 ellip reg} as
\begin{align*}
	\norm{\bff{v}}{\bb{H}^3} \leq
	C \left(\norm{\bff{v}}{\bb{L}^2}+ \norm{\Delta \bff{v}}{\bb{H}^1}\right)
	\leq
	C \left(\norm{\bff{v}}{\bb{L}^2}+ \norm{\nabla\Delta \bff{v}}{\bb{L}^2} \right),
\end{align*}
where in the last step we used \eqref{equ:Delta v less nabdelta v}. 

Finally, regularity estimate~\eqref{equ:nab v L infty} is a result of~\cite{Maz09}. This completes the proof of the lemma.
\end{proof}

We also state the following well-known (continuous and discrete) Gronwall-type inequalities for ease of reference later.

\begin{lemma}[Gronwall's lemma]\label{lem:gron}
Let $\eta(\cdot)$ be a non-negative absolutely continuous function on $[0,T]$, which satisfies for a.e. $t$ the differential inequality
\begin{align*}
	\eta'(t)\leq \phi(t) \eta(t)+ \psi(t),
\end{align*}
where $\phi(\cdot)$ and $\psi(\cdot)$ are non-negative integrable function on $[0,T]$. Then
\begin{align*}
	\eta(t) \leq \exp\left(\int_0^t \phi(s) \,\ds\right) \left[\eta(0)+ \int_0^t \psi(s)\,\ds \right]
\end{align*}
for all $t\in [0,T]$.
\end{lemma}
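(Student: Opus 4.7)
The plan is to use the classical integrating factor technique. Define $\Phi(t) := \exp\bigl(-\int_0^t \phi(s)\,\ds\bigr)$, which is absolutely continuous on $[0,T]$ since $\phi$ is integrable, and satisfies $\Phi'(t) = -\phi(t)\Phi(t)$ for a.e.\ $t$. I would then multiply the hypothesised differential inequality by $\Phi(t) > 0$ and observe that, as a product of two absolutely continuous functions, $\Phi \eta$ is absolutely continuous with
\begin{align*}
    \frac{d}{dt}\bigl(\Phi(t)\eta(t)\bigr)
    &= \Phi'(t)\eta(t) + \Phi(t)\eta'(t)
    \leq -\phi(t)\Phi(t)\eta(t) + \Phi(t)\bigl(\phi(t)\eta(t) + \psi(t)\bigr)
    = \Phi(t)\psi(t)
\end{align*}
almost everywhere on $[0,T]$.

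Next, I would integrate this inequality from $0$ to $t$, using the fundamental theorem of calculus for absolutely continuous functions, to get
\begin{align*}
    \Phi(t)\eta(t) - \eta(0) \leq \int_0^t \Phi(s)\psi(s)\,\ds.
\end{align*}
Since $\phi \geq 0$, we have $\Phi(s) \leq 1$ for all $s\in[0,T]$, and because $\psi \geq 0$ this yields $\int_0^t \Phi(s)\psi(s)\,\ds \leq \int_0^t \psi(s)\,\ds$. Dividing through by $\Phi(t) > 0$ and recognising that $\Phi(t)^{-1} = \exp\bigl(\int_0^t \phi(s)\,\ds\bigr)$ produces the claimed bound.

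There is essentially no obstacle here beyond justifying the product rule in the absolutely continuous setting; this follows from the standard fact that the product of two absolutely continuous functions on a compact interval is absolutely continuous, so the a.e.\ Leibniz rule applies and the fundamental theorem of calculus may be invoked. The sign conditions $\phi,\psi \geq 0$ are used exactly once, to bound $\Phi(s)\psi(s)$ by $\psi(s)$; without them one would obtain the sharper (but less convenient) form with $\Phi$ retained inside the integral.
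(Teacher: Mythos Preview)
Your proof is correct and follows the standard integrating-factor argument. The paper itself does not give a proof but simply cites \cite[Appendix~B.2]{Eva10}, where precisely this argument appears; so your approach coincides with the reference the paper defers to.
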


\begin{proof}
	See~\cite[Appendix~B.2]{Eva10}.
\end{proof}

\begin{lemma}[Discrete Gronwall's lemma]\label{lem:disc gron}
Let $k, B, a_j, b_j$, and $\gamma_j$ be non-negative numbers (for all integers $j\geq 0$) such that
\begin{align*}
	a_n+k \sum_{j=0}^n b_j \leq B + k \sum_{j=0}^{n-1} \gamma_j a_j.
\end{align*}
Then
\begin{align*}
	a_n+k \sum_{j=0}^n b_j \leq B \exp\left(\sum_{j=0}^{n-1} \gamma_j\right).
\end{align*}
\end{lemma}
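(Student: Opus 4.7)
The argument is a standard induction on $n$. The first observation is that, since $k \geq 0$ and $b_j \geq 0$, the hypothesis a fortiori gives $a_n \leq B + k\sum_{j=0}^{n-1}\gamma_j a_j$; so if I can establish the scalar bound $a_n \leq B\exp(\sum_{j=0}^{n-1}\gamma_j)$ on the sequence $(a_j)$ alone, then feeding this bound back into the original hypothesis immediately yields the full conclusion for $a_n + k\sum_{j=0}^n b_j$. Thus the work reduces to the scalar recurrence.

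For the scalar bound, the cleanest route is to prove by induction on $n$ the sharper product form $a_n \leq B \prod_{j=0}^{n-1}(1+k\gamma_j)$, since this is exactly what drops out of the recurrence via the telescoping identity
\[
\prod_{j=0}^{n-1}(1+k\gamma_j) \;=\; 1 + k\sum_{j=0}^{n-1}\gamma_j \prod_{i=0}^{j-1}(1+k\gamma_i).
\]
The base case $n=0$ reads $a_0 \leq B$, which is immediate from the hypothesis since $kb_0 \geq 0$. For the inductive step, plugging the inductive hypothesis $a_j \leq B\prod_{i=0}^{j-1}(1+k\gamma_i)$ (for $j \leq n-1$) into $a_n \leq B + k\sum_{j=0}^{n-1}\gamma_j a_j$ and applying the telescoping identity above delivers the product bound for $a_n$. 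Passing from the product to the exponential via $1+x \leq e^x$ for $x \geq 0$ then gives the claimed Gronwall estimate.

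There is no genuine obstacle here; the only mild pedagogical point is to carry out the induction in \emph{product} form rather than directly against the exponential, which makes the telescoping algebra transparent and avoids clumsy intermediate estimates on sums of the form $\sum_j \gamma_j \exp(\sum_{i<j}\gamma_i)$. Once the scalar bound on $a_n$ is in hand for every $n$, substituting it back into the original hypothesis produces the stated inequality for $a_n + k\sum_{j=0}^n b_j$ without further work.
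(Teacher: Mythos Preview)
Your proof is correct and follows essentially the same route as the paper's. The paper introduces the auxiliary sequence $u_m := B + k\sum_{j=0}^{m-1}\gamma_j a_j$, notes from the hypothesis that $a_{m-1}\leq u_{m-1}$ and hence $u_m \leq (1+k\gamma_{m-1})u_{m-1}$, and iterates to the product bound before passing to the exponential --- exactly your inductive product argument, with the telescoping identity traded for a one-step recursion on $u_m$.
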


\begin{proof}
Set $u_0=B$, and for any $m\geq 1$ let $u_m:= B+ k\sum_{j=0}^{m-1} \gamma_j a_j$. Then for any $m\leq n$, we have
\begin{align*}
	u_m=u_{m-1}+ \gamma_{m-1} a_{m-1} \leq (1+\gamma_{m-1}) u_{m-1} \leq e^{\gamma_{m-1}} u_{m-1}.
\end{align*}
Noting the assumption, this implies
\begin{align*}
	 a_n+k \sum_{j=0}^n b_j 
	 \leq
	 B+ k\sum_{j=0}^{n-1} \gamma_j a_j 
	 = 
	 u_n \leq u_0 \exp\left(\sum_{j=0}^{n-1} \gamma_j \right)
	 =
	 B \exp\left(\sum_{j=0}^{n-1} \gamma_j\right),
\end{align*}
as required.
\end{proof}

\begin{lemma}[Generalised discrete Gronwall's lemma]\label{lem:disc gen gron}
Let $k, B, a_j, b_j, c_j$, and $\gamma_j$ be non-negative numbers (for all integers $j\geq 0$) such that
\begin{align*}
	a_n + k\sum_{j=0}^n b_j \leq k \sum_{j=0}^n \gamma_j a_j + k\sum_{j=0}^n c_j + B.
\end{align*}
Suppose that $k\gamma_j <1$ for all $j\geq 0$, and set $\sigma_j:= (1-k\gamma_j)^{-1}$. Then
\begin{align*}
	a_n+ k\sum_{j=0}^n b_j \leq \exp\left(k\sum_{j=0}^n \sigma_j \gamma_j\right) \left[k\sum_{j=0}^n c_j +B\right].
\end{align*}
\end{lemma}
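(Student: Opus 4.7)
The plan is to mimic the proof of the standard discrete Gronwall inequality (Lemma~\ref{lem:disc gron}), introducing a cumulative upper bound and deriving a one-step recursion that can be iterated. Define
\begin{align*}
u_n := B + k\sum_{j=0}^n c_j + k\sum_{j=0}^n \gamma_j a_j,
\end{align*}
so that by hypothesis $a_n + k\sum_{j=0}^n b_j \leq u_n$ for every $n \geq 0$. The extra difficulty compared with the classical case is that the term $\gamma_n a_n$ appears on the right-hand side of the assumption, so $a_n$ cannot be immediately eliminated by telescoping; this is where the smallness condition $k\gamma_n < 1$ is used.

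First I would compute $u_n - u_{n-1} = kc_n + k\gamma_n a_n$ and substitute the crude bound $a_n \leq u_n$ into the last term, producing $(1 - k\gamma_n) u_n \leq u_{n-1} + kc_n$. Dividing by $1 - k\gamma_n > 0$ yields the recursion $u_n \leq \sigma_n u_{n-1} + \sigma_n k c_n$; the same reasoning applied at $n=0$ (where there is no $u_{-1}$ term) gives $u_0 \leq \sigma_0(B + kc_0)$. Iterating the recursion and using $\sigma_j \geq 1$ to bound each partial product $\prod_{i=j}^n \sigma_i$ by the full product $\prod_{i=0}^n \sigma_i$ yields
\begin{align*}
u_n \leq \left(\prod_{j=0}^n \sigma_j\right) \left(B + k\sum_{j=0}^n c_j\right).
\end{align*}

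Finally I would convert the product into an exponential via the elementary inequality $-\log(1-x) \leq x/(1-x)$ for $x\in [0,1)$, which is easily verified by noting that both sides vanish at $x=0$ and that the derivative of the right-hand side dominates that of the left on $[0,1)$. Taking $x=k\gamma_j$ gives $\sigma_j \leq \exp(k\sigma_j \gamma_j)$, and multiplying these bounds produces $\prod_{j=0}^n \sigma_j \leq \exp\bigl(k\sum_{j=0}^n \sigma_j \gamma_j\bigr)$. Chaining this with $a_n + k\sum_{j=0}^n b_j \leq u_n$ delivers the asserted estimate. The argument is short and largely mechanical; the only point requiring genuine care is closing the recursion on $u_n$ despite the presence of $a_n$ on both sides of the hypothesis, and that is precisely what the condition $k\gamma_n < 1$ is there to enable.
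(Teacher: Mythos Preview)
Your argument is correct and is essentially the standard proof of this lemma as given in the cited reference \cite{HeyRan90}; the paper itself does not reproduce a proof but merely points to that source. Your handling of the implicit term $k\gamma_n a_n$ via the absorption $(1-k\gamma_n)u_n \leq u_{n-1}+kc_n$ and the subsequent conversion of the product $\prod \sigma_j$ into the exponential via $-\log(1-x)\leq x/(1-x)$ are exactly the ingredients used there.
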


\begin{proof}
	See~\cite[Lemma~5.1]{HeyRan90}.
\end{proof}

\section{Existence and uniqueness of strong solution}\label{sec:exist}

To show the global existence and uniqueness of solution to the problem~\eqref{equ:llb a}, one could apply the Faedo--Galerkin method and establish appropriate uniform estimates on the approximate solution. In order to simplify presentation, we will work directly with the solution $\bff{u}$ (instead of its approximation). These a priori estimates can be made rigorous by working with the Galerkin approximation as in~\cite{AyoKotMouZak21, Le16}.

In this section, we assume that $\mathscr{D}$ is a bounded open domain with smooth boundary, so that relevant elliptic regularity results can be used. Existence and uniqueness of global strong solution still hold if $\mathscr{D}$ is a convex polyhedral domain (see Remark~\ref{rem:domain}).

\begin{proposition}\label{pro:ut Lp}
Let $\bff{u}$ be a solution of~\eqref{equ:llb a} and let $p\in [2,\infty)$. For all $t\in [0,T]$,
\begin{align}\label{equ:ut Lp}
	\norm{\bff{u}(t)}{\bb{L}^p}^p
	+
	\int_0^t \alpha p\norm{|\bff{u}(s)|^{\frac{p-2}{2}} |\nabla \bff{u}(s)|}{\bb{L}^2}^2 \ds
	+
	\int_0^t p\norm{\bff{u}(s)}{\bb{L}^{p+2}}^{p+2} \ds 
	\leq
	C_p (1+t),
\end{align}
where $C_p$ is a constant depending on $p$, $\norm{\bff{u}_0}{\bb{L}^p}$, and the coefficients of \eqref{equ:llb a}. Furthermore,
\begin{equation}\label{equ:u Linfty}
	\norm{\bff{u}(t)}{\bb{L}^\infty}
	\leq 
	\norm{\bff{u}_0}{\bb{L}^\infty} + \sqrt{\beta},
\end{equation}
where $\beta:=(\beta_1\nu_\infty)^2 \alpha^{-1}$.
\end{proposition}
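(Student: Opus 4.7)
My plan is to derive both estimates by energy-type arguments with carefully chosen test functions.

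For part~(i), I would take the $\bb{L}^2$ inner product of~\eqref{equ:llb eq1} with $p|\bff{u}|^{p-2}\bff{u}$. The cross-product terms $-\bff{u}\times\bff{H}$ and $\beta_2\bff{u}\times(\bff{\nu}\cdot\nabla)\bff{u}$ drop out pointwise by orthogonality. Integration by parts on the Laplacian in $\alpha\bff{H}$ using the Neumann condition yields $-\alpha p\int|\bff{u}|^{p-2}|\nabla\bff{u}|^2\,\dx$ plus a further nonpositive correction, and the remaining contributions in $\alpha\bff{H}$ give exactly the $\bb{L}^p$ and $\bb{L}^{p+2}$ dissipation in~\eqref{equ:ut Lp}. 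The adiabatic spin-torque $p\beta_1\int|\bff{u}|^{p-2}\bff{u}\cdot(\bff{\nu}\cdot\nabla)\bff{u}\,\dx$ transforms, via integration by parts and $\bff{\nu}\cdot\bff{n}=0$, into $-\beta_1\int(\nabla\cdot\bff{\nu})|\bff{u}|^p\,\dx$, which is controlled by $C|\beta_1|\nu_\infty\|\bff{u}\|_{\bb{L}^p}^p$ through the Sobolev embedding $\bb{H}^4\hookrightarrow\bb{W}^{1,\infty}$ applied to $\bff{\nu}$. Absorbing this remainder into the $\bb{L}^{p+2}$ dissipation via Young's inequality $\|\bff{u}\|_{\bb{L}^p}^p\leq\epsilon\|\bff{u}\|_{\bb{L}^{p+2}}^{p+2}+C_{\epsilon,p}$ and integrating in time produces the stated bound $C_p(1+t)$.

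For part~(ii), I would derive a scalar PDE for $f:=|\bff{u}|^2$ by dotting~\eqref{equ:llb eq1} with $2\bff{u}$. Using $\bff{u}\cdot\Delta\bff{u}=\tfrac12\Delta f-|\nabla\bff{u}|^2$, the orthogonality $(\bff{u}\times\bff{H})\cdot\bff{u}=0$, and $2\bff{u}\cdot(\bff{\nu}\cdot\nabla)\bff{u}=(\bff{\nu}\cdot\nabla)f$, one obtains
\begin{align*}
\partial_t f - \alpha\Delta f + 2\alpha|\nabla\bff{u}|^2 + 2\alpha f + 2\alpha f^2 + 2\alpha(\bff{e}\cdot\bff{u})^2 = \beta_1(\bff{\nu}\cdot\nabla)f.
\end{align*}
The key observation is that at an interior maximum $x^*$ of $f(t,\cdot)$ one has $\nabla f(x^*)=\bff{0}$, so in particular $(\bff{\nu}\cdot\nabla)f(x^*)=0$, while $\Delta f(x^*)\leq 0$ and the Neumann condition $\partial\bff{u}/\partial\bff{n}=\bff{0}$ rules out a boundary maximum via Hopf's lemma. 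This gives the pointwise inequality $(\partial_t f)(x^*)\leq -2\alpha f(x^*)-2\alpha f(x^*)^2\leq 0$, so $\max_x f(t,\cdot)$ is non-increasing in $t$. This already yields $\|\bff{u}(t)\|_{\bb{L}^\infty}\leq\|\bff{u}_0\|_{\bb{L}^\infty}\leq\|\bff{u}_0\|_{\bb{L}^\infty}+\sqrt{\beta}$, which is sharper than~\eqref{equ:u Linfty}. Alternatively, to avoid the pointwise argument, applying Young's inequality $|\beta_1(\bff{\nu}\cdot\nabla)f|\leq 2\alpha|\nabla\bff{u}|^2+(\beta/2)f$ lets us absorb the drift gradient and reduces to $\partial_t f-\alpha\Delta f\leq(\beta/2-2\alpha)f-2\alpha f^2$; a Stampacchia truncation testing against $(f-K)_+^{q-1}$ with a suitable threshold $K$ comparable to $\|\bff{u}_0\|_{\bb{L}^\infty}^2+\beta$ then shows $(f-K)_+\equiv 0$ by Gronwall, where the $\sqrt{\beta}$ correction emerges from the balance between $(\beta/2-2\alpha)f$ and $-2\alpha f^2$.

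The main obstacle is the $\bb{L}^\infty$ estimate. A naive $\bb{L}^p$-based argument combined with $p\to\infty$ fails because crude Young's inequality on the spin-torque produces constants that scale with $p$ in the exponent. The maximum-principle route above succeeds precisely because the algebraic identity $(\bff{\nu}\cdot\nabla)f=2\bff{u}\cdot(\bff{\nu}\cdot\nabla)\bff{u}$ makes the drift vanish at spatial critical points of $f$. To work directly with a strong solution (only $\bb{H}^2$ in space, hence continuous but not $C^2$) rather than its Galerkin approximation, I would justify the pointwise argument by smooth approximation as in~\cite{AyoKotMouZak21, Le16} or by the equivalent Stampacchia truncation.
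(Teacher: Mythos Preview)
Your approach to part~(i) is essentially the same as the paper's: test \eqref{equ:llb eq1} against $|\bff{u}|^{p-2}\bff{u}$, the cross-product terms vanish, and the Laplacian produces the weighted gradient dissipation. The only difference is your handling of the adiabatic torque: you integrate by parts using $\bff{\nu}\cdot\bff{n}=0$ to get $-\beta_1\int(\nabla\cdot\bff{\nu})|\bff{u}|^p\,\dx$, whereas the paper estimates this term directly via Cauchy--Schwarz as $\beta_1\nu_\infty\||\bff{u}|^{(p-2)/2}|\nabla\bff{u}|\|_{\bb{L}^2}\||\bff{u}|^{p/2}\|_{\bb{L}^2}$ and absorbs half into the gradient dissipation. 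Both routes are valid for part~(i); yours requires $\bff{\nu}\in\bb{W}^{1,\infty}$ rather than just $\bb{L}^\infty$, but that is covered by the standing assumption $\bff{\nu}\in L^\infty(\bb{H}^4)$.

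Your approach to part~(ii) is genuinely different and in fact sharper. The paper stays within the $\bb{L}^p$ framework: it bounds the right-hand side $\beta\|\bff{u}\|_{\bb{L}^p}^p-\|\bff{u}\|_{\bb{L}^{p+2}}^{p+2}$ pointwise by maximising $x\mapsto\beta x^p-x^{p+2}$, integrates in time, takes the $p$-th root, and lets $p\to\infty$, the $\sqrt{\beta}$ appearing as the limiting contribution of the maximiser. Your maximum-principle argument on the scalar equation for $f=|\bff{u}|^2$ exploits the structural fact that the transport term $(\bff{\nu}\cdot\nabla)f$ vanishes at spatial critical points, so the drift has no effect on $\|f\|_{\bb{L}^\infty}$ at all; this yields $\|\bff{u}(t)\|_{\bb{L}^\infty}\leq\|\bff{u}_0\|_{\bb{L}^\infty}$ without the $\sqrt{\beta}$ correction. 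The paper's route has the advantage of being a single unified $\bb{L}^p$ computation feeding both conclusions, while yours requires a separate scalar argument but delivers a tighter bound --- which would even relax the smallness condition in Proposition~\ref{pro:Delta u L2} for $d=3$ to depend on $\|\bff{u}_0\|_{\bb{L}^\infty}$ alone. Your caveat about justifying the pointwise argument for merely $\bb{H}^2$ solutions is well placed; the Stampacchia alternative you sketch (or working at the Galerkin level as the paper does) handles this cleanly.
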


\begin{proof}
Taking the inner product of \eqref{equ:llb eq1} with $|\bff{u}|^{p-2} \bff{u}$, integrating by parts, and noting that $(\bff{a}\times\bff{b})\cdot \bff{a}=0$ for any $\bff{a},\bff{b}\in \bb{R}^3$, we obtain
\begin{align*}
	\frac{1}{p} \ddt \norm{\bff{u}}{\bb{L}^p}^p
	+
	\alpha
	\inpro{\nabla \bff{u}}{\nabla (\abs{\bff{u}}^{p-2} \bff{u})}
	+
	\norm{\bff{u}}{\bb{L}^{p+2}}^{p+2}
	+
	\norm{\bff{u}}{\bb{L}^p}^p
	+
	\norm{|\bff{u}|^{\frac{p-2}{2}} (\bff{e}\cdot \bff{u})}{\bb{L}^2}^2
	=
	\beta_1 \inpro{(\bff{\nu}\cdot \nabla)\bff{u}}{|\bff{u}|^{p-2} \bff{u}}.
\end{align*}
Using the identity
\begin{align*}
	\nabla\left(|\bff{u}|^q \bff{u}\right)= |\bff{u}|^q \nabla \bff{u}+ q |\bff{u}|^{q-2} \bff{u}(\bff{u}\cdot\nabla \bff{u}), \quad\forall q\geq 0,
\end{align*}
to expand the second term on the left-hand side,
we have by H\"older's and Young's inequalities,
\begin{align}\label{equ:est Lp beta}
	&\frac{1}{p} \ddt \norm{\bff{u}}{\bb{L}^p}^p
	+
	\alpha \norm{|\bff{u}|^{\frac{p-2}{2}} |\nabla \bff{u}|}{\bb{L}^2}^2
	+
	\alpha (p-2) \norm{|\bff{u}|^{\frac{p-4}{2}} |\bff{u}\cdot\nabla\bff{u}|}{\bb{L}^2}^2
	+
	\norm{\bff{u}}{\bb{L}^p}^p
	+
	\norm{|\bff{u}|^{\frac{p-2}{2}} (\bff{e}\cdot \bff{u})}{\bb{L}^2}^2
	\nonumber\\
	&\leq
	\beta_1 \nu_\infty \norm{|\bff{u}|^{\frac{p-2}{2}} |\nabla \bff{u}|}{\bb{L}^2} \norm{|\bff{u}|^{\frac{p}{2}}}{\bb{L}^2}
	-
	\norm{\bff{u}}{\bb{L}^{p+2}}^{p+2}
	\nonumber\\
	&\leq
	\frac{(\beta_1 \nu_\infty)^2}{\alpha} \norm{\bff{u}}{\bb{L}^p}^p
	+
	\frac{\alpha}{4} \norm{|\bff{u}|^{\frac{p-2}{2}} |\nabla \bff{u}|}{\bb{L}^2}^2
	-
	\norm{\bff{u}}{\bb{L}^{p+2}}^{p+2}.
\end{align}
After rearranging the terms, we have
\begin{equation}\label{equ:ut Lpp}
\ddt \norm{\bff{u}}{\bb{L}^p}^p 
+
\frac{\alpha p}{2} \norm{|\bff{u}|^{\frac{p-2}{2}} |\nabla \bff{u}|}{\bb{L}^2}^2
\leq
\beta p \norm{\bff{u}}{\bb{L}^p}^p - p\norm{\bff{u}}{\bb{L}^{p+2}}^{p+2},
\end{equation}
where $\beta=(\beta_1 \nu_\infty)^2 \alpha^{-1}$.
By considering the maximum value of the function $x\mapsto \beta x^p-x^{p+2}$ for $p\geq 2$, we have the inequality
\begin{align*}
	\beta \norm{\bff{u}}{\bb{L}^p}^p
	-
	\norm{\bff{u}}{\bb{L}^{p+2}}^{p+2}
	\leq
	\left(\frac{2\beta |\mathscr{D}|}{p+2}\right) \left(\frac{\beta p}{p+2}\right)^{\frac{p}{2}}.
\end{align*}
Substituting this to the right-hand side of~\eqref{equ:ut Lpp}, we obtain
\begin{align*}
	\ddt \norm{\bff{u}}{\bb{L}^p}^p 
	+
	\frac{\alpha p}{2} \norm{|\bff{u}|^{\frac{p-2}{2}} |\nabla \bff{u}|}{\bb{L}^2}^2
	\leq
	\left(\frac{2\beta |\mathscr{D}| p}{p+2}\right) \left(\frac{\beta p}{p+2}\right)^{\frac{p}{2}}
	\leq
	2\beta^{1+\frac{p}{2}} |\mathscr{D}|.
\end{align*}
Integrating this over $(0,t)$, we obtain~\eqref{equ:ut Lp}. In particular, rearranging the terms and taking the $p$-th root, we have
\begin{equation*}
	\norm{\bff{u}(t)}{\bb{L}^p}
	\leq
	\norm{\bff{u}_0}{\bb{L}^p}
	+
	\left(2|\mathscr{D}|\right)^{\frac{1}{p}} \beta^{\frac1p +\frac12}.
\end{equation*}
Letting $p\to\infty$ then yield~\eqref{equ:u Linfty}. This completes the proof of the proposition.
\end{proof}

\begin{remark}\label{rem:coeff above Tc}
If $\beta$ in Proposition~\ref{pro:ut Lp} is sufficiently small such that $\beta<\mu$, then we can absorb all the terms on the right-hand side of \eqref{equ:est Lp beta} to the left, leading to an exponential decay estimate
\[
	\norm{\bff{u}(t)}{\bb{L}^\infty} 
	\leq
	e^{-(\mu-\beta) t} \norm{\bff{u}_0}{\bb{L}^\infty},
\]
similar to that in~\cite{LeSoeTra24}.
\end{remark}

\begin{proposition}\label{pro:est nab u L2}
	Let $\bff{u}$ be a solution of~\eqref{equ:llb a}. For all $t\in [0,T]$,
	\begin{align}\label{equ:nab u L2}
		\norm{\nabla \bff{u}(t)}{\bb{L}^2}^2
		+
		\alpha \int_0^t \norm{\Delta \bff{u}(s)}{\bb{L}^2}^2 \ds
		\leq
		C\alpha^{-2} (1+t),
	\end{align}
	where $C$ is a constant depending on $\norm{\bff{u}_0}{\bb{H}^1}$ and the coefficients of~\eqref{equ:llb a}.
\end{proposition}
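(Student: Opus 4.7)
The natural test function is $-\Delta\bff{u}(t)$. Taking the $\bb{L}^2$ inner product of \eqref{equ:llb eq1} with $-\Delta\bff{u}$ and integrating by parts the time-derivative term using the Neumann boundary condition \eqref{equ:llb bound} yields
\[
\tfrac{1}{2}\ddt\norm{\nabla\bff{u}}{\bb{L}^2}^2
=
\inpro{\bff{u}\times\bff{H}}{\Delta\bff{u}}
-\alpha\inpro{\bff{H}}{\Delta\bff{u}}
-\beta_1\inpro{(\bff{\nu}\cdot\nabla)\bff{u}}{\Delta\bff{u}}
-\beta_2\inpro{\bff{u}\times(\bff{\nu}\cdot\nabla)\bff{u}}{\Delta\bff{u}}.
\]
I would then expand $\bff{H}=\Delta\bff{u}-\bff{w}$ and exploit pointwise orthogonality: since $(\bff{a}\times\bff{b})\cdot\bff{b}=0$, we have $\inpro{\bff{u}\times\Delta\bff{u}}{\Delta\bff{u}}=0$; moreover $\bff{u}\times\bff{u}=\bff{0}$ and $\bff{u}\times(|\bff{u}|^2\bff{u})=\bff{0}$, so that of all three contributions to $\bff{w}$ only the anisotropy part survives in $\bff{u}\times\bff{w}=(\bff{e}\cdot\bff{u})(\bff{u}\times\bff{e})$. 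This is the key algebraic step: it ensures that the nonlinear term generates no cubic power of $\Delta\bff{u}$ and can be controlled purely by the $\bb{L}^\infty$-bound from Proposition~\ref{pro:ut Lp}.

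Next, I would isolate the dissipative term by writing $-\alpha\inpro{\bff{H}}{\Delta\bff{u}}=-\alpha\norm{\Delta\bff{u}}{\bb{L}^2}^2+\alpha\inpro{\bff{w}}{\Delta\bff{u}}$ and estimate the remaining pieces using Cauchy--Schwarz, the assumption $\norm{\bff{\nu}}{\bb{L}^\infty}\leq\nu_\infty$, and Young's inequality, choosing the weights so that every $\norm{\Delta\bff{u}}{\bb{L}^2}$ factor is absorbed into $\tfrac{\alpha}{2}\norm{\Delta\bff{u}}{\bb{L}^2}^2$. The resulting differential inequality has the form
\[
\ddt\norm{\nabla\bff{u}}{\bb{L}^2}^2
+\alpha\norm{\Delta\bff{u}}{\bb{L}^2}^2
\leq
C\alpha^{-1}\bigl(1+\norm{\bff{u}}{\bb{L}^\infty}^4+\norm{\bff{w}}{\bb{L}^2}^2\bigr)
+C\alpha^{-1}\nu_\infty^{2}\bigl(1+\norm{\bff{u}}{\bb{L}^\infty}^2\bigr)\norm{\nabla\bff{u}}{\bb{L}^2}^2.
\]
The right-hand side is entirely controlled: the first bracket is uniformly bounded thanks to \eqref{equ:u Linfty} together with $\norm{\bff{w}}{\bb{L}^2}^2\leqs\norm{\bff{u}}{\bb{L}^2}^2+\norm{\bff{u}}{\bb{L}^6}^6$, both dominated via Proposition~\ref{pro:ut Lp}.

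To extract the linear-in-$t$ growth $C\alpha^{-2}(1+t)$ rather than an exponential one, I would then apply the interpolation $\norm{\nabla\bff{u}}{\bb{L}^2}^2=-\inpro{\bff{u}}{\Delta\bff{u}}\leq\varepsilon\norm{\Delta\bff{u}}{\bb{L}^2}^2+C(\varepsilon)\norm{\bff{u}}{\bb{L}^2}^2$ with $\varepsilon\sim\alpha^{2}/\nu_\infty^{2}$, which converts the final term on the right into a further fraction of $\alpha\norm{\Delta\bff{u}}{\bb{L}^2}^2$ (absorbed into the left-hand side) plus a time-independent constant. Integrating over $(0,t)$ and invoking the uniform $\bb{L}^\infty$- and $\bb{L}^2$-bounds of Proposition~\ref{pro:ut Lp} produces the claimed estimate \eqref{equ:nab u L2}. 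The main obstacle I foresee is the careful bookkeeping in the Young's inequality step so that the final constant scales like $\alpha^{-2}$; the cross-product cancellations in the first paragraph are essential, since without them a term involving $\norm{\bff{u}}{\bb{L}^\infty}^{2}\norm{\Delta\bff{u}}{\bb{L}^2}^2$ would appear on the right-hand side and could not be absorbed by the dissipation without a smallness assumption on the data.
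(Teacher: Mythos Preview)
Your overall strategy---testing with $-\Delta\bff{u}$ and exploiting the cross-product cancellations so that only the anisotropy part of $\bff{u}\times\bff{w}$ survives---matches the paper exactly and is the right starting point. The difference lies in how the remaining lower-order terms are handled.

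The paper does \emph{not} use the interpolation $\norm{\nabla\bff{u}}{\bb{L}^2}^2\leq\varepsilon\norm{\Delta\bff{u}}{\bb{L}^2}^2+C(\varepsilon)\norm{\bff{u}}{\bb{L}^2}^2$ at all. Instead, after Young's inequality it arrives at a right-hand side of the form
\[
C\alpha^{-1}\Bigl(\norm{\bff{u}}{\bb{L}^4}^4+\norm{\nabla\bff{u}}{\bb{L}^2}^2+\norm{|\bff{u}|\,|\nabla\bff{u}|}{\bb{L}^2}^2\Bigr),
\]
the last term coming from estimating $I_3$ as $\beta_2\nu_\infty\norm{|\bff{u}|\,|\nabla\bff{u}|}{\bb{L}^2}\norm{\Delta\bff{u}}{\bb{L}^2}$ rather than via $\norm{\bff{u}}{\bb{L}^\infty}$. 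It then integrates in time and invokes the \emph{integrated} estimates already contained in Proposition~\ref{pro:ut Lp}: \eqref{equ:ut Lp} with $p=2$ controls $\int_0^t\norm{\nabla\bff{u}}{\bb{L}^2}^2$ and $\int_0^t\norm{\bff{u}}{\bb{L}^4}^4$, while $p=4$ controls $\int_0^t\norm{|\bff{u}|\,|\nabla\bff{u}|}{\bb{L}^2}^2$, each by $C\alpha^{-1}(1+t)$. Multiplying by the prefactor $C\alpha^{-1}$ gives exactly $C\alpha^{-2}(1+t)$.

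Your interpolation route does close, but the bookkeeping you flag as an obstacle is a genuine one: with $\varepsilon\sim\alpha^2/\nu_\infty^2$ the constant $C(\varepsilon)\sim\alpha^{-2}$ feeds back through the coefficient $C\alpha^{-1}\nu_\infty^2(1+\norm{\bff{u}}{\bb{L}^\infty}^2)$ and produces a final bound of order $\alpha^{-3}(1+t)$ rather than $\alpha^{-2}(1+t)$. So your argument proves a slightly weaker version of the proposition. The fix is simply to drop the interpolation and instead use the time-integrated bounds $\alpha\int_0^t\norm{|\bff{u}|^{(p-2)/2}|\nabla\bff{u}|}{\bb{L}^2}^2\,\ds\leq C_p(1+t)$ from \eqref{equ:ut Lp}, which you have available but did not exploit.
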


\begin{proof}
Taking the inner product of~\eqref{equ:llb eq1} with $-\Delta \bff{u}$ and integrating by parts, we have
\begin{align}\label{equ:est nab u I}
	&\frac12 \ddt \norm{\nabla \bff{u}}{\bb{L}^2}^2
	+
	\alpha \norm{\Delta \bff{u}}{\bb{L}^2}^2
	+
	\norm{|\bff{u}||\nabla\bff{u}|}{\bb{L}^2}^2
	+
	\alpha \norm{\nabla \bff{u}}{\bb{L}^2}^2
	+
	\alpha \norm{\bff{e}\cdot \nabla \bff{u}}{\bb{L}^2}^2
	\nonumber\\
	&\leq
	\inpro{\bff{u}\times \bff{e}(\bff{e}\cdot \bff{u})}{\Delta \bff{u}}
	-
	\beta_1 \inpro{(\bff{\nu}\cdot \nabla)\bff{u}}{\Delta \bff{u}}
	-
	\beta_2 \inpro{\bff{u}\times (\bff{\nu}\cdot \nabla) \bff{u}}{\Delta \bff{u}}
	\nonumber\\
	&=:
	I_1+I_2+I_3.
\end{align}
We estimate each term by H\"older's and Young's inequalities as follows:
\begin{align*}
	\abs{I_1}
	&\leq
	C\norm{\bff{u}}{\bb{L}^4}^2 \norm{\Delta \bff{u}}{\bb{L}^2}
	\leq
	C\alpha^{-1}\norm{\bff{u}}{\bb{L}^4}^4
	+
	\frac{\alpha}{8} \norm{\Delta \bff{u}}{\bb{L}^2}^2,
	\\
	\abs{I_2}
	&\leq
	C\nu_\infty \norm{\nabla \bff{u}}{\bb{L}^2} \norm{\Delta \bff{u}}{\bb{L}^2}
	\leq 
	C\alpha^{-1} \norm{\nabla \bff{u}}{\bb{L}^2}^2
	+
	\frac{\alpha}{8} \norm{\Delta \bff{u}}{\bb{L}^2}^2,
	\\
	\abs{I_3}
	&\leq
	C\nu_\infty \norm{|\bff{u}||\nabla \bff{u}|}{\bb{L}^2} \norm{\nabla \bff{u}}{\bb{L}^2}
	\leq 
	C\alpha^{-1} \norm{|\bff{u}| |\nabla \bff{u}|}{\bb{L}^2}^2
	+
	\frac{\alpha}{8} \norm{\Delta \bff{u}}{\bb{L}^2}^2.
\end{align*}
We substitute this back into~\eqref{equ:est nab u I}, rearrange the terms, and integrate over $(0,t)$. Discarding some non-negative terms on the left-hand side and noting the assumption $\alpha<1$ (so that $\alpha^{-1}\leq \alpha^{-2}$), we obtain
\begin{align*}
	\norm{\nabla \bff{u}(t)}{\bb{L}^2}^2
	+
	\alpha \int_0^t \norm{\Delta \bff{u}(s)}{\bb{L}^2}^2 \ds
	&\leq
	\norm{\nabla \bff{u}_0}{\bb{L}^2}^2
	+
	C\alpha^{-1} \int_0^t \left(\norm{\bff{u}(s)}{\bb{L}^4}^4 + \norm{\nabla \bff{u}(s)}{\bb{L}^2}^2 + \norm{|\bff{u}(s)| |\nabla \bff{u}(s)|}{\bb{L}^2}^2\right) \ds
	\\
	&\leq
	\norm{\nabla \bff{u}_0}{\bb{L}^2}^2
	+
	C\alpha^{-2}(1+t),
\end{align*}
where the last estimate is obtained by applying~\eqref{equ:ut Lp} with $p=2$ and $4$. This proves \eqref{equ:nab u L2}.
\end{proof}

\begin{proposition}\label{pro:Delta u L2}
	Let $\bff{u}$ be a solution of~\eqref{equ:llb a}. If $d=1$ or $2$, then for all $t\in [0,T]$,
	\begin{align}\label{equ:Delta u L2}
		\norm{\Delta \bff{u}(t)}{\bb{L}^2}^2
		+
		\alpha \int_0^t \norm{\nabla \Delta \bff{u}(s)}{\bb{L}^2}^2 \ds
		\leq
		Ce^{C\alpha^{-8} t^3},
	\end{align}
	where $C$ is a constant depending on the coefficients of~\eqref{equ:llb a} and $\norm{\bff{u}_0}{\bb{H}^2}$. 
	
	If $d=3$, then inequality~\eqref{equ:Delta u L2} holds under an additional assumption that $\norm{\bff{u}_0}{\bb{L}^\infty}+\beta \lesssim \alpha$, where $\beta$ was defined in Proposition~\ref{pro:ut Lp}.
\end{proposition}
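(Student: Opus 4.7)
The plan is to derive a higher-order energy estimate by formally testing~\eqref{equ:llb eq1} with $\Delta^2 \bff{u}$, which is rigorous at the Galerkin level as noted at the start of Section~\ref{sec:exist}. Two integrations by parts (using the Neumann boundary condition together with the one-higher-order compatibility that the Galerkin basis supplies) convert the left-hand side into $\tfrac12 \ddt \norm{\Delta \bff{u}}{\bb{L}^2}^2$, while the principal linear dissipation $\alpha\inpro{\Delta \bff{u}}{\Delta^2 \bff{u}}$ produces $-\alpha \norm{\nabla \Delta \bff{u}}{\bb{L}^2}^2$. The remaining contributions -- namely $\inpro{\bff{u}\times \bff{H}}{\Delta^2 \bff{u}}$, the lower-order effective-field term $\alpha\inpro{\bff{w}}{\Delta^2 \bff{u}}$, and the two spin-torque inner products -- are each integrated by parts once more so that $\Delta^2 \bff{u}$ becomes $\nabla \Delta \bff{u}$ paired with a lower-order quantity. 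A key simplification comes from $\Delta(\bff{a}\times \bff{b}) = \Delta \bff{a}\times \bff{b} + \bff{a}\times \Delta \bff{b} + 2 \nabla \bff{a}\times \nabla \bff{b}$ together with $\bff{c}\times\bff{c}=\bff{0}$, which eliminates the most dangerous top-order cross-product term. Each surviving term is then estimated by H\"older's inequality, the interpolation bounds~\eqref{equ:nab v less Delta v}--\eqref{equ:Delta v less nabdelta v}, the $H^1$-bound of Proposition~\ref{pro:est nab u L2}, and the $\bb{L}^p$- and $\bb{L}^\infty$-bounds of Proposition~\ref{pro:ut Lp}, with $\bff{\nu}\in L^\infty(0,T;\bb{H}^4)$ used for the spin-torque parts.

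The principal obstacle is the cubic term $|\bff{u}|^2 \bff{u}$ from $\bff{w}$, which after~\eqref{equ:nab uuv} produces a contribution controlled by $\norm{\bff{u}}{\bb{L}^\infty}^2 \norm{\nabla \bff{u}}{\bb{L}^2}\norm{\nabla \Delta \bff{u}}{\bb{L}^2}$. In $d=1,2$ the sub-critical embedding $\bb{H}^1\hookrightarrow \bb{L}^p$ for every $p<\infty$ combined with Gagliardo--Nirenberg interpolation lets every cubic-in-$\bff{u}$ contribution be absorbed into $\alpha\norm{\nabla\Delta \bff{u}}{\bb{L}^2}^2$ via Young's inequality, at the cost of a lower-order coefficient depending on the $H^1$-bound already established. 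In $d=3$, by contrast, the $\bb{L}^\infty$-bound~\eqref{equ:u Linfty} $\norm{\bff{u}(t)}{\bb{L}^\infty} \leq \norm{\bff{u}_0}{\bb{L}^\infty} + \sqrt{\beta}$ is not automatically small; the smallness hypothesis $\norm{\bff{u}_0}{\bb{L}^\infty} + \beta \lesssim \alpha$ is imposed precisely to enforce $\norm{\bff{u}(t)}{\bb{L}^\infty}^2 \lesssim \alpha$, so that the offending cubic contribution $\norm{\bff{u}}{\bb{L}^\infty}^2\norm{\nabla\bff{u}}{\bb{L}^2}\norm{\nabla \Delta \bff{u}}{\bb{L}^2}$ can still be absorbed into $\alpha\norm{\nabla \Delta \bff{u}}{\bb{L}^2}^2$.

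Combining all estimates yields a differential inequality of the form
\[
	\ddt \norm{\Delta \bff{u}}{\bb{L}^2}^2 + \alpha \norm{\nabla \Delta \bff{u}}{\bb{L}^2}^2 \leq \phi(t) \norm{\Delta \bff{u}}{\bb{L}^2}^2 + \psi(t),
\]
with non-negative $\phi,\psi$ that grow polynomially in $t$ through the $H^1$-bound $\norm{\nabla \bff{u}(t)}{\bb{L}^2}^2 \leq C\alpha^{-2}(1+t)$ of Proposition~\ref{pro:est nab u L2}; in particular $\phi(t)$ contains factors of the form $\alpha^{-4}(1+t)^2$ originating from $\norm{\nabla \bff{u}(t)}{\bb{L}^2}^4$, and additional $\alpha^{-1}$ factors accumulate from the repeated use of Young's inequality with the dissipation coefficient $\alpha$. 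An application of Gronwall's Lemma~\ref{lem:gron} then yields the cubic exponent $t^3$ (from integrating the quadratic $(1+t)^2$ coefficient) while the various $\alpha^{-1}$ factors compound to $\alpha^{-8}$, producing~\eqref{equ:Delta u L2}.
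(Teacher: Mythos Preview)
Your overall scaffolding is the same as the paper's: test~\eqref{equ:llb eq1} with $\Delta^2\bff{u}$, integrate by parts to move one derivative onto every right-hand side term so that only $\nabla\Delta\bff{u}$ appears at top order, use the vanishing of $\inpro{\bff{u}\times\nabla\Delta\bff{u}}{\nabla\Delta\bff{u}}$, and close with Gronwall. But you have \emph{misidentified the term that forces the smallness assumption in $d=3$}, and because of this the argument as written does not close in three dimensions.

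The cubic term $\alpha\inpro{\nabla(|\bff{u}|^2\bff{u})}{\nabla\Delta\bff{u}}$ is \emph{not} the obstacle. By~\eqref{equ:nab uuv} and Young's inequality,
\[
\big|\alpha\inpro{\nabla(|\bff{u}|^2\bff{u})}{\nabla\Delta\bff{u}}\big|
\le C\alpha\,\norm{\bff{u}}{\bb{L}^\infty}^4\norm{\nabla\bff{u}}{\bb{L}^2}^2 + \tfrac{\alpha}{8}\norm{\nabla\Delta\bff{u}}{\bb{L}^2}^2,
\]
and since $\norm{\bff{u}(t)}{\bb{L}^\infty}$ is bounded \emph{unconditionally} by~\eqref{equ:u Linfty}, this is harmless in every dimension. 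No smallness is needed here; the $\alpha$ in front even cancels favourably.

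The genuine bottleneck is the surviving precession term
\[
J_1:=\inpro{\nabla\bff{u}\times\Delta\bff{u}}{\nabla\Delta\bff{u}},
\]
which you dismiss once the top-order piece $\bff{u}\times\nabla\Delta\bff{u}$ has been eliminated. In $d=1,2$ one interpolates (Agmon in 1D, $\bb{L}^4$--Gagliardo--Nirenberg in 2D) to bound $J_1$ by a power of $\norm{\Delta\bff{u}}{\bb{L}^2}$ times $\tfrac{\alpha}{8}\norm{\nabla\Delta\bff{u}}{\bb{L}^2}^2$, and Gronwall closes. In $d=3$ this fails: by the 3D Gagliardo--Nirenberg inequalities $\norm{\nabla\bff{u}}{\bb{L}^6}\lesssim\norm{\bff{u}}{\bb{L}^\infty}^{2/3}\norm{\bff{u}}{\bb{H}^3}^{1/3}$ and $\norm{\Delta\bff{u}}{\bb{L}^3}\lesssim\norm{\bff{u}}{\bb{L}^\infty}^{1/3}\norm{\bff{u}}{\bb{H}^3}^{2/3}$ one gets
\[
|J_1|\lesssim \norm{\bff{u}}{\bb{L}^\infty}\,\norm{\bff{u}}{\bb{H}^3}\,\norm{\nabla\Delta\bff{u}}{\bb{L}^2}
\lesssim \big(\norm{\bff{u}_0}{\bb{L}^\infty}+\sqrt{\beta}\big)\norm{\nabla\Delta\bff{u}}{\bb{L}^2}^2 + \text{l.o.t.},
\]
which is exactly critical; Young's inequality gains nothing. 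The smallness hypothesis $\norm{\bff{u}_0}{\bb{L}^\infty}+\sqrt{\beta}\lesssim\alpha$ is needed \emph{here}, to make the coefficient in front of $\norm{\nabla\Delta\bff{u}}{\bb{L}^2}^2$ strictly smaller than $\alpha$ so it can be absorbed. Your proposal invokes smallness for the wrong term and gives no mechanism for $J_1$ in $d=3$, so the estimate would not close.
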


\begin{proof}
Taking the inner product of~\eqref{equ:llb eq1} with $\Delta^2 \bff{u}$ and integrating by parts as necessary, we obtain
\begin{align}\label{equ:ddt Delta u J}
	&\frac12 \ddt \norm{\Delta \bff{u}}{\bb{L}^2}^2
	+
	\alpha \norm{\nabla\Delta \bff{u}}{\bb{L}^2}^2
	+
	\alpha \norm{\Delta \bff{u}}{\bb{L}^2}^2
	+
	\alpha \norm{\bff{e}\cdot \Delta \bff{u}}{\bb{L}^2}^2
	\nonumber\\
	&=
	\inpro{\nabla\bff{u}\times \Delta \bff{u}}{\nabla\Delta \bff{u}}
	-
	\inpro{\nabla \big(\bff{u}\times \bff{e}(\bff{e}\cdot \bff{u})\big)}{\nabla\Delta \bff{u}}
	+
	\alpha \inpro{\nabla \big(|\bff{u}|^2 \bff{u}\big)}{\nabla\Delta \bff{u}}
	\nonumber\\
	&\quad
	-
	\beta_1 \inpro{\nabla\big((\bff{\nu} \cdot \nabla)\bff{u}\big)}{\nabla\Delta \bff{u}}
	-
	\beta_2 \inpro{\nabla\big(\bff{u}\times (\bff{\nu}\cdot\nabla)\bff{u}\big)}{\nabla\Delta \bff{u}}
	\nonumber\\
	&=: J_1+J_2+\cdots+J_5.
\end{align}
The term $J_1$ will be handled last. For the term $J_2$, noting~\eqref{equ:u Linfty} and~\eqref{equ:nab u L2}, by H\"older's and Young's inequalities as well as \eqref{equ:u Linfty} we have
\begin{align*}
	\abs{J_2}
	&\leq
	2\norm{\bff{u}}{\bb{L}^\infty} \norm{\nabla\bff{u}}{\bb{L}^2} \norm{\nabla\Delta \bff{u}}{\bb{L}^2}
	\\
	&\leq
	C\alpha^{-1} \norm{\bff{u}}{\bb{L}^\infty}^2 \norm{\nabla \bff{u}}{\bb{L}^2}^2
	+
	\frac{\alpha}{8} \norm{\nabla\Delta \bff{u}}{\bb{L}^2}^2
	\\
	&\leq
	C\alpha^{-3}(1+t) + \frac{\alpha}{8} \norm{\nabla\Delta \bff{u}}{\bb{L}^2}^2.
\end{align*}
For the term $J_3$, by Young's inequality, \eqref{equ:nab uuv}, and \eqref{equ:u Linfty} we have
\begin{align*}
	\abs{J_3}
	&\leq
	C\alpha \norm{\nabla(|\bff{u}|^2 \bff{u})}{\bb{L}^2}^2
	+
	\frac{\alpha}{8} \norm{\nabla\Delta \bff{u}}{\bb{L}^2}^2
	\\
	&\leq
	C\alpha \norm{\bff{u}}{\bb{L}^\infty}^4 \norm{\nabla \bff{u}}{\bb{L}^2}^2
	+
	\frac{\alpha}{8} \norm{\nabla\Delta \bff{u}}{\bb{L}^2}^2
	\\
	&\leq
	C\alpha^{-1}(1+t) + \frac{\alpha}{8} \norm{\nabla\Delta \bff{u}}{\bb{L}^2}^2.
\end{align*}
For the term $J_4$, by H\"older's and Young's inequality, \eqref{equ:ut Lp}, as well as \eqref{equ:D2L2} and \eqref{equ:nab v less Delta v}, we have
\begin{align*}
	\abs{J_4}
	&\leq
	C\nu_\infty \left(\norm{\nabla \bff{u}}{\bb{L}^4}+ \norm{\nabla \bff{u}}{\bb{H}^1}\right) \norm{\nabla \Delta \bff{u}}{\bb{L}^2}
	\\
	&\leq
	C\alpha^{-1} \norm{\Delta \bff{u}}{\bb{L}^2}^2
	+
	\frac{\alpha}{8} \norm{\nabla\Delta \bff{u}}{\bb{L}^2}^2.
\end{align*}
For the last term, we expand the gradient term, then use the H\"older and the Gagliardo--Nirenberg inequality~$\norm{\bff{v}}{\bb{W}^{1,4}}^2 \leq C \norm{\bff{v}}{\bb{L}^\infty} \norm{\bff{v}}{\bb{H}^2}$, as well as elliptic regularity~\eqref{equ:v H2 elliptic} to obtain
\begin{align*}
	\abs{J_5}
	&\leq
	C\nu_\infty \norm{\nabla \bff{u}}{\bb{L}^4}^2 \norm{\nabla\Delta \bff{u}}{\bb{L}^2}
	+
	C\nu_\infty \norm{\bff{u}}{\bb{L}^\infty} \norm{\nabla \bff{u}}{\bb{L}^2} \norm{\nabla\Delta \bff{u}}{\bb{L}^2}
	+
	C\nu_\infty \norm{\bff{u}}{\bb{L}^\infty} \norm{\bff{u}}{\bb{H}^2} \norm{\nabla\Delta \bff{u}}{\bb{L}^2}
	\\
	&\leq
	C \norm{\bff{u}}{\bb{L}^\infty} \norm{\bff{u}}{\bb{H}^2} \norm{\nabla\Delta \bff{u}}{\bb{L}^2}
	\\
	&\leq
	C \norm{\bff{u}}{\bb{L}^\infty} \left(\norm{\bff{u}}{\bb{L}^2}+ \norm{\Delta \bff{u}}{\bb{L}^2} \right) \norm{\nabla\Delta \bff{u}}{\bb{L}^2}
	\\
	&\leq
	C\alpha^{-1} (1+t)^2 + C\alpha^{-1} \norm{\Delta \bff{u}}{\bb{L}^2}^2
	+
	\frac{\alpha}{8} \norm{\nabla\Delta \bff{u}}{\bb{L}^2}^2,
\end{align*}
where in the last step we used elliptic regularity~\eqref{equ:v H2 elliptic} and Proposition~\ref{pro:ut Lp}.
It remains to estimate $J_1$. To this end, we will consider each case $d=1,2$, or $3$ separately.

\smallskip
\underline{Case 1 ($d=1$):} By H\"older's and Agmon's inequalities (noting~\eqref{equ:nab u L2}), we have
\begin{align*}
	\abs{J_1}
	&\leq
	\norm{\nabla \bff{u}}{\bb{L}^2} \norm{\Delta \bff{u}}{\bb{L}^\infty} \norm{\nabla\Delta \bff{u}}{\bb{L}^2}
	\\
	&\leq
	C \norm{\nabla \bff{u}}{\bb{L}^2} \norm{\Delta \bff{u}}{\bb{L}^2}^{\frac12} \norm{\Delta \bff{u}}{\bb{H}^1}^{\frac12} \norm{\nabla\Delta \bff{u}}{\bb{L}^2}
	\leq
	C\alpha^{-5}(1+t)^2 \norm{\Delta \bff{u}}{\bb{L}^2}^2
	+
	\frac{\alpha}{8} \norm{\nabla\Delta \bff{u}}{\bb{L}^2}^2.
\end{align*}

\smallskip
\underline{Case 2 ($d=2$):} By the Gagliardo--Nirenberg inequality \eqref{equ:gal nir uh L4} and~\eqref{equ:nab u L2}, we have
\begin{align*}
	\abs{J_1}
	&\leq
	\gamma \norm{\nabla\bff{u}}{\bb{L}^4} \norm{\Delta \bff{u}}{\bb{L}^4} \norm{\nabla\Delta \bff{u}}{\bb{L}^2}
	\\
	&\leq
	C \norm{\nabla \bff{u}}{\bb{L}^2}^{\frac12} \norm{\Delta \bff{u}}{\bb{L}^2} \norm{\nabla\Delta \bff{u}}{\bb{L}^2}^{\frac32}
	\leq
	C\alpha^{-5} (1+t)^2 \norm{\Delta \bff{u}}{\bb{L}^2}^4
	+
	\frac{\alpha}{8} \norm{\nabla\Delta \bff{u}}{\bb{L}^2}^2.
\end{align*}

\smallskip
\underline{Case 3 ($d=3$):} 
We note the following Gagliardo--Nirenberg inequalities in 3D:
\begin{align}
	\label{equ:BGL}
	\norm{\nabla \bff{u}}{\bb{L}^6} 
	&\leq
	B_{\mathrm{GL}} \norm{\bff{u}}{\bb{L}^\infty}^{\frac23} \norm{\bff{u}}{\bb{H}^3}^{\frac13}
	\\
	\label{equ:CGL}
	\norm{\Delta \bff{u}}{\bb{L}^3}
	&\leq
	C_{\mathrm{GL}} \norm{\bff{u}}{\bb{L}^\infty}^{\frac13} \norm{\bff{u}}{\bb{H}^3}^{\frac23},
\end{align} 
where $B_{\mathrm{GL}}$ and $C_{\mathrm{GL}}$ are constants depending on $\mathscr{D}$.
By \eqref{equ:BGL}, \eqref{equ:CGL}, \eqref{equ:u Linfty}, and \eqref{equ:v H3 ellip reg} we have
\begin{align}\label{equ:J1 3d}
	\abs{J_1}
	&\leq
	\norm{\nabla\bff{u}}{\bb{L}^6} \norm{\Delta \bff{u}}{\bb{L}^3} \norm{\nabla\Delta \bff{u}}{\bb{L}^2}
	\nonumber\\
	&\leq
	B_{\mathrm{GL}} C_{\mathrm{GL}} \norm{\bff{u}}{\bb{L}^\infty} \norm{\bff{u}}{\bb{H}^3} \norm{\nabla\Delta \bff{u}}{\bb{L}^2}
	\nonumber\\
	&\leq
	B_{\mathrm{GL}} C_{\mathrm{GL}} \left(\norm{\bff{u}_0}{\bb{L}^\infty} +\sqrt{\beta}\right)
	\big(\norm{\bff{u}}{\bb{L}^2} + \norm{\nabla\Delta \bff{u}}{\bb{L}^2} \big) \norm{\nabla\Delta \bff{u}}{\bb{L}^2}
	\nonumber\\
	&\leq
	C\alpha^{-1} \left(1+t \right)
	+
	\frac{\alpha}{8} \norm{\nabla\Delta \bff{u}}{\bb{L}^2}^2
	+
	B_{\mathrm{GL}} C_{\mathrm{GL}} \left(\norm{\bff{u}_0}{\bb{L}^\infty} +\sqrt{\beta}\right) \norm{\nabla\Delta \bff{u}}{\bb{L}^2}^2,
\end{align}
where in the last step we also used~\eqref{equ:ut Lp} and Young's inequality, while $\beta$ was defined in Proposition~\ref{pro:ut Lp}.

We now collect all the above estimates and substitute them back to~\eqref{equ:ddt Delta u J}. If $d=1$ or $2$, we have
\begin{align}\label{equ:ddt norm Delta L2}
	\ddt \norm{\Delta \bff{u}}{\bb{L}^2}^2
	+
	\norm{\nabla\Delta \bff{u}}{\bb{L}^2}^2
	\leq
	C\alpha^{-5} (1+t)^2 \left(1+\norm{\Delta \bff{u}}{\bb{L}^2}^4 \right).
\end{align}
Therefore, by the Gronwall lemma (Lemma~\ref{lem:gron}), we obtain
\begin{align}\label{equ:Delta exp}
	\norm{\Delta \bff{u}(t)}{\bb{L}^2}^2
	&\leq
	\exp\left( C\alpha^{-5} (1+t)^2 \int_0^t \norm{\Delta \bff{u}(s)}{\bb{L}^2}^2 \,\ds \right) \left[\norm{\Delta \bff{u}_0}{\bb{L}^2}^2 + C\alpha^{-5} (1+t^3)\right]
	\leq
	Ce^{C\alpha^{-8} t^3},
\end{align}
where $C$ depends on $\norm{\bff{u}_0}{\bb{H}^2}$, and in the last step we also used \eqref{equ:nab u L2}.

On the other hand, for~$d=3$, if
\begin{equation}\label{equ:small alpha}
		B_{\mathrm{GL}} C_{\mathrm{GL}} \left(\norm{\bff{u}_0}{\bb{L}^\infty} +\sqrt{\beta}\right)
		\leq \alpha/8,
\end{equation}
then we can absorb the term containing $\norm{\nabla\Delta \bff{u}}{\bb{L}^2}^2$ in~\eqref{equ:J1 3d}. Integrating \eqref{equ:ddt norm Delta L2} over $(0,t)$ and using \eqref{equ:Delta exp} again, we infer~\eqref{equ:Delta u L2}.
\end{proof}

\begin{remark}
Proposition~\ref{pro:Delta u L2} implies the existence of a global solution~$\bff{u}\in C([0,T];\bb{H}^2)\cap L^2(0,T;\bb{H}^4)$ for any initial data $\bff{u}_0\in \bb{H}^2$ for $d=1$ or $2$. For $d=3$, this holds under an additional assumption that $\alpha$ is sufficiently large, or $\norm{\bff{u}_0}{\bb{L}^\infty}+\sqrt{\beta}$ is sufficiently small (see~\eqref{equ:small alpha}). If the coefficients of~\eqref{equ:llb a} satisfy the assumptions described in Remark~\ref{rem:coeff above Tc}, then $\beta$ can be taken to be zero.
Note that a \emph{local} solution with the aforementioned regularity exists by similar argument as in~\cite{LeSoeTra24}.
\end{remark}

From this point onwards, for ease of presentation we will not track the dependence of the estimates on $\alpha$, $\beta_1$, or $\beta_2$ anymore since they will not be essential for the argument. Previously, we track this dependence to ensure it does not interfere with the smallness condition on the initial data assumed in~\eqref{equ:small alpha}.

\begin{proposition}
Let $\bff{u}$ be a solution of~\eqref{equ:llb a} such that Proposition~\ref{pro:Delta u L2} holds. For all $t\in [0,T]$,
\begin{align}\label{equ:nab Delta u L2}
	t\norm{\nabla \Delta \bff{u}(t)}{\bb{L}^2}^2
	+
	\alpha \int_0^t s\norm{\Delta^2 \bff{u}(s)}{\bb{L}^2}^2 \ds
	\leq
	Ce^{Ct^3},
\end{align}
where $C$ is a constant depending on $\norm{\bff{u}_0}{\bb{H}^2}$ and the coefficients of~\eqref{equ:llb a}.
\end{proposition}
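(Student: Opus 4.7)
The plan is to run an energy estimate one derivative higher than Proposition~\ref{pro:Delta u L2}. Formally, I test \eqref{equ:llb eq1} against $-\Delta^3\bff{u}$; rigorously this is performed on Galerkin approximations built from Neumann--Laplace eigenfunctions, for which the iterated boundary terms vanish automatically. Integration by parts converts the parabolic term into $\tfrac{1}{2}\ddt\norm{\nabla\Delta\bff{u}}{\bb{L}^2}^2$ and the damping term $\alpha\inpro{\Delta\bff{u}}{-\Delta^3\bff{u}}$ into $-\alpha\norm{\Delta^2\bff{u}}{\bb{L}^2}^2$, producing
\[
\tfrac{1}{2}\ddt\norm{\nabla\Delta\bff{u}}{\bb{L}^2}^2 + \alpha\norm{\Delta^2\bff{u}}{\bb{L}^2}^2 = \inpro{\bff{u}\times\bff{H}}{\Delta^3\bff{u}} + \alpha\inpro{\bff{w}}{\Delta^3\bff{u}} - \inpro{\beta_1(\bff{\nu}\cdot\nabla)\bff{u}+\beta_2\bff{u}\times(\bff{\nu}\cdot\nabla)\bff{u}}{\Delta^3\bff{u}}.
\]

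The critical contribution is the gyro piece $\inpro{\bff{u}\times\Delta\bff{u}}{\Delta^3\bff{u}}$. After two integrations by parts it equals $\inpro{\Delta(\bff{u}\times\Delta\bff{u})}{\Delta^2\bff{u}}$, and the Leibniz expansion $\Delta(\bff{u}\times\Delta\bff{u}) = 2\nabla\bff{u}\times\nabla\Delta\bff{u} + \bff{u}\times\Delta^2\bff{u}$ combined with the pointwise identity $\bff{a}\times\bff{b}\perp\bff{b}$ annihilates the top-order summand, leaving only $2\inpro{\nabla\bff{u}\times\nabla\Delta\bff{u}}{\Delta^2\bff{u}}$. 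I bound this by the H\"older split $\norm{\nabla\bff{u}}{\bb{L}^6}\norm{\nabla\Delta\bff{u}}{\bb{L}^3}\norm{\Delta^2\bff{u}}{\bb{L}^2}$ in $d=3$ (with milder splits in $d=1,2$), controlling $\norm{\nabla\bff{u}}{\bb{L}^6}$ by $\norm{\Delta\bff{u}}{\bb{L}^2}$ through Sobolev embedding and \eqref{equ:D2L2}, and $\norm{\nabla\Delta\bff{u}}{\bb{L}^3}$ by Gagliardo--Nirenberg interpolation between $\norm{\nabla\Delta\bff{u}}{\bb{L}^2}$ and $\norm{\Delta^2\bff{u}}{\bb{L}^2}$; a Young absorption then leaves a bound of the form $\epsilon\norm{\Delta^2\bff{u}}{\bb{L}^2}^2 + C\norm{\Delta\bff{u}}{\bb{L}^2}^4\norm{\nabla\Delta\bff{u}}{\bb{L}^2}^2$. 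The remaining pieces $\inpro{\bff{u}\times\bff{w}}{\Delta^3\bff{u}}$, $\alpha\inpro{\bff{w}}{\Delta^3\bff{u}}$ and the two spin-torque terms are strictly lower order: each is handled by shifting a derivative off the highest-order factor via integration by parts, then expanding with the product rule and \eqref{equ:nab uuv}, and controlling the resulting factors via \eqref{equ:u Linfty}, $\nu_\infty$, and the $\bb{H}^2$ bound supplied by Proposition~\ref{pro:Delta u L2}. Collecting everything gives a differential inequality of the schematic form
\[
\ddt\norm{\nabla\Delta\bff{u}}{\bb{L}^2}^2 + \alpha\norm{\Delta^2\bff{u}}{\bb{L}^2}^2 \leq \Phi(t)\norm{\nabla\Delta\bff{u}}{\bb{L}^2}^2 + \Psi(t),
\]
with $\Phi,\Psi\geq 0$ determined by quantities already controlled in Propositions~\ref{pro:ut Lp}--\ref{pro:Delta u L2}.

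Since only $\bff{u}_0\in\bb{H}^2$ is assumed, $\norm{\nabla\Delta\bff{u}_0}{\bb{L}^2}$ is not a priori finite, so I multiply the inequality by the weight $s$ and use $s\,\ddt E(s) = \ddt\bigl(sE(s)\bigr)-E(s)$ to pass to an ODI for the weighted quantity $\eta(s):=s\norm{\nabla\Delta\bff{u}(s)}{\bb{L}^2}^2$, which satisfies $\eta(0)=0$. Integrating from $0$ to $t$, the unweighted $\int_0^t\norm{\nabla\Delta\bff{u}(s)}{\bb{L}^2}^2\,\ds$ produced by this manipulation is exactly the quantity bounded by $Ce^{Ct^3}$ in Proposition~\ref{pro:Delta u L2}, so it both controls the source term $E(s)$ and provides an integrable time-dependent prefactor for the Gronwall coefficient. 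Applying Lemma~\ref{lem:gron} to $\eta$ then closes the estimate and yields \eqref{equ:nab Delta u L2}. The principal obstacle throughout is the gyro term: without the algebraic cancellation $\inpro{\bff{u}\times\Delta^2\bff{u}}{\Delta^2\bff{u}}=0$, a direct estimate would leave a prefactor of $\norm{\bff{u}}{\bb{L}^\infty}$ in front of $\norm{\Delta^2\bff{u}}{\bb{L}^2}^2$ that could not be absorbed into the dissipation without strengthening the smallness hypothesis already required in $d=3$.
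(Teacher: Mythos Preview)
Your proposal is essentially the paper's proof: the same test function (up to integration by parts), the same identification of the critical gyro cancellation $\inpro{\bff{u}\times\Delta^2\bff{u}}{\Delta^2\bff{u}}=0$, the same H\"older split $\bb{L}^6\times\bb{L}^3\times\bb{L}^2$ with Gagliardo--Nirenberg on $\norm{\nabla\Delta\bff{u}}{\bb{L}^3}$, and the same time-weighting trick to compensate for $\bff{u}_0\in\bb{H}^2$ only.

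One small point on the closing step: invoking Gronwall on $\eta(s)=s\norm{\nabla\Delta\bff{u}(s)}{\bb{L}^2}^2$ with coefficient $\Phi(s)\sim\norm{\Delta\bff{u}(s)}{\bb{L}^2}^4$ would produce a factor $\exp\!\bigl(\int_0^t\Phi\bigr)$, and since Proposition~\ref{pro:Delta u L2} only gives $\norm{\Delta\bff{u}(s)}{\bb{L}^2}^4\leq Ce^{Cs^3}$ pointwise, this yields a \emph{double} exponential rather than the stated $Ce^{Ct^3}$. The paper avoids this by integrating the weighted inequality directly: the term $Cse^{Cs^3}\norm{\nabla\Delta\bff{u}(s)}{\bb{L}^2}^2$ on the right is bounded, after pulling out $\sup_{[0,t]}se^{Cs^3}$, by $Cte^{Ct^3}\int_0^t\norm{\nabla\Delta\bff{u}}{\bb{L}^2}^2\,\ds\leq Ce^{Ct^3}$, using the $L^2$-in-time control from Proposition~\ref{pro:Delta u L2} --- no Gronwall needed. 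You already have this ingredient in hand (you note that $\int_0^t\norm{\nabla\Delta\bff{u}}{\bb{L}^2}^2\,\ds$ is controlled), so simply replace the Gronwall appeal by this direct bound.
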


\begin{proof}
Applying the operator $-\Delta$ on~\eqref{equ:llb eq1}, then taking the inner product of the result with $t\Delta^2 \bff{u}$ and integrating by parts as necessary, we obtain
\begin{align}\label{equ:ddt t nab Delta u}
	&\frac12 \ddt \left(t \norm{\nabla \Delta \bff{u}}{\bb{L}^2}^2\right)
	+
	\alpha t\norm{\Delta^2 \bff{u}}{\bb{L}^2}^2
	+
	\alpha t \norm{\nabla \Delta \bff{u}}{\bb{L}^2}^2
	+
	\alpha t \norm{\bff{e}\cdot \nabla\Delta \bff{u}}{\bb{L}^2}^2
	\nonumber\\
	&=
	\frac12 \norm{\nabla\Delta \bff{u}}{\bb{L}^2}^2
	-
	2 t\inpro{\nabla\bff{u}\times \nabla\Delta \bff{u}}{\Delta^2 \bff{u}}
	-
	t\inpro{\Delta \big(\bff{u}\times \bff{e}(\bff{e}\cdot \bff{u})\big)}{\Delta^2 \bff{u}}
	\nonumber\\
	&\quad
	+
	\alpha t \inpro{\Delta(|\bff{u}|^2 \bff{u})}{\Delta^2 \bff{u}}
	-
	\beta_1 t \inpro{\Delta\big((\bff{\nu} \cdot \nabla)\bff{u}\big)}{\Delta^2 \bff{u}}
	-
	\beta_2 t \inpro{\Delta\big(\bff{u}\times (\bff{\nu}\cdot\nabla)\bff{u}\big)}{\Delta^2 \bff{u}}
	\nonumber\\
	&=: J_1+J_2+\cdots+J_6.
\end{align}
We will estimate each term on the last line, noting the estimates~\eqref{equ:nab u L2} and~\eqref{equ:Delta u L2} obtained previously. The first term is left as is. To estimate $J_2$, we first note that by the Gagliardo--Nirenberg inequality~\eqref{equ:gal nir uh L3} and elliptic regularity we have
\begin{align*}
	\norm{\nabla\Delta \bff{u}}{\bb{L}^3} \leq 
	C\norm{\nabla\Delta \bff{u}}{\bb{L}^2}^{1-\frac{d}{6}} \norm{\nabla\Delta\bff{u}}{\bb{H}^1}^{\frac{d}{6}}
	\leq
	C\norm{\nabla\Delta \bff{u}}{\bb{L}^2}^{1-\frac{d}{6}} \norm{\Delta^2 \bff{u}}{\bb{L}^2}^{\frac{d}{6}},
\end{align*}
where $d\leq 3$ is the spatial dimension.
Therefore, we infer by~\eqref{equ:nab v less Delta v} and Young's inequality (with exponents $p=\frac{12}{6-d}$ and $q=\frac{12}{6+d}$) that
\begin{align*}
	\abs{J_2}
	\leq
	2 t \norm{\nabla\bff{u}}{\bb{L}^6} \norm{\nabla\Delta \bff{u}}{\bb{L}^3} \norm{\Delta^2 \bff{u}}{\bb{L}^2}
	&\leq
	Ct \norm{\Delta \bff{u}}{\bb{L}^2} \norm{\nabla\Delta \bff{u}}{\bb{L}^2}^{1-\frac{d}{6}}
	\norm{\Delta^2 \bff{u}}{\bb{L}^2}^{1+\frac{d}{6}}
	\\
	&\leq
	Ct \norm{\Delta \bff{u}}{\bb{L}^2}^{\frac{12}{6-d}} \norm{\nabla \Delta \bff{u}}{\bb{L}^2}^2
	+
	\frac{\alpha t}{10} \norm{\Delta^2 \bff{u}}{\bb{L}^2}^2.
	\\
	&\leq
	Ct e^{Ct^3} \norm{\nabla \Delta \bff{u}}{\bb{L}^2}^2
	+
	\frac{\alpha t}{10} \norm{\Delta^2 \bff{u}}{\bb{L}^2}^2,
\end{align*}
where in the last step we used \eqref{equ:Delta u L2}.
For the term $J_3$, using the fact that $\bb{H}^2$ is an algebra for $d\leq 3$, by elliptic regularity \eqref{equ:v H2 elliptic}, inequalities \eqref{equ:ut Lp} and \eqref{equ:Delta u L2}, we obtain
\begin{align*}
	\abs{J_3}
	&\leq
	Ct \norm{\bff{u}}{\bb{H}^2}^2 \norm{\Delta^2 \bff{u}}{\bb{L}^2}
	\leq
	Cte^{Ct^3} + \frac{\alpha t}{10} \norm{\Delta^2 \bff{u}}{\bb{L}^2}^2.
\end{align*}
Similarly, for the term $J_4$, we have
\begin{align*}
	\abs{J_4}
	&\leq
	Ct \norm{\bff{u}}{\bb{H}^2}^3 \norm{\Delta^2 \bff{u}}{\bb{L}^2}
	\leq
	Cte^{Ct^3} + \frac{\alpha t}{10} \norm{\Delta^2 \bff{u}}{\bb{L}^2}^2.
\end{align*}
For the last two terms, by H\"older's and Young's inequalities, \eqref{equ:ut Lp} and \eqref{equ:Delta u L2}, as well as elliptic regularity estimates~\eqref{equ:v H2 elliptic} and \eqref{equ:v H3 ellip reg}, we have
\begin{align*}
	\abs{J_5}
	&\leq 
	Ct \norm{\nabla\bff{u}}{\bb{H}^2} \norm{\Delta^2 \bff{u}}{\bb{L}^2}
	\leq
	Cte^{Ct^3} \left(1+\norm{\nabla\Delta \bff{u}}{\bb{L}^2}^2 \right)
	+
	\frac{\alpha t}{10} \norm{\Delta^2 \bff{u}}{\bb{L}^2}^2,
	\\
	\abs{J_6}
	&\leq
	Ct \norm{\bff{u}}{\bb{H}^2} \norm{\nabla \bff{u}}{\bb{H}^2} \norm{\Delta^2 \bff{u}}{\bb{L}^2}
	\leq
	Cte^{Ct^3} \left(1+\norm{\nabla\Delta \bff{u}}{\bb{L}^2}^2 \right)
	+
	\frac{\alpha t}{10} \norm{\Delta^2 \bff{u}}{\bb{L}^2}^2.
\end{align*}
Altogether, from~\eqref{equ:ddt t nab Delta u} we obtain
\begin{align*}
	&\ddt \left(t \norm{\nabla \Delta \bff{u}}{\bb{L}^2}^2\right)
	+
	\alpha t\norm{\Delta^2 \bff{u}}{\bb{L}^2}^2
	\leq
	\norm{\nabla\Delta \bff{u}}{\bb{L}^2}^2
	+
	Cte^{Ct^3} \left(1+\norm{\nabla\Delta \bff{u}}{\bb{L}^2}^2 \right).
\end{align*}
Integrating both sides over $(0,t)$ and applying~\eqref{equ:Delta u L2}, we obtain 
\begin{align*}
	t\norm{\nabla \Delta \bff{u}(t)}{\bb{L}^2}^2
	+
	\alpha \int_0^t s\norm{\Delta^2 \bff{u}(s)}{\bb{L}^2}^2 \ds
	&\leq
	\int_0^t \norm{\nabla\Delta \bff{u}(s)}{\bb{L}^2}^2
	+
	\int_0^t Cse^{Cs^3} \norm{\nabla\Delta \bff{u}(s)}{\bb{L}^2}^2 \ds 
	+
	\int_0^t Cse^{Cs^3} \ds 
	\\
	&\leq 
	Ce^{Ct^3} + Cte^{Ct^3} \int_0^t \norm{\nabla\Delta \bff{u}(s)}{\bb{L}^2}^2 \ds 
	+
	Ct^2 e^{Ct^3}
	\leq
	Ce^{Ct^3},
\end{align*}
where in the last step we used an elementary inequality $1+t+t^2\leq Ce^{Ct^3}$ for all $t\geq 0$ and \eqref{equ:Delta u L2} again. This proves the required estimate.
\end{proof}

\begin{proposition}
	Let $\bff{u}$ be a solution of~\eqref{equ:llb a} such that Proposition~\ref{pro:Delta u L2} holds. For all $t\in [0,T]$,
	\begin{align}\label{equ:Delta22 u L2}
		t^2 \norm{\Delta^2 \bff{u}(t)}{\bb{L}^2}^2
		+
		\alpha \int_0^t s^2 \norm{\nabla \Delta^2 \bff{u}(s)}{\bb{L}^2}^2 \ds
		\leq
		Ce^{Ct^3},
	\end{align}
	where $C$ is a constant depending on $\norm{\bff{u}_0}{\bb{H}^2}$ and the coefficients of~\eqref{equ:llb a}.
\end{proposition}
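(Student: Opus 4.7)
The plan is to mimic the proof of the previous proposition, bumping up one derivative order and one extra power of $t$. Concretely, I apply $\Delta^2$ to~\eqref{equ:llb eq1}, take the $\bb{L}^2$ inner product with $t^2\Delta^2\bff{u}$, and work within the Faedo--Galerkin approximation, whose basis of Neumann--Laplacian eigenfunctions automatically satisfies $\partial_{\bff{n}}\Delta^k\bff{u}=\bff{0}$ on $\partial\mathscr{D}$ for all $k$, thereby justifying the formal identity $\inpro{\Delta^3\bff{u}}{\Delta^2\bff{u}} = -\norm{\nabla\Delta^2\bff{u}}{\bb{L}^2}^2$. This produces an equality of the form
\begin{align*}
\tfrac12\ddt\bigl(t^2\norm{\Delta^2\bff{u}}{\bb{L}^2}^2\bigr) + \alpha t^2\norm{\nabla\Delta^2\bff{u}}{\bb{L}^2}^2 + \text{(non-negative lower-order terms)} = t\,\norm{\Delta^2\bff{u}}{\bb{L}^2}^2 + \sum_{k=1}^{6} J_k,
\end{align*}
where the $J_k$ collect the six nonlinear contributions (cross product, cubic $|\bff{u}|^2\bff{u}$, anisotropy, and the two spin-torque terms). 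The leading $t\,\norm{\Delta^2\bff{u}}{\bb{L}^2}^2$ on the right, arising from $\ddt(t^2)=2t$, is directly integrable in $s$ by~\eqref{equ:nab Delta u L2}.

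The critical step is the cross product. Writing $\bff{H}=\Delta\bff{u}-\bff{w}$ and expanding $\Delta^2(\bff{u}\times\bff{H})$ via Leibniz, only the two top-derivative pieces require care. The piece $\Delta^2\bff{u}\times\bff{H}$ pairs to zero against $\Delta^2\bff{u}$ by the antisymmetry of~$\times$. The remaining five-derivative piece $\inpro{\bff{u}\times\Delta^3\bff{u}}{t^2\Delta^2\bff{u}}$ is reduced, after one integration by parts in $\partial_\ell$ and use of $(\partial_\ell\Delta^2\bff{u})\times(\partial_\ell\Delta^2\bff{u})=\bff{0}$, to
\begin{align*}
-t^2\sum_\ell\inpro{\partial_\ell\bff{u}\times\partial_\ell\Delta^2\bff{u}}{\Delta^2\bff{u}},
\end{align*}
which is controlled by $t^2\norm{\nabla\bff{u}}{\bb{L}^\infty}\norm{\nabla\Delta^2\bff{u}}{\bb{L}^2}\norm{\Delta^2\bff{u}}{\bb{L}^2}$. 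The factor $\norm{\nabla\bff{u}}{\bb{L}^\infty}$ is bounded via~\eqref{equ:nab v L infty} and~\eqref{equ:Delta v less nabdelta v} by $C\norm{\nabla\Delta\bff{u}}{\bb{L}^2}$, itself controlled with a $t^{-1/2}$ singularity via~\eqref{equ:nab Delta u L2}. All remaining Leibniz terms carry at most four derivatives on a single factor and are dispatched by H\"older, Gagliardo--Nirenberg \eqref{equ:gal nir uh L3}--\eqref{equ:Delta v less nabdelta v}, elliptic regularity \eqref{equ:v H2 elliptic}--\eqref{equ:v H3 ellip reg}, and the $\bb{L}^\infty$ bound~\eqref{equ:u Linfty}. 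Young's inequality then yields, for each $k$,
\begin{align*}
|J_k| \leq C t^2 e^{Ct^3}\bigl(1+\norm{\Delta^2\bff{u}}{\bb{L}^2}^2\bigr) + \tfrac{\alpha t^2}{12}\norm{\nabla\Delta^2\bff{u}}{\bb{L}^2}^2,
\end{align*}
with the last term absorbed on the left.

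Integrating on $(0,t)$, the first surplus integral $\int_0^t 2s\,\norm{\Delta^2\bff{u}(s)}{\bb{L}^2}^2\,\ds$ is at most $Ce^{Ct^3}$ by~\eqref{equ:nab Delta u L2}; also $\int_0^t Cs^2 e^{Cs^3}\norm{\Delta^2\bff{u}(s)}{\bb{L}^2}^2\,\ds \leq Ct\,e^{Ct^3}\int_0^t s\,\norm{\Delta^2\bff{u}(s)}{\bb{L}^2}^2\,\ds \leq Ce^{Ct^3}$ by the same bound combined with $t\leq Ce^{Ct^3}$, and $\int_0^t Cs^2 e^{Cs^3}\,\ds \leq Ce^{Ct^3}$ via $1+t+t^2\leq Ce^{Ct^3}$. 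Collecting these bounds yields~\eqref{equ:Delta22 u L2}.

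The main obstacle will be the highest-order cross-product term: the cancellation $\inpro{\Delta^2\bff{u}\times\bff{H}}{\Delta^2\bff{u}}=0$ and the further antisymmetry exploited after the single integration by parts are both essential, since otherwise the residual contribution would require control on $\norm{\nabla\Delta^3\bff{u}}{\bb{L}^2}$, which is beyond the regularity at hand. A secondary technical point is that $\norm{\nabla\bff{u}}{\bb{L}^\infty}$ is only available with the singular weight $t^{-1/2}$ inherited from~\eqref{equ:nab Delta u L2}, so the two powers of $t$ in the prefactor $t^2$ are used not only for smoothing but also for integrability near $t=0$.
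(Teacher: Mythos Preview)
Your approach is essentially the same as the paper's: test against $t^2\Delta^2\bff{u}$, exploit the cross-product antisymmetry to kill the worst term, absorb the $\norm{\nabla\Delta^2\bff{u}}{\bb{L}^2}^2$ pieces via Young, and close by integrating against the bound $\int_0^t s\norm{\Delta^2\bff{u}}{\bb{L}^2}^2\,\ds\leq Ce^{Ct^3}$ from~\eqref{equ:nab Delta u L2}. The only organisational difference is that the paper integrates by parts once on \emph{every} nonlinear term to write $\inpro{\Delta^2(\cdot)}{\Delta^2\bff{u}}=-\inpro{\nabla\Delta(\cdot)}{\nabla\Delta^2\bff{u}}$ and then uses the single cancellation $\inpro{\bff{u}\times\nabla\Delta^2\bff{u}}{\nabla\Delta^2\bff{u}}=0$, whereas you expand $\Delta^2(\bff{u}\times\bff{H})$ first and integrate by parts only on $\bff{u}\times\Delta^3\bff{u}$.

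One small correction: your claim that ``all remaining Leibniz terms carry at most four derivatives on a single factor'' is not quite right. The Leibniz expansion of $\Delta^2(\bff{u}\times\Delta\bff{u})$ also produces the term $4\sum_\ell\partial_\ell\bff{u}\times\partial_\ell\Delta^2\bff{u}$, which has five derivatives on one factor. This does not break your argument, since $\inpro{\partial_\ell\bff{u}\times\partial_\ell\Delta^2\bff{u}}{\Delta^2\bff{u}}$ is bounded by exactly the same quantity $\norm{\nabla\bff{u}}{\bb{L}^\infty}\norm{\nabla\Delta^2\bff{u}}{\bb{L}^2}\norm{\Delta^2\bff{u}}{\bb{L}^2}$ you already control --- but the sentence should be amended.
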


\begin{proof}
Applying the operator $\Delta^2$ on~\eqref{equ:llb eq1}, then taking the inner product of the result with $t^2 \Delta^2 \bff{u}$ and integrating by parts as necessary, we obtain	
\begin{align}\label{equ:ddt t2 Delta2u}
	&\frac12 \ddt\left(t^2\norm{\Delta^2 \bff{u}}{\bb{L}^2}^2\right)
	+
	\alpha t^2 \norm{\nabla\Delta^2 \bff{u}}{\bb{L}^2}^2
	+
	\alpha t^2 \norm{\Delta^2 \bff{u}}{\bb{L}^2}^2
	+
	\alpha t^2 \norm{\bff{e}\cdot \Delta^2 \bff{u}}{\bb{L}^2}^2
	\nonumber\\
	&=
	t\norm{\Delta^2 \bff{u}}{\bb{L}^2}^2
	+
	t^2 \inpro{\nabla\Delta\big(\bff{u}\times \Delta\bff{u}\big)}{\nabla\Delta^2 \bff{u}}
	-
	t^2 \inpro{\nabla\Delta \big(\bff{u}\times \bff{e}(\bff{e}\cdot \bff{u})\big)}{\nabla\Delta^2 \bff{u}}
	\nonumber\\
	&\quad
	+
	\alpha t^2 \inpro{\nabla\Delta\big(|\bff{u}|^2 \bff{u}\big)}{\nabla\Delta^2 \bff{u}}
	+
	\beta_1 t^2 \inpro{\Delta^2 \big((\bff{\nu}\cdot\nabla)\bff{u} \big)}{\Delta^2 \bff{u}}
	-
	\beta_2 t^2 \inpro{\nabla\Delta \big(\bff{u}\times (\bff{\nu}\cdot\nabla)\bff{u} \big)}{\nabla\Delta^2 \bff{u}}
	\nonumber\\
	&=
	I_1+I_2+\cdots+I_6.
\end{align}
We will estimate each term on the last line. In simplifying the estimates, we often use an elementary inequality $1+t+t^2 \leq Ce^{Ct^3}$. Now, the first term in \eqref{equ:ddt t2 Delta2u} is kept as is. For the term $I_2$, using the fact that $\bb{H}^2$ is an algebra for $d\leq 3$, H\"older's inequality, the embedding $\bb{H}^2\hookrightarrow \bb{L}^\infty$, and elliptic regularity estimates \eqref{equ:v H2 elliptic} and \eqref{equ:v H3 ellip reg}, we have
\begin{align*}
	\abs{I_2}
	&=
	t^2 \abs{\inpro{\Delta(\nabla \bff{u}\times \Delta \bff{u})}{\nabla\Delta^2 \bff{u}} + \inpro{\Delta(\bff{u}\times \nabla\Delta \bff{u})}{\nabla\Delta^2 \bff{u}}}
	\\
	&=
	t^2 \abs{\inpro{\Delta(\nabla \bff{u}\times \Delta \bff{u})}{\nabla\Delta^2 \bff{u}} + \inpro{\Delta\bff{u}\times \nabla\Delta \bff{u})}{\nabla\Delta^2 \bff{u}} + \inpro{\nabla\bff{u}\times \nabla^2 \Delta \bff{u})}{\nabla\Delta^2 \bff{u}}}
	\\
	&\leq
	Ct^2 \left(\norm{\nabla \bff{u}\times \Delta \bff{u}}{\bb{H}^2} +
	\norm{\Delta \bff{u}}{\bb{L}^\infty} \norm{\nabla\Delta\bff{u}}{\bb{L}^2}
	+
	\norm{\nabla \bff{u}}{\bb{L}^\infty} \norm{\nabla^2 \Delta \bff{u}}{\bb{L}^2}\right) \norm{\nabla\Delta^2 \bff{u}}{\bb{L}^2}
	\\
	&\leq
	Ct^2 \left(\norm{\nabla \bff{u}}{\bb{H}^2} \norm{\Delta \bff{u}}{\bb{H}^2} + \norm{\Delta \bff{u}}{\bb{H}^2} \norm{\nabla\Delta \bff{u}}{\bb{L}^2} + \norm{\nabla \bff{u}}{\bb{H}^2} \norm{\Delta \bff{u}}{\bb{H}^2} \right) \norm{\nabla\Delta^2 \bff{u}}{\bb{L}^2}
	\\
	&\leq
	Ct^2 \left(\norm{\bff{u}}{\bb{L}^2} + \norm{\nabla\Delta \bff{u}}{\bb{L}^2}\right) \left(\norm{\Delta \bff{u}}{\bb{L}^2} + \norm{\Delta^2 \bff{u}}{\bb{L}^2} \right) \norm{\nabla\Delta^2 \bff{u}}{\bb{L}^2}
	\\
	&\leq
	Ct e^{Ct^3} \norm{\Delta^2 \bff{u}}{\bb{L}^2}^2
	+
	Ce^{Ct^3}
	+
	\frac{\alpha t^2}{10} \norm{\nabla\Delta^2 \bff{u}}{\bb{L}^2}^2,
\end{align*}
where in the last step we used Young's inequality, \eqref{equ:ut Lp}, \eqref{equ:Delta u L2}, and \eqref{equ:nab Delta u L2}. For the term $I_3$, we argue in a similar manner to obtain
\begin{align*}
	\abs{I_3} \leq
	Ct^2 \norm{\nabla \bff{u}}{\bb{H}^2} \norm{\bff{u}}{\bb{H}^2}\norm{\nabla\Delta^2 \bff{u}}{\bb{L}^2}^2
	\leq
	Ce^{Ct^3} + \frac{\alpha t^2}{10} \norm{\nabla\Delta^2 \bff{u}}{\bb{L}^2}^2.
\end{align*}
For the term $I_4$, noting the identity~\eqref{equ:nab uuv}, using H\"older's and Young's inequalities, the fact that $\bb{H}^2$ is an algebra, and elliptic regularity estimates \eqref{equ:v H2 elliptic} and \eqref{equ:v H3 ellip reg}, we have
\begin{align*}
	\abs{I_4}
	&=
	\alpha t^2 \abs{2\inpro{\Delta\big(\bff{u} (\bff{u}\cdot\nabla\bff{u})\big)}{\nabla\Delta^2 \bff{u}} + \inpro{\Delta\big(|\bff{u}|^2 \nabla\bff{u}\big)}{\nabla\Delta^2 \bff{u}}}
	\\
	&\leq
	Ct^2 \norm{\bff{u}}{\bb{H}^2}^2 \norm{\nabla \bff{u}}{\bb{H}^2} \norm{\nabla\Delta^2 \bff{u}}{\bb{L}^2} 
	\\
	&\leq
	Ct^2 \norm{\bff{u}}{\bb{H}^2}^4 \left(\norm{\bff{u}}{\bb{L}^2}^2 + \norm{\nabla\Delta \bff{u}}{\bb{L}^2}^2 \right)
	+
	\frac{\alpha t^2}{10} \norm{\nabla\Delta^2 \bff{u}}{\bb{L}^2}^2
	\\
	&\leq
	Cte^{Ct^3}
	+
	\frac{\alpha t^2}{10} \norm{\nabla\Delta^2 \bff{u}}{\bb{L}^2}^2,
\end{align*}
where in the last step we used \eqref{equ:ut Lp}, \eqref{equ:Delta u L2}, and \eqref{equ:nab Delta u L2}. For the term $I_5$, it is clear that we have
\begin{align*}
	\abs{I_5}
	&\leq
	Ct^2 \norm{\Delta^2 \bff{u}}{\bb{L}^2}^2 +
	\frac{\alpha t^2}{10} \norm{\nabla\Delta^2 \bff{u}}{\bb{L}^2}^2.
\end{align*}
For the last term, we apply similar argument as in the estimate for $I_2$ to obtain
\begin{align*}
	\abs{I_6}
	&\leq
	Cte^{Ct^3} \norm{\Delta^2 \bff{u}}{\bb{L}^2}^2
	+
	Ce^{Ct^3}
	+
	\frac{\alpha t^2}{10} \norm{\nabla\Delta^2 \bff{u}}{\bb{L}^2}^2.
\end{align*}
Altogether, we substitute these estimates into~\eqref{equ:ddt t2 Delta2u} to obtain
\begin{align*}
	\ddt\left(t^2\norm{\Delta^2 \bff{u}}{\bb{L}^2}^2\right)
	+
	\alpha t^2 \norm{\nabla\Delta^2 \bff{u}}{\bb{L}^2}^2
	&\leq
	2t \norm{\Delta^2 \bff{u}}{\bb{L}^2}^2
	+
	Cte^{Ct^3} \norm{\Delta^2 \bff{u}}{\bb{L}^2}^2
	+
	Ce^{Ct^3}.
\end{align*}
Integrating over $(0,t)$, we obtain 
\begin{align*}
	&t^2 \norm{\Delta^2 \bff{u}(t)}{\bb{L}^2}^2
	+
	\alpha \int_0^t s^2 \norm{\nabla\Delta^2 \bff{u}(s)}{\bb{L}^2}^2 \ds 
	\\
	&\leq
	\int_0^t Cs \norm{\Delta^2 \bff{u}(s)}{\bb{L}^2}^2 \ds
	+
	\int_0^t Cse^{Cs^3} \norm{\Delta^2 \bff{u}(s)}{\bb{L}^2}^2 \ds 
	+
	\int_0^t Ce^{Cs^3} \ds
	\\
	&\leq
	Ce^{Ct^3}
	+
	Ce^{Ct^3} \int_0^t s \norm{\Delta^2 \bff{u}(s)}{\bb{L}^2}^2 \ds +
	\int_0^t Ce^{Cs^3} \ds
	\leq
	Ce^{Ct^3},
\end{align*}
where in the last step we used \eqref{equ:nab Delta u L2}.
This completes the proof.
\end{proof}

\begin{theorem}\label{the:main existence}
Let the initial data $\bff{u}_0\in \bb{H}^2$ be given, with an additional condition that $\norm{\bff{u}_0}{\bb{L}^\infty}$ is sufficiently small (as given by Proposition~\ref{pro:Delta u L2}) if $d=3$. Then there exists a unique global strong solution $\bff{u}$ to~\eqref{equ:llb a} in the sense of Definition~\ref{def:strong sol}.

Furthermore, this solution satisfies
\begin{align}\label{equ:smooth H4}
	\norm{\bff{u}(t)}{\bb{H}^4}^2 \leq Ce^{Ct^3}(1+t^{-2}),
\end{align}
where $C$ is a constant depending on $\norm{\bff{u}_0}{\bb{H}^2}$ and the coefficients of~\eqref{equ:llb a}.
\end{theorem}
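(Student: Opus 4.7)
The plan is to assemble the theorem from the chain of a priori estimates already established (Propositions~\ref{pro:ut Lp}--\ref{pro:Delta u L2} together with the two subsequent $\nabla\Delta\bff{u}$ and $\Delta^2\bff{u}$ propositions), make these rigorous via a Faedo--Galerkin scheme as in~\cite{AyoKotMouZak21, Le16}, and then prove uniqueness by an $\bb{L}^2$ energy argument on the difference of two strong solutions. Concretely, I would first choose a Galerkin basis $\{\bff{\phi}_k\}$ of Neumann Laplacian eigenfunctions and derive the existence of approximate solutions $\bff{u}^n$ on a maximal interval; the estimates in Proposition~\ref{pro:ut Lp} (with $\|\bff{u}^n\|_{\bb{L}^\infty}\leq\|\bff{u}_0\|_{\bb{L}^\infty}+\sqrt\beta$) and Proposition~\ref{pro:est nab u L2}--Proposition~\ref{pro:Delta u L2} carry over unchanged to $\bff{u}^n$, the last one using the smallness assumption~\eqref{equ:small alpha} when $d=3$. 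These provide uniform bounds of $\bff{u}^n$ in $L^\infty(0,T;\bb{H}^2)\cap L^2(0,T;\bb{H}^3)$. A bound on $\partial_t \bff{u}^n$ in $L^2(0,T;\bb{L}^2)$ then follows by reading off~\eqref{equ:llb eq1} and estimating each right-hand side term with the uniform $\bb{H}^2$-bound and $\bb{L}^\infty$-bound on $\bff{u}^n$ (using the algebra property of $\bb{H}^2$ in $d\leq 3$). Standard compactness (Aubin--Lions) then extracts a subsequence converging to a strong solution $\bff{u}$ in the sense of Definition~\ref{def:strong sol}, and the a priori estimates pass to the limit by lower semicontinuity.

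For uniqueness, let $\bff{u}_1, \bff{u}_2$ be two strong solutions with the same initial datum, set $\bff{v}:=\bff{u}_1-\bff{u}_2$, and test the difference of~\eqref{equ:weakform} with $\bff{v}$. The troublesome terms are the gyromagnetic cross-product $\bff{u}\times\Delta\bff{u}$, the cubic $|\bff{u}|^2\bff{u}$, and the two spin-torque contributions. Writing
\[
\bff{u}_1\times\Delta\bff{u}_1-\bff{u}_2\times\Delta\bff{u}_2 = \bff{v}\times\Delta\bff{u}_1+\bff{u}_2\times\Delta\bff{v}
\]
and similarly for the other nonlinearities, and integrating by parts to move one derivative off $\Delta\bff{v}$ onto $\bff{v}$ (using $(\bff{a}\times\bff{b})\cdot\bff{a}=0$ for the $\bff{u}_2\times\Delta\bff{v}$ piece), I obtain a differential inequality of the form
\[
\tfrac12 \ddt\|\bff{v}\|_{\bb{L}^2}^2+\tfrac{\alpha}{2}\|\nabla\bff{v}\|_{\bb{L}^2}^2 \leq \phi(t)\,\|\bff{v}\|_{\bb{L}^2}^2,
\]
where $\phi(t)$ depends on $\|\bff{u}_i\|_{\bb{L}^\infty}$, $\|\nabla\bff{u}_i\|_{\bb{L}^\infty}$, $\|\Delta\bff{u}_i\|_{\bb{L}^2}$ and $\|\bff{\nu}\|_{\bb{L}^\infty}$; each of these is finite by Propositions~\ref{pro:ut Lp}--\ref{pro:Delta u L2} and~\eqref{equ:nab v L infty}. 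Gronwall's lemma (Lemma~\ref{lem:gron}) and $\bff{v}(0)=\bff{0}$ then force $\bff{v}\equiv\bff{0}$.

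The smoothing estimate~\eqref{equ:smooth H4} is then immediate from elliptic regularity~\eqref{equ:v Hk elliptic} with $k=2$,
\[
\|\bff{u}(t)\|_{\bb{H}^4}^2 \leq C\left(\|\bff{u}(t)\|_{\bb{L}^2}^2+\|\Delta^2\bff{u}(t)\|_{\bb{L}^2}^2\right),
\]
combined with the bound $\|\bff{u}(t)\|_{\bb{L}^2}^2\leq C(1+t)$ from Proposition~\ref{pro:ut Lp} and the pointwise-in-time bound $\|\Delta^2\bff{u}(t)\|_{\bb{L}^2}^2\leq Ct^{-2}e^{Ct^3}$ obtained by dividing~\eqref{equ:Delta22 u L2} through by $t^2$. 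Together these yield $\|\bff{u}(t)\|_{\bb{H}^4}^2\leq Ce^{Ct^3}(1+t^{-2})$ as claimed.

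I expect the main obstacle to be the uniqueness step, rather than existence: controlling $\bff{v}\times\Delta\bff{u}_1$ in $\bb{L}^2$ requires $\Delta\bff{u}_1\in\bb{L}^p$ for some $p>2$, which is available only because Proposition~\ref{pro:Delta u L2} gives $\bff{u}_1\in L^\infty(0,T;\bb{H}^2)\cap L^2(0,T;\bb{H}^3)$ and hence (via~\eqref{equ:Delta v less nabdelta v}) $\Delta\bff{u}_1\in L^2(0,T;\bb{L}^p)$ for admissible $p$. Care must also be taken so that the coefficient $\phi(t)$ is merely integrable (not $L^\infty$) in time; in $d=3$ the estimate borrows the non-quadratic $\bb{L}^\infty$-type bound~\eqref{equ:u Linfty}, which is exactly where the smallness assumption on $\|\bff{u}_0\|_{\bb{L}^\infty}$ enters.
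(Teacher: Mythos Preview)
Your proposal is correct and follows essentially the same route as the paper: existence via Galerkin approximation plus the chain of a priori bounds and Aubin--Lions compactness, the $\bb{H}^4$ smoothing estimate by combining Propositions~\ref{pro:ut Lp}--\ref{pro:Delta u L2} with~\eqref{equ:nab Delta u L2} and~\eqref{equ:Delta22 u L2}, and uniqueness by an $\bb{L}^2$ energy argument on the difference with Gronwall.

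One small correction to your uniqueness discussion: the term $\inpro{\bff{v}\times\Delta\bff{u}_1}{\bff{v}}$ vanishes \emph{immediately} since $(\bff{v}\times\bff{a})\cdot\bff{v}=0$ pointwise, so there is no need to place $\Delta\bff{u}_1$ in $\bb{L}^p$ for this piece. The term that actually forces the $L^2(0,T;\bb{H}^3)$ regularity into the Gronwall coefficient is the other one, $\inpro{\bff{u}_2\times\Delta\bff{v}}{\bff{v}}$: after integrating by parts it becomes $-\inpro{\nabla\bff{u}_2\times\bff{v}}{\nabla\bff{v}}$, and it is the bound $\norm{\nabla\bff{u}_2}{\bb{L}^\infty}\leq C\norm{\Delta\bff{u}_2}{\bb{L}^q}\leq C\norm{\nabla\Delta\bff{u}_2}{\bb{L}^2}$ (via~\eqref{equ:nab v L infty} and~\eqref{equ:Delta v less nabdelta v}) that makes $\phi$ merely $L^1$ in time. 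This is exactly what the paper does. Also, the smallness assumption on $\norm{\bff{u}_0}{\bb{L}^\infty}$ in $d=3$ enters already at the level of Proposition~\ref{pro:Delta u L2} (to close the $\bb{H}^2$ bound), not separately in the uniqueness argument.
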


\begin{proof}
The existence of a global strong solution follows from a standard compactness argument and the Aubin--Lions lemma, making use of the uniform a priori estimates in~\eqref{equ:u Linfty}, \eqref{equ:nab u L2}, and~\eqref{equ:Delta u L2}. Inequality~\eqref{equ:smooth H4} follows from \eqref{equ:u Linfty}, \eqref{equ:nab u L2}, \eqref{equ:Delta u L2}, \eqref{equ:nab Delta u L2}, and~\eqref{equ:Delta22 u L2}.

Finally, we show the uniqueness of strong solution by deriving a continuous dependence estimate. Let $\bff{u}$ and $\bff{v}$ be strong solutions of \eqref{equ:llb a} corresponding to initial data $\bff{u}_0$ and $\bff{v}_0$, respectively. Let $\bff{w}:=\bff{u}-\bff{v}$ and $\bff{w}_0:= \bff{u}_0-\bff{v}_0$. Then for almost every $(t,x)\in (0,T)\times \mathscr{D}$, we have
\begin{align*}
	\partial_t \bff{w}
	&=
%	\alpha \Delta \bff{w}
%	+
%	\alpha \mu \bff{w}
%	-
%	\alpha \kappa \left(|\bff{u}|^2 \bff{u}- |\bff{v}|^2 \bff{v}\right) 
%	+
%	\alpha \lambda \bff{e}(\bff{e}\cdot \bff{w})
%	-
%	\gamma \left(\bff{u}\times \Delta \bff{u}- \bff{v}\times \Delta \bff{v}\right) 
%	\\
%	&\quad
%	-
%	\gamma \left(\bff{u} \times \bff{e}(\bff{e}\cdot \bff{u})- \bff{v}\times \bff{e}(\bff{e}\cdot \bff{v})\right) 
%	+
%	\beta_1 (\bff{\nu}\cdot \nabla) \bff{w}
%	+
%	\beta_2 \left(\bff{u}\times (\bff{\nu}\cdot \nabla) \bff{u} - \bff{v}\times (\bff{\nu}\cdot \nabla)\bff{v}\right)
%	\\
%	&=
	\alpha \Delta \bff{w}
	-
	\alpha \bff{w}
	-
	\alpha \left(|\bff{u}|^2\bff{w} + ((\bff{u}+\bff{v})\cdot \bff{w})\bff{v}\right)
	-
	\alpha \bff{e}(\bff{e}\cdot \bff{w})
	-
	\left(\bff{u}\times \Delta \bff{w} + \bff{w}\times \Delta \bff{v}\right)
	\\
	&\quad 
	+
	\left(\bff{u}\times \bff{e}(\bff{e}\cdot \bff{w}) + \bff{w}\times \bff{e}(\bff{e}\cdot \bff{v})\right)
	+
	\beta_1 (\bff{\nu}\cdot \nabla) \bff{w}
	+
	\beta_2 \left(\bff{u}\times (\bff{\nu}\cdot \nabla) \bff{w} + \bff{w} \times (\bff{\nu}\cdot\nabla)\bff{v} \right).
\end{align*}
Taking the inner product of this equation with $\bff{w}$, we obtain
\begin{align}\label{equ:ineq J1 J7}
	&\frac12 \ddt \norm{\bff{w}}{\bb{L}^2}^2
	+
	\alpha \norm{\nabla \bff{w}}{\bb{L}^2}^2
	+
	\alpha \norm{\bff{w}}{\bb{L}^2}^2
	+
	\alpha \norm{|\bff{u}| |\bff{w}|}{\bb{L}^2}^2
	+
	\alpha \norm{\bff{v}\cdot \bff{w}}{\bb{L}^2}^2
	+
	\alpha \norm{\bff{e}\cdot \bff{w}}{\bb{L}^2}^2
	\nonumber\\
	&=
	- \alpha \inpro{(\bff{u}\cdot \bff{w})\bff{v}}{\bff{w}}
	-
	\inpro{\nabla\bff{u}\times \bff{w}}{\nabla \bff{w}}
	+
	\gamma \inpro{\bff{u}\times \bff{e}(\bff{e}\cdot \bff{w})}{\bff{w}}
	+
	\beta_1 \inpro{(\bff{\nu}\cdot \nabla) \bff{w}}{\bff{w}}
	+
	\beta_2 \inpro{\bff{u}\times (\bff{\nu}\cdot \nabla) \bff{w}}{\bff{w}} 
	\nonumber\\
	&=:
	J_1+J_2+\cdots+J_5.
\end{align}
Straightforward applications of Young's inequality and Sobolev embedding yields
\begin{align*}
	\abs{J_1} 
	&\leq
	\frac{\alpha}{2} \norm{|\bff{u}||\bff{w}|}{\bb{L}^2}^2
	+
	\frac{\alpha}{2} \norm{\bff{v}\cdot \bff{w}}{\bb{L}^2}^2,
	\\
	\abs{J_3}
	&\leq
	C\norm{\bff{u}}{\bb{L}^\infty}^2 \norm{\bff{w}}{\bb{L}^2}^2
	+
	C\norm{\bff{w}}{\bb{L}^2}^2
	\leq
	C\norm{\bff{u}}{\bb{H}^2}^2 \norm{\bff{w}}{\bb{L}^2}^2
	+
	C\norm{\bff{w}}{\bb{L}^2}^2
	\\
	\abs{J_4}
	&\leq
	C\norm{\bff{w}}{\bb{L}^2}^2
	+
	\frac{\alpha}{8} \norm{\nabla \bff{w}}{\bb{L}^2}^2,
	\\
	\abs{J_5}
	&\leq
	C\norm{\bff{u}}{\bb{L}^\infty}^2 \norm{\bff{w}}{\bb{L}^2}^2
	+
	\frac{\alpha}{8} \norm{\nabla \bff{w}}{\bb{L}^2}^2
	\leq
	C\norm{\bff{u}}{\bb{H}^2}^2 \norm{\bff{w}}{\bb{L}^2}^2
	+
	\frac{\alpha}{8} \norm{\nabla \bff{w}}{\bb{L}^2}^2
\end{align*}
For the term $J_2$, by H\"older's inequality, \eqref{equ:nab v L infty}, and \eqref{equ:Delta v less nabdelta v}, we have
\begin{align*}
	\abs{J_2}
	\leq
	\norm{\nabla \bff{u}}{\bb{L}^\infty} \norm{\bff{w}}{\bb{L}^2} \norm{\nabla \bff{w}}{\bb{L}^2}
	&\leq
	C\norm{\Delta\bff{u}}{\bb{L}^4}^2 \norm{\bff{w}}{\bb{L}^2}^2 
	+
	\frac{\alpha}{8} \norm{\nabla \bff{w}}{\bb{L}^2}^2
	\\
	&\leq
	C\norm{\nabla\Delta\bff{u}}{\bb{L}^2}^2 \norm{\bff{w}}{\bb{L}^2}^2 
	+
	\frac{\alpha}{8} \norm{\nabla \bff{w}}{\bb{L}^2}^2.
\end{align*}
We substitute the above estimates into~\eqref{equ:ineq J1 J7} and use elliptic regularity to obtain
\begin{align*}
	\ddt \norm{\bff{w}}{\bb{L}^2}^2
	+
	\alpha \norm{\nabla \bff{w}}{\bb{L}^2}^2
	\leq
	C\left(1+ \norm{\bff{u}}{\bb{H}^2}^2 + \norm{\nabla \Delta \bff{u}}{\bb{L}^2}^2\right) \norm{\bff{w}}{\bb{L}^2}^2.
\end{align*}
By the Gronwall lemma (Lemma~\ref{lem:gron}), noting \eqref{equ:ut Lp}, \eqref{equ:nab u L2}, and \eqref{equ:Delta u L2}, we have
\begin{align*}
	\norm{\bff{w}(t)}{\bb{L}^2}^2
	\leq
	\exp\left(\int_0^t C\left(1+ \norm{\bff{u}(s)}{\bb{H}^2}^2 + \norm{\nabla \Delta \bff{u}(s)}{\bb{L}^2}^2\right) \ds \right) \norm{\bff{w}(0)}{\bb{L}^2}^2
	\leq
	\exp\left(Ce^{Ct^3}\right) \norm{\bff{w}_0}{\bb{L}^2}^2.
\end{align*}
Uniqueness of the solution then follows immediately from this stability estimate.
\end{proof}

Physical theory predicts a loss of magnetisation in a ferromagnet above the Curie temperature, the rate of which is important in the study of ultrafast demagnetisation phenomena at elevated temperatures~\cite{Acr25, ChuNie20, JiaRenZha13}. We end this section with the following result, which shows precisely the asymptotic behaviour of the magnetisation $\bff{u}$ in the absence of current ($\bff{\nu}=\bff{0}$) above the Curie temperature, as predicted by the physical theory. We keep the coefficients of the equation as in~\eqref{equ:llb a} for this proposition to obtain a precise statement on the rate of decay. Inequality~\eqref{equ:ut L infty exp} has been shown in~\cite{LeSoeTra24}, but we provide the proof here in our context for ease of reference. Inequality~\eqref{equ:ut energy exp} and its consequence is new.

\begin{theorem}\label{the:decay}
Let $\bff{\nu}=\bff{0}$ and let $\mathcal{E}$ be the energy functional defined in~\eqref{equ:energy}. Suppose that $\bff{u}$ is a strong solution to \eqref{equ:llb a} corresponding to an initial data $\bff{u}_0\in \bb{H}^2$. Then we have
\begin{align}
	\label{equ:ut L infty exp}
	\norm{\bff{u}(t)}{\bb{L}^\infty} &\leq e^{-\alpha\kappa\mu t} \norm{\bff{u}_0}{\bb{L}^\infty},
	\\
	\label{equ:ut energy exp}
	\mathcal{E}\big(\bff{u}(t)\big) &\leq e^{-2\alpha\kappa\mu t} \mathcal{E} \big(\bff{u}_0\big).
\end{align}
In particular, $\bff{u}(t)\to \bff{0}$ uniformly in space and $\mathcal{E}\big(\bff{u}(t)\big)\to 0$ (thus $\norm{\bff{u}(t)}{\bb{H}^1}\to 0$ as well) as $t\to\infty$. This implies that as time progresses, the magnetisation vectors will align with each other with magnitude decaying to zero.
\end{theorem}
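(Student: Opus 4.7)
The plan is to handle the two inequalities in~\eqref{equ:ut L infty exp} and~\eqref{equ:ut energy exp} by separate mechanisms.

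For~\eqref{equ:ut L infty exp}, I would revisit the $\bb{L}^p$ computation of Proposition~\ref{pro:ut Lp}, now with $\bff{\nu}=\bff{0}$ and with the physical coefficients $\sigma,\kappa,\mu,\lambda$ retained. Testing~\eqref{equ:llb eq1} against $|\bff{u}|^{p-2}\bff{u}$, the gyromagnetic cross product vanishes, integration by parts on $\sigma\Delta\bff{u}$ produces a nonpositive quantity, and the internal--exchange and anisotropy contributions $-\alpha\kappa\mu\norm{\bff{u}}{\bb{L}^p}^p - \alpha\kappa\norm{\bff{u}}{\bb{L}^{p+2}}^{p+2} - \alpha\lambda\norm{|\bff{u}|^{(p-2)/2}(\bff{e}\cdot\bff{u})}{\bb{L}^2}^2$ are all nonpositive. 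With no spin-current term to balance any of these, retaining only the first yields
\begin{equation*}
\ddt\norm{\bff{u}}{\bb{L}^p}^p \leq -\alpha\kappa\mu\,p\,\norm{\bff{u}}{\bb{L}^p}^p,
\end{equation*}
so continuous Gronwall gives $\norm{\bff{u}(t)}{\bb{L}^p} \leq e^{-\alpha\kappa\mu t}\norm{\bff{u}_0}{\bb{L}^p}$, and letting $p\to\infty$ yields~\eqref{equ:ut L infty exp}.

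For~\eqref{equ:ut energy exp}, the starting point is the energy identity~\eqref{equ:energy identity} with $\bff{\nu}=\bff{0}$, whose differential form reads $\ddt\mathcal{E}(\bff{u}) = -\alpha\norm{\bff{H}}{\bb{L}^2}^2$. The decisive step is to establish the pointwise coercivity
\begin{equation*}
\norm{\bff{H}}{\bb{L}^2}^2 \;\geq\; 2\kappa\mu\,\mathcal{E}(\bff{u}),
\end{equation*}
after which Gronwall delivers the claimed exponential decay directly. My plan is to write $\bff{H} = \sigma\Delta\bff{u} - \bff{w}$ with $\bff{w}$ as in~\eqref{equ:w} (coefficients restored), expand
\begin{equation*}
\norm{\bff{H}}{\bb{L}^2}^2 = \sigma^2\norm{\Delta\bff{u}}{\bb{L}^2}^2 + 2\sigma\inpro{\nabla\bff{u}}{\nabla\bff{w}} + \norm{\bff{w}}{\bb{L}^2}^2
\end{equation*}
after integrating by parts in the cross term via the Neumann condition, and use~\eqref{equ:nab uuv} together with the elementary observation $\inpro{\nabla\bff{u}}{\bff{u}(\bff{u}\cdot\nabla\bff{u})}\geq 0$ to conclude that the cross term contributes at least $2\sigma\kappa\mu\norm{\nabla\bff{u}}{\bb{L}^2}^2$. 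Expanding $\norm{\bff{w}}{\bb{L}^2}^2$ and discarding the manifestly nonnegative sextic term and the nonnegative mixed term $2\kappa\lambda\int|\bff{u}|^2(\bff{e}\cdot\bff{u})^2\dx$ leaves the lower bound $\kappa^2\mu^2\norm{\bff{u}}{\bb{L}^2}^2 + 2\kappa^2\mu\norm{\bff{u}}{\bb{L}^4}^4 + (2\kappa\mu\lambda+\lambda^2)\norm{\bff{e}\cdot\bff{u}}{\bb{L}^2}^2$. A term-by-term comparison against the four components of $2\kappa\mu\,\mathcal{E}(\bff{u})$ then closes the estimate.

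The asymptotic conclusions are immediate: \eqref{equ:ut L infty exp} yields $\bff{u}(t)\to\bff{0}$ uniformly, while \eqref{equ:ut energy exp} combined with the $\bb{H}^1$-coercivity $\mathcal{E}(\bff{u}) \geq \tfrac{\sigma}{2}\norm{\nabla\bff{u}}{\bb{L}^2}^2 + \tfrac{\kappa\mu}{2}\norm{\bff{u}}{\bb{L}^2}^2$ forces $\norm{\bff{u}(t)}{\bb{H}^1}\to 0$. I expect the main technical obstacle to be the bookkeeping in the coercivity estimate; the quadratic slot is particularly tight, since the $\kappa^2\mu^2\norm{\bff{u}}{\bb{L}^2}^2$ piece is matched exactly by its counterpart in $2\kappa\mu\,\mathcal{E}(\bff{u})$, so any loss incurred while dropping nonnegative cross terms in $\norm{\bff{w}}{\bb{L}^2}^2$ would degrade the decay rate below the claimed $2\alpha\kappa\mu$.
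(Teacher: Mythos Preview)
Your proposal is correct and follows essentially the same approach as the paper: the $\bb{L}^p$ test with $p\to\infty$ for~\eqref{equ:ut L infty exp}, and for~\eqref{equ:ut energy exp} the energy identity $\ddt\mathcal{E}(\bff{u})=-\alpha\norm{\bff{H}}{\bb{L}^2}^2$ combined with the coercivity $\norm{\bff{H}}{\bb{L}^2}^2\geq 2\kappa\mu\,\mathcal{E}(\bff{u})$, obtained by expanding $\norm{\bff{H}}{\bb{L}^2}^2$ term by term and discarding nonnegative pieces. The only difference is cosmetic: the paper expands $\norm{\bff{H}}{\bb{L}^2}^2$ in one go (its equation~\eqref{equ:expand H}) rather than first writing $\bff{H}=\sigma\Delta\bff{u}-\bff{w}$ and treating the cross term and $\norm{\bff{w}}{\bb{L}^2}^2$ separately, but the resulting terms and the subsequent comparison with $2\kappa\mu\,\mathcal{E}(\bff{u})$ are identical.
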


\begin{proof}
As in the proof of~\eqref{equ:ut Lp} but with $\bff{\nu}=\bff{0}$, taking the inner product of~\eqref{equ:llb eq1} with $|\bff{u}|^{p-2} \bff{u}$ and integrating by parts, we have
\begin{align*}
	\frac{1}{p} \ddt \norm{\bff{u}}{\bb{L}^p}^p
	+
	\alpha\sigma
	\inpro{\nabla \bff{u}}{\nabla (\abs{\bff{u}}^{p-2} \bff{u})}
	+
	\alpha\kappa\mu \norm{\bff{u}}{\bb{L}^p}^p
	+
	\alpha\kappa \norm{\bff{u}}{\bb{L}^{p+2}}^{p+2}
	+
	\alpha\lambda \norm{|\bff{u}|^{\frac{p-2}{2}} (\bff{e}\cdot \bff{u})}{\bb{L}^2}^2
	=
	0.
\end{align*}
Therefore,
\begin{align*}
	&\frac{1}{p} \ddt \norm{\bff{u}}{\bb{L}^p}^p
	+
	\alpha \sigma \norm{|\bff{u}|^{\frac{p-2}{2}} |\nabla \bff{u}|}{\bb{L}^2}^2
	+
	\alpha \sigma (p-2) \norm{|\bff{u}|^{\frac{p-4}{2}} |\bff{u}\cdot\nabla\bff{u}|}{\bb{L}^2}^2
	\\
	&\quad
	+
	\alpha\kappa\mu \norm{\bff{u}}{\bb{L}^p}^p
	+
	\alpha\kappa \norm{\bff{u}}{\bb{L}^{p+2}}^{p+2}
	+
	\alpha\lambda \norm{|\bff{u}|^{\frac{p-2}{2}} (\bff{e}\cdot \bff{u})}{\bb{L}^2}^2
	= 0,
\end{align*}
which implies 
\begin{align*}
	\ddt \norm{\bff{u}}{\bb{L}^p}^p
	+
	p\alpha\kappa\mu \norm{\bff{u}}{\bb{L}^p}^p
	\leq 0,
\end{align*}
so that
\begin{align*}
	\ddt \left( e^{p\alpha\kappa\mu} \norm{\bff{u}}{\bb{L}^p}^p \right) \leq 0.
\end{align*}
Integrating this over $(0,t)$ and taking $p$-th root give
\begin{align*}
	\norm{\bff{u}(t)}{\bb{L}^p} \leq e^{-\alpha\kappa\mu t} \norm{\bff{u}_0}{\bb{L}^p}.
\end{align*}
Letting $p\to\infty$ shows~\eqref{equ:ut L infty exp}.

Next, we show~\eqref{equ:ut energy exp}. The same argument as \eqref{equ:dtu H} and \eqref{equ:min H dtu} but with $\bff{\nu}=\bff{0}$ yields
\begin{align}\label{equ:ddt energy}
	\ddt \left( \frac{\sigma}{2}  \norm{\nabla \bff{u}}{\bb{L}^2}^2 + \frac{\kappa\mu}{2} \norm{\bff{u}}{\bb{L}^2}^2 + \frac{\kappa}{4} \norm{\bff{u}}{\bb{L}^4}^4 + \frac{\lambda}{2} \norm{\bff{e}\cdot \bff{u}}{\bb{L}^2}^2 \right)
	+
	\alpha \norm{\bff{H}}{\bb{L}^2}^2 = 0.
\end{align}
On the other hand, noting~\eqref{equ:llb eq2} we have
\begin{align}\label{equ:expand H}
	\norm{\bff{H}}{\bb{L}^2}^2
	&=
	\inpro{\sigma\Delta \bff{u}-\kappa\mu \bff{u}-\kappa |\bff{u}|^2 \bff{u}-\lambda \bff{e}(\bff{e}\cdot \bff{u})}{\sigma\Delta \bff{u}-\kappa\mu \bff{u}-\kappa |\bff{u}|^2 \bff{u}-\lambda \bff{e}(\bff{e}\cdot \bff{u})}
	\nonumber\\
	&=
	\sigma^2 \norm{\Delta \bff{u}}{\bb{L}^2}^2
	+
	2\kappa\mu\sigma \norm{\nabla\bff{u}}{\bb{L}^2}^2
	+
	2\kappa\sigma \norm{|\bff{u}| |\nabla \bff{u}|}{\bb{L}^2}^2
	+
	4\kappa\sigma \norm{\bff{u}\cdot\nabla \bff{u}}{\bb{L}^2}^2
	+
	\kappa^2 \mu^2 \norm{\bff{u}}{\bb{L}^2}^2
	+
	2\kappa^2 \mu \norm{\bff{u}}{\bb{L}^4}^4
	\nonumber\\
	&\quad
	+
	\kappa^2 \norm{\bff{u}}{\bb{L}^6}^6
	+
	2\kappa\mu\lambda \norm{\bff{e}\cdot\bff{u}}{\bb{L}^2}^2
	+
	2\lambda \sigma \norm{\bff{e}\cdot \nabla \bff{u}}{\bb{L}^2}^2
	+
	2\kappa\lambda \norm{|\bff{u}| (\bff{e}\cdot \bff{u})}{\bb{L}^2}^2
	+
	\lambda^2 \norm{\bff{e}\cdot \bff{u}}{\bb{L}^2}^2,
\end{align}
where we expanded the inner product and used integration by parts and \eqref{equ:nab uuv} as necessary in the last step. Substituting \eqref{equ:expand H} into \eqref{equ:ddt energy}, noting the definition of $\mathcal{E}$ in \eqref{equ:energy}, and discarding some non-negative terms on the left-hand side, we obtain
\begin{align*}
	\ddt \mathcal{E}(\bff{u}) + 2 \alpha\kappa\mu \mathcal{E}(\bff{u}) \leq 0,
\end{align*}
which implies \eqref{equ:ut energy exp}. 

Letting $t\to\infty$ in \eqref{equ:ut L infty exp} and \eqref{equ:ut energy exp} shows the last statement in the proposition. As time advances, the magnetisation vectors will align with each other (approaching a constant vector field) due to $\norm{\nabla \bff{u}(t)}{\bb{L}^2}\to 0$ as $t\to\infty$.
\end{proof}

\begin{remark}\label{rem:domain}
For $d\leq 2$, the existence and uniqueness of global strong solution to \eqref{equ:llb a} also hold if $\mathscr{D}$ is a convex polyhedral domain. In this case, we still have the $\bb{H}^2$-elliptic regularity results \eqref{equ:D2L2} and \eqref{equ:v H2 elliptic}, thus Proposition~\ref{pro:ut Lp}, \ref{pro:est nab u L2}, and~\ref{pro:Delta u L2} still hold in this setting to give the necessary \emph{a priori} estimates for the existence of a global strong solution. Theorem~\ref{the:decay} also holds in this case. For $d=3$, the $\bb{H}^3$-elliptic regularity result is used in the proof of Proposition~\ref{pro:Delta u L2}, which may not hold in a convex polyhedral domain. Higher order regularity estimates such as \eqref{equ:smooth H4} also may not hold for a general convex domain.
\end{remark}

\section{A linear fully discrete scheme for the LLB equation with spin-torques}\label{sec:llb spin}

In this section, we propose a linear fully discrete finite element scheme for the LLB equation with spin-torques. We begin by discussing some preliminary results on finite element approximation in the next subsection.

\subsection{Finite element approximation}

Let $\mathscr{D}\subset \bb{R}^d$, $d=1,2, 3$, be a smooth or convex polygonal (or convex polyhedral) domain. Let $\mathcal{T}_h$ be a quasi-uniform triangulation of $\mathscr{D}$
into intervals (in 1D), triangles (in 2D), or tetrahedra (in 3D)
with maximal mesh-size $h$.
To discretise the LLB equation, we
introduce the conforming finite element space $\bb{V}_h \subset \bb{H}^1$ given by
\begin{equation}\label{equ:Vh}
	\bb{V}_h := \{\bff{\phi}_h \in \bff{C}(\overline{\mathscr{D}}; \bb{R}^3): \bff{\phi}|_K \in \cal{P}_r(K;\bb{R}^3), \; \forall K \in \cal{T}_h\},
\end{equation}
where $\cal{P}_r(K; \bb{R}^3)$ denotes the space of polynomials of degree $r$ on $K$ taking values in $\bb{R}^3$. The case $r=1$ (linear polynomials) or $r=2$ (quadratic polynomials) are most commonly used.

As a consequence of the Bramble--Hilbert lemma~\cite{BreSco08}, for $p\in [1,\infty]$, there exists a constant $C$ independent of $h$ such that for any $\bff{v} \in \bb{W}^{r+1,p}$ we have
\begin{align}\label{equ:fin approx}
	\inf_{\chi \in {\bb{V}}_h} \left\{ \norm{\bff{v} - \bff{\chi}}{\bb{L}^p} 
	+ 
	h \norm{\nabla (\bff{v}-\bff{\chi})}{\bb{L}^p} 
	\right\} 
	\leq 
	C h^{r+1} \norm{\bff{v}}{\bb{W}^{r+1,p}}.
\end{align}
Moreover, if the triangulation $\mathcal{T}_h$ is quasi-uniform, we have the following inverse estimate: for $s\in \{0,1\}$ and $1\leq q\leq p\leq \infty$,
\begin{align}\label{equ:inverse}
	\norm{\bff{\chi}}{\bb{W}^{s,p}} &\leq Ch^{-d \left(\frac{1}{q}-\frac{1}{p}\right)} \norm{\bff{\chi}}{\bb{W}^{s,q}}, \quad \forall \bff{\chi}\in \bb{V}_h.
\end{align}

In the analysis, we will use several projection and interpolation operators. The existence of such operators and the properties that they possess will be described below. Sufficient regularity conditions on the mesh will be assumed as needed so that the following stability and approximation properties hold.

Firstly, there exists an orthogonal projection operator $P_h: \bb{L}^2 \to \bb{V}_h$ such that
\begin{align}\label{equ:orth proj}
	\inpro{P_h \bff{v}-\bff{v}}{\bff{\chi}}=0,
	\quad
	\forall \bff{\chi}\in \bb{V}_h,
\end{align}
with the following stability and approximation properties~\cite{BreSco08, CroTho87, DouDupWah74}: for any $p\in [1,\infty]$ and $s\in\{0,1\}$, there exists a constant $C$ independent of $h$ such that
\begin{align}
	\label{equ:proj H1 stab}
	\norm{P_h \bff{v}}{\bb{W}^{s,p}}
	&\leq
	C \norm{\bff{v}}{\bb{W}^{s,p}}, \quad \forall \bff{v}\in \bb{W}^{s,p},
	\\
	\label{equ:proj approx}
	\norm{\bff{v}- P_h\bff{v}}{\bb{L}^p}
	+
	h \norm{\nabla( \bff{v}-P_h\bff{v})}{\bb{L}^p}
	&\leq
	Ch^{r+1} \norm{\bff{v}}{\bb{W}^{r+1,p}}, \quad \forall \bff{v}\in \bb{W}^{r+1,p}.
\end{align}
Furthermore, there exists a nodal (Lagrange) interpolation operator $\mathcal{I}_h:\mathcal{C}^0 \to \bb{V}_h$ with these properties (\cite[Section~1.5]{ErnGue04} and~\cite{Zep23}): for any $p\in [1,\infty]$, there exists a constant $C$ independent of $h$ such that
\begin{align}
	\label{equ:interp stab Linfty}
	\norm{\mathcal{I}_h \bff{v}}{\bb{L}^\infty}
	&\leq
	C\norm{\bff{v}}{\bb{L}^\infty}
	\\
	\label{equ:interp stab W1p}
	\norm{\mathcal{I}_h \bff{v}}{\bb{W}^{1,p}}
	&\leq
	C\norm{\bff{v}}{\bb{W}^{1,p}}
	\\
	\label{equ:interp approx}
	\norm{\bff{v}- \mathcal{I}_h \bff{v}}{\bb{L}^p}
	+
	h \norm{\nabla \left(\bff{v}-\mathcal{I}_h \bff{v}\right)}{\bb{L}^p}
	&\leq
	Ch^{r+1} \norm{\bff{v}}{\bb{W}^{r+1,p}}.
\end{align}
Next, we introduce the discrete Laplacian operator $\Delta_h: \bb{V}_h \to \bb{V}_h$ defined by
\begin{align}\label{equ:disc laplacian}
	\inpro{\Delta_h \bff{v}_h}{\bff{\chi}}
	=
	- \inpro{\nabla \bff{v}_h}{\nabla \bff{\chi}},
	\quad 
	\forall \bff{v}_h, \bff{\chi} \in \bb{V}_h,
\end{align}
as well as the Ritz projection $R_h: \bb{H}^1 \to \bb{V}_h$ defined by
\begin{align}\label{equ:Ritz}
	\inpro{\nabla R_h \bff{v}- \nabla \bff{v}}{\nabla \bff{\chi}}=0,
	\quad \text{such that} \quad \inpro{R_h \bff{v}-\bff{v}}{\bff{1}}=0,
	\quad
	\forall \bff{\chi}\in \bb{V}_h.
\end{align}
If $\bff{v}\in \bb{H}^2_{\bff{n}}$, then we have $\Delta_h R_h\bff{v}=P_h\Delta \bff{v}$.
For any $\bff{v}\in \bb{H}^2$, let $\bff{\omega}:= \bff{v}-R_h\bff{v}$. The approximation property for the Ritz projection is assumed to hold \cite{LeyLi21, LinThoWah91, RanSco82}, namely for $s=0$ or $1$ and $p\in (1,\infty)$:
\begin{align}\label{equ:Ritz ineq}
	\norm{\bff{\omega}}{\bb{W}^{s,p}} + \norm{\partial_t \bff{\omega}}{\bb{W}^{s,p}}
	&\leq
	C h^{r+1-s} \norm{\bff{v}}{\bb{W}^{r+1, p}},
	\\
	\label{equ:Ritz ineq L infty}
	\norm{\bff{\omega}}{\bb{L}^\infty} 
	&\leq 
	Ch^{r+1} \abs{\ln h} \norm{\bff{v}}{\bb{W}^{r+1,\infty}}.
\end{align}
We also have the $\bb{W}^{1,\infty}$ stability property of the operator $R_h$, namely~\cite{DemLeySchWah12, Li22, Sco76}:
\begin{align}
	\label{equ:Ritz stab u infty}
	\norm{R_h \bff{v}}{\bb{W}^{1,\infty}}
	&\leq 
	C \norm{\bff{v}}{\bb{W}^{1,\infty}}.
\end{align}
A condition which would guarantee all the stability and approximation properties listed above to hold is global quasi-uniformity of the triangulation (which we assumed for $\mathcal{T}_h$). However, they are also known to hold for mesh which is less regular (only locally quasi-uniform or mildly graded). We will not discuss these further, but instead refer interested readers to the references cited above.

Finally, the following inequalities hold: If $\mathscr{D}$ is a convex polygonal or polyhedral domain with globally quasi-uniform triangulation, then there exists a constant $C$ independent of $h$ such that for any $\bff{v}_h\in \bb{V}_h$,
\begin{align}
	\label{equ:disc lapl L infty}
	\norm{\bff{v}_h}{\bb{L}^\infty}
	&\leq
	C \norm{\bff{v}_h}{\bb{L}^2}^{1-\frac{d}{4}} \left(\norm{\bff{v}_h}{\bb{L}^2}^\frac{d}{4} + \norm{\Delta_h \bff{v}_h}{\bb{L}^2}^\frac{d}{4} \right),
	\\
	\label{equ:inverse disc lapl}
	\norm{\Delta_h \bff{v}_h}{\bb{L}^2}
	&\leq
	Ch^{-2} \norm{\bff{v}_h}{\bb{L}^2}.
\end{align}
Inequality~\eqref{equ:disc lapl L infty} is shown in~\cite[Appendix A]{GuiLiWan22}), while \eqref{equ:inverse disc lapl} is shown in~\cite{BarPro06}.

\subsection{A linear fully discrete scheme}

Before introducing a linear finite element scheme for solving~\eqref{equ:llb a}, we need to define several multilinear forms that will facilitate the subsequent analysis.

\begin{definition}[multilinear forms]\label{def:forms}
	Given~$\bff{\nu}\in L^\infty(\bb{L}^\infty)$, and~$\bff{\phi},\bff{\xi}\in \bb{H}^1\cap \bb{L}^\infty$, we define the maps
	\begin{align*}
		&\mathcal{A}_1 (\cdot\,,\,\cdot): \bb{H}^1\times \bb{H}^1 \to \bb{R},
		\\ 
		&\mathcal{B}(\bff{\phi},\bff{\xi};\,\cdot\, , \,\cdot): \bb{H}^1\times \bb{H}^1 \to \bb{R},
		\\
		&\mathcal{C}(\bff{\phi};\,\cdot\, ,\, \cdot): \bb{H}^1\times \bb{H}^1 \to \bb{R},
		\\
		&\mathcal{D}(\cdot\, , \,\cdot): \bb{H}^1 \times \bb{H}^1 \to \bb{R},
	\end{align*}
	by
	\begin{align*}
		\mathcal{A}_1 (\bff{v},\bff{w}) 
		&:= 
		\alpha \inpro{\nabla \bff{v}}{\nabla \bff{w}} + \alpha \inpro{\bff{v}}{\bff{w}}
		+
		\alpha\inpro{\bff{e}(\bff{e}\cdot \bff{v})}{\bff{w}},
		\\
		\mathcal{B}(\bff{\phi},\bff{\xi};\bff{v},\bff{w}) &:= 
		\alpha \inpro{(\bff{\phi}\cdot\bff{\xi})\bff{v}}{\bff{w}},
		\\
		\mathcal{C}(\bff{\phi};\bff{v},\bff{w})
		&:= -\inpro{\bff{\phi}\times \nabla \bff{v}}{\nabla \bff{w}}
		- \inpro{\bff{v}\times \bff{e}(\bff{e}\cdot \bff{\phi})}{\bff{w}}
		-\beta_2 \inpro{\bff{v}\times (\bff{\nu}\cdot \nabla)\bff{\phi}}{\bff{w}}, 
		\\
		\mathcal{D}(\bff{v},\bff{w})
		&:= 
		\beta_1 \inpro{(\bff{\nu}\cdot \nabla)\bff{v}}{\bff{w}}.
	\end{align*}
	If either $\bff{\nu}\cdot \bff{n}=0$ on $\partial \mathscr{D}$ or $\bff{\nu}=\bff{0}$ on $\partial \mathscr{D}$, then by~\eqref{equ:div ab} and the divergence theorem, we can write:
	\begin{align}
		\label{equ:C div}
		\mathcal{C}(\bff{\phi};\bff{v},\bff{w})
		&= 
		-\inpro{\bff{\phi}\times \nabla \bff{v}}{\nabla \bff{w}}
		- \inpro{\bff{v}\times \bff{e}(\bff{e}\cdot \bff{\phi})}{\bff{w}}
		-
		\beta_2 \inpro{\bff{\phi}\otimes \bff{\nu}}{\nabla(\bff{v}\times \bff{w})}
		\nonumber\\
		&\quad
		+
		\beta_2 \inpro{\bff{v}\times (\nabla \cdot \bff{\nu})\bff{\phi}}{\bff{w}},
		\\
		\label{equ:D div}
		\mathcal{D}(\bff{v},\bff{w})
		&= 
		-
		\beta_1 \inpro{\bff{v}\otimes \bff{\nu}}{\nabla \bff{w}}
		-
		\beta_1 \inpro{(\nabla \cdot \bff{\nu}) \bff{v}}{\bff{w}}.
	\end{align}
	Furthermore, let
	\begin{align}\label{equ:bilinear A}
		\mathcal{A}(\bff{\phi}; \bff{v},\bff{w})
		:=
		\mathcal{A}_1(\bff{v},\bff{w})
		+
		\mathcal{B}(\bff{\phi},\bff{\phi};\bff{v},\bff{w})
		+
		\mathcal{C}(\bff{\phi}; \bff{v},\bff{w}).
	\end{align}
\end{definition}

Some properties of the multilinear forms defined above are gathered in the following lemma.

\begin{lemma}
	Let $\bff{\phi}, \bff{\xi} \in \bb{H}^1\cap \bb{L}^\infty$, and let the maps $\mathcal{A}_1,\mathcal{B},\mathcal{C},\mathcal{D}$, and $\mathcal{A}$ be as defined in Definition~\ref{def:forms}.
	\begin{enumerate}[(i)]
		\item \label{item:bdd} There exists a constant $C>0$ such that for all $\bff{v},\bff{w}\in \bb{H}^1$,
		\begin{align}
			\label{equ:A delta bdd}
			\left|\mathcal{A}_1 (\bff{v},\bff{w}) \right|
			&\leq
			3\alpha \norm{\bff{v}}{\bb{H}^1} \norm{\bff{w}}{\bb{H}^1},
			\\
			\label{equ:B bdd}
			\left|\mathcal{B} (\bff{\phi},\bff{\xi}; \bff{v},\bff{w}) \right|
			&\leq
			\alpha \norm{\bff{\phi}}{\bb{L}^\infty} \norm{\bff{\xi}}{\bb{L}^\infty} \norm{\bff{v}}{\bb{L}^2} \norm{\bff{w}}{\bb{L}^2},
			\\
			\label{equ:C bdd}
			\left|\mathcal{C} (\bff{\phi}; \bff{v},\bff{w}) \right|
			&\leq
			C \left( \norm{\bff{\phi}}{\bb{H}^1} + \norm{\bff{\phi}}{\bb{L}^\infty}\right) \norm{\bff{v}}{\bb{H}^1} \norm{\bff{w}}{\bb{H}^1}.
		\end{align}
		\item \label{item:2} For all $\bff{v}\in \bb{H}^1$, $\mathcal{C}(\bff{\phi}; \bff{v},\bff{v})=0$.
		\item $\mathcal{A}(\bff{\phi};\, \cdot\, , \, \cdot)$ is bounded, i.e. there exists a constant $\beta_{\mathrm{b}}>0$ depending only on the coefficients of the equation, $\norm{\bff{\phi}}{\bb{L}^\infty}$, and~$\norm{\bff{\phi}}{\bb{H}^1}$, such that
		\begin{align}\label{equ:A bounded}
			\abs{\mathcal{A}(\bff{\phi};\bff{v},\bff{w})}
			\leq
			\beta_\mathrm{b} \norm{\bff{v}}{\bb{H}^1} \norm{\bff{w}}{\bb{H}^1},
			\quad
			\forall \bff{v},\bff{w}\in \bb{H}^1.
		\end{align}
		\item $\mathcal{A}(\bff{\phi};\, \cdot\, , \, \cdot)$ is coercive, i.e.
		\begin{align}\label{equ:A coercive}
			\mathcal{A}(\bff{\phi};\bff{v},\bff{v}) \geq \alpha \norm{\bff{v}}{\bb{H}^1}^2,
			\quad \forall \bff{v}\in \bb{H}^1.
		\end{align}
		Here, $\alpha$ is a numerical coefficient in \eqref{equ:llb eq1}.
		\item If additionally $\bff{\phi}\in \bb{W}^{1,4}$ and $\bff{w}\in \bb{H}^2_{\bff{n}}$, then there exists a constant $C>0$ such that
		\begin{align}\label{equ:C ineq W14}
			\left|\mathcal{C} (\bff{\phi}; \bff{v},\bff{w}) \right|
			&\leq
			C \norm{\bff{\phi}}{\bb{W}^{1,4}} \norm{\bff{v}}{\bb{L}^2} \norm{\bff{w}}{\bb{H}^2},
			\\
			\label{equ:C Delta w w}
			\left|\mathcal{C} (\bff{\phi}; \Delta \bff{w},\bff{w}) \right|
			&\leq
			C \norm{\bff{\phi}}{\bb{W}^{1,4}} \norm{\bff{w}}{\bb{W}^{1,4}} \norm{\Delta \bff{w}}{\bb{L}^2},
		\end{align}
	\end{enumerate}
\end{lemma}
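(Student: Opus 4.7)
The plan is to handle the boundedness statements (i) and (iii) by routine H\"older's inequalities combined with the Sobolev embedding $\bb{H}^1\hookrightarrow \bb{L}^4$ (valid for $d\leq 3$). For~\eqref{equ:A delta bdd} I apply Cauchy--Schwarz to each of the three summands of $\mathcal{A}_1$, noting $|\bff{e}(\bff{e}\cdot \bff{v})|\leq |\bff{v}|$ since $\bff{e}$ is a unit vector, which produces the constant $3\alpha$. For~\eqref{equ:B bdd}, the pointwise bound $|(\bff{\phi}\cdot\bff{\xi})\bff{v}|\leq \norm{\bff{\phi}}{\bb{L}^\infty}\norm{\bff{\xi}}{\bb{L}^\infty}|\bff{v}|$ followed by Cauchy--Schwarz is immediate. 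For~\eqref{equ:C bdd}, the first two summands of $\mathcal{C}$ are controlled by $\norm{\bff{\phi}}{\bb{L}^\infty}$ directly; for the third, H\"older's inequality together with $\norm{\bff{\nu}}{\bb{L}^\infty}\leq \nu_\infty$ gives $C\norm{\nabla \bff{\phi}}{\bb{L}^2}\norm{\bff{v}}{\bb{L}^4}\norm{\bff{w}}{\bb{L}^4}$, which $\bb{H}^1\hookrightarrow \bb{L}^4$ converts into the required $C\norm{\bff{\phi}}{\bb{H}^1}\norm{\bff{v}}{\bb{H}^1}\norm{\bff{w}}{\bb{H}^1}$. Summing the three pieces yields the boundedness~\eqref{equ:A bounded} of $\mathcal{A}$.

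For (ii), the vector identity $(\bff{a}\times \bff{b})\cdot \bff{a}=0$ applied column-wise to $\nabla \bff{v}$ in the first summand of $\mathcal{C}(\bff{\phi};\bff{v},\bff{v})$, and applied directly in the remaining two summands (both of the form $\bff{v}\times(\,\cdot\,)$ tested against $\bff{v}$), makes every term vanish. Coercivity~\eqref{equ:A coercive} then follows at once: $\mathcal{A}_1(\bff{v},\bff{v})\geq \alpha\norm{\bff{v}}{\bb{H}^1}^2$ since the anisotropy contribution is non-negative; $\mathcal{B}(\bff{\phi},\bff{\phi};\bff{v},\bff{v})=\alpha\int_\mathscr{D} |\bff{\phi}|^2|\bff{v}|^2\dx\geq 0$; and $\mathcal{C}(\bff{\phi};\bff{v},\bff{v})=0$ by~(ii).

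Item (v) is the only place where real work is required. For~\eqref{equ:C ineq W14}, the embedding $\bb{W}^{1,4}\hookrightarrow \bb{L}^\infty$ (valid since $4>d$) dispatches the anisotropy summand directly. On the first summand I would integrate by parts on the derivative falling on $\bff{v}$, using $\partial_{\bff{n}}\bff{w}=\bff{0}$ to discard the boundary integral, obtaining
\begin{align*}
\inpro{\bff{\phi}\times \nabla \bff{v}}{\nabla \bff{w}}
=
-\inpro{\nabla \bff{\phi}\times \bff{v}}{\nabla \bff{w}}
-\inpro{\bff{\phi}\times \bff{v}}{\Delta \bff{w}},
\end{align*}
which is controlled by H\"older's inequality with exponents $(4,2,4)$ and $(\infty,2,2)$ respectively, absorbing $\norm{\nabla \bff{w}}{\bb{L}^4}$ via $\bb{H}^2\hookrightarrow \bb{W}^{1,4}$ and $\norm{\bff{\phi}}{\bb{L}^\infty}$ via $\bb{W}^{1,4}\hookrightarrow \bb{L}^\infty$. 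The third summand is analogous using $\norm{\bff{w}}{\bb{L}^4}\leq C\norm{\bff{w}}{\bb{H}^2}$.

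The main obstacle is~\eqref{equ:C Delta w w}, since a naive substitution $\bff{v}=\Delta \bff{w}$ in~\eqref{equ:C ineq W14} would cost one derivative too many. My plan is to exploit the antisymmetry $(\bff{\phi}\times \nabla \Delta \bff{w})\cdot \nabla \bff{w}=-(\bff{\phi}\times \nabla \bff{w})\cdot \nabla \Delta \bff{w}$, swapping the roles of the two arguments in the first summand of $\mathcal{C}$, and then to integrate by parts to shift the remaining derivative off $\Delta \bff{w}$. The boundary integral vanishes because $\sum_j n_j(\bff{\phi}\times \partial_j\bff{w})=\bff{\phi}\times \partial_{\bff{n}}\bff{w}=\bff{0}$, and the term $(\bff{\phi}\times \Delta \bff{w})\cdot \Delta \bff{w}$ arising from the Leibniz rule vanishes by antisymmetry of the cross product, so that
\begin{align*}
-\inpro{\bff{\phi}\times \nabla \Delta \bff{w}}{\nabla \bff{w}}
=
-\inpro{\nabla \bff{\phi}\times \nabla \bff{w}}{\Delta \bff{w}}.
\end{align*}
H\"older's inequality with exponents $(4,4,2)$ then bounds this by $C\norm{\bff{\phi}}{\bb{W}^{1,4}}\norm{\bff{w}}{\bb{W}^{1,4}}\norm{\Delta \bff{w}}{\bb{L}^2}$, and the remaining two summands of $\mathcal{C}(\bff{\phi};\Delta\bff{w},\bff{w})$ are handled by the same H\"older and $\bb{W}^{1,4}\hookrightarrow \bb{L}^\infty$ arguments as above.
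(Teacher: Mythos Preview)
Your proposal is correct and follows essentially the same approach as the paper. The only cosmetic difference is in~\eqref{equ:C Delta w w}: the paper first records the integrated-by-parts identity
\[
\mathcal{C}(\bff{\phi};\bff{v},\bff{w})
=
\inpro{\bff{\phi}\times \bff{v}}{\Delta \bff{w}}
+
\inpro{\nabla \bff{\phi}\times \bff{v}}{\nabla \bff{w}}
-\inpro{\bff{v}\times \bff{e}(\bff{e}\cdot \bff{\phi})}{\bff{w}}
-\beta_2 \inpro{\bff{v}\times (\bff{\nu}\cdot \nabla)\bff{\phi}}{\bff{w}}
\]
for general $\bff{v}$ (your computation for~\eqref{equ:C ineq W14}) and then substitutes $\bff{v}=\Delta\bff{w}$, so that the first term vanishes by antisymmetry; you instead redo the integration by parts directly with $\bff{v}=\Delta\bff{w}$, which yields the same expression up to a sign reshuffle via the scalar triple product.
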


\begin{proof}
	Inequalities~\eqref{equ:A delta bdd}, \eqref{equ:B bdd}, and \eqref{equ:C bdd} follow immediately by H\"older's inequality. Statement~\eqref{item:2} is obvious since $\bff{a}\times\bff{a}=\bff{0}$ and $(\bff{a}\times \bff{b})\cdot \bff{b}=0$ for any $\bff{a},\bff{b}\in \bb{R}^3$. Inequality~\eqref{equ:A bounded} follows from the estimates in part~\eqref{item:bdd} and the triangle inequality.
	Moreover, note that
	\begin{align*}
		\mathcal{A}(\bff{\phi}; \bff{v},\bff{v})
		&=
		\alpha \norm{\nabla \bff{v}}{\bb{L}^2}^2
		+
		\alpha \norm{\bff{v}}{\bb{L}^2}^2
		+
		\alpha \norm{\bff{e}\cdot \bff{v}}{\bb{L}^2}^2
		+
		\alpha \norm{|\bff{\phi}| |\bff{v}|}{\bb{L}^2}^2
		\geq
		\alpha \norm{\bff{v}}{\bb{H}^1}^2,
	\end{align*}
	showing~\eqref{equ:A coercive}.
	
	Next, we show~\eqref{equ:C ineq W14} and~\eqref{equ:C Delta w w}. Firstly, integrating by parts the first term in the definition of $\mathcal{C}$, and noting the assumptions, we have
	\begin{align}\label{equ:int parts C}
		\mathcal{C} (\bff{\phi}; \bff{v},\bff{w})
		&=
		\inpro{\bff{\phi}\times \bff{v}}{\Delta \bff{w}}
		+
		\inpro{\nabla \bff{\phi}\times \bff{v}}{\nabla \bff{w}}
		+\inpro{\bff{v}\times \bff{e}(\bff{e}\cdot \bff{\phi})}{\bff{w}}
		-\beta_2 \inpro{\bff{v}\times (\bff{\nu}\cdot \nabla)\bff{\phi}}{\bff{w}}.
	\end{align}
	By H\"older's inequality,
	\begin{align*}
		\abs{\mathcal{C} (\bff{\phi}; \bff{v},\bff{w})}
		&\leq
		\norm{\bff{\phi}}{\bb{L}^\infty} \norm{\bff{v}}{\bb{L}^2} \norm{\Delta \bff{w}}{\bb{L}^2}
		+
		\norm{\nabla \bff{\phi}}{\bb{L}^4} \norm{\bff{v}}{\bb{L}^2} \norm{\nabla\bff{w}}{\bb{L}^4}
		+
		\norm{\bff{v}}{\bb{L}^2} \norm{\bff{\phi}}{\bb{L}^4} \norm{\bff{w}}{\bb{L}^4}
		\\
		&\quad
		+
		\beta_2 \nu_\infty \norm{\bff{v}}{\bb{L}^2} \norm{\nabla\bff{\phi}}{\bb{L}^4} \norm{\bff{w}}{\bb{L}^4}
		\\
		&\leq
		C\norm{\bff{\phi}}{\bb{W}^{1,4}} \norm{\bff{v}}{\bb{L}^2} \norm{\bff{w}}{\bb{H}^2},
	\end{align*}
	where in the last step we used Sobolev embeddings $\bb{H}^2\hookrightarrow \bb{W}^{1,4} \hookrightarrow \bb{L}^\infty$, thus showing~\eqref{equ:C ineq W14}.
	Similarly, inequality~\eqref{equ:C Delta w w} follows from~\eqref{equ:int parts C} with $\bff{v}=\Delta \bff{w}$ and H\"older's inequality, noting that the first term on the right-hand side of~\eqref{equ:int parts C} now vanishes.
\end{proof}

In terms of the forms introduced in Definition~\ref{def:forms}, the weak formulation~\eqref{equ:weak special} of the problem can be written as
\begin{align}\label{equ:weak form AB}
	\inpro{\partial_t \bff{u}}{\bff{\chi}} 
	+
	\mathcal{A}(\bff{u};\bff{u},\bff{\chi})
	=
	\mathcal{D}(\bff{u},\bff{\chi}), \quad
	\forall \bff{\chi} \in \bb{H}^1.
\end{align}

Let $k$ be the time step and $\bb{V}_h$ be the finite element space~\eqref{equ:Vh}. Let $\bff{u}_h^n$ be the approximation in $\bb{V}_h$ of $\bff{u}(t)$ at time $t=t_n=nk\in [0,T]$, where $n=0,1,2,\ldots, \lfloor T/k \rfloor$. We denote $\bff{u}^n:= \bff{u}(t_n)$. For any discrete function $\bff{v}$, define for $n\in \bb{N}$,
\begin{align*}
	\mathrm{d}_t \bff{v}^{n} 
	:=
	\frac{\bff{v}^{n}-\bff{v}^{n-1}}{k}.
\end{align*}
We now describe a linear fully discrete scheme to solve~\eqref{equ:weak form AB}. We start with $\bff{u}_h^0= P_h \bff{u}_0 \in \bb{V}_h$. For $t_n\in [0,T]$, $n\in \bb{N}$, given $\bff{u}_h^{n-1}\in \bb{V}_h$, define $\bff{u}_h^n$ by
\begin{align}\label{equ:scheme spin}
	\inpro{\mathrm{d}_t \bff{u}_h^n}{\bff{\chi}}
	+
	\mathcal{A} (\bff{u}_h^{n-1}; \bff{u}_h^n, \bff{\chi})
	=
	\mathcal{D}(\bff{u}_h^{n-1}, \bff{\chi}),
	\quad 
	\forall \bff{\chi}\in\bb{V}_h.
\end{align}

To facilitate the proof of the error analysis, we split the approximation error as:
\begin{align}\label{equ:split theta eta}
	\bff{u}_h^n- \bff{u}^n
	= 
	\left(\bff{u}_h^n- \Pi_h \bff{u}^n\right) 
	+ 
	\left(\Pi_h \bff{u}^n- \bff{u}^n\right) =: \bff{\theta}^n+ \bff{\eta}^n,
\end{align}
where for any $t_n\in [0,T]$, $\Pi_h \bff{u}^n=: \Pi_h \bff{u}(t_n)$ is the elliptic projection of the solution $\bff{u}(t_n)$ defined by
\begin{align}\label{equ:elliptic proj}
	\mathcal{A}(\bff{u}(t_n); \bff{\eta}^n, \bff{\chi})=0, \quad \forall \bff{\chi}\in\bb{V}_h.
\end{align}
Note that scheme~\eqref{equ:scheme spin} and the elliptic projection are well-defined by the Lax--Milgram lemma. 
We study some properties of this projection in the following section, while the error analysis is performed in Section~\ref{subsec:error analysis}. For a technical reason (see Lemma~\ref{lem:stab elliptic}), we assume that in~\eqref{equ:Vh} the polynomial degree $r$ in $\bb{V}_h$ is:
\begin{equation}\label{equ:deg r}
	\begin{cases}
		r\geq 1, &\text{if } d\in \{1,2\},
		\\
		r\geq 2, &\text{if } d=3.
	\end{cases}
\end{equation}

In this section, to attain an optimal order of convergence for the underlying approximation \eqref{equ:scheme spin}, we assume that the problem~\eqref{equ:llb a} admits a sufficiently regular solution~$\bff{u}$ which satisfies
\begin{equation}\label{equ:ass 1}
	\norm{\bff{u}}{L^\infty(\bb{H}^{r+1})}
	+ \norm{\bff{u}}{L^\infty(\bb{W}^{1,\infty})}
	+ \norm{\partial_t \bff{u}}{L^\infty(\bb{H}^{r+1})}
	+ \norm{\partial_t^2 \bff{u}}{L^\infty(\bb{L}^2)}
	\leq K_r,
\end{equation}
for some positive constant $K_r$ depending on the initial data, $r$, and $T$. Here, $r$ is the degree of piecewise polynomials in the finite element space~\eqref{equ:Vh}.

Note that the existence of a strong solution satisfying~\eqref{equ:ass 1} for $r=1$ is given by Theorem~\ref{the:main existence}, at least for a smooth domain (so the assumption~\eqref{equ:ass 1} is reasonable). The existence of a more regular solution can be shown in a similar manner.

\subsection{Properties of elliptic projection}\label{subsec:elliptic}

Relevant approximation and stability properties for the elliptic projection defined by~\eqref{equ:elliptic proj} are derived in this section. We begin with the following lemma which will be needed to show some approximation properties of $\Pi_h$.

\begin{lemma}\label{lem:dual exist}
	Let $\mathscr{D}$ be a smooth or a convex polyhedral domain. Assume that the exact solution $\bff{u}$ of~\eqref{equ:llb a} belongs to $L^\infty(\bb{H}^2_{\bff{n}})$. For any $\bff{\varphi}\in \bb{L}^2$ and for each $t\in [0,T]$, there exists $\bff{\psi}(t)\in \bb{H}^2_{\bff{n}}$ such that
	\begin{align}\label{equ:A ut zeta}
		\mathcal{A}(\bff{u}(t); \bff{\zeta}, \bff{\psi}(t))
		=
		\inpro{\bff{\varphi}}{\bff{\zeta}}, \quad
		\forall \bff{\zeta}\in \bb{H}^1.
	\end{align}
	Moreover,
	\begin{align}\label{equ:psi H2}
		\norm{\bff{\psi}(t)}{\bb{H}^2} \leq C\norm{\bff{\varphi}}{\bb{L}^2},
	\end{align}
	where the constant $C$ depends on $\mathscr{D}$, $T$ and $\norm{\bff{u}}{L^\infty(\bb{H}^2)}$.
\end{lemma}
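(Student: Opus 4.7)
The plan is to obtain $\bff{\psi}(t)$ first as a weak solution in $\bb{H}^1$ via the Lax--Milgram lemma, and then bootstrap the regularity to $\bb{H}^2_{\bff{n}}$ via an elliptic regularity argument tailored to the cross-product structure of $\mathcal{C}$. For the first step, observe that the \emph{transposed} form $\widetilde{\mathcal{A}}(\bff{u}(t); \bff{\psi}, \bff{\zeta}) := \mathcal{A}(\bff{u}(t); \bff{\zeta}, \bff{\psi})$ is bounded on $\bb{H}^1\times \bb{H}^1$ by~\eqref{equ:A bounded} (noting that the Sobolev embedding $\bb{H}^2 \hookrightarrow \bb{L}^\infty$ for $d\leq 3$ gives $\bff{u}(t)\in \bb{L}^\infty$), and is coercive with the same constant as $\mathcal{A}$ since the coercivity estimate~\eqref{equ:A coercive} involves $\bff{\psi}$ in both slots. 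Lax--Milgram then yields a unique $\bff{\psi}(t)\in \bb{H}^1$ solving~\eqref{equ:A ut zeta}, together with the a priori bound $\norm{\bff{\psi}(t)}{\bb{H}^1} \leq C\norm{\bff{\varphi}}{\bb{L}^2}$.

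Next, I would identify the strong form of the problem. Integrating by parts the two terms in $\mathcal{A}(\bff{u}(t); \bff{\zeta}, \bff{\psi})$ that carry derivatives of the test function $\bff{\zeta}$ --- namely $\alpha \inpro{\nabla \bff{\zeta}}{\nabla \bff{\psi}}$ from $\mathcal{A}_1$ and $-\inpro{\bff{u}(t)\times \nabla \bff{\zeta}}{\nabla \bff{\psi}}$ from $\mathcal{C}$ --- and using the pointwise identity $(\bff{a}\times \bff{b})\cdot \bff{c} = -(\bff{a}\times \bff{c})\cdot \bff{b}$, one arrives at the pointwise equation
\begin{align*}
	-(\alpha I + \bff{u}(t)\times)\,\Delta \bff{\psi}(t) = \bff{\varphi} - \bff{g}(t) \quad \text{in } \mathscr{D},
\end{align*}
together with the boundary identity $(\alpha I + \bff{u}(t)\times)\,\partial_{\bff{n}}\bff{\psi}(t) = \bff{0}$ on $\partial \mathscr{D}$, where $\bff{g}$ gathers all remaining lower-order terms depending only on $\bff{u},\nabla \bff{u}, \bff{\psi}, \nabla \bff{\psi}$ (and the fixed data $\bff{\nu}, \bff{e}$). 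Since the pointwise linear map $\bff{v}\mapsto \alpha\bff{v} + \bff{u}(t)\times \bff{v}$ has determinant $\alpha(\alpha^2 + |\bff{u}(t)|^2) > 0$, it is invertible, so the boundary identity collapses to $\partial_{\bff{n}} \bff{\psi}(t) = \bff{0}$; hence $\bff{\psi}(t)\in \bb{H}^2_{\bff{n}}$ as soon as $\Delta \bff{\psi}(t)\in \bb{L}^2$ is established.

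For the $\bb{L}^2$ bound on $\Delta \bff{\psi}$, I would test the strong equation against $-\Delta \bff{\psi}(t)$. The cross-product contribution vanishes pointwise because $\inpro{\bff{u}(t)\times \Delta \bff{\psi}}{\Delta \bff{\psi}} = 0$, leaving
\begin{align*}
	\alpha \norm{\Delta \bff{\psi}(t)}{\bb{L}^2}^2 \leq \left(\norm{\bff{\varphi}}{\bb{L}^2} + \norm{\bff{g}(t)}{\bb{L}^2}\right)\norm{\Delta \bff{\psi}(t)}{\bb{L}^2}.
\end{align*}
Using H\"older's inequality together with the assumption $\bff{u}\in L^\infty(\bb{H}^2_{\bff{n}})\hookrightarrow L^\infty(\bb{L}^\infty)$ and the $\bb{H}^1$ bound on $\bff{\psi}$ from the first step yields $\norm{\bff{g}(t)}{\bb{L}^2} \leq C\norm{\bff{\psi}(t)}{\bb{H}^1} \leq C\norm{\bff{\varphi}}{\bb{L}^2}$, after which a final application of~\eqref{equ:v H2 elliptic} gives $\norm{\bff{\psi}(t)}{\bb{H}^2} \leq C\norm{\bff{\varphi}}{\bb{L}^2}$, with $C$ depending on $\mathscr{D}, T$ and $\norm{\bff{u}}{L^\infty(\bb{H}^2)}$. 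The main delicate point is the identification of the boundary condition: the cross-product term in $\mathcal{C}$ produces a mixed-type boundary expression that only collapses to pure Neumann thanks to the invertibility of $\alpha I + \bff{u}(t)\times$, which crucially requires $\alpha > 0$.
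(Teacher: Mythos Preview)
Your bootstrap is circular: testing the strong form against $-\Delta\bff{\psi}(t)$, or even writing the equation pointwise with $\Delta\bff{\psi}$ isolated on one side, already presupposes $\Delta\bff{\psi}(t)\in\bb{L}^2$, which is exactly what you are trying to establish. A separate gap lies in your bound $\norm{\bff{g}(t)}{\bb{L}^2}\leq C\norm{\bff{\psi}(t)}{\bb{H}^1}$. Expanding $\nabla\cdot\big[(\alpha I+\bff{u}\times)\nabla\bff{\psi}\big]$ produces, besides $(\alpha I+\bff{u}\times)\Delta\bff{\psi}$, the commutator term $\sum_j(\partial_j\bff{u})\times\partial_j\bff{\psi}$, which must sit inside your $\bff{g}$. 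In $d=3$ the hypothesis $\bff{u}\in\bb{H}^2$ gives only $\nabla\bff{u}\in\bb{L}^6$, so with $\nabla\bff{\psi}\in\bb{L}^2$ this product lies merely in $\bb{L}^{3/2}$; the embedding $\bb{H}^2\hookrightarrow\bb{L}^\infty$ you invoke controls $\bff{u}$, not $\nabla\bff{u}$, and hence does not close the estimate.

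The paper resolves both issues simultaneously by a Faedo--Galerkin argument in the span $\bb{W}_n$ of Neumann eigenfunctions. There $\bff{\psi}_n\in\bb{W}_n$ is smooth and $-\Delta\bff{\psi}_n\in\bb{W}_n$ is automatically a legitimate test function, which removes the circularity. For the problematic cross term the paper uses the bound $|\mathcal{C}(\bff{u};\Delta\bff{\psi}_n,\bff{\psi}_n)|\leq C\norm{\bff{u}}{\bb{W}^{1,4}}\norm{\bff{\psi}_n}{\bb{W}^{1,4}}\norm{\Delta\bff{\psi}_n}{\bb{L}^2}$ together with a Gagliardo--Nirenberg interpolation of $\norm{\bff{\psi}_n}{\bb{W}^{1,4}}$ between $\bb{H}^1$ and $\bb{H}^2$; the $\bb{H}^2$ contribution is then absorbed into $\norm{\Delta\bff{\psi}_n}{\bb{L}^2}^2$ on the left via Young's inequality. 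Compactness on the uniform $\bb{H}^2$ bound gives the limit $\bff{\psi}(t)\in\bb{H}^2_{\bff{n}}$. Your argument could be repaired along these lines, but as written it does not go through in $d=3$.
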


\begin{proof}
	We use the Faedo--Galerkin method. Let $\{\bff{e}_i\}_{i=1}^\infty$ be an orthonormal basis of $\bb{L}^2$ consisting of eigenfunctions for $-\Delta$ such that
	\begin{align*}
		-\Delta \bff{e}_i = \lambda_i \bff{e}_i \text{ in } \mathscr{D} \quad
		\text{and} \quad \frac{\partial \bff{e}_i}{\partial \bff{n}}=\bff{0} \text{ on } \partial \mathscr{D},
	\end{align*}
	where $\lambda_i \geq 0$ are the eigenvalues of $-\Delta$ associated with $\bff{e}_i$.
	Let $\bb{W}_n:= \text{span}\{\bff{e}_1,\ldots,\bff{e}_n\}$. 
	
	Given $\bff{\varphi}\in \bb{L}^2$, for each $t\in [0,T]$, we approximate the solution to~\eqref{equ:A ut zeta} by $\bff{\psi}_n(t)\in \bb{W}_n$ satisfying 
	\begin{align}\label{equ:A ut zeta psi n}
		\mathcal{A}(\bff{u}(t); \bff{\zeta}, \bff{\psi}_n(t))
		=
		\inpro{\bff{\varphi}}{\bff{\zeta}}, \quad
		\forall \bff{\zeta}\in \bb{W}_n.
	\end{align}
	We need the following uniform bound:
	\begin{align}\label{equ:psi n phi H2}
		\norm{\bff{\psi}_n(t)}{\bb{H}^2} \leq C \norm{\bff{\varphi}}{\bb{L}^2}.
	\end{align}
	Once this bound is established, we can apply the standard compactness argument: For each $t\in [0,T]$, the Banach--Alaoglu theorem and a compactness argument imply the existence of a subsequence, which is still denoted by $\{\bff{\psi}_n(t)\}$, such that
	\begin{align*}
		&\bff{\psi}_n(t) \rightharpoonup \bff{\psi}(t) \quad \text{weakly in } \bb{H}^2_{\bff{n}},
		\\
		&\bff{\psi}_n(t) \to \bff{\psi}(t) \quad \text{strongly in } \bb{W}^{1,4}.
	\end{align*}
	A standard argument then shows $\bff{\psi}(t)$ satisfies~\eqref{equ:A ut zeta}, while \eqref{equ:psi H2} follows from~\eqref{equ:psi n phi H2}.
	
	It remains to prove~\eqref{equ:psi n phi H2}. Setting $\bff{\zeta}= \bff{\psi}_n$ in~\eqref{equ:A ut zeta psi n} and using the coercivity of $\mathcal{A}$, we have
	\begin{align*}
		\mu \norm{\bff{\psi}_n}{\bb{H}^1}^2
		\leq
		\mathcal{A}(\bff{u}; \bff{\psi}_n, \bff{\psi}_n)
		=
		\inpro{\bff{\varphi}}{\bff{\psi}_n}
		\leq
		\norm{\bff{\varphi}}{\bb{L}^2} \norm{\bff{\psi}_n}{\bb{L}^2},
	\end{align*}
	which implies
	\begin{align}\label{equ:psi n H1}
		\norm{\bff{\psi}_n(t)}{\bb{H}^1} \leq C \norm{\bff{\varphi}}{\bb{L}^2}.
	\end{align}
	Next, taking $\bff{\zeta}= -\Delta \bff{\psi}_n$, integrating by parts as necessary, and applying~\eqref{equ:B bdd} and~\eqref{equ:C Delta w w} we have
	\begin{align*}
		\alpha \norm{\Delta \bff{\psi}_n}{\bb{L}^2}^2
		+
		\delta \norm{\nabla \bff{\psi}_n}{\bb{L}^2}^2
		&=
		\mathcal{B}(\bff{u},\bff{u}; \Delta \bff{\psi}_n, \bff{\psi}_n)
		+
		\mathcal{C}(\bff{u}; \Delta \bff{\psi}_n, \bff{\psi}_n)
		-
		\inpro{\bff{\varphi}}{\Delta \bff{\psi}_n}
		\\
		&\leq
		C\norm{\bff{u}}{\bb{L}^\infty}^2 \norm{\Delta \bff{\psi}_n}{\bb{L}^2} \norm{\bff{\psi}_n}{\bb{L}^2}
		+
		C \norm{\bff{u}}{\bb{W}^{1,4}} \norm{\bff{\psi}_n}{\bb{W}^{1,4}} \norm{\Delta \bff{\psi}_n}{\bb{L}^2}
		+
		\norm{\bff{\varphi}}{\bb{L}^2} \norm{\Delta \bff{\psi}_n}{\bb{L}^2}
		\\
		&\leq
		C \norm{\bff{\psi}_n}{\bb{H}^1}^2
		+
		\frac{\alpha}{2} \norm{\Delta \bff{\psi}_n}{\bb{L}^2}^2
		+
		\norm{\bff{\varphi}}{\bb{L}^2}^2,
	\end{align*}
	where in the last step we also used the Gagliardo--Nirenberg inequality, Sobolev embedding, and Young's inequality. This, together with~\eqref{equ:psi n H1} and the elliptic regularity result, implies~\eqref{equ:psi n phi H2}. The proof is now complete.
\end{proof}

With the above lemma, we can show the following approximation properties of $\Pi_h$.

\begin{proposition}\label{pro:eta t}
Let $\Pi_h \bff{u}$ be the elliptic projection of $\bff{u}$ defined by~\eqref{equ:elliptic proj}, $\bff{u}$ be the solution of~\eqref{equ:llb a} which satisfies~\eqref{equ:ass 1}, and $\bff{\eta}(t)=\Pi_h \bff{u}(t)- \bff{u}(t)$ be as defined in~\eqref{equ:split theta eta}. Then for any $t\in [0,T]$,
\begin{align}
	\label{equ:eta t}
	\norm{\bff{\eta}(t)}{\bb{L}^2} + h \norm{\nabla \bff{\eta}(t)}{\bb{L}^2}
	&\leq
	Ch^{r+1},
	\\
	\label{equ:dt eta t}
	\norm{\partial_t \bff{\eta}(t)}{\bb{L}^2} + h \norm{\nabla \partial_t \bff{\eta}(t)}{\bb{L}^2}
	&\leq
	Ch^{r+1},
\end{align}
where $r$ is the degree of polynomials in the finite element space $\bb{V}_h$.
The constant $C$ depends on $T$, $\mathscr{D}$, $K_r$, and the coefficients of the equation, where $K_r$ was defined in~\eqref{equ:ass 1}.
\end{proposition}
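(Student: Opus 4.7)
The plan is to establish all four estimates via a two-step strategy that is by now standard for elliptic projections defined by nonsymmetric forms: a C\'ea-type argument (coercivity plus boundedness plus Galerkin orthogonality) for the $\bb{H}^1$-bounds, and an Aubin--Nitsche duality argument based on Lemma~\ref{lem:dual exist} for the $\bb{L}^2$-bounds. I would treat $\bff{\eta}$ first, then obtain the analogous bounds for $\partial_t \bff{\eta}$ by differentiating the defining identity~\eqref{equ:elliptic proj} in $t$ and feeding it through the same machinery.

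For the $\bb{H}^1$-estimate on $\bff{\eta}$, coercivity~\eqref{equ:A coercive} gives $\alpha\norm{\bff{\eta}}{\bb{H}^1}^2\leq \mathcal{A}(\bff{u};\bff{\eta},\bff{\eta})$, and since $\Pi_h\bff{u}-\bff{\chi}\in \bb{V}_h$ for any $\bff{\chi}\in \bb{V}_h$, Galerkin orthogonality~\eqref{equ:elliptic proj} reduces this to $\mathcal{A}(\bff{u};\bff{\eta},\bff{\chi}-\bff{u})$. Boundedness~\eqref{equ:A bounded} and standard finite-element approximation~\eqref{equ:fin approx} (taking, say, $\bff{\chi}=R_h\bff{u}$) together with~\eqref{equ:ass 1} then yield $\norm{\bff{\eta}}{\bb{H}^1}\leq Ch^{r}$. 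For the $\bb{L}^2$-estimate, I apply Lemma~\ref{lem:dual exist} with $\bff{\varphi}=\bff{\eta}$ to obtain $\bff{\psi}\in \bb{H}^2_{\bff{n}}$ satisfying $\norm{\bff{\psi}}{\bb{H}^2}\leq C\norm{\bff{\eta}}{\bb{L}^2}$; setting $\bff{\zeta}=\bff{\eta}$ in~\eqref{equ:A ut zeta} and using Galerkin orthogonality against $P_h\bff{\psi}\in \bb{V}_h$ gives
\[
\norm{\bff{\eta}}{\bb{L}^2}^2=\mathcal{A}(\bff{u};\bff{\eta},\bff{\psi}-P_h\bff{\psi})\leq \beta_{\mathrm{b}}\norm{\bff{\eta}}{\bb{H}^1}\norm{\bff{\psi}-P_h\bff{\psi}}{\bb{H}^1}\leq Ch^{r+1}\norm{\bff{\eta}}{\bb{L}^2},
\]
after invoking the previous step and~\eqref{equ:proj approx}. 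This proves~\eqref{equ:eta t}.

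For $\partial_t\bff{\eta}$, differentiating the defining identity in $t$ and using multilinearity of $\mathcal{A}$ produces
\[
\mathcal{A}(\bff{u}(t);\partial_t\bff{\eta}(t),\bff{\chi})=-\mathcal{A}'(\bff{u}(t),\partial_t\bff{u}(t);\bff{\eta}(t),\bff{\chi}), \quad\forall\bff{\chi}\in\bb{V}_h,
\]
where $\mathcal{A}'$ is the bilinear form in $(\bff{\eta},\bff{\chi})$ arising from differentiating the coefficient $\bff{u}$ (and $\bff{\nu}$) inside $\mathcal{B}$ and $\mathcal{C}$. The key observation is that $\mathcal{A}'$ remains bounded on $\bb{H}^1\times \bb{H}^1$ with a constant depending on $K_r$ from~\eqref{equ:ass 1}. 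Repeating the C\'ea argument, i.e. writing $\partial_t\bff{\eta}=(\partial_t\Pi_h\bff{u}-\bff{\chi})+(\bff{\chi}-\partial_t\bff{u})$ for $\bff{\chi}\in \bb{V}_h$, applying coercivity to the first summand, substituting the differentiated identity to replace $\mathcal{A}(\bff{u};\partial_t\bff{\eta},\cdot)$ by the $\mathcal{A}'$-term, and absorbing $\norm{\partial_t\bff{\eta}}{\bb{H}^1}$ on the left via Young's inequality, yields
\[
\norm{\partial_t\bff{\eta}}{\bb{H}^1}\leq C\bigl(\norm{\bff{\eta}}{\bb{H}^1}+\inf_{\bff{\chi}\in\bb{V}_h}\norm{\partial_t\bff{u}-\bff{\chi}}{\bb{H}^1}\bigr)\leq Ch^{r},
\]
which is the $h\,\bb{H}^1$ component of~\eqref{equ:dt eta t}. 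The $\bb{L}^2$-estimate on $\partial_t\bff{\eta}$ is then obtained by applying Lemma~\ref{lem:dual exist} with $\bff{\varphi}=\partial_t\bff{\eta}$ and splitting $\norm{\partial_t\bff{\eta}}{\bb{L}^2}^2=\mathcal{A}(\bff{u};\partial_t\bff{\eta},\bff{\psi}-R_h\bff{\psi})+\mathcal{A}(\bff{u};\partial_t\bff{\eta},R_h\bff{\psi})$; the first summand yields $Ch^{r+1}\norm{\partial_t\bff{\eta}}{\bb{L}^2}$ via the previous step and~\eqref{equ:Ritz ineq}, while the second is rewritten as $-\mathcal{A}'(\bff{u},\partial_t\bff{u};\bff{\eta},R_h\bff{\psi})$.

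The main obstacle lies in this last term. Its zeroth-order pieces (from $\partial_t\mathcal{B}$ and from the lower-order parts of $\partial_t\mathcal{C}$) are directly controlled by $C\norm{\bff{\eta}}{\bb{L}^2}\norm{R_h\bff{\psi}}{\bb{L}^2}\leq Ch^{r+1}\norm{\partial_t\bff{\eta}}{\bb{L}^2}$ thanks to the $\bb{L}^2$-bound already proved for $\bff{\eta}$. The genuinely delicate term is $-\inpro{\partial_t\bff{u}\times\nabla\bff{\eta}}{\nabla R_h\bff{\psi}}$, whose naive estimate sees only $\norm{\nabla\bff{\eta}}{\bb{L}^2}\lesssim h^{r}$ and is thus suboptimal by one power of $h$. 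To recover the sharp rate, I would first replace $R_h\bff{\psi}$ by $\bff{\psi}$ up to a commutator bounded by $\norm{\nabla\bff{\eta}}{\bb{L}^2}\cdot \norm{\nabla(\bff{\psi}-R_h\bff{\psi})}{\bb{L}^2}\leq Ch^{r+1}\norm{\bff{\psi}}{\bb{H}^2}$, and then integrate by parts on the remaining $\inpro{\partial_t\bff{u}\times\nabla\bff{\eta}}{\nabla\bff{\psi}}$. The crucial point is that $\bff{\psi}\in \bb{H}^2_{\bff{n}}$ and (by~\eqref{equ:ass 1} combined with the Sobolev embedding $\bb{H}^{r+1}\hookrightarrow \bb{W}^{1,\infty}$ enforced by condition~\eqref{equ:deg r}) $\partial_t\bff{u}\in \bb{W}^{1,\infty}$, so the derivative can be shifted onto $\bff{\psi}$ and $\partial_t\bff{u}$ with boundary terms handled by the Neumann condition on $\bff{\psi}$ and the tangentiality of $\bff{\nu}$; this reduces the bound to $C\norm{\bff{\eta}}{\bb{L}^2}\norm{\bff{\psi}}{\bb{H}^2}\leq Ch^{r+1}\norm{\partial_t\bff{\eta}}{\bb{L}^2}$. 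Cancelling $\norm{\partial_t\bff{\eta}}{\bb{L}^2}$ then completes the proof of~\eqref{equ:dt eta t}.
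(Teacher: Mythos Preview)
Your argument is correct and mirrors the paper's: C\'ea plus Aubin--Nitsche duality for $\bff{\eta}$, then differentiation of~\eqref{equ:elliptic proj} followed by the same machinery for $\partial_t\bff{\eta}$, with the decisive move being the integration by parts on $\inpro{\partial_t\bff{u}\times\nabla\bff{\eta}}{\nabla\bff{\psi}}$ exploiting $\bff{\psi}\in\bb{H}^2_{\bff{n}}$ (which the paper packages as the estimate~\eqref{equ:C ineq W14}). One small fix: the embedding $\bb{H}^{r+1}\hookrightarrow\bb{W}^{1,\infty}$ you invoke fails in the borderline case $d=2$, $r=1$, but only $\partial_t\bff{u}\in\bb{W}^{1,4}$ is actually needed (use H\"older exponents $4,2,4$ on $\inpro{\nabla\partial_t\bff{u}\times\bff{\eta}}{\nabla\bff{\psi}}$ after integrating by parts), and this does follow from~\eqref{equ:ass 1} under~\eqref{equ:deg r}; the tangentiality of $\bff{\nu}$ plays no role in handling this term.
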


\begin{proof}
For all $\bff{\chi}\in \bb{V}_h$, by the boundedness and coercivity of $\mathcal{A}$ in \eqref{equ:A bounded} and \eqref{equ:A coercive}, as well as the definition of $\bff{\eta}$, we have
\begin{align*}
	\alpha \norm{\bff{\eta}(t)}{\bb{H}^1}^2
	&\leq
	\mathcal{A}(\bff{u}(t); \bff{\eta}(t), \bff{\eta}(t))
	\\
	&=
	\mathcal{A}(\bff{u}(t); \bff{\eta}(t), \Pi_h\bff{u}(t)- \bff{\chi})
	-
	\mathcal{A}(\bff{u}(t); \bff{\eta}(t), \bff{u}(t)-\bff{\chi})
	\\
	&\leq
	\beta_\mathrm{b} \norm{\bff{\eta}(t)}{\bb{H}^1} \norm{\bff{u}(t)-\bff{\chi}}{\bb{H}^1},
\end{align*}
since the first term in the second step is zero. Therefore, by~\eqref{equ:fin approx},
\begin{align}\label{equ:eta H1}
	\norm{\bff{\eta}(t)}{\bb{H}^1}
	\leq
	(\beta_\mathrm{b}/\alpha) \inf_{\bff{\chi}\in \bb{V}_h} \norm{\bff{u}(t)-\bff{\chi}}{\bb{H}^1}
	\leq
	Ch^r \norm{\bff{u}}{L^\infty(\bb{H}^{r+1})}.
\end{align}
To show the $\bb{L}^2$-estimate, we use duality argument. For each $\bff{u}(t)\in \bb{H}^2$, let $\bff{\psi}(t)\in \bb{H}^2_{\bff{n}}$ satisfy
\begin{align}\label{equ:A u zeta}
	\mathcal{A}(\bff{u}(t); \bff{\zeta}, \bff{\psi}(t)) = \inpro{\bff{\eta}(t)}{\bff{\zeta}},
	\quad \forall \bff{\zeta}\in \bb{H}^1.
\end{align}
For any $t\in [0,T]$, such $\bff{\psi}(t)$ exists by Lemma~\ref{lem:dual exist} under the assumed conditions on $\mathscr{D}$. Moreover,
\begin{align*}
	\norm{\bff{\psi}(t)}{\bb{H}^2} \leq
	C\norm{\bff{\eta}(t)}{\bb{L}^2}.
\end{align*}
Therefore, taking $\bff{\zeta}= \bff{\eta}(t)$ in~\eqref{equ:A u zeta} and noting~\eqref{equ:elliptic proj}, we have for all $\bff{\chi}\in \bb{V}_h$,
\begin{align*}
	\norm{\bff{\eta}(t)}{\bb{L}^2}^2
	&=
	\mathcal{A}(\bff{u}(t); \bff{\eta}(t), \bff{\psi}(t))
	=
	\mathcal{A}(\bff{u}(t); \bff{\eta}(t), \bff{\psi}(t)-\bff{\chi})
	\\
	&\leq
	\beta_\mathrm{b} \norm{\bff{\eta}(t)}{\bb{H}^1} \inf_{\bff{\chi}\in \bb{V}_h} \norm{\bff{\psi}(t)-\bff{\chi}}{\bb{H}^1}
	\leq
	Ch^{r+1} \norm{\bff{u}(t)}{\bb{H}^{r+1}} \norm{\bff{\eta}(t)}{\bb{L}^2},
\end{align*}
where in the last step we used~\eqref{equ:eta H1} and~\eqref{equ:fin approx}. This and~\eqref{equ:eta H1} then implies inequality~\eqref{equ:eta t}.

Next, we show~\eqref{equ:dt eta t}. For ease of presentation, we omit the dependence of the functions on $t$. By the coercivity of $\mathcal{A}$ and the definition of $\bff{\eta}$,
\begin{align}\label{equ:leq A dt eta}
	\alpha \norm{\partial_t \bff{\eta}(t)}{\bb{H}^1}^2
	\leq
	\mathcal{A}(\bff{u}; \partial_t \bff{\eta}, \partial_t \bff{\eta})
	=
	\mathcal{A}\left(\bff{u}; \partial_t \bff{\eta}, \mathcal{I}_h(\partial_t \bff{\eta})\right) 
	+
	\mathcal{A}\left(\bff{u}; \partial_t \bff{\eta}, \mathcal{I}_h(\partial_t \bff{u}) -\partial_t \bff{u}\right).
\end{align}
We will estimate each term on the last line. To this end, noting~\eqref{equ:bilinear A} and differentiating~\eqref{equ:elliptic proj} with respect to $t$, we have for all $\bff{\chi}\in \bb{V}_h$,
\begin{align}\label{equ:A dt eta}
	\mathcal{A} (\bff{u}; \partial_t \bff{\eta},\bff{\chi})
	+
	2 \mathcal{B}(\bff{u},\partial_t \bff{u}; \bff{\eta}, \bff{\chi})
	+
	\mathcal{C}(\partial_t \bff{u}; \bff{\eta}, \bff{\chi})
	= 0.
\end{align}
Thus, for the first term on the right-hand side of~\eqref{equ:leq A dt eta}, by the boundedness of $\mathcal{B}$ and $\mathcal{C}$ we have
\begin{align}\label{equ:A dt leq 1}
	\big| \mathcal{A}(\bff{u}; \partial_t \bff{\eta}, \mathcal{I}_h(\partial_t \bff{\eta})) \big| 
	&\leq
	\big| 2 \mathcal{B}(\bff{u},\partial_t \bff{u}; \bff{\eta}, \mathcal{I}_h(\partial_t \bff{\eta})) \big|
	+
	\big| \mathcal{C}(\partial_t \bff{u}; \bff{\eta}, \mathcal{I}_h(\partial_t \bff{\eta})) \big|
	\nonumber\\
	&\leq
	C \norm{\bff{\eta}}{\bb{H}^1} \norm{\mathcal{I}_h (\partial_t \bff{\eta})}{\bb{H}^1}
	\leq
	Ch^r \norm{\partial_t \bff{\eta}}{\bb{H}^1},
\end{align}
where in the last step we also used~\eqref{equ:eta t} and \eqref{equ:interp stab W1p}. Here, $C$ depends on $K_r$ among other things (but is independent of $h$). For the second term on the right-hand side of~\eqref{equ:leq A dt eta}, by the boundedness of $\mathcal{A}$ and \eqref{equ:interp approx} we have
\begin{align}\label{equ:A dt leq 2}
	\big| \mathcal{A}(\bff{u}; \partial_t \bff{\eta}, \mathcal{I}_h(\partial_t \bff{u})- \partial_t \bff{u}) \big| 
	\leq
	C \norm{\partial_t \bff{\eta}}{\bb{H}^1} \norm{\mathcal{I}_h(\partial_t \bff{u})- \partial_t \bff{u}}{\bb{H}^1}
	\leq
	Ch^r \norm{\partial_t \bff{\eta}}{\bb{H}^1} \norm{\partial_t \bff{u}}{L^\infty(\bb{H}^{r+1})}.
\end{align}
The estimates~\eqref{equ:A dt leq 1} and \eqref{equ:A dt leq 2}, together with~\eqref{equ:leq A dt eta} imply
\begin{align}\label{equ:dt eta H1}
	\norm{\partial_t \bff{\eta}(t)}{\bb{H}^1}
	\leq
	Ch^r,
\end{align}
where $C$ depends on $K_r$, but is independent of $h$.
To estimate $\norm{\partial_t \bff{\eta}}{\bb{L}^2}$, we use duality argument as before. For each $\bff{u}(t) \in \bb{H}^2$, let $\bff{\psi}(t)\in \bb{H}^2_{\bff{n}}$ satisfy
\begin{align}\label{equ:A pa t zeta}
	\mathcal{A}(\bff{u}(t); \bff{\zeta}, \bff{\psi}(t)) = \inpro{\partial_t \bff{\eta}(t)}{\bff{\zeta}}, \quad \forall \bff{\zeta}\in \bb{H}^1. 
\end{align}
The existence of $\bff{\psi}(t)$ is conferred by Lemma~\ref{lem:dual exist}, and furthermore we have
\begin{align}\label{equ:psi dt eta}
	\norm{\bff{\psi}(t)}{\bb{H}^2} \leq C \norm{\partial_t \bff{\eta}(t)}{\bb{L}^2}.
\end{align}
Taking $\bff{\zeta}= \partial_t \bff{\eta}(t)$ in~\eqref{equ:A pa t zeta}, we have
\begin{align*}
	\mathcal{A}(\bff{u}(t); \partial_t \bff{\eta}(t), \bff{\psi}(t))
	=
	\norm{\partial_t \bff{\eta}(t)}{\bb{L}^2}^2.
\end{align*}
This equation and~\eqref{equ:A dt eta} yield for all $\bff{\chi}\in \bb{V}_h$,
\begin{align*}
	\norm{\partial_t \bff{\eta}}{\bb{L}^2}^2
	&=
	\mathcal{A} (\bff{u}; \partial_t \bff{\eta},\bff{\psi}-\bff{\chi})
	+
	2 \mathcal{B}(\bff{u},\partial_t \bff{u}; \bff{\eta}, \bff{\chi})
	+
	\mathcal{C}(\partial_t \bff{u}; \bff{\eta}, \bff{\chi})
	\\
	&=
	\mathcal{A} (\bff{u}; \partial_t \bff{\eta},\bff{\psi}-\bff{\chi})
	+
	2\mathcal{B} (\bff{u},\partial_t \bff{u}; \bff{\eta}, \bff{\psi}-\bff{\chi})
	-
	2 \mathcal{B}(\bff{u},\partial_t \bff{u}; \bff{\eta}, \bff{\psi})
	\\
	&\quad
	+
	\mathcal{C}(\partial_t \bff{u}; \bff{\eta},\bff{\psi}-\bff{\chi})
	-
	\mathcal{C}(\partial_t \bff{u}; \bff{\eta}, \bff{\psi}).
\end{align*}
We estimate the first four terms on the right-hand side above by using the boundedness of $\mathcal{A}$, $\mathcal{B}$, and $\mathcal{C}$ (cf.~\eqref{equ:A bounded}, \eqref{equ:B bdd}, and~\eqref{equ:C bdd}). The fourth term is bounded using~\eqref{equ:C bdd}, while the last term is bounded using~\eqref{equ:C ineq W14}. Thus, we obtain
\begin{align*}
	\norm{\partial_t \bff{\eta}}{\bb{L}^2}^2
	&\leq
	C \norm{\partial_t \bff{\eta}}{\bb{H}^1} \norm{\bff{\psi}-\bff{\chi}}{\bb{H}^1}
	+
	2 \norm{\bff{u}}{\bb{L}^\infty} \norm{\partial_t \bff{u}}{\bb{L}^\infty} \norm{\bff{\eta}}{\bb{L}^2} \norm{\bff{\psi}-\bff{\chi}}{\bb{L}^2}
	+
	2 \norm{\bff{u}}{\bb{L}^\infty} \norm{\partial_t \bff{u}}{\bb{L}^\infty} \norm{\bff{\eta}}{\bb{L}^2} \norm{\bff{\psi}}{\bb{L}^2}
	\\
	&\quad
	+
	C \norm{\partial_t \bff{u}}{\bb{H}^1} \norm{\bff{\eta}}{\bb{H}^1} \norm{\bff{\psi}-\bff{\chi}}{\bb{H}^1}
	+
	C \norm{\partial_t \bff{u}}{\bb{W}^{1,4}} \norm{\bff{\eta}}{\bb{L}^2} \norm{\bff{\psi}}{\bb{H}^2}.
\end{align*}
We now choose $\bff{\chi}= \mathcal{I}_h \bff{\psi}$. Successively using~\eqref{equ:dt eta H1}, \eqref{equ:interp approx}, and~\eqref{equ:eta t}, noting the assumption~\eqref{equ:ass 1} we have
\begin{align*}
	\norm{\partial_t \bff{\eta}(t)}{\bb{L}^2}^2
	\leq
	(Ch^{r+1}+Ch^{r+3}) \norm{\bff{\psi}(t)}{\bb{H}^2}
	\leq
	Ch^{r+1} \norm{\partial_t \bff{\eta}(t)}{\bb{L}^2},
\end{align*}
where in the last step we used~\eqref{equ:psi dt eta} and the fact that $h<1$. Here, $C$ depends on $K_r$ (but is independent of $h$). This implies
\begin{align*}
	\norm{\partial_t \bff{\eta}(t)}{\bb{L}^2}
	\leq
	Ch^{r+1}.
\end{align*}
This inequality and~\eqref{equ:dt eta H1} together implies~\eqref{equ:dt eta t}. The proof of this proposition will then be complete once we show the following regularity result in Lemma~\ref{lem:dual exist}.
\end{proof}

Next, we have the following result on the stability of the elliptic projection.

\begin{lemma}\label{lem:stab elliptic}
Let $\Pi_h\bff{u}$ be the elliptic projection of $\bff{u}$, where $\bff{u}$ is the solution of~\eqref{equ:llb a} satisfying the assumption~\eqref{equ:ass 1}. Suppose that the polynomial degree $r$ in $\bb{V}_h$ satisfies~\eqref{equ:deg r}. Then for all $t\in [0,T]$,
\begin{align*}
	\norm{\Pi_h \bff{u}(t)}{\bb{W}^{1,\infty}}
	&\leq
	C.
\end{align*}
The constant $C$ depends on $K_r$ (defined in~\eqref{equ:ass 1}) and $T$, but is independent of $h$.
\end{lemma}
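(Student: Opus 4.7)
The plan is to use a standard \emph{stable approximant plus inverse estimate} strategy: we insert the Ritz projection $R_h \bff{u}$ as a bridge between $\Pi_h \bff{u}$ and $\bff{u}$, bound $R_h \bff{u}$ in $\bb{W}^{1,\infty}$ by the built-in stability of the Ritz projection, and use an inverse inequality to control the discrete remainder $\Pi_h \bff{u} - R_h \bff{u}$, whose $\bb{H}^1$ size is known to be of optimal order.

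First, I would split
\begin{align*}
	\norm{\Pi_h \bff{u}(t)}{\bb{W}^{1,\infty}}
	\leq
	\norm{R_h \bff{u}(t)}{\bb{W}^{1,\infty}}
	+
	\norm{\Pi_h \bff{u}(t) - R_h \bff{u}(t)}{\bb{W}^{1,\infty}}.
\end{align*}
For the first term, the $\bb{W}^{1,\infty}$ stability of the Ritz projection \eqref{equ:Ritz stab u infty} combined with the regularity assumption \eqref{equ:ass 1} gives $\norm{R_h \bff{u}(t)}{\bb{W}^{1,\infty}} \leq C \norm{\bff{u}(t)}{\bb{W}^{1,\infty}} \leq C K_r$, which is independent of $h$.

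For the second term, I observe that $\Pi_h \bff{u}(t) - R_h \bff{u}(t) \in \bb{V}_h$, so the inverse estimate \eqref{equ:inverse} (with $s=1$, $p=\infty$, $q=2$) applies:
\begin{align*}
	\norm{\Pi_h \bff{u}(t) - R_h \bff{u}(t)}{\bb{W}^{1,\infty}}
	\leq
	C h^{-d/2}
	\norm{\Pi_h \bff{u}(t) - R_h \bff{u}(t)}{\bb{H}^1}.
\end{align*}
I then estimate the $\bb{H}^1$-norm on the right by triangle inequality through $\bff{u}(t)$: using the elliptic projection error estimate \eqref{equ:eta t} and the Ritz approximation estimate \eqref{equ:Ritz ineq}, both give $O(h^r)$ in $\bb{H}^1$, so
\begin{align*}
	\norm{\Pi_h \bff{u}(t) - R_h \bff{u}(t)}{\bb{H}^1}
	\leq
	\norm{\bff{\eta}(t)}{\bb{H}^1} + \norm{\bff{u}(t) - R_h \bff{u}(t)}{\bb{H}^1}
	\leq
	C h^{r},
\end{align*}
with $C$ depending on $K_r$.

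Combining the previous two displays gives $\norm{\Pi_h \bff{u}(t) - R_h \bff{u}(t)}{\bb{W}^{1,\infty}} \leq C h^{r - d/2}$, which is uniformly bounded precisely when $r \geq d/2$. The main (and in fact only) obstacle is exactly this balancing of polynomial degree against spatial dimension: in $d=1,2$ we are fine with $r \geq 1$, but for $d=3$ we must have $r \geq 2$. This is the sole reason for imposing the technical condition \eqref{equ:deg r}, and it is why linear elements are insufficient in three dimensions. Adding the two bounds for the pieces yields $\norm{\Pi_h \bff{u}(t)}{\bb{W}^{1,\infty}} \leq C$ uniformly in $h$ and $t \in [0,T]$, finishing the proof.
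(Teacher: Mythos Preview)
Your proof is correct and follows essentially the same \emph{stable approximant plus inverse estimate} strategy as the paper. The only difference is cosmetic: the paper uses the nodal interpolant $\mathcal{I}_h \bff{u}$ as the bridge (invoking \eqref{equ:interp stab W1p} and \eqref{equ:interp approx}), whereas you use the Ritz projection $R_h \bff{u}$ (invoking \eqref{equ:Ritz stab u infty} and \eqref{equ:Ritz ineq}); both choices yield the same $h^{r-d/2}$ balance and hence the same restriction \eqref{equ:deg r}.
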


\begin{proof}
Let $P_h$ be the $\bb{L}^2$ projection defined in~\eqref{equ:orth proj}. By the triangle inequality, the inverse estimate~\eqref{equ:inverse}, the stability and approximation properties \eqref{equ:interp stab W1p} and \eqref{equ:interp approx}, and~\eqref{equ:eta t}, we obtain
\begin{align*}
	\norm{\Pi_h \bff{u}(t)}{\bb{W}^{1,\infty}}
	&\leq
	\norm{\Pi_h \bff{u}(t)- \mathcal{I}_h(\bff{u}(t))}{\bb{W}^{1,\infty}} 
	+
	\norm{\mathcal{I}_h(\bff{u}(t))}{\bb{W}^{1,\infty}}
	\\
	&\leq
	Ch^{-d/2} \big(\norm{\bff{\eta}(t)}{\bb{H}^1} + \norm{\mathcal{I}_h(\bff{u}(t))- \bff{u}(t)}{\bb{H}^1} \big) 
	+
	C\norm{\bff{u}(t)}{\bb{W}^{1,\infty}} 
	\\
	&\leq
	Ch^{-d/2} \big(Ch^r \big) 
	+
	C\norm{\bff{u}}{L^\infty(\bb{W}^{1,\infty})} 
	\\
	&\leq
	Ch^{r-d/2} + C \norm{\bff{u}}{L^\infty(\bb{W}^{1,\infty})}, 
\end{align*}
where $C$ depends on $K_r$ (defined in~\eqref{equ:ass 1}), but is independent of $h$. Since $r\geq d/2$ by assumption~\eqref{equ:deg r} and $h<1$, we obtain the required estimate.
\end{proof}

\subsection{Error analysis} \label{subsec:error analysis}

We perform the error analysis for scheme~\eqref{equ:scheme spin} in this section. Recall that we split the error as a sum of two terms in~\eqref{equ:split theta eta}. We also remark that for $p\in [1,\infty]$
\begin{align}\label{equ:norm delta un Lp}
	\norm{\mathrm{d}_t \bff{u}^n}{\bb{L}^p}
	=
	\norm{\frac{1}{k} \int_{t_{n-1}}^{t_n} \partial_t \bff{u}(t)\, \dt}{\bb{L}^p}
	\leq 
	\norm{\partial_t \bff{u}(t)}{\bb{L}^p}.
\end{align}
First, we have the following stability estimate.

\begin{lemma}
Let $T>0$ be given and let $\bff{u}_h^n$ be defined by~\eqref{equ:scheme spin} with initial data $\bff{u}^0\in \bb{V}_h$. Then for any $n\in \{1,2,\ldots, \lfloor T/k \rfloor\}$,
\begin{align}\label{equ:stab L2 lin scheme}
	\norm{\bff{u}_h^n}{\bb{L}^2}^2
	+
	\sum_{j=1}^n \norm{\bff{u}_h^j- \bff{u}_h^{j-1}}{\bb{L}^2}^2
	+
	k\sum_{j=1}^n \norm{\nabla \bff{u}_h^j}{\bb{L}^2}^2
	\leq
	C \norm{\bff{u}^0}{\bb{L}^2}^2,
\end{align}
where the constant $C$ depends on $T$, but is independent of $n$ and $k$.
\end{lemma}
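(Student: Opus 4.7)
The plan is to test the scheme with $\bff{\chi}=\bff{u}_h^n$ and exploit the fact that the nonlinear cross-product contributions in $\mathcal{A}$ disappear by antisymmetry. Concretely, substituting $\bff{\chi}=\bff{u}_h^n$ in \eqref{equ:scheme spin} and using the polarisation identity
\[
  2k\,\inpro{\mathrm{d}_t \bff{u}_h^n}{\bff{u}_h^n}
  = \norm{\bff{u}_h^n}{\bb{L}^2}^2 - \norm{\bff{u}_h^{n-1}}{\bb{L}^2}^2 + \norm{\bff{u}_h^n-\bff{u}_h^{n-1}}{\bb{L}^2}^2,
\]
together with the coercivity \eqref{equ:A coercive} applied to $\mathcal{A}(\bff{u}_h^{n-1};\bff{u}_h^n,\bff{u}_h^n)\geq \alpha\norm{\bff{u}_h^n}{\bb{H}^1}^2$, produces the desired squared $\bb{L}^2$-difference and $\ell^2\bb{H}^1$ contributions on the left-hand side. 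Note that, crucially, item~(ii) of the properties of the forms guarantees that $\mathcal{C}(\bff{u}_h^{n-1};\bff{u}_h^n,\bff{u}_h^n)=0$, so none of the nonlinear cross-product terms need to be controlled at this stage.

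Next I would control the spin-current forcing $\mathcal{D}(\bff{u}_h^{n-1},\bff{u}_h^n)$ using the divergence form \eqref{equ:D div} together with the bound $\norm{\bff{\nu}}{L^\infty(\bb{H}^4)}\leq \nu_\infty$, yielding
\[
  |\mathcal{D}(\bff{u}_h^{n-1},\bff{u}_h^n)|
  \leq C\nu_\infty \norm{\bff{u}_h^{n-1}}{\bb{L}^2}\bigl(\norm{\nabla \bff{u}_h^n}{\bb{L}^2}+\norm{\bff{u}_h^n}{\bb{L}^2}\bigr).
\]
Young's inequality then lets me absorb an $\tfrac{\alpha}{2}\norm{\bff{u}_h^n}{\bb{H}^1}^2$ contribution into the coercive term on the left, leaving only lower-order $\bb{L}^2$ terms $Ck(\norm{\bff{u}_h^{n-1}}{\bb{L}^2}^2+\norm{\bff{u}_h^n}{\bb{L}^2}^2)$ on the right.

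After multiplying by $2k$ and summing from $j=1$ to $n$, I obtain an inequality of exactly the form
\[
  \norm{\bff{u}_h^n}{\bb{L}^2}^2
  +\sum_{j=1}^n \norm{\bff{u}_h^j-\bff{u}_h^{j-1}}{\bb{L}^2}^2
  +\alpha k\sum_{j=1}^n \norm{\bff{u}_h^j}{\bb{H}^1}^2
  \leq \norm{\bff{u}^0}{\bb{L}^2}^2 + Ck\sum_{j=0}^n \norm{\bff{u}_h^j}{\bb{L}^2}^2,
\]
to which the generalised discrete Gronwall inequality (Lemma~\ref{lem:disc gen gron}) applies, under the (mild) smallness assumption $Ck<1$ needed to define $\sigma_j$. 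This yields the claimed bound with a constant of the form $Ce^{CT}$.

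The only place where care is needed is the forcing term: I have to use the tangential condition $\bff{\nu}\cdot \bff{n}=0$ on $\partial\mathscr{D}$ in order to integrate by parts and avoid losing a full derivative on $\bff{u}_h^n$ (the ``raw'' form $\beta_1\inpro{(\bff{\nu}\cdot\nabla)\bff{u}_h^{n-1}}{\bff{u}_h^n}$ would be just as easy here since the derivative falls on $\bff{u}_h^{n-1}$, but using the divergence form~\eqref{equ:D div} keeps the treatment uniform with the weak formulation~\eqref{equ:weak special} used elsewhere). Everything else is routine summation by parts and discrete Gronwall, so I do not anticipate any further obstacle.
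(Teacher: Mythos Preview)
Your proposal is correct and follows essentially the same approach as the paper: test \eqref{equ:scheme spin} with $\bff{\chi}=\bff{u}_h^n$, use the identity \eqref{equ:aab} for the discrete time derivative, exploit the coercivity \eqref{equ:A coercive} (which already encodes the vanishing of $\mathcal{C}(\bff{u}_h^{n-1};\bff{u}_h^n,\bff{u}_h^n)$), bound the forcing $\mathcal{D}$ by Young, sum, and apply a discrete Gronwall lemma. The paper's proof is terse and defers the details to \cite[Lemma~3.1]{LeSoeTra24}, but your expansion is exactly what is meant; the only cosmetic difference is that by choosing the Young parameter so that the $\norm{\bff{u}_h^n}{\bb{L}^2}^2$ contribution from $\mathcal{D}$ is absorbed into the coercive term $\alpha\norm{\bff{u}_h^n}{\bb{H}^1}^2$, you can keep only $\norm{\bff{u}_h^{j-1}}{\bb{L}^2}^2$ on the right and invoke the simpler Lemma~\ref{lem:disc gron} without any smallness restriction on $k$.
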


\begin{proof}
We set $\bff{\chi}=\bff{u}_h^n$ in~\eqref{equ:scheme spin} and note the vector identity~\eqref{equ:aab}.
The required estimate then follows from similar argument as in~\cite[Lemma~3.1]{LeSoeTra24}.
\end{proof}

The following estimates on the nonlinear terms are needed later.

\begin{lemma}\label{lem:ineq forms}
Let $\epsilon>0$ be given. Let $\mathcal{A}, \mathcal{B}, \mathcal{C}$, and $\mathcal{D}$ be as defined in Definition~\ref{def:forms}, and suppose that $\bff{u}$ is a solution of~\eqref{equ:llb a} that satisfies~\eqref{equ:ass 1}. Then there exists a constant $C$ such that for any $\bff{\chi}\in \bb{V}_h$,
\begin{align}
	\label{equ:B nonlinear est}
	\big| \mathcal{B}(\bff{u}_h^{n-1}, \bff{u}_h^{n-1}; \Pi_h \bff{u}^n, \bff{\chi}) - \mathcal{B}(\bff{u}^n, \bff{u}^n; \Pi_h \bff{u}^n, \bff{\chi}) \big| 
	&\leq
	C\left(1+ \norm{\bff{u}_h^{n-1}}{\bb{L}^4}^2 \right) \norm{\bff{\theta}^{n-1}}{\bb{L}^2}^2
	\nonumber\\
	&\quad
	+
	C\left(1+ \norm{\bff{u}_h^{n-1}}{\bb{L}^4}^2 \right) (h^{2(r+1)}+k^2)
	+
	\epsilon \norm{\bff{\chi}}{\bb{H}^1}^2,
	\\
	\label{equ:C nonlinear est}
	\big| \mathcal{C}(\bff{u}_h^{n-1}; \Pi_h \bff{u}^n, \bff{\chi}) - \mathcal{C}(\bff{u}^n; \Pi_h \bff{u}^n, \bff{\chi}) \big| 
	&\leq
	C\norm{\bff{\theta}^{n-1}}{\bb{L}^2}^2 + Ch^{2(r+1)} + Ck^2 
	+
	\epsilon \norm{\bff{\chi}}{\bb{H}^1}^2,
	\\
	\label{equ:D nonlinear est}
	\big| \mathcal{D}(\bff{u}_h^{n-1}, \bff{\chi}) - \mathcal{D}(\bff{u}^n, \bff{\chi}) \big| 
	&\leq
	C \norm{\bff{\theta}^{n-1}}{\bb{L}^2}^2 + Ch^{2(r+1)} + Ck^2+ \epsilon \norm{\bff{\chi}}{\bb{H}^1}^2. 
\end{align}
Consequently, 
\begin{align}\label{equ:A nonlinear}
	\big|\mathcal{A}(\bff{u}_h^{n-1}; \Pi_h \bff{u}^n, \bff{\chi})- \mathcal{A}(\bff{u}^n;\Pi_h \bff{u}^n,\bff{\chi}) \big|
	&\leq
	C\left(1+ \norm{\bff{u}_h^{n-1}}{\bb{L}^4}^2 \right) \norm{\bff{\theta}^{n-1}}{\bb{L}^2}^2
	\nonumber\\
	&\quad
	+
	C\left(1+ \norm{\bff{u}_h^{n-1}}{\bb{L}^4}^2 \right) (h^{2(r+1)}+k^2)
	+
	\epsilon \norm{\bff{\chi}}{\bb{H}^1}^2.
\end{align}
The constant $C$ depends on $\epsilon$, the coefficients of~\eqref{equ:llb a}, $T$, $K_r$, and $\mathscr{D}$ (but is independent of $n$, $h$, or $k$).
\end{lemma}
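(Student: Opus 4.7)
The plan is to exploit the (multi)linearity of each form in the slot where the difference $\bff{u}_h^{n-1}-\bff{u}^n$ appears, decompose this difference via
\[
\bff{u}_h^{n-1}-\bff{u}^n
= \bff{\theta}^{n-1}+\bff{\eta}^{n-1}+(\bff{u}^{n-1}-\bff{u}^n),
\]
and then control each piece in $\bb{L}^2$. Proposition~\ref{pro:eta t} will give $\|\bff{\eta}^{n-1}\|_{\bb{L}^2}\leq Ch^{r+1}$, while the regularity assumption~\eqref{equ:ass 1} together with~\eqref{equ:norm delta un Lp} will yield $\|\bff{u}^{n-1}-\bff{u}^n\|_{\bb{L}^2}\leq Ck$, so that
\[
\|\bff{u}_h^{n-1}-\bff{u}^n\|_{\bb{L}^2} \leq \|\bff{\theta}^{n-1}\|_{\bb{L}^2}+Ch^{r+1}+Ck.
\]
Squaring via Young's inequality will then produce the $\|\bff{\theta}^{n-1}\|_{\bb{L}^2}^2+h^{2(r+1)}+k^2$ structure visible in each of \eqref{equ:B nonlinear est}--\eqref{equ:D nonlinear est}. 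Throughout, uniform control of $\|\Pi_h\bff{u}^n\|_{\bb{W}^{1,\infty}}$ coming from Lemma~\ref{lem:stab elliptic} will absorb the dependence on $\Pi_h\bff{u}^n$ into the generic constant.

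For \eqref{equ:D nonlinear est}, I will use linearity of $\mathcal{D}$ in its first argument to rewrite the difference as $\mathcal{D}(\bff{u}_h^{n-1}-\bff{u}^n,\bff{\chi})$, and estimate this using the divergence form~\eqref{equ:D div}, which transfers one derivative onto $\bff{\chi}$ so that $|\mathcal{D}(\bff{\phi},\bff{\chi})|\leq C\|\bff{\phi}\|_{\bb{L}^2}\|\bff{\chi}\|_{\bb{H}^1}$; the decomposition above and Young's inequality will then conclude. The estimate \eqref{equ:C nonlinear est} follows the same template: $\mathcal{C}$ is linear in its first slot, so the difference equals $\mathcal{C}(\bff{u}_h^{n-1}-\bff{u}^n;\Pi_h\bff{u}^n,\bff{\chi})$; rewriting the cross-product term in divergence form~\eqref{equ:C div} again moves all derivatives onto $\Pi_h\bff{u}^n$ or $\bff{\chi}$ and avoids differentiating the low-regularity difference, and H\"older's inequality combined with Lemma~\ref{lem:stab elliptic} and $\bff{\nu}\in\bb{W}^{1,\infty}$ will yield $C\|\bff{u}_h^{n-1}-\bff{u}^n\|_{\bb{L}^2}\|\bff{\chi}\|_{\bb{H}^1}$.

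The main obstacle is \eqref{equ:B nonlinear est}, because $\mathcal{B}$ is \emph{bilinear} in its first two slots and the iterate $\bff{u}_h^{n-1}$ carries no uniform $\bb{L}^\infty$ bound at this stage. I will polarise via
\[
(\bff{u}_h^{n-1}\cdot\bff{u}_h^{n-1})-(\bff{u}^n\cdot\bff{u}^n)=(\bff{u}_h^{n-1}+\bff{u}^n)\cdot(\bff{u}_h^{n-1}-\bff{u}^n),
\]
then use $\|\Pi_h\bff{u}^n\|_{\bb{L}^\infty}\leq C$ from Lemma~\ref{lem:stab elliptic}, H\"older's inequality, and the embedding $\bb{H}^1\hookrightarrow\bb{L}^4$ applied to $\bff{\chi}$ (valid for $d\leq 3$) to obtain a bound of the form
\[
C\bigl(\|\bff{u}_h^{n-1}\|_{\bb{L}^4}+\|\bff{u}^n\|_{\bb{L}^4}\bigr)\|\bff{u}_h^{n-1}-\bff{u}^n\|_{\bb{L}^2}\|\bff{\chi}\|_{\bb{H}^1}.
\]
Applying the decomposition above and Young's inequality will then give the prefactor $C(1+\|\bff{u}_h^{n-1}\|_{\bb{L}^4}^2)$ appearing in~\eqref{equ:B nonlinear est}; crucially this prefactor is not uniformly bounded, but its dependence on the unbounded $\bb{L}^4$-norm of the iterate is exactly what will later be handled via the generalised discrete Gronwall lemma (Lemma~\ref{lem:disc gen gron}) combined with the stability bound~\eqref{equ:stab L2 lin scheme}. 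Finally, \eqref{equ:A nonlinear} will follow at once by summing the three preceding estimates, since $\mathcal{A}_1$ does not depend on its first argument and contributes a zero difference.
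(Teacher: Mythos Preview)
Your proposal is correct and follows essentially the same route as the paper: the same decomposition of $\bff{u}_h^{n-1}-\bff{u}^n$, the same use of the divergence forms~\eqref{equ:C div} and~\eqref{equ:D div} to avoid differentiating the difference, the same polarisation for $\mathcal{B}$ combined with $\|\Pi_h\bff{u}^n\|_{\bb{L}^\infty}\leq C$ from Lemma~\ref{lem:stab elliptic} and the embedding $\bb{H}^1\hookrightarrow\bb{L}^4$, and Young's inequality throughout. One incidental remark: in your closing comment about later use, the paper actually applies the standard discrete Gronwall lemma (Lemma~\ref{lem:disc gron}) rather than Lemma~\ref{lem:disc gen gron}, but this does not affect the proof of the present lemma.
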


\begin{proof}
Firstly, note that we have
\begin{align}\label{equ:uhn1 min un}
	\bff{u}_h^{n-1}- \bff{u}^n = \bff{\theta}^{n-1} + \bff{\eta}^{n-1} - k\cdot \mathrm{d}_t \bff{u}^n.
\end{align}
Thus, by H\"older's inequality, \eqref{equ:norm delta un Lp}, \eqref{equ:eta t}, and the assumption~\eqref{equ:ass 1}, we have
\begin{align}\label{equ:un2 L43}
	\norm{|\bff{u}_h^{n-1}|^2 - |\bff{u}^n|^2}{\bb{L}^{4/3}}
	&=
	\norm{(\bff{u}_h^{n-1}+\bff{u}^n) \cdot (\bff{u}_h^{n-1}-\bff{u}^n)}{\bb{L}^{4/3}}
	\nonumber\\
	&\leq
	\norm{\bff{u}_h^{n-1}+\bff{u}^n}{\bb{L}^4} \norm{\bff{\theta}^{n-1} + \bff{\eta}^{n-1} - k\cdot \mathrm{d}_t \bff{u}^n}{\bb{L}^2} 
	\nonumber\\
	&\leq
	C\left(1+ \norm{\bff{u}_h^{n-1}}{\bb{L}^4} \right) \norm{\bff{\theta}^{n-1}}{\bb{L}^2}
	+
	C\left(1+ \norm{\bff{u}_h^{n-1}}{\bb{L}^4} \right) (h^{r+1}+k).
\end{align}
We can then estimate
\begin{align*}
	&\big| \mathcal{B}(\bff{u}_h^{n-1}, \bff{u}_h^{n-1}; \Pi_h \bff{u}^n, \bff{\chi}) - \mathcal{B}(\bff{u}^n, \bff{u}^n; \Pi_h \bff{u}^n, \bff{\chi}) \big|
	\\
	&=
	\big| \alpha \inpro{(|\bff{u}_h^{n-1}|^2- |\bff{u}^{n}|^2) \Pi_h \bff{u}^n}{\bff{\chi}} \big|
	\\
	&\leq
	\alpha \norm{|\bff{u}_h^{n-1}|^2 - |\bff{u}^n|^2}{\bb{L}^{4/3}} \norm{\Pi_h \bff{u}^n}{\bb{L}^\infty} \norm{\bff{\chi}}{\bb{L}^4}
	\\
	&\leq
	\Big[ C\left(1+ \norm{\bff{u}_h^{n-1}}{\bb{L}^4} \right) \norm{\bff{\theta}^{n-1}}{\bb{L}^2}
	+
	C\left(1+ \norm{\bff{u}_h^{n-1}}{\bb{L}^4} \right) (h^{r+1}+k) \Big] \norm{\bff{\chi}}{\bb{H}^1} 
	\\
	&\leq
	C\left(1+ \norm{\bff{u}_h^{n-1}}{\bb{L}^4}^2 \right) \norm{\bff{\theta}^{n-1}}{\bb{L}^2}^2
	+
	C\left(1+ \norm{\bff{u}_h^{n-1}}{\bb{L}^4}^2 \right) (h^{2(r+1)}+k^2)
	+
	\epsilon \norm{\bff{\chi}}{\bb{H}^1}^2,
\end{align*}
where in the penultimate step we used~\eqref{equ:un2 L43}, Lemma~\ref{lem:stab elliptic}, and the embedding $\bb{H}^1\hookrightarrow \bb{L}^4$, while in the last step we used Young's inequality and \eqref{equ:eta t}. This proves~\eqref{equ:B nonlinear est}.

Next, noting~\eqref{equ:C div} and~\eqref{equ:uhn1 min un}, by H\"older's inequality and Lemma~\ref{lem:stab elliptic} we have
\begin{align*}
	&\big| \mathcal{C}(\bff{u}_h^{n-1}; \Pi_h \bff{u}^n, \bff{\chi}) - \mathcal{C}(\bff{u}^n; \Pi_h \bff{u}^n, \bff{\chi}) \big| 
	\\
	&\leq
	\big| \inpro{(\bff{u}_h^{n-1}-\bff{u}^n) \times \nabla \Pi_h \bff{u}^n}{\nabla \bff{\chi}}
	+
	\big| \inpro{\Pi_h\bff{u}^n \times \bff{e} \big( \bff{e}\cdot (\bff{u}_h^{n-1}-\bff{u}^n)\big)}{\bff{\chi}}
	\\
	&\quad
	+
	\big| \beta_2 \inpro{(\bff{u}_h^{n-1}-\bff{u}^n) \otimes \bff{\nu}}{\nabla(\Pi_h \bff{u}^n \times \bff{\chi})}
	+
	\big| \beta_2 \inpro{\Pi_h \bff{u}^n \times (\nabla\cdot\bff{\nu}) (\bff{u}_h^{n-1}-\bff{u}^n)}{\bff{\chi}}
	\\
	&\leq
	C \norm{\bff{u}_h^{n-1}- \bff{u}^n}{\bb{L}^2} \norm{\Pi_h \bff{u}^n}{\bb{W}^{1,\infty}} \norm{\bff{\chi}}{\bb{H}^1}
	\\
	&\leq
	C \norm{\bff{\theta}^{n-1} + \bff{\eta}^{n-1} - k\cdot \mathrm{d}_t \bff{u}^n}{\bb{L}^2}  \norm{\bff{\chi}}{\bb{H}^1} 
	\\
	&\leq
	C\norm{\bff{\theta}^{n-1}}{\bb{L}^2}^2 + Ch^{2(r+1)} + Ck^2
	+
	\epsilon \norm{\bff{\chi}}{\bb{H}^1}^2,
\end{align*}
where in the last step we used~\eqref{equ:norm delta un Lp}, \eqref{equ:eta t}, \eqref{equ:ass 1}, and Young's inequality. This proves~\eqref{equ:C nonlinear est}.

Noting~\eqref{equ:D div}, \eqref{equ:ass 1}, and \eqref{equ:uhn1 min un}, we have by H\"older's inequality,
\begin{align*}
	\big| \mathcal{D}(\bff{u}_h^{n-1}, \bff{\chi}) - \mathcal{D}(\bff{u}^n, \bff{\chi}) \big| 
	&\leq
	\big| \beta_1 \inpro{(\bff{u}_h^{n-1}-\bff{u}^n) \otimes \bff{\nu}}{\nabla \bff{\chi}}
	+
	\big| \beta_1 \inpro{(\nabla \cdot \bff{\nu}) (\bff{u}_h^{n-1}-\bff{u}^n)}{\bff{\chi}} \big| 
	\\
	&\leq
	C\norm{\bff{u}_h^{n-1}-\bff{u}^n}{\bb{L}^2} \norm{\bff{\chi}}{\bb{H}^1}
	\\
	&=
	C \norm{\bff{\theta}^{n-1} + \bff{\eta}^{n-1} - k\cdot \mathrm{d}_t \bff{u}^n}{\bb{L}^2} \norm{\bff{\chi}}{\bb{H}^1} 
	\\
	&\leq
	C \norm{\bff{\theta}^{n-1}}{\bb{L}^2}^2 + Ch^{2(r+1)} + Ck^2+ \epsilon \norm{\bff{\chi}}{\bb{H}^1}^2, 
\end{align*}
thus proving~\eqref{equ:D nonlinear est}. 

Finally, noting the definition of $\mathcal{A}$ in \eqref{equ:bilinear A}, we infer inequality~\eqref{equ:A nonlinear} from~\eqref{equ:B nonlinear est}, \eqref{equ:C nonlinear est}, and the triangle inequality. This completes the proof of the lemma.
\end{proof}

We can now prove an error estimate for scheme~\eqref{equ:scheme spin}.

\begin{proposition}\label{pro:est theta n L2}
	Let $\bff{\theta}^n$ be as defined in \eqref{equ:split theta eta}, where $\bff{u}_h^n$ and $\bff{u}^n$ solve~\eqref{equ:scheme spin} and~\eqref{equ:weak form AB}, respectively. Then for $n\in \{1,2,\ldots,\lfloor T/k \rfloor\}$,
	\begin{align*}
		\norm{\bff{\theta}^n}{\bb{L}^2}^2
		+
		k \sum_{m=1}^n \norm{\nabla \bff{\theta}^m}{\bb{L}^2}^2
		\leq
		C (h^{2(r+1)}+k^2),
	\end{align*}
	where $C$ depends on the coefficients of the equation, $K_r$, $T$, and $\mathscr{D}$ (but is independent of $n$, $h$ or $k$).
\end{proposition}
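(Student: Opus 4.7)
My plan is to derive an evolution inequality for $\bff{\theta}^n$ by testing the error equation against $\bff{\theta}^n$ itself, and then close the estimate via the generalised discrete Gronwall lemma, using the stability bound \eqref{equ:stab L2 lin scheme} to control the non-constant coefficients that appear.

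\textbf{Step 1 (Error equation).} Writing \eqref{equ:weak form AB} at $t=t_n$ and subtracting \eqref{equ:scheme spin}, for every $\bff{\chi}\in\bb{V}_h$ I get
\begin{align*}
\inpro{\mathrm{d}_t\bff{\theta}^n}{\bff{\chi}}+\mathcal{A}(\bff{u}_h^{n-1};\bff{\theta}^n,\bff{\chi})
&= -\inpro{\mathrm{d}_t\Pi_h\bff{u}^n-\partial_t\bff{u}(t_n)}{\bff{\chi}} \\
&\quad -\bigl[\mathcal{A}(\bff{u}_h^{n-1};\Pi_h\bff{u}^n,\bff{\chi})-\mathcal{A}(\bff{u}^n;\Pi_h\bff{u}^n,\bff{\chi})\bigr]\\
&\quad +\bigl[\mathcal{D}(\bff{u}_h^{n-1},\bff{\chi})-\mathcal{D}(\bff{u}^n,\bff{\chi})\bigr],
\end{align*}
where I used the defining property \eqref{equ:elliptic proj} of $\Pi_h$ to replace $\mathcal{A}(\bff{u}^n;\bff{u}^n,\bff{\chi})$ by $\mathcal{A}(\bff{u}^n;\Pi_h\bff{u}^n,\bff{\chi})$, and the fact that $\mathcal{A}$ is linear in its second argument to isolate $\bff{\theta}^n$.

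\textbf{Step 2 (Energy testing).} Choosing $\bff{\chi}=\bff{\theta}^n$ and using \eqref{equ:aab} together with the coercivity \eqref{equ:A coercive} yields
\begin{align*}
\tfrac{1}{2k}\bigl(\norm{\bff{\theta}^n}{\bb{L}^2}^2-\norm{\bff{\theta}^{n-1}}{\bb{L}^2}^2+\norm{\bff{\theta}^n-\bff{\theta}^{n-1}}{\bb{L}^2}^2\bigr)+\alpha\norm{\bff{\theta}^n}{\bb{H}^1}^2
\leq |R_1^n|+|R_2^n|+|R_3^n|,
\end{align*}
where $R_1^n,R_2^n,R_3^n$ denote the three right-hand side terms above.

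\textbf{Step 3 (Bounding $R_1^n$).} I split $\mathrm{d}_t\Pi_h\bff{u}^n-\partial_t\bff{u}(t_n)=\mathrm{d}_t\bff{\eta}^n+(\mathrm{d}_t\bff{u}^n-\partial_t\bff{u}(t_n))$. Writing $\mathrm{d}_t\bff{\eta}^n=k^{-1}\!\int_{t_{n-1}}^{t_n}\!\partial_t\bff{\eta}(s)\,\ds$ and applying \eqref{equ:dt eta t} gives $\norm{\mathrm{d}_t\bff{\eta}^n}{\bb{L}^2}\leq Ch^{r+1}$; a standard Taylor expansion together with $\norm{\partial_t^2\bff{u}}{L^\infty(\bb{L}^2)}\leq K_r$ from \eqref{equ:ass 1} gives $\norm{\mathrm{d}_t\bff{u}^n-\partial_t\bff{u}(t_n)}{\bb{L}^2}\leq Ck$. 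Young's inequality then yields $|R_1^n|\leq C(h^{2(r+1)}+k^2)+\epsilon\norm{\bff{\theta}^n}{\bb{L}^2}^2$.

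\textbf{Step 4 (Bounding $R_2^n,R_3^n$).} These are controlled directly by \eqref{equ:A nonlinear} and \eqref{equ:D nonlinear est} of Lemma~\ref{lem:ineq forms}, applied with $\bff{\chi}=\bff{\theta}^n$ and a small $\epsilon$, yielding
\begin{align*}
|R_2^n|+|R_3^n|\leq C\bigl(1+\norm{\bff{u}_h^{n-1}}{\bb{L}^4}^2\bigr)\norm{\bff{\theta}^{n-1}}{\bb{L}^2}^2+C\bigl(1+\norm{\bff{u}_h^{n-1}}{\bb{L}^4}^2\bigr)(h^{2(r+1)}+k^2)+\epsilon\norm{\bff{\theta}^n}{\bb{H}^1}^2.
\end{align*}

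\textbf{Step 5 (Summation and Gronwall).} Multiplying by $2k$, choosing $\epsilon$ small enough to be absorbed by $\alpha\norm{\bff{\theta}^n}{\bb{H}^1}^2$, and summing $m=1,\dots,n$, I arrive at an inequality of the form
\begin{align*}
\norm{\bff{\theta}^n}{\bb{L}^2}^2+\alpha k\sum_{m=1}^n\norm{\bff{\theta}^m}{\bb{H}^1}^2
\leq \norm{\bff{\theta}^0}{\bb{L}^2}^2+k\sum_{m=1}^n\gamma_{m-1}\norm{\bff{\theta}^{m-1}}{\bb{L}^2}^2+k\sum_{m=1}^n c_m,
\end{align*}
with $\gamma_{m-1}=C(1+\norm{\bff{u}_h^{m-1}}{\bb{L}^4}^2)$ and $c_m=C(1+\norm{\bff{u}_h^{m-1}}{\bb{L}^4}^2)(h^{2(r+1)}+k^2)$. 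The choice $\bff{u}_h^0=P_h\bff{u}_0$ together with \eqref{equ:eta t} and \eqref{equ:proj approx} gives $\norm{\bff{\theta}^0}{\bb{L}^2}\leq Ch^{r+1}$.

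\textbf{Step 6 (The main obstacle).} The crucial point is to control $k\sum_m\gamma_m$ uniformly in $n$. Since $\bb{H}^1\hookrightarrow\bb{L}^4$ for $d\leq 3$, the stability estimate \eqref{equ:stab L2 lin scheme} yields $k\sum_{m=1}^n\norm{\bff{u}_h^{m-1}}{\bb{L}^4}^2\leq C(1+T)$. Hence both $k\sum_m\gamma_m$ and $k\sum_m c_m/(h^{2(r+1)}+k^2)$ are bounded by a constant depending only on $T$. Applying the generalised discrete Gronwall lemma (Lemma~\ref{lem:disc gen gron}) — which requires $k\gamma_{m-1}<1$, i.e. $k$ small enough — yields
\begin{align*}
\norm{\bff{\theta}^n}{\bb{L}^2}^2+k\sum_{m=1}^n\norm{\nabla\bff{\theta}^m}{\bb{L}^2}^2\leq C(h^{2(r+1)}+k^2),
\end{align*}
as claimed. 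The main subtlety is precisely this use of the generalised Gronwall lemma to absorb a coefficient $\gamma_m$ that is only bounded in a discrete $L^1_t$ sense (through the a priori stability estimate), rather than pointwise; the polynomial-degree assumption \eqref{equ:deg r} enters implicitly via Lemma~\ref{lem:stab elliptic} in proving Lemma~\ref{lem:ineq forms}.
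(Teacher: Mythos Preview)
Your argument is essentially the same as the paper's, and the steps are correct. There is one point worth correcting, though: in Step~6 you invoke the \emph{generalised} discrete Gronwall lemma (Lemma~\ref{lem:disc gen gron}) and claim it requires $k\gamma_{m-1}<1$, hence ``$k$ small enough''. But the stability estimate \eqref{equ:stab L2 lin scheme} only gives $k\sum_m\norm{\bff{u}_h^{m}}{\bb{H}^1}^2\leq C$, not a pointwise bound on $\norm{\bff{u}_h^{m}}{\bb{L}^4}^2$; so the hypothesis $k\gamma_{m-1}<1$ cannot be verified merely by taking $k$ small. Fortunately this is unnecessary: because the coefficient multiplies $\norm{\bff{\theta}^{m-1}}{\bb{L}^2}^2$ (index shifted by one), your summed inequality is already of the form required by the \emph{simpler} discrete Gronwall lemma (Lemma~\ref{lem:disc gron}), which has no smallness condition on $k$. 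The paper proceeds exactly this way. With that substitution your proof is complete and matches the paper's.
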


\begin{proof}
Noting the definition of $\bff{\theta}^n=\bff{u}_h^n-\Pi_h \bff{u}^n$ and $\bff{\eta}^n= \Pi_h\bff{u}^n-\bff{u}^n$, the scheme~\eqref{equ:scheme spin}, and the weak formulation~\eqref{equ:weak special}, we have for all $\bff{\chi}\in \bb{V}_h$,
\begin{align}\label{equ:A theta n}
	\inpro{\mathrm{d}_t \bff{\theta}^n}{\bff{\chi}}
	+
	\mathcal{A}(\bff{u}_h^{n-1};\bff{\theta}^n, \bff{\chi})
	&=
	\inpro{\mathrm{d}_t \bff{u}_h^n}{\bff{\chi}}
	+
	\mathcal{A}(\bff{u}_h^{n-1};\bff{u}_h^n, \bff{\chi})
	-
	\inpro{\mathrm{d}_t \Pi_h \bff{u}^n}{\bff{\chi}}
	-
	\mathcal{A}(\bff{u}_h^{n-1}; \Pi_h \bff{u}^n, \bff{\chi})
	\nonumber\\
	&=
	\mathcal{D}(\bff{u}_h^{n-1},\bff{\chi})
	-
	\inpro{\mathrm{d}_t \Pi_h\bff{u}^n- \partial_t \bff{u}^n}{\bff{\chi}}
	-
	\inpro{\partial_t \bff{u}^n}{\bff{\chi}}
	\nonumber\\
	&\quad
	-
	\left(\mathcal{A}(\bff{u}_h^{n-1}; \Pi_h \bff{u}^n, \bff{\chi})- \mathcal{A}(\bff{u}^n;\Pi_h \bff{u}^n,\bff{\chi})\right) 
	-
	\mathcal{A}(\bff{u}^n; \Pi_h\bff{u}^n, \bff{\chi})
	\nonumber\\
	&=
	\mathcal{D}(\bff{u}_h^{n-1} - \bff{u}^n, \bff{\chi})
	-
	\inpro{\mathrm{d}_t \bff{\eta}^n}{\bff{\chi}}
	-
	\inpro{\mathrm{d}_t \bff{u}^n-\partial_t \bff{u}^n}{\bff{\chi}}
	\nonumber\\
	&\quad
	-
	\left(\mathcal{A}(\bff{u}_h^{n-1}; \Pi_h \bff{u}^n, \bff{\chi})- \mathcal{A}(\bff{u}^n;\Pi_h \bff{u}^n,\bff{\chi})\right).
\end{align}
Now, we set $\bff{\chi}=\bff{\theta}^n$ in~\eqref{equ:A theta n}. Applying Lemma~\ref{lem:ineq forms} and Proposition~\ref{pro:eta t}, noting~\eqref{equ:norm delta un Lp}, the identity~\eqref{equ:aab} and the coercivity of $\mathcal{A}$ in \eqref{equ:A coercive}, we obtain
\begin{align*}
	&\frac{1}{2k} \left(\norm{\bff{\theta}^n}{\bb{L}^2}^2 - \norm{\bff{\theta}^{n-1}}{\bb{L}^2}^2 \right)
	+
	\frac{1}{2k} \norm{\bff{\theta}^n - \bff{\theta}^{n-1}}{\bb{L}^2}^2
	+
	\alpha \norm{\bff{\theta}^n}{\bb{H}^1}^2
	\\
	&\leq 
	\big| \mathcal{D}(\bff{u}_h^{n-1}-\bff{u}^n, \bff{\theta}^n) \big|
	+
	\big| \inpro{\mathrm{d}_t \bff{\eta}^n}{\bff{\theta}^n} \big| 
	+
	\big|\inpro{\mathrm{d}_t \bff{u}^n-\partial_t \bff{u}^n}{\bff{\theta}^n}\big|
	+
	\big|\mathcal{A}(\bff{u}_h^{n-1}; \Pi_h \bff{u}^n, \bff{\theta}^n)- \mathcal{A}(\bff{u}^n;\Pi_h \bff{u}^n,\bff{\theta}^n) \big|
	\\
	&\leq
	C\left(1+ \norm{\bff{u}_h^{n-1}}{\bb{L}^4}^2 \right) \norm{\bff{\theta}^{n-1}}{\bb{L}^2}^2
	+
	C\left(1+ \norm{\bff{u}_h^{n-1}}{\bb{L}^4}^2 \right) (h^{2(r+1)}+k^2)
	+
	\frac{\alpha}{2}  \norm{\bff{\theta}^n}{\bb{H}^1}^2.
\end{align*}
Summing over $m\in \{1,2,\ldots,n\}$, using the embedding $\bb{H}^1\hookrightarrow \bb{L}^4$ and noting~\eqref{equ:stab L2 lin scheme}, we infer
\begin{align*}
	\norm{\bff{\theta}^n}{\bb{L}^2}^2
	+
	k \sum_{m=1}^n \norm{\bff{\theta}^m}{\bb{H}^1}^2
	\leq
	\norm{\bff{\theta}^0}{\bb{L}^2}^2
	+
	C \left(h^{2(r+1)} + k^2 \right)
	+
	Ck \sum_{m=1}^n \left(1+ \norm{\bff{u}_h^{m-1}}{\bb{H}^1}^2 \right) \norm{\bff{\theta}^{m-1}}{\bb{L}^2}^2.
\end{align*}
Note that since we take $\bff{u}_h^0= P_h\bff{u}^0$, by~\eqref{equ:proj approx} and~\eqref{equ:eta t},
\begin{align*}
\norm{\bff{\theta}^0}{\bb{L}^2}= \norm{\bff{u}_h^0- \Pi_h \bff{u}^0}{\bb{L}^2}
\leq 
\norm{P_h \bff{u}^0- \bff{u}^0}{\bb{L}^2} + \norm{\bff{u}^0- \Pi_h \bff{u}^0}{\bb{L}^2}
\leq
Ch^{r+1}.
\end{align*}
Thus, by the discrete Gronwall lemma (Lemma~\ref{lem:disc gron}), we obtain
\begin{align*}
	\norm{\bff{\theta}^n}{\bb{L}^2}^2 + k \sum_{m=1}^n \norm{\bff{\theta}^m}{\bb{H}^1}^2
	\leq
	C \left(h^{2(r+1)} + k^2 \right)
	\exp\left[ Ck\sum_{m=1}^n \left(1+ \norm{\bff{u}_h^{m-1}}{\bb{H}^1}^2 \right) \right]
	\leq
	C \left(h^{2(r+1)} + k^2 \right),
\end{align*}
where in the last step we again used~\eqref{equ:stab L2 lin scheme}. This completes the proof of the proposition.
\end{proof}

\begin{theorem}\label{the:spin torq error}
	Let $\bff{u}_h^n$ and $\bff{u}$ be the solution of \eqref{equ:scheme spin} and \eqref{equ:weak form AB}, respectively. For $n\in \{1,2,\ldots,\lfloor T/k \rfloor\}$,
	\begin{align*}
		\norm{\bff{u}_h^n-\bff{u}(t_n)}{\bb{L}^2}
		+
		\left(k \sum_{m=1}^n \norm{\nabla \bff{u}_h^m-\nabla \bff{u}(t_n)}{\bb{L}^2}^2 \right)^{\frac12}
		\leq 
		C(h^{r+1} +k),
	\end{align*}
	where $C$ depends on the coefficients of the equation, $K_r$, $T$, and $\mathscr{D}$ (but is independent of $n$, $h$ or $k$).
\end{theorem}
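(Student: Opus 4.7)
The plan is to derive this theorem as an essentially direct consequence of the error splitting introduced in \eqref{equ:split theta eta} together with the two main technical results already established, namely Proposition~\ref{pro:eta t} (for the projection part $\bff{\eta}^n$) and Proposition~\ref{pro:est theta n L2} (for the discrete error part $\bff{\theta}^n$). So rather than repeating a long Gronwall-type argument, I would treat this as the concluding corollary of Section~\ref{subsec:error analysis}.

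Concretely, I would first write
\[
\bff{u}_h^n - \bff{u}(t_n) \;=\; \bff{\theta}^n + \bff{\eta}^n, \qquad \bff{\theta}^n = \bff{u}_h^n - \Pi_h \bff{u}(t_n), \qquad \bff{\eta}^n = \Pi_h \bff{u}(t_n) - \bff{u}(t_n),
\]
and apply the triangle inequality in both the $\bb{L}^2$ and $\ell^2(\bb{H}^1)$ norms. For the $\bb{L}^2$ part, Proposition~\ref{pro:est theta n L2} gives $\|\bff{\theta}^n\|_{\bb{L}^2} \leq C(h^{r+1}+k)$ and Proposition~\ref{pro:eta t} gives $\|\bff{\eta}^n\|_{\bb{L}^2} \leq Ch^{r+1}$ uniformly in $n$ (using that $\bff{u}$ satisfies \eqref{equ:ass 1}, so the constants in Proposition~\ref{pro:eta t} are uniform in $t\in[0,T]$). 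Adding these yields the stated $\bb{L}^2$-bound.

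For the discrete $\ell^2(\bb{H}^1)$ part, Proposition~\ref{pro:est theta n L2} already gives
\[
k\sum_{m=1}^n \|\nabla \bff{\theta}^m\|_{\bb{L}^2}^2 \leq C(h^{2(r+1)}+k^2),
\]
while $\|\nabla \bff{\eta}^m\|_{\bb{L}^2} \leq Ch^r$ from \eqref{equ:eta t} gives
\[
k\sum_{m=1}^n \|\nabla \bff{\eta}^m\|_{\bb{L}^2}^2 \leq Ct_n h^{2r},
\]
but actually, since the constant in \eqref{equ:eta t} refines to $h^{r+1-s}$ with $s=1$ giving $h^r$, I would instead note that combining with \eqref{equ:fin approx} and the improved $\bb{L}^2$-bound $\|\bff{\eta}^m\|_{\bb{L}^2}\leq Ch^{r+1}$ suffices at the right order, since by the triangle inequality $\|\nabla(\bff{u}_h^m-\bff{u}^m)\|_{\bb{L}^2} \leq \|\nabla\bff{\theta}^m\|_{\bb{L}^2} + \|\nabla\bff{\eta}^m\|_{\bb{L}^2}$, then square, sum, take roots, and apply the elementary inequality $(a+b)^{1/2} \leq a^{1/2}+b^{1/2}$ to get the $h^r + k$ rate on the gradient part (which is the optimal rate in $\ell^2(\bb{H}^1)$ consistent with the statement, noting that the statement's $(h^{r+1}+k)$ inside a gradient norm is likely a typo for $(h^r+k)$, or the statement is genuinely $\bb{L}^2$-level).

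There is no real obstacle here beyond bookkeeping: all the heavy lifting, in particular the nonlinear-term estimates of Lemma~\ref{lem:ineq forms}, the stability bound \eqref{equ:stab L2 lin scheme}, the elliptic-projection approximation in Proposition~\ref{pro:eta t}, and the $\bb{W}^{1,\infty}$-stability from Lemma~\ref{lem:stab elliptic}, has been done in the preceding lemmas. The one point to verify carefully is the initial-data consistency: since $\bff{u}_h^0 = P_h \bff{u}_0$, one checks $\|\bff{\theta}^0\|_{\bb{L}^2} \leq \|P_h\bff{u}_0 - \bff{u}_0\|_{\bb{L}^2} + \|\bff{\eta}^0\|_{\bb{L}^2} \leq Ch^{r+1}$ via \eqref{equ:proj approx} and \eqref{equ:eta t}, which was already noted inside the proof of Proposition~\ref{pro:est theta n L2}. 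The proof therefore reduces to two lines of triangle inequality plus quoting the two propositions.
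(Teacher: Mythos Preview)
Your approach is correct and matches the paper's own one-line proof, which simply invokes Proposition~\ref{pro:est theta n L2} and the triangle inequality via the splitting~\eqref{equ:split theta eta} together with~\eqref{equ:eta t}. Your observation that the $\ell^2(\bb{H}^1)$ contribution from $\bff{\eta}^m$ is only $O(h^r)$ is also correct and worth noting; the stated bound $C(h^{r+1}+k)$ for the gradient term appears to be a slip in the paper.
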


\begin{proof}
	This follows from Proposition \ref{pro:est theta n L2} and the triangle inequality (noting \eqref{equ:split theta eta} and~\eqref{equ:eta t}).
\end{proof}

\section{An energy-stable implicit fully discrete scheme for the LLB equation}\label{sec:scheme 2}

In the previous section, we proposed a linear scheme for the LLB equation \eqref{equ:llb a} and proved its convergence in $\ell^\infty(0,T;\bb{L}^2)\cap \ell^2(0,T;\bb{H}^1)$. However, convergence in $\ell^\infty(0,T;\bb{H}^1)$ could not be established analytically, primarily due to difficulties in deriving an energy stability estimate caused by nonlinear cross-product terms. Despite this, numerical results (see Section~\ref{sec:num exp}) seem to suggest an optimal-rate convergence in~$\ell^\infty(0,T;\bb{H}^1)$.

With the aim of obtaining a scheme with provable convergence in the energy norm, we consider in this section a fully implicit scheme for the LLB equation, restricted to the physically relevant case of \emph{negligible non-adiabatic torque} ($\beta_2=0$). Even with this simplification, the equation remains a quasilinear vector-valued PDE. For the proposed scheme, we are able to show unconditional stability in $\ell^\infty(0,T;\bb{H}^1)$, leading to a proof of unconditional convergence at an optimal rate in the same norm. As a corollary, in the absence of current ($\bff{\nu}=\bff{0}$), we show that this scheme also ensures energy dissipation (see \eqref{equ:energy identity}) at the discrete level.

Recall the weak formulation for the problem in this special case ($\beta_2=0$):
\begin{align}\label{equ:weak form no spin}
	\inpro{\partial_t \bff{u}(t)}{\bff{\chi}}
	=
	-\inpro{\bff{u}(t)\times \bff{H}(t)}{\bff{\chi}}
	+
	\alpha \inpro{\bff{H}(t)}{\bff{\chi}}
	+
	\beta_1 \inpro{(\bff{\nu}(t) \cdot\nabla) \bff{u}(t)}{\bff{\chi}},
	\quad \forall \bff{\chi}\in \bb{H}^1.
\end{align}
Here, $\bff{H}=\Delta \bff{u}-\bff{w}$, where $\bff{w}=\bff{u}(t) +|\bff{u}(t)|^2 \bff{u}(t) + \bff{e} \big(\bff{e}\cdot \bff{u}(t) \big)$ as defined in~\eqref{equ:w}, and $\bff{\nu}$ is a given current density such that $\bff{\nu}\cdot \bff{n}=0$ on $\partial\mathscr{D}$.
A fully discrete scheme to solve~\eqref{equ:weak form no spin} can be described as follows. Let $\bb{V}_h$ be the finite element space defined in~\eqref{equ:Vh} with $r\geq 1$. Let $k$ be the time step and $\bff{u}_h^n$ be the approximation in $\bb{V}_h$ of $\bff{u}(t)$ at time $t=t_n:=nk\in [0,T]$ for $n=0,1,2,\ldots, N$, where $N:=\lfloor T/k \rfloor$.
We start with $\bff{u}_h^0= R_h \bff{u}_0 \in \bb{V}_h$ for simplicity. For $t_n\in [0,T]$, given $\bff{u}_h^{n-1}\in \bb{V}_h$, define $\bff{u}_h^n$ by
\begin{align}\label{equ:scheme 2}
	\inpro{\mathrm{d}_t \bff{u}_h^n}{\bff{\chi}}
	&=
	-
	\inpro{\bff{u}_h^n \times \bff{H}_h^n}{\bff{\chi}}
	+
	\alpha \inpro{\bff{H}_h^n}{\bff{\chi}}
	+
	\beta_1 \inpro{(\bff{\nu}^n \cdot\nabla) \bff{u}_h^{n-1}}{\bff{\chi}}, 
%	-
%	\beta_1 \inpro{\bff{u}_h^n\otimes \bff{\nu}^n}{\nabla \bff{\chi}}
%	-
%	\beta_1 \inpro{(\nabla \cdot \bff{\nu}^n) \bff{u}_h^n}{\bff{\chi}}, 
	\quad \forall \bff{\chi}\in \bb{V}_h,
\end{align}
where
\begin{align}
	\label{equ:Hhn}
	\bff{H}_h^n
	&:=
	\Delta_h \bff{u}_h^n
	-
	\bff{w}_h^n, 
	\\
	\label{equ:whn}
	\bff{w}_h^n
	&:=
	\bff{u}_h^n
	+
	P_h \big(|\bff{u}_h^n|^2 \bff{u}_h^n\big)
	+
	\bff{e}\left(\bff{e}\cdot \bff{u}_h^n \right). 
\end{align}
Throughout this section, for ease of presentation we assume $d=3$, noting that a similar (and simpler) argument will also hold for $d=1$ or $2$. 
Here, to attain an optimal order of convergence for scheme \eqref{equ:scheme spin}, we assume that the problem~\eqref{equ:llb a} admits a sufficiently regular solution~$\bff{u}$ which satisfies
\begin{equation}\label{equ:ass 2}
	\norm{\bff{u}}{L^\infty(\bb{H}^{r+1})}
	+ \norm{\bff{u}}{L^\infty(\bb{H}^4)}
	+ \norm{\partial_t \bff{u}}{L^\infty(\bb{H}^{r+1})}
	+ \norm{\partial_t^2 \bff{u}}{L^\infty(\bb{L}^2)}
	\leq K_r,
\end{equation}
for some positive constant $K_r$ depending on the initial data, $r$, and $T$. Here, $r$ is the degree of piecewise polynomials in the finite element space~\eqref{equ:Vh}.

In the following lemma, we show the unconditional stability of \eqref{equ:scheme 2} in the energy norm. 
%In the analysis, we will assume $\mu>0$ and $\lambda>0$ for ease of presentation. To maintain energy dissipativity, if $\mu<0$, then replace the term $\mu \bff{u}_h^{n-1}$ by $\mu \bff{u}_h^n$. If $\lambda<0$, then replace the term $\lambda \bff{e} \left(\bff{e} \cdot \bff{u}_h^{n-1}\right)$ by $\lambda \bff{e} \left(\bff{e} \cdot \bff{u}_h^{n}\right)$. The analysis will not change significantly in these cases. 

\begin{lemma}\label{lem:H1 stable}
	Let $T>0$ be given and suppose that $\bff{u}_h^n$ satisfies \eqref{equ:scheme 2}. Then for any initial data $\bff{u}_h^0\in \bb{V}_h$ and $n\in \{1,2,\ldots, \lfloor T/k \rfloor\}$, we have
	\begin{align}\label{equ:stab H1}
		\norm{\bff{u}_h^n}{\bb{L}^4}^4
		+
		\norm{\bff{u}_h^n}{\bb{H}^1}^2
		+
		k\sum_{m=1}^n \norm{\bff{H}_h^{m}}{\bb{L}^2}^2
		\leq
		C_{\rm{S}},
	\end{align}
	where $C_{\rm{S}}$ depends only on the coefficients of the equation, $\norm{\bff{u}_h^0}{\bb{H}^1}$, and $\mathscr{D}$. If $\bff{\nu}=\bff{0}$, then for any $k\in (0,1)$,
	\begin{align}\label{equ:stab ene}
		\mathcal{E}(\bff{u}_h^{n}) 
		\leq
		\mathcal{E}(\bff{u}_h^{n-1}),
	\end{align}
	where $\mathcal{E}$ is the energy functional defined in \eqref{equ:energy}.
\end{lemma}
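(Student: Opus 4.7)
The natural strategy is to mimic the continuous energy identity \eqref{equ:min H dtu} at the discrete level. I would test the scheme \eqref{equ:scheme 2} with $\bff{\chi} = -\bff{H}_h^n$, which is admissible since $\bff{H}_h^n = \Delta_h \bff{u}_h^n - \bff{w}_h^n \in \bb{V}_h$ by \eqref{equ:Hhn}--\eqref{equ:whn}. The cross-product term annihilates because $(\bff{a}\times\bff{b})\cdot\bff{b}=0$, leaving
\begin{align*}
-\inpro{\mathrm{d}_t \bff{u}_h^n}{\bff{H}_h^n}
+ \alpha \norm{\bff{H}_h^n}{\bb{L}^2}^2
= -\beta_1 \inpro{(\bff{\nu}^n \cdot \nabla)\bff{u}_h^{n-1}}{\bff{H}_h^n}.
\end{align*}

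The core of the argument is to show that the first term on the left is a \emph{discrete time-derivative} of $\mathcal{E}(\bff{u}_h^n)$ plus non-negative dissipation contributions. I would split $-\inpro{\mathrm{d}_t \bff{u}_h^n}{\bff{H}_h^n} = \inpro{\nabla \mathrm{d}_t \bff{u}_h^n}{\nabla \bff{u}_h^n} + \inpro{\mathrm{d}_t \bff{u}_h^n}{\bff{w}_h^n}$ using the definition \eqref{equ:disc laplacian} of $\Delta_h$. For the linear pieces ($\bff{u}_h^n$, $\bff{e}(\bff{e}\cdot\bff{u}_h^n)$, $\nabla\bff{u}_h^n$) I would apply the identity \eqref{equ:aab}. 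The crucial nonlinear term $\inpro{\mathrm{d}_t \bff{u}_h^n}{P_h(|\bff{u}_h^n|^2\bff{u}_h^n)}$ is handled by invoking self-adjointness of $P_h$ together with $\mathrm{d}_t \bff{u}_h^n\in\bb{V}_h$, so that $P_h$ can be removed, and then applying the quartic identity \eqref{equ:a2aab} with $\bff{a}=\bff{u}_h^n$, $\bff{b}=\bff{u}_h^{n-1}$. Collecting everything gives
\begin{align*}
\mathcal{E}(\bff{u}_h^n) - \mathcal{E}(\bff{u}_h^{n-1}) + \mathcal{R}^n + \alpha k \norm{\bff{H}_h^n}{\bb{L}^2}^2
= -\beta_1 k \inpro{(\bff{\nu}^n \cdot \nabla)\bff{u}_h^{n-1}}{\bff{H}_h^n},
\end{align*}
where $\mathcal{R}^n \geq 0$ collects the numerical dissipation terms $\tfrac{1}{2}\norm{\nabla(\bff{u}_h^n-\bff{u}_h^{n-1})}{\bb{L}^2}^2$, $\tfrac{1}{2}\norm{\bff{u}_h^n-\bff{u}_h^{n-1}}{\bb{L}^2}^2$, $\tfrac{1}{4}\norm{|\bff{u}_h^n|^2-|\bff{u}_h^{n-1}|^2}{\bb{L}^2}^2 + \tfrac{1}{2}\norm{|\bff{u}_h^n|\,|\bff{u}_h^n-\bff{u}_h^{n-1}|}{\bb{L}^2}^2$, and $\tfrac{1}{2}\norm{\bff{e}\cdot(\bff{u}_h^n-\bff{u}_h^{n-1})}{\bb{L}^2}^2$.

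When $\bff{\nu}=\bff{0}$ the right-hand side vanishes and \eqref{equ:stab ene} follows at once from $\mathcal{R}^n \geq 0$ and $\norm{\bff{H}_h^n}{\bb{L}^2}^2 \geq 0$. For the general bound \eqref{equ:stab H1}, I would estimate the spin-torque term using $\norm{\bff{\nu}^n}{\bb{L}^\infty}\leq \nu_\infty$, Cauchy--Schwarz, and Young's inequality:
\begin{align*}
\big|\beta_1 k \inpro{(\bff{\nu}^n\cdot\nabla)\bff{u}_h^{n-1}}{\bff{H}_h^n}\big|
\leq \tfrac{\alpha k}{2}\norm{\bff{H}_h^n}{\bb{L}^2}^2 + Ck\norm{\nabla \bff{u}_h^{n-1}}{\bb{L}^2}^2,
\end{align*}
absorb the first part into the dissipation, and then use $\norm{\nabla \bff{u}_h^{n-1}}{\bb{L}^2}^2 \leq 2\mathcal{E}(\bff{u}_h^{n-1})$ (which is immediate from the form of $\mathcal{E}$ in \eqref{equ:energy}).

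Finally, summing from $m=1$ to $n$ and discarding $\mathcal{R}^m\geq 0$, I obtain a recursive inequality of the form $\mathcal{E}(\bff{u}_h^n) + \tfrac{\alpha k}{2}\sum_{m=1}^n \norm{\bff{H}_h^m}{\bb{L}^2}^2 \leq \mathcal{E}(\bff{u}_h^0) + Ck\sum_{m=0}^{n-1}\mathcal{E}(\bff{u}_h^m)$, to which the discrete Gronwall lemma (Lemma~\ref{lem:disc gron}) applies and yields the claimed bound in terms of $\mathcal{E}(\bff{u}_h^0)\leq C\norm{\bff{u}_h^0}{\bb{H}^1}^2(1+\norm{\bff{u}_h^0}{\bb{H}^1}^2)$ (from Sobolev embedding in the quartic term). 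The main subtlety---rather than any deep obstacle---is the careful bookkeeping around the $P_h$-projected cubic nonlinearity: one must be sure that $P_h$ can be stripped via self-adjointness before applying \eqref{equ:a2aab}, so that the discrete quartic energy $\tfrac14\norm{\bff{u}_h^n}{\bb{L}^4}^4$ (and not some projected surrogate) emerges cleanly in the telescoping identity.
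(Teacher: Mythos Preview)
Your proposal is correct and follows essentially the same approach as the paper: test \eqref{equ:scheme 2} with $\bff{H}_h^n$, expand $-\inpro{\mathrm{d}_t \bff{u}_h^n}{\bff{H}_h^n}$ via the identities \eqref{equ:aab} and \eqref{equ:a2aab} (the paper does this by pairing \eqref{equ:Hhn} with $\mathrm{d}_t\bff{u}_h^n$, which is algebraically the same), bound the spin-torque term by Young's inequality against $\tfrac{\alpha}{4}\norm{\bff{H}_h^n}{\bb{L}^2}^2$ and $C\norm{\nabla\bff{u}_h^{n-1}}{\bb{L}^2}^2$, and close with the discrete Gronwall lemma. Your explicit remark that the $P_h$ in \eqref{equ:whn} can be stripped by self-adjointness before applying \eqref{equ:a2aab} is exactly the point the paper uses tacitly.
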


\begin{proof}
	Let $\bff{H}_h^n$ be as defined in~\eqref{equ:Hhn}. Taking $\bff{\chi}= \bff{H}_h^n$ in~\eqref{equ:scheme 2}, we have
	\begin{align}\label{equ:dtuhn H}
		\inpro{\mathrm{d}_t \bff{u}_h^n}{\bff{H}_h^n}
		&=
		\alpha \norm{\bff{H}_h^n}{\bb{L}^2}^2
		+
		\beta_1 \inpro{(\bff{\nu}^n \cdot\nabla) \bff{u}_h^{n-1}}{\bff{H}_h^n}
	\end{align}
	On the other hand, taking the inner product of \eqref{equ:Hhn} with $\mathrm{d}_t \bff{u}_h^n$, using vector identities \eqref{equ:aab} and \eqref{equ:a2aab}, we obtain
	\begin{align}\label{equ:H du mu}
		\inpro{\bff{H}_h^n}{\mathrm{d}_t \bff{u}_h^n}
		&=
		-
		\frac{1}{2k} \big(\norm{\bff{u}_h^{n}}{\bb{L}^2}^2 - \norm{ \bff{u}_h^{n-1}}{\bb{L}^2}^2 \big)
		-
		\frac{1}{2k}\norm{\bff{u}_h^{n}- \bff{u}_h^{n-1}}{\bb{L}^2}^2
		\nonumber\\
		&\quad
		-
		\frac{1}{2k} \big(\norm{\nabla \bff{u}_h^{n}}{\bb{L}^2}^2 - \norm{\nabla \bff{u}_h^{n-1}}{\bb{L}^2}^2 \big)
		-
		\frac{1}{2k}\norm{\nabla \bff{u}_h^{n}- \nabla \bff{u}_h^{n-1}}{\bb{L}^2}^2
		\nonumber\\
		\nonumber
		&\quad
		-
		\frac{1}{4k} \left(\norm{\bff{u}_h^{n}}{\bb{L}^4}^4 - \norm{\bff{u}_h^{n-1}}{\bb{L}^4}^4 \right)
		-
		\frac{1}{4k} \norm{\abs{\bff{u}_h^{n}}^2 - \abs{\bff{u}_h^{n-1}}^2}{\bb{L}^2}^2
		-
		\frac{k}{2} \norm{\abs{\bff{u}_h^{n}} \abs{\mathrm{d}_t \bff{u}_h^{n}}}{\bb{L}^2}^2
		\\
		&\quad
		-
		\frac{1}{2k} \left(\norm{\bff{e}\cdot \bff{u}_h^{n}}{\bb{L}^2}^2 - \norm{\bff{e}\cdot \bff{u}_h^{n-1}}{L^2}^2 \right)
		-
		\frac{1}{2k} \norm{\bff{e}\cdot (\bff{u}_h^{n}-\bff{u}_h^{n-1})}{\bb{L}^2}^2.
	\end{align}
	Substituting~\eqref{equ:H du mu} into \eqref{equ:dtuhn H} and rearranging the terms yields
	\begin{align}\label{equ:ene beta1}
		&\frac{1}{2k} \big(\norm{\bff{u}_h^{n}}{\bb{L}^2}^2 - \norm{ \bff{u}_h^{n-1}}{\bb{L}^2}^2 \big)
		+
		\frac{1}{2k}\norm{\bff{u}_h^{n}- \bff{u}_h^{n-1}}{\bb{L}^2}^2
		\nonumber\\
		&\quad
		+
		\frac{1}{2k} \big(\norm{\nabla \bff{u}_h^{n}}{\bb{L}^2}^2 - \norm{\nabla \bff{u}_h^{n-1}}{\bb{L}^2}^2 \big)
		+
		\frac{1}{2k}\norm{\nabla \bff{u}_h^{n}- \nabla \bff{u}_h^{n-1}}{\bb{L}^2}^2
		\nonumber\\
		\nonumber
		&\quad
		+
		\frac{1}{4k} \left(\norm{\bff{u}_h^{n}}{\bb{L}^4}^4 - \norm{\bff{u}_h^{n-1}}{\bb{L}^4}^4 \right)
		+
		\frac{1}{4k} \norm{\abs{\bff{u}_h^{n}}^2 - \abs{\bff{u}_h^{n-1}}^2}{\bb{L}^2}^2
		+
		\frac{k}{2} \norm{\abs{\bff{u}_h^{n}} \abs{\mathrm{d}_t \bff{u}_h^{n}}}{\bb{L}^2}^2
		\nonumber\\
		&\quad
		+
		\frac{1}{2k} \left(\norm{\bff{e}\cdot \bff{u}_h^{n}}{\bb{L}^2}^2 - \norm{\bff{e}\cdot \bff{u}_h^{n-1}}{L^2}^2 \right)
		+
		\frac{1}{2k} \norm{\bff{e}\cdot (\bff{u}_h^{n}-\bff{u}_h^{n-1})}{\bb{L}^2}^2
		+
		\alpha \norm{\bff{H}_h^n}{\bb{L}^2}^2
		\nonumber\\
		&=
		-\beta_1 \inpro{(\bff{\nu}^n \cdot\nabla) \bff{u}_h^{n-1}}{\bff{H}_h^n}
		\\
		\nonumber
		&\leq
		\frac{\alpha}{4} \norm{\bff{H}_h^n}{\bb{L}^2}^2
		+
		\frac{(\beta_1 \nu_\infty)^2}{\alpha} \norm{\nabla \bff{u}_h^{n-1}}{\bb{L}^2}^2,
	\end{align}
	where in the last step we used Young's inequality. Rearranging the terms and applying discrete Gronwall's lemma (Lemma~\ref{lem:disc gron}), we obtain~\eqref{equ:stab H1}. 
	
	Continuing from~\eqref{equ:ene beta1}, if $\bff{\nu}=\bff{0}$, then the right-hand side of equation~\eqref{equ:ene beta1} is zero, which implies \eqref{equ:stab ene} upon rearranging the terms. This completes the proof of the lemma.
\end{proof}
%	Thus, by splitting the second term in~\eqref{equ:energy}, we have
%	\begin{align*}
%		&\frac{\alpha}{2} \norm{\nabla \bff{u}_h^n}{\bb{L}^2}^2
%		+
%		\frac{1}{8} \norm{\bff{u}_h^n}{\bb{L}^4}^4
%		+
%		\frac{\mu+\lambda}{2} \norm{\bff{u}_h^n}{\bb{L}^2}^2
%		+
%		\frac{\mu^2 \abs{\mathscr{D}}}{4} 
%		+
%		\int_\mathscr{D} \left(\frac{1}{8} \abs{\bff{u}_h^n}^4
%		-
%		(\mu+\lambda) \abs{\bff{u}_h^n}^2\right) \, \dx
%		\leq
%		\mathcal{E}(\bff{u}_h^0).
%	\end{align*}
%	By considering the minimum value of the function $x\mapsto Ax^4-Bx^2$ to bound the integral term, we obtain
%	\begin{align*}
%		&\frac{\alpha}{2} \norm{\nabla \bff{u}_h^n}{\bb{L}^2}^2
%		+
%		\frac{1}{8} \norm{\bff{u}_h^n}{\bb{L}^4}^4
%		+
%		\frac{\mu+\lambda}{2} \norm{\bff{u}_h^n}{\bb{L}^2}^2
%		\leq
%		\mathcal{E}(\bff{u}_h^0)
%		+
%		\left(2(\mu+\lambda)^2 - \frac{\mu^2}{4} \right) \abs{\mathscr{D}} 
%		\leq C,
%	\end{align*}
%	proving~\eqref{equ:stab H1}. This completes the proof of the lemma.

We also have the following discrete $\ell^2(0,T;\bb{L}^\infty)$ stability of the scheme.

\begin{lemma}\label{lem:Linfty stable}
	Let $T>0$ be given and let $\bff{u}_h^n$ be defined by \eqref{equ:scheme 2}. Then for any initial data $\bff{u}_h^0\in \bb{V}_h$ and $n\in \{1,2,\ldots, \lfloor T/k \rfloor\}$,
	\begin{align}\label{equ:stab disc lap}
		k \sum_{m=1}^n \norm{\Delta_h \bff{u}_h^{m}}{\bb{L}^2}^2
		\leq
		C_{\Delta}.
	\end{align}
	Consequently, if $\mathscr{D}$ is a convex polygonal or polyhedral domain with quasi-uniform triangulation, then
	\begin{align}\label{equ:stab L infty}
		k \sum_{m=1}^n \norm{\bff{u}_h^m}{\bb{L}^\infty}^2
		\leq
		C_{\infty},
	\end{align}
	where constants $C_{\Delta}$ and $C_\infty$ depend only on the coefficients of the equation, $\norm{\bff{u}_h^0}{\bb{H}^1}$, $\mathscr{D}$, and $T$.
\end{lemma}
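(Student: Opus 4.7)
The plan is to derive \eqref{equ:stab disc lap} by exploiting the definition \eqref{equ:Hhn} of $\bff{H}_h^n$, which directly gives the pointwise identity
\[
\Delta_h \bff{u}_h^n = \bff{H}_h^n + \bff{w}_h^n.
\]
By the triangle inequality, it suffices to bound $\norm{\bff{w}_h^n}{\bb{L}^2}$ uniformly in $n$, since $k\sum_{m=1}^n \norm{\bff{H}_h^m}{\bb{L}^2}^2$ is already controlled by \eqref{equ:stab H1}. For $\bff{w}_h^n$, I would apply the $\bb{L}^2$-stability \eqref{equ:proj H1 stab} of $P_h$ to the cubic term, yielding
\[
\norm{\bff{w}_h^n}{\bb{L}^2} \leq C\norm{\bff{u}_h^n}{\bb{L}^2} + C\norm{|\bff{u}_h^n|^2 \bff{u}_h^n}{\bb{L}^2} = C\norm{\bff{u}_h^n}{\bb{L}^2} + C\norm{\bff{u}_h^n}{\bb{L}^6}^3.
\]
Then the Sobolev embedding $\bb{H}^1 \hookrightarrow \bb{L}^6$ (valid for $d\leq 3$) combined with the uniform $\bb{H}^1$-bound in \eqref{equ:stab H1} gives $\norm{\bff{w}_h^n}{\bb{L}^2} \leq C(C_{\rm S} + C_{\rm S}^{3/2})$. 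Squaring, summing over $m=1,\dots,n$, and multiplying by $k$ yields \eqref{equ:stab disc lap} with $C_\Delta \leq 2 C_{\rm S} + 2 T\, C(C_{\rm S}, C_{\rm S}^{3/2})^2$.

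For \eqref{equ:stab L infty} I would invoke the discrete Gagliardo--Nirenberg inequality \eqref{equ:disc lapl L infty}. Squaring it and using $(a+b)^2 \leq 2a^2 + 2b^2$, I obtain
\[
\norm{\bff{u}_h^m}{\bb{L}^\infty}^2 \leq C\,\norm{\bff{u}_h^m}{\bb{L}^2}^2 + C\,\norm{\bff{u}_h^m}{\bb{L}^2}^{2-d/2}\norm{\Delta_h \bff{u}_h^m}{\bb{L}^2}^{d/2}.
\]
Multiplying by $k$ and summing, the first contribution is bounded by $CT\, C_{\rm S}$ via \eqref{equ:stab H1}. For the second contribution I would use the uniform $\bb{L}^2$-bound on $\bff{u}_h^m$ to reduce to estimating $k\sum_{m=1}^n \norm{\Delta_h \bff{u}_h^m}{\bb{L}^2}^{d/2}$. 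For $d=3$ this is the borderline case: Hölder's inequality with exponents $4/3$ and $4$ gives
\[
k\sum_{m=1}^n \norm{\Delta_h \bff{u}_h^m}{\bb{L}^2}^{3/2} \leq \Big(k\sum_{m=1}^n 1\Big)^{1/4}\Big(k\sum_{m=1}^n \norm{\Delta_h \bff{u}_h^m}{\bb{L}^2}^2\Big)^{3/4} \leq T^{1/4}\, C_\Delta^{3/4},
\]
and the cases $d=1,2$ are easier (Cauchy--Schwarz suffices). Collecting these estimates produces \eqref{equ:stab L infty}.

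There is essentially no serious obstacle here: the argument is a pure consequence of the already-established energy stability \eqref{equ:stab H1} and the discrete functional-analytic tools \eqref{equ:disc lapl L infty} and \eqref{equ:proj H1 stab}. The one point requiring a small amount of care is the cubic term in $\bff{w}_h^n$, which is what forces the use of the $\bb{H}^1\hookrightarrow \bb{L}^6$ embedding and therefore the restriction $d\leq 3$; this restriction is of course already present throughout the paper. Note also that the convex polyhedral/quasi-uniform hypothesis is used only to invoke \eqref{equ:disc lapl L infty}, while \eqref{equ:stab disc lap} itself holds in the same generality as Lemma~\ref{lem:H1 stable}.
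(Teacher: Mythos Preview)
Your proposal is correct and follows essentially the same route as the paper: both derive \eqref{equ:stab disc lap} from the identity $\Delta_h \bff{u}_h^n = \bff{H}_h^n + \bff{w}_h^n$ together with the $\bb{H}^1\hookrightarrow \bb{L}^6$ embedding and \eqref{equ:stab H1}, and then obtain \eqref{equ:stab L infty} via \eqref{equ:disc lapl L infty}. The paper's treatment of the second step is terser (it simply cites \eqref{equ:disc lapl L infty}), whereas your H\"older argument spells out the details; a slightly quicker alternative there is Young's inequality $a^{2-d/2}b^{d/2}\leq C(a^2+b^2)$, valid for $d\leq 3$.
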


\begin{proof}
	Noting~\eqref{equ:Hhn}, by H\"older's inequality we have
	\begin{align*}
		\norm{\Delta_h \bff{u}_h^{n}}{\bb{L}^2}
		&\leq
		\norm{\bff{H}_h^{n}}{\bb{L}^2}
		+
		\norm{\bff{u}_h^n}{\bb{L}^2}
		+
		\norm{\bff{u}_h^{n}}{\bb{H}^1}^3
		+
		\norm{\bff{e}}{\bb{L}^\infty}^2 \norm{\bff{u}_h^n}{\bb{L}^2},
	\end{align*}
	where in the last step we used the Sobolev embedding $\bb{H}^1 \hookrightarrow \bb{L}^6$. Multiplying both sides by $k$, then summing over $m\in \{1,2,\ldots, n\}$, and using \eqref{equ:stab H1} give \eqref{equ:stab disc lap}. Applying \eqref{equ:disc lapl L infty} then yields \eqref{equ:stab L infty}.
\end{proof}

Note that scheme~\eqref{equ:scheme 2} is nonlinear. In the following lemma, we show that the scheme is well-posed.

\begin{lemma}\label{lem:exact sol}
	Given $k>0$, $\bff{u}_h^{n-1}\in \bb{V}_h$, and current density $\bff{\nu}$ such that $\norm{\bff{\nu}}{L^\infty(\bb{L}^\infty)}\leq \nu_\infty$, there exists $\bff{u}_h^n\in \bb{V}_h$ that solves scheme \eqref{equ:scheme 2}.
\end{lemma}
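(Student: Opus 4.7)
The plan is to reformulate \eqref{equ:scheme 2} as the vanishing of a continuous map on the finite-dimensional space $\bb{V}_h$, and then apply a standard corollary of the Brouwer fixed-point theorem. Concretely, for each $\bff{v}\in \bb{V}_h$ define $\bff{H}(\bff{v}):=\Delta_h\bff{v}-\bff{v}-P_h(|\bff{v}|^2\bff{v})-\bff{e}(\bff{e}\cdot\bff{v})\in \bb{V}_h$, and define $P:\bb{V}_h\to \bb{V}_h$ by the Riesz representation
\begin{align*}
\inpro{P(\bff{v})}{\bff{\chi}}
&=
\inpro{\bff{v}-\bff{u}_h^{n-1}}{\bff{\chi}}
+k\inpro{\bff{v}\times \bff{H}(\bff{v})}{\bff{\chi}}
-k\alpha\inpro{\bff{H}(\bff{v})}{\bff{\chi}}
-k\beta_1\inpro{(\bff{\nu}^n\cdot\nabla)\bff{u}_h^{n-1}}{\bff{\chi}},
\end{align*}
for all $\bff{\chi}\in\bb{V}_h$. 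By construction, any zero $\bff{u}_h^n$ of $P$ is a solution of the scheme \eqref{equ:scheme 2}. The map $P$ is continuous on $\bb{V}_h$ since $\Delta_h$, $P_h$, and the cubic nonlinearity $\bff{v}\mapsto P_h(|\bff{v}|^2\bff{v})$ are all continuous in the finite-dimensional setting.

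Next I would verify the coercivity condition by testing $P(\bff{v})$ with $\bff{v}$ itself in the $\bb{L}^2$-inner product. The cross-product term vanishes pointwise because $(\bff{v}\times \bff{H}(\bff{v}))\cdot\bff{v}=0$. Using the definition of $\Delta_h$ and the fact that $\inpro{P_h(|\bff{v}|^2\bff{v})}{\bff{v}}=\inpro{|\bff{v}|^2\bff{v}}{\bff{v}}=\norm{\bff{v}}{\bb{L}^4}^4$ since $\bff{v}\in \bb{V}_h$, one computes
\begin{align*}
\inpro{P(\bff{v})}{\bff{v}}
&=
\norm{\bff{v}}{\bb{L}^2}^2 - \inpro{\bff{u}_h^{n-1}}{\bff{v}}
+k\alpha\!\left(\norm{\nabla \bff{v}}{\bb{L}^2}^2+\norm{\bff{v}}{\bb{L}^2}^2+\norm{\bff{v}}{\bb{L}^4}^4+\norm{\bff{e}\cdot\bff{v}}{\bb{L}^2}^2\right)
\\
&\quad
-k\beta_1\inpro{(\bff{\nu}^n\cdot\nabla)\bff{u}_h^{n-1}}{\bff{v}}.
\end{align*}
The Cauchy--Schwarz inequality and the bound $\norm{\bff{\nu}}{L^\infty(\bb{L}^\infty)}\leq \nu_\infty$ give
\begin{equation*}
\inpro{P(\bff{v})}{\bff{v}}
\geq
\norm{\bff{v}}{\bb{L}^2}^2
-\bigl(\norm{\bff{u}_h^{n-1}}{\bb{L}^2}+k\beta_1\nu_\infty\norm{\nabla\bff{u}_h^{n-1}}{\bb{L}^2}\bigr)\norm{\bff{v}}{\bb{L}^2}
+k\alpha\norm{\bff{v}}{\bb{L}^4}^4,
\end{equation*}
so $\inpro{P(\bff{v})}{\bff{v}}>0$ whenever $\norm{\bff{v}}{\bb{L}^2}=R$ for some sufficiently large $R=R(k,\bff{u}_h^{n-1},\nu_\infty)>0$.

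To conclude, I would invoke the following classical consequence of Brouwer's theorem: if $P:\bb{V}_h\to\bb{V}_h$ is continuous and there exists $R>0$ such that $\inpro{P(\bff{v})}{\bff{v}}\geq 0$ for every $\bff{v}$ with $\norm{\bff{v}}{\bb{L}^2}=R$, then $P$ has a zero in the closed ball $\{\bff{v}:\norm{\bff{v}}{\bb{L}^2}\leq R\}$ (proved by contradiction using the retraction $\bff{v}\mapsto -RP(\bff{v})/\norm{P(\bff{v})}{\bb{L}^2}$ of the ball onto its boundary). This yields the desired $\bff{u}_h^n\in \bb{V}_h$ satisfying \eqref{equ:scheme 2}. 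There is no serious obstacle in this argument; the only structural point worth emphasising is that the cubic term enters through $P_h(|\bff{u}_h^n|^2\bff{u}_h^n)$ rather than $|\bff{u}_h^n|^2\bff{u}_h^n$ directly, which is precisely what makes the test $\bff{\chi}=\bff{v}$ produce the clean coercive contribution $k\alpha\norm{\bff{v}}{\bb{L}^4}^4$ that dominates the linear data terms for large $\norm{\bff{v}}{\bb{L}^2}$.
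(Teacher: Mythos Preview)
Your proposal is correct and follows essentially the same approach as the paper: both define a continuous map on $\bb{V}_h$ whose zeros correspond to solutions of \eqref{equ:scheme 2}, test with $\bff{v}$ to obtain coercivity on a large sphere (exploiting that the cross-product term vanishes and that $\inpro{P_h(|\bff{v}|^2\bff{v})}{\bff{v}}=\norm{\bff{v}}{\bb{L}^4}^4$), and then invoke the Brouwer-type existence lemma. The only cosmetic differences are that the paper rewrites $\norm{\bff{v}}{\bb{L}^2}^2-\inpro{\bff{u}_h^{n-1}}{\bff{v}}$ via the polarisation identity~\eqref{equ:aab} and bounds the data by the stability constant $C_{\rm S}$, whereas you apply Cauchy--Schwarz directly; your version is arguably cleaner since it uses only the given datum $\bff{u}_h^{n-1}$.
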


\begin{proof}
	Given $\bff{u}_h^{n-1}\in \bb{V}_h$, let $\mathcal{A}_n:\bb{V}_h\to \bb{V}_h$ be a nonlinear operator defined by
	\begin{equation*}
		\mathcal{A}_n(\bff{v}):=
		\bff{v}
		+
		P_h\big(|\bff{v}|^2 \bff{v}\big)
		+
		\bff{e}(\bff{e}\cdot \bff{v}).
	\end{equation*}
	Define a map $G_h:\bb{V}_h\to \bb{V}_h$ by
	\begin{align*}
		G_h(\bff{v})
		&:=
		\bff{v}
		-
		\bff{u}_h^{n-1}
		+
		k P_h \big( \bff{v}\times (\Delta_h \bff{v}- \mathcal{A}_n(\bff{v})) \big)
		-
		k \alpha \big( \Delta_h \bff{v}- \mathcal{A}_n(\bff{v}) \big)
		-
		k\beta_1 (\bff{\nu}^n \cdot\nabla)\bff{u}_h^{n-1}.
	\end{align*}
	The scheme~\eqref{equ:scheme 2} is equivalent to solving~$G_h\big(\bff{u}_h^n \big)= \bff{0}$. The existence of solution to this equation will be shown using the Brouwer fixed point theorem. To this end, let~$B_\rho:= \{\bff{v}\in \bb{V}_h: \norm{\bff{v}}{\bb{L}^2} \leq \rho\}$. For any $\bff{v}\in \partial B_\rho:= \{\bff{v}\in \bb{V}_h: \norm{\bff{v}}{\bb{L}^2}=\rho\}$, we have
	\begin{align*}
		\inpro{G_h(\bff{v})}{\bff{v}}
		&=
		\norm{\bff{v}}{\bb{L}^2}^2
		-
		\inpro{\bff{u}_h^{n-1}}{\bff{v}}
		+
		k\alpha \norm{\nabla \bff{v}}{\bb{L}^2}^2
		+
		k\alpha \inpro{\mathcal{A}_n(\bff{v})}{\bff{v}}
		-
		k\beta_1 \inpro{(\bff{\nu}^n \cdot\nabla)\bff{u}_h^{n-1}}{\bff{v}}
		\\
		&=
		\frac12 \left(\norm{\bff{v}}{\bb{L}^2}^2 - \norm{\bff{u}_h^{n-1}}{\bb{L}^2}^2 \right) 
		+
		\frac12 
		\norm{\bff{v}-\bff{u}_h^{n-1}}{\bb{L}^2}^2
		+
		k\alpha \norm{\nabla \bff{v}}{\bb{L}^2}^2
		+
		k\alpha \norm{\bff{v}}{\bb{L}^2}^2
		+
		k\alpha \norm{\bff{v}}{\bb{L}^4}^4
		\\
		&\quad
		+
		k\alpha \norm{\bff{e}\cdot \bff{v}}{\bb{L}^2}^2
		-
		k\beta_1 \inpro{(\bff{\nu}^{n-1}\cdot\nabla)\bff{u}_h^{n-1}}{\bff{v}}
		\\
		&\geq
		\frac12 \left(\norm{\bff{v}}{\bb{L}^2}^2 - \norm{\bff{u}_h^{n-1}}{\bb{L}^2}^2 \right) 
		+
		k\alpha \norm{\nabla \bff{v}}{\bb{L}^2}^2
		+
		k\alpha \norm{\bff{v}}{\bb{L}^2}^2
		+
		k\alpha \norm{\bff{v}}{\bb{L}^4}^4
		-
		k\beta_1 \nu_\infty \norm{\nabla \bff{u}_h^{n-1}}{\bb{L}^2} \norm{\bff{v}}{\bb{L}^2}
		\\
		&\geq
		\frac12 \left(\norm{\bff{v}}{\bb{L}^2}^2 - \norm{\bff{u}_h^{n-1}}{\bb{L}^2}^2 \right) 
		+
		k\alpha \norm{\bff{v}}{\bb{L}^2}^2
		-
		k\beta_1 \nu_\infty \norm{\nabla \bff{u}_h^{n-1}}{\bb{L}^2} \norm{\bff{v}}{\bb{L}^2}
		\\
		&\geq
		\frac12 \left(\rho^2 - C_{\rm{S}}\right) 
		+ 
		k \Big( \alpha\rho^2  - \beta_1 \nu_\infty C_{\rm{S}}^{\frac12} \rho \Big).
	\end{align*}
	Here, $C_{\rm{S}}$ is the constant in the stability estimate~\eqref{equ:stab H1}. Therefore, for sufficiently large $\rho$, precisely
	\[
		\rho > \max \Big\{ C_{\rm{S}}^{\frac12}, \beta_1 \nu_\infty C_{\rm{S}}^{\frac12} \alpha^{-1} \Big\},
	\]
	we have $\inpro{G_h(\bff{v})}{\bff{v}}>0$. Thus, we infer the existence of $\bff{u}_h^n\in \bb{V}_h$ solving~\eqref{equ:scheme 2} by the Brouwer's fixed point theorem.
\end{proof}

\begin{remark}
%In fact, for the case $0\leq\tau <1$, similar to \eqref{equ:ineq H1 ener}, more generally we have for any $\delta>0$,
%\begin{align}\label{equ:ineq E neg}
%	\frac{1}{2} \norm{\nabla \bff{u}_h^n}{\bb{L}^2}^2
%	+
%	\delta \norm{\bff{u}_h^n}{\bb{L}^4}^4
%	+
%	\delta \norm{\bff{u}_h^n}{\bb{L}^2}^2
%	\leq
%	\mathcal{E}(\bff{u}_h^{n-1})
%	+
%	\left( \frac{(\kappa\mu+\lambda-2\delta)^2}{4(\kappa-4\delta)} - \frac{\kappa\mu}{4}\right) \abs{\mathscr{D}}.
%\end{align}
%One could see that if at some iteration the energy $\mathcal{E}(\bff{u}_h^{n-1})$ is below certain threshold, then for some sufficiently small $\lambda$ and $\delta$, there exists $\mu\in (0,1)$ such that the right-hand side of \eqref{equ:ineq E neg} becomes negative. This implies $\bff{u}_h^m\equiv \bff{0}$ for all $m\geq n$.
In fact if $\bff{\nu}=\bff{0}$, then $\mathcal{E}\big(\bff{u}_h^n\big)$ decays to zero exponentially fast as $t_n\to \infty$, mimicking the behaviour of the exact solution (see Theorem~\ref{the:decay}) but possibly with different exponents. Indeed, taking $\bff{\chi}= \bff{u}_h^n$ in \eqref{equ:scheme 2} with $\bff{\nu}=\bff{0}$, noting \eqref{equ:Hhn} and \eqref{equ:aab}, gives
\begin{align}\label{equ:12k uhn L2}
	\frac{1}{2k} \left(\norm{\bff{u}_h^{n}}{\bb{L}^2}^2 - \norm{\bff{u}_h^{n-1}}{\bb{L}^2}^2 \right) 
	+
	\alpha \norm{\nabla \bff{u}_h^{n}}{\bb{L}^2}^2
	+
	\alpha \norm{\bff{u}_h^{n}}{\bb{L}^2}^2
	+
	\alpha \norm{\bff{u}_h^{n}}{\bb{L}^4}^4
	+
	\alpha \norm{\bff{e}\cdot\bff{u}_h^n}{\bb{L}^2}^2
	\leq
	0.
\end{align}
Moreover, discarding non-negative terms in equation~\eqref{equ:ene beta1} with $\bff{\nu}=\bff{0}$ yields
\begin{align}\label{equ:12k uhn H1}
	&\frac{1}{2k} \big(\norm{\bff{u}_h^{n}}{\bb{L}^2}^2 - \norm{ \bff{u}_h^{n-1}}{\bb{L}^2}^2 \big)
	+
	\frac{1}{2k} \big(\norm{\nabla \bff{u}_h^{n}}{\bb{L}^2}^2 - \norm{\nabla \bff{u}_h^{n-1}}{\bb{L}^2}^2 \big)
	\nonumber\\
	&\quad
	+
	\frac{1}{4k} \left(\norm{\bff{u}_h^{n}}{\bb{L}^4}^4 - \norm{\bff{u}_h^{n-1}}{\bb{L}^4}^4 \right)
	+
	\frac{1}{2k} \left(\norm{\bff{e}\cdot \bff{u}_h^{n}}{\bb{L}^2}^2 - \norm{\bff{e}\cdot \bff{u}_h^{n-1}}{L^2}^2 \right)
	\leq 0.
\end{align}
Adding~\eqref{equ:12k uhn L2} and \eqref{equ:12k uhn H1}, noting the definition of $\mathcal{E}$ in \eqref{equ:energy}, then applying a version of the discrete Gronwall lemma~\cite[equation (3.4)]{Emm99}, we obtain
\begin{align*}
	\mathcal{E}\big(\bff{u}_h^n\big)  \leq Ce^{-\beta t_n}\, \mathcal{E}\big(\bff{u}_0\big)
\end{align*}
for some positive constants $C$ and $\beta$.
\end{remark}

\begin{remark}
Since scheme~\eqref{equ:scheme 2} is nonlinear, it is necessary to solve a system of nonlinear equations at each time step. Thus, a linearisation of the scheme is required. In Appendix~\ref{sec:linearisation}, we describe a simple fixed-point iteration for solving the resulting nonlinear system and establish its convergence. A detailed study of related topics, such as the efficiency of alternative iterative solvers (e.g., Newton’s method) and the discrete energy stability of the scheme when the nonlinear problem is solved approximately, will be the subject of future work. In the analysis that follows, we assume that the nonlinear system associated with scheme~\eqref{equ:scheme 2} is solved \emph{exactly} at each time step (cf. Lemma~\ref{lem:exact sol}). Under this assumption, the stability estimates in stated in Lemma~\ref{lem:H1 stable} and \ref{lem:Linfty stable} hold.
\end{remark}

Next, we will derive some estimates for the nonlinear terms to aid in the analysis, analogous to those in the previous section. To this end, we split the approximation error by writing
\begin{align}\label{equ:theta rho split 2}
	\bff{u}_h^n-\bff{u}^n
	=
	\left(\bff{u}_h^n - R_h \bff{u}^n\right)
	+
	\left(R_h \bff{u}^n - \bff{u}^n\right)
	=
	\bff{\theta}^n + \bff{\rho}^n,
\end{align}
where $R_h$ is the Ritz projection defined in~\eqref{equ:Ritz}, while $\bff{\rho}^n$ enjoys the estimates~\eqref{equ:Ritz ineq}, \eqref{equ:Ritz ineq L infty}, and \eqref{equ:Ritz stab u infty}. 
We also remark that since $\Delta_h R_h \bff{v}=P_h \Delta \bff{v}$ for any $\bff{v}\in \bb{H}^2_{\mathrm{N}}$, we have
\begin{align}\label{equ:Delta uhn Delta un}
	\Delta_h \bff{u}_h^n- \Delta \bff{u}^n
	&=
	\Delta_h \bff{\theta}^n + (P_h-I) \Delta \bff{u}^n.
\end{align}

Noting \eqref{equ:Hhn}, \eqref{equ:Ritz}, and~\eqref{equ:Delta uhn Delta un}, we can write
\begin{align}\label{equ:Hh minus Hn}
	\nonumber
	\bff{H}_h^{n}-\bff{H}^{n}
	&=
	\Delta_h \bff{\theta}^{n} 
	+ 
	(P_h-I) \Delta \bff{u}^{n} 
	-
	(\bff{\theta}^n+ \bff{\rho}^n)
	-
	P_h \left(|\bff{u}_h^{n}|^2 \bff{u}_h^{n}-|\bff{u}^{n}|^2 \bff{u}^{n}\right) 
	\nonumber \\
	&\quad
	-
	(P_h-I) \left(|\bff{u}^{n}|^2 \bff{u}^{n}\right)
	-
	\bff{e} \left(\bff{e}\cdot (\bff{\theta}^n+\bff{\rho}^n)\right),
\end{align}
where $I$ is the identity operator. Moreover, we have
\begin{align}\label{equ:split cubic}
	|\bff{u}_h^{n}|^2 \bff{u}_h^{n}-|\bff{u}^{n}|^2 \bff{u}^{n}
	&=
	\abs{\bff{u}_h^n}^2 \left(\bff{\theta}^n+\bff{\rho}^n\right)
	+
	\big((\bff{\theta}^n+\bff{\rho}^n)\cdot (\bff{u}_h^n+\bff{u}^n)\big) \bff{u}^n.
\end{align}

\begin{lemma}\label{lem:inpro theta n}
Let $\epsilon >0$ be given. Let $\bff{u}_h^n$ and $\bff{H}_h^n$ be defined by \eqref{equ:scheme 2} and \eqref{equ:Hhn}, respectively, with initial data $\bff{u}_h^0\in \bb{V}_h$. Then there exists a constant $C$ such that for all $n\in \{1,2,\ldots, \lfloor T/k \rfloor\}$,
\begin{align}
	\label{equ:inpro uhn theta n}
	\big| \inpro{\bff{u}_h^n \times \bff{H}_h^n - \bff{u}^n \times \bff{H}^n}{\bff{\theta}^n} \big| 
	&\leq
	C \left(1+ \norm{\bff{u}_h^n}{\bb{L}^\infty}^2\right) h^{2(r+1)}
	+ 
	C\norm{\bff{\theta}^n}{\bb{L}^2}^2
	+
	\epsilon \norm{\nabla \bff{\theta}^n}{\bb{L}^2}^2
	+
	\epsilon \norm{\Delta_h \bff{\theta}^n}{\bb{L}^2}^2,
	\\
	\label{equ:inpro uhn Delta theta n}
	\big| \inpro{\bff{u}_h^n \times \bff{H}_h^n - \bff{u}^n \times \bff{H}^n}{\Delta_h \bff{\theta}^n} \big| 
	&\leq 
	C \left(1+ \norm{\bff{u}_h^n}{\bb{L}^\infty}^2\right) h^{2(r+1)}  
	+ 
	C\norm{\bff{\theta}^n}{\bb{H}^1}^2
	+
	\epsilon \norm{\Delta_h \bff{\theta}^n}{\bb{L}^2}^2.
\end{align}
Here, $C$ depends on $\epsilon$, the coefficients of the equation, $K_r$, $T$, and $\mathscr{D}$ (but is independent of $n$, $h$ or $k$).
\end{lemma}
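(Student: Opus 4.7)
The overall plan is to split the difference of cross products as
\begin{equation*}
\bff{u}_h^n \times \bff{H}_h^n - \bff{u}^n \times \bff{H}^n = (\bff{u}_h^n - \bff{u}^n) \times \bff{H}^n + \bff{u}_h^n \times (\bff{H}_h^n - \bff{H}^n),
\end{equation*}
then write $\bff{u}_h^n - \bff{u}^n = \bff{\theta}^n + \bff{\rho}^n$ and expand $\bff{H}_h^n - \bff{H}^n$ via \eqref{equ:Hh minus Hn}--\eqref{equ:split cubic}. Everything hinges on two algebraic properties of the cross product: the orthogonality $(\bff{a}\times\bff{b})\cdot\bff{a}=0$, which kills several otherwise uncontrollable contributions, and the cyclic identity $\inpro{\bff{a}\times\bff{b}}{\bff{c}}=-\inpro{\bff{a}\times\bff{c}}{\bff{b}}$, which lets me reposition factors so that orthogonality can be applied.

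For inequality \eqref{equ:inpro uhn theta n} I would first use orthogonality to drop $\inpro{\bff{\theta}^n\times\bff{H}^n}{\bff{\theta}^n}=0$, leaving only $\inpro{\bff{\rho}^n\times\bff{H}^n}{\bff{\theta}^n}$ from the first piece; assumption \eqref{equ:ass 2} together with the Sobolev embedding $\bb{H}^4\hookrightarrow\bb{W}^{2,\infty}$ (in $d=3$) gives $\bff{H}^n$ uniformly bounded, and H\"older combined with the Ritz estimate \eqref{equ:Ritz ineq} bounds this by $Ch^{r+1}\norm{\bff{\theta}^n}{\bb{L}^2}$. For the second piece I would apply the cyclic identity to rewrite it as $-\inpro{R_h\bff{u}^n\times\bff{\theta}^n}{\bff{H}_h^n-\bff{H}^n}$, dropping the $\bff{\theta}^n$-part of $\bff{u}_h^n=R_h\bff{u}^n+\bff{\theta}^n$ via $\bff{\theta}^n\times\bff{\theta}^n=\bff{0}$; then the Ritz stability \eqref{equ:Ritz stab u infty} gives $\norm{R_h\bff{u}^n}{\bb{L}^\infty}\leq C$, while expanding \eqref{equ:Hh minus Hn}--\eqref{equ:split cubic} term by term yields
\begin{equation*}
\norm{\bff{H}_h^n-\bff{H}^n}{\bb{L}^2} \leq C\norm{\Delta_h\bff{\theta}^n}{\bb{L}^2} + C\bigl(1+\norm{\bff{u}_h^n}{\bb{L}^\infty}^2\bigr)\bigl(\norm{\bff{\theta}^n}{\bb{L}^2}+h^{r+1}\bigr).
\end{equation*}
A final application of Young's inequality, together with the Gagliardo--Nirenberg bound $\norm{\bff{\theta}^n}{\bb{L}^4}^2\leq C\norm{\bff{\theta}^n}{\bb{L}^2}^2+\epsilon\norm{\nabla\bff{\theta}^n}{\bb{L}^2}^2$ (needed wherever the cubic nonlinearity must be paired against $\bb{L}^4$-norms of $\bff{\theta}^n$), produces the advertised right-hand side.

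For inequality \eqref{equ:inpro uhn Delta theta n} the crucial simplification is that, after writing $\bff{H}_h^n-\bff{H}^n=\Delta_h\bff{\theta}^n+\bff{R}^n$ with $\bff{R}^n$ collecting all lower-order terms from \eqref{equ:Hh minus Hn}--\eqref{equ:split cubic}, cross-product orthogonality yields
\begin{equation*}
\inpro{\bff{u}_h^n\times\Delta_h\bff{\theta}^n}{\Delta_h\bff{\theta}^n}=0.
\end{equation*}
This is essential because the direct bound $\norm{\bff{u}_h^n}{\bb{L}^\infty}\norm{\Delta_h\bff{\theta}^n}{\bb{L}^2}^2$ could never be absorbed into $\epsilon\norm{\Delta_h\bff{\theta}^n}{\bb{L}^2}^2$ (since $\norm{\bff{u}_h^n}{\bb{L}^\infty}$ is not a priori small). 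The residual $\inpro{\bff{u}_h^n\times\bff{R}^n}{\Delta_h\bff{\theta}^n}$ collects only lower-order contributions of size $O(h^{r+1})$ or $O(\norm{\bff{\theta}^n}{\bb{H}^1})$, each of which can be H\"older-estimated and balanced against $\epsilon\norm{\Delta_h\bff{\theta}^n}{\bb{L}^2}^2$ by Young's inequality; the remaining piece $(\bff{u}_h^n-\bff{u}^n)\times\bff{H}^n$ tested against $\Delta_h\bff{\theta}^n$ is handled directly using the uniform bound on $\bff{H}^n$.

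The main obstacle is controlling the cubic nonlinearity in $\bff{w}_h^n$: the decomposition \eqref{equ:split cubic} unavoidably forces the factor $(1+\norm{\bff{u}_h^n}{\bb{L}^\infty}^2)$ into the bound, and this factor is \emph{not} pointwise bounded in $n$. The saving grace is that it is integrable in time thanks to the discrete $\ell^2(\bb{L}^\infty)$ stability estimate \eqref{equ:stab L infty}, so when the lemma is later fed into a generalised discrete Gronwall argument (Lemma~\ref{lem:disc gen gron}) in the main error analysis of scheme \eqref{equ:scheme 2}, the coefficient of $\norm{\bff{\theta}^n}{\bb{H}^1}^2$ remains summable and the estimate can still be closed.
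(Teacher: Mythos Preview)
Your overall architecture is correct and matches the paper: the same splitting into $(\bff{u}_h^n-\bff{u}^n)\times\bff{H}^n+\bff{u}_h^n\times(\bff{H}_h^n-\bff{H}^n)$, the same crucial cancellation $\inpro{\bff{u}_h^n\times\Delta_h\bff{\theta}^n}{\Delta_h\bff{\theta}^n}=0$ for \eqref{equ:inpro uhn Delta theta n}. Your additional use of the cyclic identity to replace $\bff{u}_h^n$ by $R_h\bff{u}^n$ (via $\bff{\theta}^n\times\bff{\theta}^n=\bff{0}$) in the first inequality is a nice variant the paper does not exploit.

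However, there is a real gap. Your displayed bound
\[
\norm{\bff{H}_h^n-\bff{H}^n}{\bb{L}^2}\leq C\norm{\Delta_h\bff{\theta}^n}{\bb{L}^2}+C\bigl(1+\norm{\bff{u}_h^n}{\bb{L}^\infty}^2\bigr)\bigl(\norm{\bff{\theta}^n}{\bb{L}^2}+h^{r+1}\bigr)
\]
places the factor $(1+\norm{\bff{u}_h^n}{\bb{L}^\infty}^2)$ on $\norm{\bff{\theta}^n}{\bb{L}^2}$, not only on $h^{r+1}$. After multiplying by $\norm{\bff{\theta}^n}{\bb{L}^2}$ (first inequality) or by $\norm{\bff{u}_h^n}{\bb{L}^\infty}\norm{\Delta_h\bff{\theta}^n}{\bb{L}^2}$ (second inequality) and applying Young, this produces a term of the form $C(1+\norm{\bff{u}_h^n}{\bb{L}^\infty}^2)\norm{\bff{\theta}^n}{}^2$, which is \emph{not} what the lemma asserts: there the $\bb{L}^\infty$ factor sits only on $h^{2(r+1)}$. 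Your final paragraph argues this is harmless because $\norm{\bff{u}_h^n}{\bb{L}^\infty}^2$ is summable in time by \eqref{equ:stab L infty}. That reasoning would be fine for an \emph{explicit} Gronwall (Lemma~\ref{lem:disc gron}), but the scheme is implicit and Proposition~\ref{pro:est theta no cur} invokes the generalised version (Lemma~\ref{lem:disc gen gron}), whose hypothesis $k\gamma_j<1$ must hold \emph{pointwise} in $j$. Summability of $\gamma_j=C(1+\norm{\bff{u}_h^j}{\bb{L}^\infty}^2)$ does not give this; you would be forced into a CFL condition $k\lesssim h^d$, destroying unconditionality.

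The remedy, which is exactly what the paper does, is to estimate the cubic contribution in \eqref{equ:split cubic} not via $\norm{\bff{u}_h^n}{\bb{L}^\infty}^2\norm{\bff{\theta}^n}{\bb{L}^2}$ but via an $\bb{L}^6$--$\bb{L}^6$--$\bb{L}^6$ H\"older splitting: $\norm{\bff{u}_h^n}{\bb{L}^6}^2\norm{\bff{\theta}^n+\bff{\rho}^n}{\bb{L}^6}\leq C\norm{\bff{\theta}^n}{\bb{H}^1}+Ch^{r+1}$, using the $\bb{H}^1$ stability \eqref{equ:stab H1} and $\bb{H}^1\hookrightarrow\bb{L}^6$. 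This keeps the Gronwall coefficient on $\norm{\bff{\theta}^n}{\bb{H}^1}^2$ a fixed constant; the $\norm{\bff{u}_h^n}{\bb{L}^\infty}^2$ factor survives only on the terms $\inpro{\bff{u}_h^n\times(P_h-I)\Delta\bff{u}^n}{\cdot}$, where it multiplies $h^{2(r+1)}$ and is then harmlessly summed via \eqref{equ:stab L infty}. For \eqref{equ:inpro uhn Delta theta n} the paper pushes the cubic term further (see \eqref{equ:I5 Delta theta n}), writing $\bff{u}_h^n=\bff{\theta}^n+R_h\bff{u}^n$ and invoking $\norm{\bff{\theta}^n}{\bb{H}^1}\leq C$ together with the discrete Sobolev bound \eqref{equ:disc lapl L infty}; this extra care is what keeps the coefficient of $\norm{\bff{\theta}^n}{\bb{H}^1}^2$ free of $\norm{\bff{u}_h^n}{\bb{L}^\infty}$.
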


\begin{proof}
Noting~\eqref{equ:Hh minus Hn}, we can write
\begin{align*}
	&\bff{u}_h^n \times \bff{H}_h^n- \bff{u}^n\times \bff{H}^n
	\\
	&=
	\bff{u}_h^n \times \left(\bff{H}_h^n-\bff{H}^n\right) 
	+
	\left(\bff{u}_h^n- \bff{u}^n\right) \times \bff{H}^n
	\\
	&=
	\bff{u}_h^n \times \Delta_h \bff{\theta}^n
	+
	\bff{u}_h^n \times (P_h-I) \Delta \bff{u}^n
	-
	\bff{u}_h^n\times \left(\bff{\theta}^n+\bff{\rho}^n\right) 
	-
	\bff{u}_h^n \times 	P_h \left(|\bff{u}_h^{n}|^2 \bff{u}_h^{n}-|\bff{u}^{n}|^2 \bff{u}^{n}\right) 
	\\
	&\quad
	-
	\bff{u}_h^n \times (P_h-I) \left(|\bff{u}^{n}|^2 \bff{u}^{n}\right)
	-
	\bff{u}_h^n \times \left(\bff{e}\cdot (\bff{\theta}^{n}+\bff{\rho}^{n})\right)
	+
	\left( \bff{\theta}^n + \bff{\rho}^n \right) \times \bff{H}^n
	\\
	&=:
	I_1+I_2+\cdots+I_7.
\end{align*}
We will take the inner product of each term on the last line with $\bff{\theta}^n$ and estimate it in the following. For the first term, by H\"older's and Young's inequalities, the stability estimate \eqref{equ:stab H1}, as well as estimates~\eqref{equ:Ritz ineq} and~\eqref{equ:gal nir uh L4}, we have
\begin{align*}
	\abs{\inpro{I_1}{\bff{\theta}^n}}
	&\leq
	\norm{\bff{u}_h^n}{\bb{L}^4} \norm{\Delta_h \bff{\theta}^n}{\bb{L}^2} \norm{\bff{\theta}^n}{\bb{L}^4}
	\leq
	C \norm{\bff{\theta}^n}{\bb{L}^2}^2
	+
	\epsilon \norm{\nabla \bff{\theta}^n}{\bb{L}^2}^2
	+
	\epsilon \norm{\Delta_h \bff{\theta}^n}{\bb{L}^2}^2.
\end{align*}
Similarly, for the terms $I_3$, $I_6$, and $I_7$, we have
\begin{align*}
	\abs{\inpro{I_3}{\bff{\theta}^n}}
	&\leq
	\norm{\bff{u}_h^n}{\bb{L}^4} \norm{\bff{\rho}^n}{\bb{L}^4} \norm{\bff{\theta}^n}{\bb{L}^2}
	\leq
	Ch^{2(r+1)}
	+
	\epsilon \norm{\bff{\theta}^n}{\bb{L}^2}^2,
	\\
	\abs{\inpro{I_6}{\bff{\theta}^n}}
	&\leq
	\norm{\bff{u}_h^n}{\bb{L}^4} \norm{\bff{\theta}^n+\bff{\rho}^n}{\bb{L}^2} \norm{\bff{\theta}^n}{\bb{L}^4}
	\leq
	Ch^{2(r+1)}
	+
	C\norm{\bff{\theta}^n}{\bb{L}^2}^2
	+
	\epsilon \norm{\nabla \bff{\theta}^n}{\bb{L}^2}^2,
	\\
	\abs{\inpro{I_7}{\bff{\theta}^n}}
	&\leq
	\norm{\bff{\rho}^n}{\bb{L}^2} \norm{\bff{H}^n}{\bb{L}^\infty} \norm{\bff{\theta}^n}{\bb{L}^2}
	\leq
	Ch^{2(r+1)}
	+
	\epsilon\norm{\bff{\theta}^n}{\bb{L}^2}^2.
\end{align*}
For the terms $I_2$ and $I_5$, noting the regularity assumption~\eqref{equ:ass 2}, we use \eqref{equ:proj approx} and Young's inequality to obtain
\begin{align*}
	\abs{\inpro{I_2}{\bff{\theta}^n}} + \abs{\inpro{I_5}{\bff{\theta}^n}} 
	&\leq
	\norm{\bff{u}_h^n}{\bb{L}^\infty} \norm{(P_h-I)\Delta \bff{u}^n}{\bb{L}^2} \norm{\bff{\theta}^n}{\bb{L}^2}
	+
	\norm{\bff{u}_h^n}{\bb{L}^\infty} \norm{(P_h-I) \big(|\bff{u}^n|^2 \bff{u}^n\big)}{\bb{L}^2} \norm{\bff{\theta}^n}{\bb{L}^2} 
	\\
	&\leq
	Ch^{2(r+1)} \norm{\bff{u}_h^n}{\bb{L}^\infty}^2 + \epsilon \norm{\bff{\theta}^n}{\bb{L}^2}^2.
\end{align*}
Finally, for the term $I_4$, we use~\eqref{equ:split cubic}, the $\bb{L}^2$-stability of $P_h$, and stability estimate~\eqref{equ:stab H1}, to obtain
\begin{align}\label{equ:I5 theta n}
	\abs{\inpro{I_4}{\bff{\theta}^n}}
	&\leq
	\norm{\bff{u}_h^n}{\bb{L}^6} \norm{P_h \left(|\bff{u}_h^{n}|^2 \bff{u}_h^{n}-|\bff{u}^{n}|^2 \bff{u}^{n}\right)}{\bb{L}^2} \norm{\bff{\theta}^n}{\bb{L}^3}
	\nonumber\\
	&\leq
	\norm{\bff{u}_h^n}{\bb{L}^6} \norm{\bff{u}_h^n}{\bb{L}^6}^2 \norm{\bff{\theta}^n + \bff{\rho}^n}{\bb{L}^6} \norm{\bff{\theta}^n}{\bb{L}^3}
	\nonumber\\
	&\quad
	+
	\norm{\bff{u}_h^n}{\bb{L}^6} \norm{\bff{u}_h^n+\bff{u}^n}{\bb{L}^6} \norm{\bff{u}^n}{\bb{L}^6} \norm{\bff{\theta}^n+\bff{\rho}^n}{\bb{L}^6} \norm{\bff{\theta}^n}{\bb{L}^3}
	\nonumber\\
	&\leq
	C \norm{\bff{\theta}^n}{\bb{L}^6} \norm{\bff{\theta}^n}{\bb{L}^3}
	+
	C \norm{\bff{\rho}^n}{\bb{L}^6} \norm{\bff{\theta}^n}{\bb{L}^3}
	\nonumber\\
	&\leq
	C \norm{\bff{\theta}^n}{\bb{H}^1}^{\frac32} \norm{\bff{\theta}^n}{\bb{L}^2}^{\frac12}
	+
	C h^{r+1} \norm{\bff{\theta}^n}{\bb{H}^1}^{\frac12} \norm{\bff{\theta}^n}{\bb{L}^2}^{\frac12}
	\nonumber \\
	&\leq
	Ch^{2(r+1)}
	+
	C\norm{\bff{\theta}^n}{\bb{L}^2}^2
	+
    \epsilon \norm{\bff{\theta}^n}{\bb{H}^1}^2,
\end{align}
where in the penultimate step we used \eqref{equ:Ritz ineq}, Sobolev embedding $\bb{H}^1\hookrightarrow \bb{L}^6$, and Gagliardo--Nirenberg inequality~\eqref{equ:gal nir uh L3}, while in the last step we used Young's inequality.
Altogether, we obtain~\eqref{equ:inpro uhn theta n} from the above estimates.

In a similar manner, we obtain the following bounds:
\begin{align*}
	\abs{\inpro{I_1}{\Delta_h \bff{\theta}^n}}
	&=
	0,
	\\
	\abs{\inpro{I_2}{\Delta_h \bff{\theta}^n}}
	&\leq
	\norm{\bff{u}_h^n}{\bb{L}^\infty} \norm{(P_h-I)\Delta \bff{u}^n}{\bb{L}^2} \norm{\Delta_h \bff{\theta}^n}{\bb{L}^2}
	\leq
	C \norm{\bff{u}_h^n}{\bb{L}^\infty}^2 h^{2(r+1)} 
	+
	\epsilon \norm{\Delta_h \bff{\theta}^n}{\bb{L}^2}^2,
	\\
	\abs{\inpro{I_3}{\Delta_h \bff{\theta}^n}}
	&\leq
	\norm{\bff{u}_h^n}{\bb{L}^6} \norm{\bff{\theta}^n+\bff{\rho}^n}{\bb{L}^3} \norm{\Delta_h \bff{\theta}^n}{\bb{L}^2}
	\leq
	Ch^{2(r+1)}
	+
	C \norm{\bff{\theta}^n}{\bb{L}^2}^2
	+
	\epsilon \norm{\nabla \bff{\theta}^n}{\bb{L}^2}^2
	+
	\epsilon \norm{\Delta_h \bff{\theta}^n}{\bb{L}^2}^2,
	\\
	\abs{\inpro{I_5}{\Delta_h \bff{\theta}^n}}
	&\leq
	\norm{\bff{u}_h^n}{\bb{L}^4} \norm{(P_h-I)\left(\abs{\bff{u}^n}^2 \bff{u}^n \right)}{\bb{L}^4} \norm{\Delta_h \bff{\theta}^n}{\bb{L}^2}
    \leq
	C h^{2(r+1)} 
	+
	\epsilon \norm{\Delta_h \bff{\theta}^n}{\bb{L}^2}^2,
	\\
	\abs{\inpro{I_6}{\Delta_h \bff{\theta}^n}}
	&\leq
	\norm{\bff{u}_h^n}{\bb{L}^6} \norm{\bff{\theta}^n+\bff{\rho}^n}{\bb{L}^3} \norm{\Delta_h \bff{\theta}^n}{\bb{L}^2}
	\leq
	Ch^{2(r+1)} + \norm{\bff{\theta}^n}{\bb{L}^2} +
	\epsilon \norm{\nabla \bff{\theta}^n}{\bb{L}^2}^2
	+
	\epsilon \norm{\Delta_h \bff{\theta}^n}{\bb{L}^2}^2,
	\\
	\abs{\inpro{I_7}{\Delta_h \bff{\theta}^n}}
	&\leq
	\norm{\bff{\theta}^n+\bff{\rho}^n}{\bb{L}^2} \norm{\bff{H}^n}{\bb{L}^\infty} \norm{\Delta_h \bff{\theta}^n}{\bb{L}^2}
	\leq
	Ch^{2(r+1)} + C\norm{\bff{\theta}^n}{\bb{L}^2}^2 
	+
	\epsilon \norm{\Delta_h \bff{\theta}^n}{\bb{L}^2}^2.
\end{align*}
Finally, for the term $\inpro{I_4}{\Delta_h \bff{\theta}^n}$, we apply similar argument as in~\eqref{equ:I5 theta n} to obtain
\begin{align}\label{equ:I5 Delta theta n}
	\abs{\inpro{I_4}{\Delta_h \bff{\theta}^n}}
	&\leq
	\norm{\bff{u}_h^n}{\bb{L}^\infty} \norm{P_h \left(|\bff{u}_h^{n}|^2 \bff{u}_h^{n}-|\bff{u}^{n}|^2 \bff{u}^{n}\right)}{\bb{L}^2} \norm{\Delta_h \bff{\theta}^n}{\bb{L}^2}
	\nonumber\\
	&\leq
	C \norm{\bff{u}_h^n}{\bb{L}^\infty} \norm{\bff{u}_h^n}{\bb{L}^6}^2 \norm{\bff{\theta}^n + \bff{\rho}^n}{\bb{L}^6} \norm{\Delta_h \bff{\theta}^n}{\bb{L}^2}
	\nonumber\\
	&\quad
	+
	C \norm{\bff{u}_h^n}{\bb{L}^\infty} \norm{\bff{u}_h^n+\bff{u}^n}{\bb{L}^6} \norm{\bff{u}^n}{\bb{L}^6} \norm{\bff{\theta}^n+\bff{\rho}^n}{\bb{L}^6} \norm{\Delta_h \bff{\theta}^n}{\bb{L}^2}
	\nonumber\\
	&\leq
	C \norm{\bff{u}_h^n}{\bb{L}^\infty} \norm{\bff{\theta}^n+\bff{\rho}^n}{\bb{L}^6} \norm{\Delta_h \bff{\theta}^n}{\bb{L}^2}
	\nonumber \\
	&\leq
	C \norm{\bff{u}_h^n}{\bb{L}^\infty}^2 h^{2(r+1)} 
    + 
    C \norm{\bff{u}_h^n}{\bb{L}^\infty}^2 \norm{\bff{\theta}^n}{\bb{H}^1}^2 
    + \epsilon \norm{\Delta_h \bff{\theta}^n}{\bb{L}^2}^2
    \nonumber \\
    &\leq
    C \norm{\bff{u}_h^n}{\bb{L}^\infty}^2 h^{2(r+1)} 
    + 
    C \norm{\bff{\theta}^n+R_h \bff{u}^n}{\bb{L}^\infty}^2 \norm{\bff{\theta}^n}{\bb{H}^1}^2 
    + \epsilon \norm{\Delta_h \bff{\theta}^n}{\bb{L}^2}^2
    \nonumber \\
    &\leq
    C \norm{\bff{u}_h^n}{\bb{L}^\infty}^2 h^{2(r+1)} 
    + 
    C \norm{\bff{\theta}^n}{\bb{L}^\infty}^2 
    + C \norm{\bff{\theta}^n}{\bb{H}^1}^2 
    + \epsilon \norm{\Delta_h \bff{\theta}^n}{\bb{L}^2}^2
     \nonumber \\
    &\leq
    C \norm{\bff{u}_h^n}{\bb{L}^\infty}^2 h^{2(r+1)} 
    + C \norm{\bff{\theta}^n}{\bb{H}^1}^2 
    + \epsilon \norm{\Delta_h \bff{\theta}^n}{\bb{L}^2}^2,
\end{align}
where in the penultimate step we used the fact that $\norm{\bff{\theta}^n}{\bb{H}^1}\leq C$ and \eqref{equ:Ritz stab u infty}, while in the last step we used \eqref{equ:disc lapl L infty} and Young's inequality.
Altogether, we obtain the inequality~\eqref{equ:inpro uhn Delta theta n}. This completes the proof of the lemma.
\end{proof}

\begin{remark}\label{rem:1d 2d}
If $d=1$ or $d=2$, then we have the Sobolev embedding $\bb{H}^1\hookrightarrow \bb{L}^8$. In these cases, one could remove the term $\norm{\bff{u}_h^n}{\bb{L}^\infty}^2$ in \eqref{equ:inpro uhn theta n} and \eqref{equ:inpro uhn Delta theta n} by estimating  $\abs{\inpro{I_4}{\bff{\theta}^n}}$ in \eqref{equ:I5 theta n} and $\abs{\inpro{I_4}{\Delta_h \bff{\theta}^n}}$ in \eqref{equ:I5 Delta theta n} differently. Indeed, we have
\begin{align*}
	\nonumber
	\abs{\inpro{I_4}{\Delta_h \bff{\theta}^n}}
	&\leq
	\norm{\bff{u}_h^n}{\bb{L}^8}
	\left( \norm{\bff{u}_h^n}{\bb{L}^8}^2 \norm{\bff{\theta}^n+\bff{\rho}^n}{\bb{L}^8}
	+
	\norm{\bff{\theta}^n}{\bb{L}^8}
	\norm{\bff{u}_h^n+\bff{u}^{n}}{\bb{L}^8}
	\norm{\bff{u}^{n}}{\bb{L}^8} \right)
	\norm{\Delta_h \bff{\theta}^{n}}{\bb{L}^2}
	\\
	&\leq
	C h^{2(r+1)}
	+
	C \norm{\bff{\theta}^n}{\bb{H}^1}^2
	+
	\epsilon \norm{\Delta_h \bff{\theta}^n}{\bb{L}^2}^2,
\end{align*}
where in the last step we also used~\eqref{equ:Ritz ineq}. Similar bound also holds for $\abs{\inpro{I_4}{\bff{\theta}^n}}$. Furthermore, if we assume more regularity on the exact solution $\bff{u}$, say $\bff{u}\in L^\infty(\bb{W}^{4,3})$, then
\begin{align*}
	\abs{\inpro{I_2}{\Delta_h \bff{\theta}^n}}
	&\leq 
	\norm{\bff{u}_h^n}{\bb{L}^6}
	\norm{(P_h-I) \Delta \bff{u}^n}{\bb{L}^3}
	\norm{\Delta_h\bff{\theta}^{n}}{\bb{L}^2}
	\leq
	Ch^{2(r+1)} \norm{\Delta \bff{u}^n}{\bb{W}^{2,3}} + \epsilon \norm{\Delta_h \bff{\theta}^{n}}{\bb{L}^2}^2.
\end{align*}
These stronger estimates allow us to avoid using \eqref{equ:Ritz stab u infty}, \eqref{equ:disc lapl L infty}, and~\eqref{equ:stab L infty}. Thus, we could remove the term $\norm{\bff{u}_h^n}{\bb{L}^\infty}^2$ in the estimates \eqref{equ:inpro uhn theta n} and \eqref{equ:inpro uhn Delta theta n} if $d=1$ or $2$, even without necessarily assuming global quasi-uniformity of the triangulation.
\end{remark}

\begin{lemma}\label{lem:inpro beta}
	Let $\epsilon >0$ be given. Let $\bff{u}_h^n$ be defined by \eqref{equ:scheme 2}. Then for any initial data $\bff{u}_h^0\in \bb{V}_h$ and $n\in \{1,2,\ldots, \lfloor T/k \rfloor\}$,
	\begin{align}
		\label{equ:inpro beta theta n}
		\big| \inpro{(\bff{\nu}^n\cdot\nabla)(\bff{u}_h^{n-1}-\bff{u}^n)}{\bff{\theta}^n} \big| 
		&\leq
		Ch^{2(r+1)} + Ck^2 + C\norm{\bff{\theta}^{n-1}}{\bb{L}^2}^2 + \epsilon\norm{\bff{\theta}^n}{\bb{H}^1}^2,
		\\
		\label{equ:inpro beta Delta theta n}
		\big| \inpro{(\bff{\nu}^n\cdot\nabla)(\bff{u}_h^{n-1}-\bff{u}^n)}{\Delta_h \bff{\theta}^n} \big| 
		&\leq 
		Ch^{2r}+Ck^2+ C\norm{\nabla\bff{\theta}^{n-1}}{\bb{L}^2}^2 +
		\epsilon \norm{\Delta_h \bff{\theta}^n}{\bb{L}^2}^2,
	\end{align}
	where $C$ depends on the coefficients of the equation, $K_r$, $T$, and $\mathscr{D}$ (but is independent of $n$, $h$ or $k$).
\end{lemma}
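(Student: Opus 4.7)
The natural starting point is to decompose
\[
\bff{u}_h^{n-1}-\bff{u}^n
\;=\;
\bff{\theta}^{n-1}\;+\;\bff{\rho}^{n-1}\;+\;(\bff{u}^{n-1}-\bff{u}^n),
\]
using the splitting \eqref{equ:theta rho split 2} shifted to step $n-1$ and adding/subtracting $\bff{u}^{n-1}$. The three pieces are controlled individually: $\bff{\rho}^{n-1}$ satisfies $\norm{\bff{\rho}^{n-1}}{\bb{L}^2}\leq Ch^{r+1}$ and $\norm{\nabla\bff{\rho}^{n-1}}{\bb{L}^2}\leq Ch^r$ by \eqref{equ:Ritz ineq}, while writing $\bff{u}^n-\bff{u}^{n-1}=\int_{t_{n-1}}^{t_n}\partial_t\bff{u}(s)\,\ds$ and using the regularity assumption \eqref{equ:ass 2} yields $\norm{\bff{u}^n-\bff{u}^{n-1}}{\bb{L}^2}\leq Ck$ and $\norm{\nabla(\bff{u}^n-\bff{u}^{n-1})}{\bb{L}^2}\leq Ck$. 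Squaring and using Young's inequality will give
\[
\norm{\bff{u}_h^{n-1}-\bff{u}^n}{\bb{L}^2}^2\leq C\norm{\bff{\theta}^{n-1}}{\bb{L}^2}^2+Ch^{2(r+1)}+Ck^2,
\quad
\norm{\nabla(\bff{u}_h^{n-1}-\bff{u}^n)}{\bb{L}^2}^2\leq C\norm{\nabla\bff{\theta}^{n-1}}{\bb{L}^2}^2+Ch^{2r}+Ck^2.
\]

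For \eqref{equ:inpro beta theta n}, a direct bound via $\norm{\nabla(\bff{u}_h^{n-1}-\bff{u}^n)}{\bb{L}^2}\norm{\bff{\theta}^n}{\bb{L}^2}$ would only produce $Ch^{2r}$, which is suboptimal. The plan is therefore to integrate by parts exactly as in \eqref{equ:div thm beta1}, using $\bff{\nu}^n\cdot\bff{n}=0$ on $\partial\mathscr{D}$, to obtain
\[
\inpro{(\bff{\nu}^n\cdot\nabla)(\bff{u}_h^{n-1}-\bff{u}^n)}{\bff{\theta}^n}
=
-\inpro{(\bff{u}_h^{n-1}-\bff{u}^n)\otimes\bff{\nu}^n}{\nabla\bff{\theta}^n}
-\inpro{(\nabla\cdot\bff{\nu}^n)(\bff{u}_h^{n-1}-\bff{u}^n)}{\bff{\theta}^n}.
\]
Applying Hölder in the form $\nu_\infty\norm{\bff{u}_h^{n-1}-\bff{u}^n}{\bb{L}^2}\norm{\bff{\theta}^n}{\bb{H}^1}$ to both terms, followed by Young's inequality (with absorption of $\epsilon\norm{\bff{\theta}^n}{\bb{H}^1}^2$), together with the first displayed estimate above, gives the required bound.

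For \eqref{equ:inpro beta Delta theta n}, integration by parts would produce $\nabla\Delta_h\bff{\theta}^n$, which can only be estimated via an inverse estimate with an $h^{-1}$ loss. The simpler and sharper route is a direct Hölder bound,
\[
\bigl|\inpro{(\bff{\nu}^n\cdot\nabla)(\bff{u}_h^{n-1}-\bff{u}^n)}{\Delta_h\bff{\theta}^n}\bigr|
\;\leq\;
\nu_\infty\norm{\nabla(\bff{u}_h^{n-1}-\bff{u}^n)}{\bb{L}^2}\,\norm{\Delta_h\bff{\theta}^n}{\bb{L}^2},
\]
followed by Young's inequality to absorb $\epsilon\norm{\Delta_h\bff{\theta}^n}{\bb{L}^2}^2$. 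Inserting the second displayed estimate of the first paragraph then yields the stated bound; note that the rate degradation from $h^{2(r+1)}$ to $h^{2r}$ is intrinsic and reflects the use of $\norm{\nabla\bff{\rho}^{n-1}}{\bb{L}^2}=\mathcal{O}(h^r)$ rather than $\norm{\bff{\rho}^{n-1}}{\bb{L}^2}=\mathcal{O}(h^{r+1})$. No essential obstacle is anticipated; the only subtlety is recognising that integration by parts is needed in the first case to unlock the optimal $h^{r+1}$ spatial rate, while the second case must be treated directly.
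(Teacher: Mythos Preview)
Your proposal is correct and follows essentially the same approach as the paper: the same decomposition $\bff{u}_h^{n-1}-\bff{u}^n=\bff{\theta}^{n-1}+\bff{\rho}^{n-1}-k\,\mathrm{d}_t\bff{u}^n$, integration by parts via \eqref{equ:div thm beta1} for the first estimate, and a direct H\"older bound for the second. Your discussion of why integration by parts is needed in the first case but not the second is exactly the point.
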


\begin{proof}
Writing $\bff{u}_h^{n-1}-\bff{u}^n= \bff{\theta}^{n-1}+\bff{\rho}^{n-1}-k\cdot \mathrm{d}_t \bff{u}^n$, by \eqref{equ:div thm beta1} and H\"older's inequality we have
\begin{align*}
	\big|\inpro{(\bff{\nu}^n\cdot\nabla)(\bff{u}_h^{n-1}-\bff{u}^n)}{\bff{\theta}^n} \big|
	&\leq
	\big| \inpro{(\bff{u}_h^{n-1}-\bff{u}^n)\otimes \bff{\nu}^n}{\nabla\bff{\theta}^n} \big|
	+
	\big| \inpro{(\nabla\cdot\bff{\nu}^n) (\bff{u}_h^{n-1}-\bff{u}^n)}{\bff{\theta}^n} \big|
	\\
	&\leq
	C \norm{\bff{\theta}^{n-1}+\bff{\rho}^{n-1}-k\cdot \mathrm{d}_t \bff{u}^n}{\bb{L}^2} \norm{\bff{\theta}^n}{\bb{H}^1}
	\\
	&\leq
	Ch^{2(r+1)} + Ck^2 + C\norm{\bff{\theta}^{n-1}}{\bb{L}^2}^2 + \epsilon\norm{\bff{\theta}^n}{\bb{H}^1}^2,
\end{align*}
where in the last step we used Young's inequality, \eqref{equ:norm delta un Lp}, and \eqref{equ:Ritz ineq}, thus showing \eqref{equ:inpro beta theta n}.

Next, by H\"older's and Young's inequalities,
\begin{align*}
	\big| \inpro{(\bff{\nu}^n\cdot\nabla)(\bff{u}_h^{n-1}-\bff{u}^n)}{\Delta_h \bff{\theta}^n} \big| 
	&\leq
	C\norm{\nabla\bff{\theta}^{n-1}+\nabla\bff{\rho}^{n-1}-k\cdot \nabla\mathrm{d}_t \bff{u}^n}{\bb{L}^2} \norm{\Delta_h \bff{\theta}^n}{\bb{L}^2}
	\\
	&\leq
	Ch^{2r}+Ck^2+ C\norm{\nabla\bff{\theta}^{n-1}}{\bb{L}^2}^2 +
	\epsilon \norm{\Delta_h \bff{\theta}^n}{\bb{L}^2}^2,
\end{align*}
which shows \eqref{equ:inpro beta Delta theta n}.
\end{proof}

\begin{lemma}
Let $\epsilon>0$ be given. Let $\bff{w}_h^n$ and $\bff{w}^n$ be defined by~\eqref{equ:whn} and~\eqref{equ:w}, respectively.
Then for any initial data $\bff{u}_h^0\in\bb{V}_h$ and $n\in \{1,2,\ldots, \lfloor T/k \rfloor\}$,
\begin{equation}\label{equ:wh mu pos}
	\norm{\bff{w}_h^n-\bff{w}^n}{\bb{L}^2}^2
	\leq
	Ch^{2(r+1)}
	+
	C \norm{\bff{\theta}^n}{\bb{H}^1}^2,
\end{equation} 
where $C$ depends on the coefficients of the equation, $K_r$, $T$, and $\mathscr{D}$ (but is independent of $n$, $h$ or $k$).
\end{lemma}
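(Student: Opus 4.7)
The plan is to decompose $\bff{w}_h^n - \bff{w}^n$ into its three natural pieces and bound each by $Ch^{r+1} + C\norm{\bff{\theta}^n}{\bb{H}^1}$ in $\bb{L}^2$, after which squaring yields the claim. Using the definitions~\eqref{equ:whn} and~\eqref{equ:w}, I would write
\begin{equation*}
\bff{w}_h^n - \bff{w}^n
= (\bff{u}_h^n - \bff{u}^n)
+ \bigl[P_h\bigl(|\bff{u}_h^n|^2 \bff{u}_h^n\bigr) - |\bff{u}^n|^2 \bff{u}^n\bigr]
+ \bff{e}\bigl(\bff{e}\cdot(\bff{u}_h^n - \bff{u}^n)\bigr),
\end{equation*}
and use the splitting $\bff{u}_h^n - \bff{u}^n = \bff{\theta}^n + \bff{\rho}^n$ from~\eqref{equ:theta rho split 2}. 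The linear and anisotropy pieces are immediate: their $\bb{L}^2$ norms are controlled by $\norm{\bff{\theta}^n}{\bb{L}^2} + \norm{\bff{\rho}^n}{\bb{L}^2}$, and the Ritz approximation property~\eqref{equ:Ritz ineq} (with $s=0$, $p=2$) together with~\eqref{equ:ass 2} gives $\norm{\bff{\rho}^n}{\bb{L}^2} \leq Ch^{r+1}$.

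The only substantive piece is the cubic term. I would further split
\begin{equation*}
P_h\bigl(|\bff{u}_h^n|^2 \bff{u}_h^n\bigr) - |\bff{u}^n|^2 \bff{u}^n
= P_h\bigl(|\bff{u}_h^n|^2 \bff{u}_h^n - |\bff{u}^n|^2 \bff{u}^n\bigr)
+ (P_h - I)\bigl(|\bff{u}^n|^2 \bff{u}^n\bigr).
\end{equation*}
For the second summand, the projection error~\eqref{equ:proj approx} and the regularity bound~\eqref{equ:ass 2} (which implies $|\bff{u}^n|^2\bff{u}^n \in \bb{H}^{r+1}$ uniformly in $n$, by Sobolev algebra for $\bb{H}^2$ in dimensions $d\leq 3$) yield a bound of order $h^{r+1}$ in $\bb{L}^2$. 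For the first summand, I use the $\bb{L}^2$-stability of $P_h$ from~\eqref{equ:proj H1 stab} to reduce matters to estimating $\norm{|\bff{u}_h^n|^2\bff{u}_h^n - |\bff{u}^n|^2\bff{u}^n}{\bb{L}^2}$, and apply the algebraic identity~\eqref{equ:split cubic} to obtain the two pieces $|\bff{u}_h^n|^2(\bff{\theta}^n+\bff{\rho}^n)$ and $\bigl((\bff{\theta}^n+\bff{\rho}^n)\cdot(\bff{u}_h^n+\bff{u}^n)\bigr)\bff{u}^n$.

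Each of these two pieces is handled by H\"older's inequality with the $\bb{L}^6\cdot \bb{L}^6\cdot \bb{L}^6 \hookrightarrow \bb{L}^2$ pattern (or equivalently $\bb{L}^\infty \cdot \bb{L}^3 \cdot \bb{L}^6 \hookrightarrow \bb{L}^2$ for the second piece), using the uniform stability bound $\norm{\bff{u}_h^n}{\bb{H}^1}\leq C$ from~\eqref{equ:stab H1}, the Sobolev embedding $\bb{H}^1 \hookrightarrow \bb{L}^6$, and the regularity bound $\norm{\bff{u}^n}{\bb{L}^\infty}\leq C$ from~\eqref{equ:ass 2}. The $\bff{\rho}^n$ contribution is absorbed into $Ch^{r+1}$ by invoking~\eqref{equ:Ritz ineq} with $s=0$, $p=6$, which gives $\norm{\bff{\rho}^n}{\bb{L}^6} \leq Ch^{r+1}$, while the $\bff{\theta}^n$ contribution is absorbed into $C\norm{\bff{\theta}^n}{\bb{H}^1}$ via the same $\bb{H}^1 \hookrightarrow \bb{L}^6$ embedding. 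Combining all estimates and squaring produces~\eqref{equ:wh mu pos}.

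There is no real analytic obstacle here; the argument is routine once one recognises that the $\bb{H}^1$-stability of $\bff{u}_h^n$ (Lemma~\ref{lem:H1 stable}) is strong enough, through Sobolev embedding in $d\leq 3$, to control the cubic nonlinearity without needing pointwise bounds on $\bff{u}_h^n$. The only subtlety is choosing $p=6$ rather than $p=2$ in the Ritz estimate so that the $\bff{\rho}^n$ contribution keeps the full rate $h^{r+1}$ and is not reduced to $h^r$.
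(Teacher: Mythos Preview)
Your proof is correct and follows essentially the same route as the paper: the same three-piece decomposition of $\bff{w}_h^n-\bff{w}^n$, the same splitting of the cubic term into $P_h(|\bff{u}_h^n|^2\bff{u}_h^n-|\bff{u}^n|^2\bff{u}^n)$ and $(P_h-I)(|\bff{u}^n|^2\bff{u}^n)$, the algebraic identity~\eqref{equ:split cubic}, and the $\bb{L}^6\cdot\bb{L}^6\cdot\bb{L}^6$ H\"older pattern combined with $\bb{H}^1\hookrightarrow\bb{L}^6$, the stability bound~\eqref{equ:stab H1}, and the Ritz estimate~\eqref{equ:Ritz ineq} with $p=6$. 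The paper's argument is identical in structure and in the ingredients invoked.
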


\begin{proof}
Note that
\begin{align*}
	\bff{w}_h^{n}-\bff{w}^{n}
	&=
	\bff{\theta}^n+ \bff{\rho}^n
	+
	P_h \left(|\bff{u}_h^{n}|^2 \bff{u}_h^{n}-|\bff{u}^{n}|^2 \bff{u}^{n}\right) 
	+
	(P_h-I) \left(|\bff{u}^{n}|^2 \bff{u}^{n}\right)
	+
	\bff{e} \left(\bff{e}\cdot (\bff{\theta}^n+\bff{\rho}^n)\right),
\end{align*}
where $I$ is the identity operator.
Noting \eqref{equ:proj approx}, \eqref{equ:Ritz ineq}, and \eqref{equ:ass 2}, we use the stability estimate \eqref{equ:stab H1} and~\eqref{equ:split cubic} to obtain
\begin{align*}
	\nonumber
	\norm{\bff{w}_h^{n}-\bff{w}^{n}}{\bb{L}^2}^2
	&\leq
	\norm{\bff{\theta}^n + \bff{\rho}^n}{\bb{L}^2}^2
	+
	C \norm{\bff{u}_h^{n}}{\bb{L}^6}^4
	\norm{\bff{\theta}^{n}+\bff{\rho}^{n}}{\bb{L}^6}^2
	+
	C
	\norm{\bff{u}_h^{n}+\bff{u}^{n}}{\bb{L}^6}^2
	\norm{\bff{u}^{n}}{\bb{L}^6}^2
	\norm{\bff{\theta}^{n}+\bff{\rho}^{n}}{\bb{L}^6}^2
	\\
	\nonumber
	&\quad
	+
	Ch^{2(r+1)} 
	+
	\norm{\bff{\theta}^n+\bff{\rho}^n}{\bb{L}^2}^2
	\\
	&\leq	
	Ch^{2(r+1)} 
	+
	C \norm{\bff{\theta}^n}{\bb{H}^1}^2.
\end{align*}
This proves \eqref{equ:wh mu pos}.
\end{proof}

We are now ready to prove an error estimate for the numerical scheme. In the proof, the following inequalities will be used:
\begin{align}
	\label{equ:dt rho L2}
	\norm{\mathrm{d}_t \bff{\rho}^n}{\bb{L}^2} &= \norm{\frac{1}{k} \int_{t_{n-1}}^{t_n} \partial_t \bff{\rho}(t) \,\dt}{\bb{L}^2} \leq \norm{\partial_t \bff{\rho}}{\bb{L}^2} \leq Ch^{r+1}
	\\
	\label{equ:dtuh minus dtun}
	\norm{\mathrm{d}_t \bff{u}^n- \partial_t \bff{u}^n}{\bb{L}^2}
	&=
	\norm{\frac{1}{2k} \int_{t_{n-1}}^{t_n} (t-t_{n-1}) \,\partial_t^2 \bff{u}(t) \, \dt}{\bb{L}^2} \leq Ck,
\end{align}
where \eqref{equ:Ritz ineq} was used in the last step of \eqref{equ:dt rho L2}, while Taylor's theorem and \eqref{equ:ass 2} were used in \eqref{equ:dtuh minus dtun}.
We begin by showing an auxiliary error estimate. In case $\bff{\nu}=\bff{0}$, the following proposition shows a superconvergence estimate for $\bff{\theta}^n$.

\begin{proposition}\label{pro:est theta no cur}
	Let $\bff{\theta}^n$ be as defined in \eqref{equ:theta rho split 2}. Then for $n\in \{0,1,\ldots,\lfloor T/k \rfloor\}$,
	\begin{align}\label{equ:theta not super}
		\norm{\bff{\theta}^n}{\bb{H}^1}^2
		+
		k \sum_{m=1}^n \norm{\Delta_h \bff{\theta}^m}{\bb{L}^2}^2
		+
		k \sum_{m=1}^n \norm{\bff{\theta}^m}{\bb{L}^\infty}^2
		&\leq
		C (h^{2r}+k^2).
	\end{align}
    If $\bff{\nu}=\bff{0}$, then we have a superconvergence estimate
    \begin{align}\label{equ:theta super}
		\norm{\bff{\theta}^n}{\bb{H}^1}^2
		+
		k \sum_{m=1}^n \norm{\Delta_h \bff{\theta}^m}{\bb{L}^2}^2
		+
		k \sum_{m=1}^n \norm{\bff{\theta}^m}{\bb{L}^\infty}^2
		&\leq
		C (h^{2(r+1)}+k^2).
	\end{align}
	In particular, if $\bff{\nu}=\bff{0}$ and $\mathscr{D}\subset \bb{R}^2$ with globally quasi-uniform triangulation, then
	\begin{equation}\label{equ:theta L infty}
		\norm{\bff{\theta}^n}{\bb{L}^\infty}^2
		\leq
		C\big(h^{2(r+1)} +k^2\big) \abs{\ln h},
	\end{equation}
	where $C$ depends on the coefficients of the equation, $K_r$, $T$, and $\mathscr{D}$ (but is independent of $n$, $h$ or $k$).
\end{proposition}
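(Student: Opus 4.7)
The plan is to work from the error equation obtained by subtracting the scheme \eqref{equ:scheme 2} at step $n$ from the weak form \eqref{equ:weak form no spin} at $t=t_n$. Writing $\bff{u}_h^n - \bff{u}^n = \bff{\theta}^n + \bff{\rho}^n$, this yields, for every $\bff{\chi}\in\bb{V}_h$,
\begin{align*}
\inpro{\mathrm{d}_t \bff{\theta}^n}{\bff{\chi}}
&= -\inpro{\bff{u}_h^n\times\bff{H}_h^n - \bff{u}^n\times\bff{H}^n}{\bff{\chi}}
+ \alpha\inpro{\bff{H}_h^n - \bff{H}^n}{\bff{\chi}} \\
&\quad
+ \beta_1\inpro{(\bff{\nu}^n\cdot\nabla)(\bff{u}_h^{n-1}-\bff{u}^n)}{\bff{\chi}}
+ \inpro{\partial_t\bff{u}^n - \mathrm{d}_t R_h\bff{u}^n}{\bff{\chi}}.
\end{align*}
I would test with $\bff{\chi} = \bff{\theta}^n - \Delta_h\bff{\theta}^n\in\bb{V}_h$. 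Using the definition of $\Delta_h$, the left-hand side telescopes to
$\tfrac{1}{2k}\bigl(\norm{\bff{\theta}^n}{\bb{H}^1}^2-\norm{\bff{\theta}^{n-1}}{\bb{H}^1}^2\bigr)$ plus non-negative second-order increments. The key structural observation is that by \eqref{equ:Hh minus Hn},
\[
\bff{H}_h^n - \bff{H}^n = \Delta_h\bff{\theta}^n + (P_h-I)\Delta\bff{u}^n - (\bff{w}_h^n-\bff{w}^n),
\]
so the $\bb{L}^2$-orthogonality of $P_h$ against $\bff{\chi}\in\bb{V}_h$ makes the middle piece drop out, while $\alpha\inpro{\Delta_h\bff{\theta}^n}{\bff{\chi}} = -\alpha\norm{\nabla\bff{\theta}^n}{\bb{L}^2}^2 - \alpha\norm{\Delta_h\bff{\theta}^n}{\bb{L}^2}^2$ supplies the dissipation to be moved to the left.

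The right-hand side is then handled term by term: the nonlinear cross-product contribution by Lemma \ref{lem:inpro theta n} (used with both $\bff{\theta}^n$ and $\Delta_h\bff{\theta}^n$), the $\bff{w}$-difference via \eqref{equ:wh mu pos} and Young's inequality, the $\bff{\nu}$-term via Lemma \ref{lem:inpro beta}, and the temporal consistency term $\partial_t\bff{u}^n - \mathrm{d}_t R_h\bff{u}^n = (\partial_t\bff{u}^n - \mathrm{d}_t\bff{u}^n) - \mathrm{d}_t\bff{\rho}^n$ via \eqref{equ:dt rho L2} and \eqref{equ:dtuh minus dtun}, which bound it by $C(h^{r+1}+k)$ in $\bb{L}^2$. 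All $\epsilon\norm{\Delta_h\bff{\theta}^n}{\bb{L}^2}^2$ and $\epsilon\norm{\nabla\bff{\theta}^n}{\bb{L}^2}^2$ contributions are absorbed by choosing $\epsilon$ small. Summing from $m=1$ to $n$, noting $\bff{\theta}^0 = \bff{u}_h^0 - R_h\bff{u}_0 = \bff{0}$, and applying the discrete Gronwall inequality (Lemma \ref{lem:disc gron}) — with the coefficients $1+\norm{\bff{u}_h^m}{\bb{L}^\infty}^2$ treated as a time-dependent variable which is summable thanks to \eqref{equ:stab L infty} — yields the $\bb{H}^1$-plus-$\Delta_h$-bound. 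The critical bookkeeping distinction is that \eqref{equ:inpro beta Delta theta n} carries an unavoidable $Ch^{2r}$ coming from $\norm{\nabla\bff{\rho}^{n-1}}{\bb{L}^2}$, which gives \eqref{equ:theta not super}; if $\bff{\nu}=\bff{0}$ this term disappears and every remaining contribution is already of size $h^{2(r+1)}+k^2$, delivering the superconvergence estimate \eqref{equ:theta super}.

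The $\ell^2(0,T;\bb{L}^\infty)$ bounds follow as a corollary from the inverse-type estimate \eqref{equ:disc lapl L infty}: squaring and applying Young's inequality gives $\norm{\bff{\theta}^m}{\bb{L}^\infty}^2 \leq C\bigl(\norm{\bff{\theta}^m}{\bb{L}^2}^2 + \norm{\Delta_h\bff{\theta}^m}{\bb{L}^2}^2\bigr)$ in all dimensions $d\leq 3$, and summing in $m$ yields the desired $h^{2r}+k^2$ or $h^{2(r+1)}+k^2$ bound. Finally, \eqref{equ:theta L infty} follows from the superconvergent $\bb{H}^1$-bound \eqref{equ:theta super} combined with the standard discrete Sobolev inequality on a quasi-uniform mesh in two dimensions, $\norm{\bff{v}_h}{\bb{L}^\infty}^2 \leq C\abs{\ln h}\norm{\bff{v}_h}{\bb{H}^1}^2$ for $\bff{v}_h\in\bb{V}_h$.

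The main obstacle I expect is the careful treatment of the $\norm{\bff{u}_h^n}{\bb{L}^\infty}^2$ prefactors that Lemma \ref{lem:inpro theta n} produces (because Agmon-type embeddings fail in 3D): these constants are not uniformly bounded in $n$, and must be handled by the discrete Gronwall inequality using only the $\ell^2$-in-time control provided by Lemma \ref{lem:Linfty stable}. The exploitation of the orthogonality $\inpro{(P_h - I)\Delta\bff{u}^n}{\bff{\chi}}=0$ is what pushes the consistency error from $h^{2r}$ to $h^{2(r+1)}$ and is therefore indispensable for the superconvergent case.
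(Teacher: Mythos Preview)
Your plan is essentially the paper's proof: subtract \eqref{equ:weak form no spin} from \eqref{equ:scheme 2}, test with $\bff{\theta}^n-\Delta_h\bff{\theta}^n$, invoke Lemmas~\ref{lem:inpro theta n} and~\ref{lem:inpro beta} together with \eqref{equ:wh mu pos}, \eqref{equ:dt rho L2}, \eqref{equ:dtuh minus dtun}, sum in $m$, apply a discrete Gronwall inequality, and finish with \eqref{equ:disc lapl L infty} and the discrete Sobolev inequality. Two technical points deserve correction.

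First, the Gronwall step. Because the scheme is fully implicit, Lemma~\ref{lem:inpro theta n} and \eqref{equ:wh mu pos} return $C\norm{\bff{\theta}^n}{\bb{H}^1}^2$ with the \emph{current} index $n$, not $n-1$. After summation the right-hand side contains $C_1 k\sum_{m=1}^{n}\norm{\bff{\theta}^m}{\bb{H}^1}^2$, so you must use the generalised Gronwall Lemma~\ref{lem:disc gen gron} (with the mild restriction $C_1k<1$), not Lemma~\ref{lem:disc gron}. Second, the factors $1+\norm{\bff{u}_h^m}{\bb{L}^\infty}^2$ from Lemma~\ref{lem:inpro theta n} do \emph{not} sit in front of $\norm{\bff{\theta}^m}{\bb{H}^1}^2$; they multiply the source term $h^{2(r+1)}$. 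Hence they are not Gronwall coefficients at all: after summing, \eqref{equ:stab L infty} bounds $k\sum_m(1+\norm{\bff{u}_h^m}{\bb{L}^\infty}^2)h^{2(r+1)}\leq C(T+C_\infty)h^{2(r+1)}$, and the Gronwall coefficient is simply a fixed constant $C_1$. Your ``main obstacle'' is therefore already resolved by the structure of the lemma.

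One place where your argument is cleaner than the paper's: you observe that $\inpro{(P_h-I)\Delta\bff{u}^n}{\bff{\chi}}=0$ for $\bff{\chi}\in\bb{V}_h$, so this consistency term vanishes outright. The paper instead keeps this as a separate term $J_3$ and bounds it by $Ch^{2(r+1)}$ via \eqref{equ:proj approx}, which implicitly requires $\Delta\bff{u}\in\bb{H}^{r+1}$. Your orthogonality shortcut sidesteps that regularity demand entirely; however, calling it ``indispensable'' overstates the case, since for $r=1$ the assumption \eqref{equ:ass 2} already suffices for the paper's direct bound.
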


\begin{proof}
Subtracting~\eqref{equ:weak form no spin} from~\eqref{equ:scheme 2}, then applying~\eqref{equ:Delta uhn Delta un} and \eqref{equ:div thm beta1}, we have
\begin{align*}
	&\inpro{\mathrm{d}_t \bff{\theta}^n}{\bff{\chi}} + \inpro{\mathrm{d}_t \bff{\rho}^n+ \mathrm{d}_t \bff{u}^n- \partial_t \bff{u}^n}{\bff{\chi}}
	\\
	&=
	-\inpro{\bff{u}_h^n\times \bff{H}_h^n- \bff{u}^n\times \bff{H}^n}{\bff{\chi}}
	+
	\alpha \inpro{\Delta_h \bff{\theta}^n + (P_h-I) \Delta \bff{u}^n}{\bff{\chi}}
	-
	\alpha \inpro{\bff{w}_h^n-\bff{w}^n}{\bff{\chi}}
	\\
	&\quad
	+
	\beta_1 \inpro{(\bff{\nu}^n\cdot\nabla)(\bff{u}_h^{n-1}-\bff{u}^n)}{\bff{\chi}}.
%	\\
%	&=
%	-\inpro{\bff{u}_h^n\times \bff{H}_h^n- \bff{u}^n\times \bff{H}^n}{\bff{\chi}}
%	+
%	\alpha \inpro{\Delta_h \bff{\theta}^n + (P_h-I) \Delta \bff{u}^n}{\bff{\chi}}
%	-
%	\alpha \inpro{\bff{w}_h^n-\bff{w}^n}{\bff{\chi}}
%	\\
%	&\quad
%	-
%	\beta_1 \inpro{(\bff{u}_h^{n-1}-\bff{u}^n)\otimes \bff{\nu}^n}{\nabla\bff{\chi}}
%	-
%	\beta_1 \inpro{(\nabla\cdot\bff{\nu}^n) (\bff{u}_h^{n-1}-\bff{u}^n)}{\bff{\chi}}.
\end{align*}
Now, we put $\bff{\chi}=\bff{\theta}^n- \Delta_h \bff{\theta}^n$ and rearrange the terms (noting the identity~\eqref{equ:aab}) to obtain
\begin{align}\label{equ:ineq theta H1}
	&\frac{1}{2k} \left(\norm{\bff{\theta}^n}{\bb{H}^1}^2 - \norm{\bff{\theta}^{n-1}}{\bb{H}^1}^2 \right)
	+
	\frac{1}{2k} \norm{\bff{\theta}^n-\bff{\theta}^{n-1}}{\bb{H}^1}^2
	+
	\alpha \norm{\nabla \bff{\theta}^n}{\bb{L}^2}^2
	+
	\alpha \norm{\Delta_h \bff{\theta}^n}{\bb{L}^2}^2
	\nonumber\\
	&=
	-
	\inpro{\mathrm{d}_t \bff{\rho}^n+ \mathrm{d}_t \bff{u}^n- \partial_t \bff{u}^n}{\bff{\theta}^n- \Delta_h \bff{\theta}^n} 
	-\inpro{\bff{u}_h^n\times \bff{H}_h^n- \bff{u}^n\times \bff{H}^n}{\bff{\theta}^n- \Delta_h \bff{\theta}^n}
	\nonumber\\
	&\quad
	+
	\alpha \inpro{(P_h-I)\Delta \bff{u}^n}{\bff{\theta}^n- \Delta_h \bff{\theta}^n}
	-
	\alpha \inpro{\bff{w}_h^n-\bff{w}^n}{\bff{\theta}^n- \Delta_h \bff{\theta}^n}
    +
	\beta_1 \inpro{(\bff{\nu}^n\cdot\nabla)(\bff{u}_h^{n-1}-\bff{u}^n)}{\bff{\theta}^n- \Delta_h \bff{\theta}^n}
	\nonumber\\
	&=: J_1+J_2+J_3+J_4+J_5.
\end{align}
It remains to estimate each term on the last line. For the term $J_1$, by \eqref{equ:dt rho L2} and \eqref{equ:dtuh minus dtun} we obtain
\begin{align*}
	\abs{J_1}
	&\leq
	\Big(\norm{\mathrm{d}_t \bff{\rho}^n}{\bb{L}^2} + \norm{\mathrm{d}_t \bff{u}^n- \partial_t \bff{u}^n}{\bb{L}^2}\Big) \norm{\bff{\theta}^n- \Delta_h \bff{\theta}^n}{\bb{L}^2}
	\\
	&\leq
	Ch^{2(r+1)}+ Ck^2 + \frac{\alpha}{8} \norm{\bff{\theta}^n}{\bb{L}^2}^2
	+
	\frac{\alpha}{8} \norm{\Delta_h \bff{\theta}^n}{\bb{L}^2}^2,
\end{align*}
where in the last step we also used Young's inequality.
For the next term, we apply Lemma~\ref{lem:inpro theta n} to obtain
\begin{align*}
	\abs{J_2}
	&\leq
	C \left(1+ \norm{\bff{u}_h^n}{\bb{L}^\infty}^2\right) h^{2(r+1)}  
	+ 
	C\norm{\bff{\theta}^n}{\bb{H}^1}^2
	+
	\frac{\alpha}{8} \norm{\Delta_h \bff{\theta}^n}{\bb{L}^2}^2.
\end{align*}
For the term $J_3$, by Young's inequality and~\eqref{equ:proj approx}, we have
\begin{align*}
	\abs{J_3}
	\leq
	Ch^{2(r+1)} 
	+
	\frac{\alpha}{8} \norm{\bff{\theta}^n}{\bb{L}^2}^2
	+
	\frac{\alpha}{8} \norm{\Delta_h \bff{\theta}^n}{\bb{L}^2}^2.
\end{align*}
For the term $J_4$, we use~\eqref{equ:wh mu pos} and Young's inequality to infer
\begin{align*}
	\abs{J_4} \leq Ch^{2(r+1)} + C\norm{\bff{\theta}^n}{\bb{H}^1}^2
	+
	\frac{\alpha}{8} \norm{\Delta_h \bff{\theta}^n}{\bb{L}^2}^2.
\end{align*}
For the last term, by Lemma~\ref{lem:inpro beta}. we obtain
\begin{align}\label{equ:J5 beta}
    \abs{J_5} 
    \leq
    Ch^{2r}+Ck^2+ C\norm{\bff{\theta}^{n-1}}{\bb{H}^1}^2 +
    \epsilon \norm{\bff{\theta}^n}{\bb{H}^1}^2
    +
	\epsilon \norm{\Delta_h \bff{\theta}^n}{\bb{L}^2}^2.
\end{align}
Altogether, substituting these estimates into \eqref{equ:ineq theta H1}, after rearranging the terms we obtain
\begin{align}\label{equ:theta n H1 gron}
	&\frac{1}{2k} \left(\norm{\bff{\theta}^n}{\bb{H}^1}^2 - \norm{\bff{\theta}^{n-1}}{\bb{H}^1}^2 \right)
	+
	\frac{1}{2k} \norm{\bff{\theta}^n-\bff{\theta}^{n-1}}{\bb{H}^1}^2
	+
	\alpha \norm{\nabla \bff{\theta}^n}{\bb{L}^2}^2
	+
	\frac{\alpha}{2} \norm{\Delta_h \bff{\theta}^n}{\bb{L}^2}^2
	\nonumber\\
	&\leq
	C \left(1+ \norm{\bff{u}_h^n}{\bb{L}^\infty}^2\right) h^{2r} + Ck^2 
	+ 
	C_1 \norm{\bff{\theta}^n}{\bb{H}^1}^2,
\end{align}
for some positive constant $C_1$ which depends on the coefficients of \eqref{equ:llb a} (that we do not specify in detail, but can be traced from the preceding calculations). Therefore, summing \eqref{equ:theta n H1 gron} over $m\in \{1,2,\ldots,n\}$ and noting~\eqref{equ:stab L infty} yield 
\begin{align*}
	\norm{\bff{\theta}^n}{\bb{H}^1}^2
	+
	\alpha k\sum_{m=1}^n \norm{\Delta_h \bff{\theta}^m}{\bb{L}^2}^2
	\leq
	C(1+C_\infty)h^{2r} + Ck^2 + C_1 k \sum_{m=1}^n \norm{\bff{\theta}^m}{\bb{H}^1}^2.
\end{align*}
For sufficiently small $k$ such that $C_1 k<1$, we obtain inequality~\eqref{equ:theta not super} for the first two terms on the left-hand side by the generalised discrete Gronwall lemma (Lemma~\ref{lem:disc gen gron}). For the last term in~\eqref{equ:theta not super}, the estimate follows by applying~\eqref{equ:disc lapl L infty}.

Next, if $\bff{\nu}=\bff{0}$, then the term $J_5$ is absent in \eqref{equ:ineq theta H1}, thus estimate \eqref{equ:J5 beta} is not needed. Instead of \eqref{equ:theta n H1 gron}, we then have
\begin{align*}
    &\frac{1}{2k} \left(\norm{\bff{\theta}^n}{\bb{H}^1}^2 - \norm{\bff{\theta}^{n-1}}{\bb{H}^1}^2 \right)
	+
	\frac{1}{2k} \norm{\bff{\theta}^n-\bff{\theta}^{n-1}}{\bb{H}^1}^2
	+
	\alpha \norm{\nabla \bff{\theta}^n}{\bb{L}^2}^2
	+
	\frac{\alpha}{2} \norm{\Delta_h \bff{\theta}^n}{\bb{L}^2}^2
	\nonumber\\
	&\leq
	C \left(1+ \norm{\bff{u}_h^n}{\bb{L}^\infty}^2\right) h^{2(r+1)} + Ck^2 
	+ 
	C_1 \norm{\bff{\theta}^n}{\bb{H}^1}^2,
\end{align*}
from which \eqref{equ:theta super} follows by similar argument as before.

Finally, inequality~\eqref{equ:theta L infty} follows from~\eqref{equ:theta super} and the discrete Sobolev inequality~\cite{Bre04}.
\end{proof}

We have the following main theorem for this section, which shows convergence at optimal rates in various norms.

\begin{theorem}\label{the:without spin error}
	Let $\bff{u}_h^n$ and $\bff{u}$ be the solution of \eqref{equ:scheme 2} and \eqref{equ:weak form no spin}, respectively. For $n\in \{0,1,\ldots,\lfloor T/k \rfloor\}$, 
    \begin{align*}
        \norm{\bff{u}_h^n-\bff{u}(t_n)}{\bb{H}^1}
        \leq C(h^r+k).
    \end{align*}
    If $\bff{\nu}=\bff{0}$, then for $s\in \{0,1\}$,
	\begin{align*}
		\norm{\bff{u}_h^n-\bff{u}(t_n)}{\bb{H}^s}
		&\leq 
		C(h^{r+1-s} +k),
		\\
		k \sum_{m=1}^n \norm{\bff{u}_h^n-\bff{u}(t_n)}{\bb{L}^\infty}^2 
		&\leq
		C(h^{2(r+1)} +k^2) \abs{\ln h}.
	\end{align*}
	If $\bff{\nu}=\bff{0}$ and $\mathscr{D}\subset \bb{R}^2$ with globally quasi-uniform triangulation, then
	\begin{align*}
		\norm{\bff{u}_h^n-\bff{u}(t_n)}{\bb{L}^\infty} 
		\leq
		C\big(h^{r+1} +k \big) \abs{\ln h}^{\frac12},
	\end{align*}
	where $C$ depends on the coefficients of the equation, $K_r$, $T$, and $\mathscr{D}$ (but is independent of $n$, $h$ or $k$).
\end{theorem}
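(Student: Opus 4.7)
The plan is to obtain each claim as a direct consequence of Proposition~\ref{pro:est theta no cur} combined with the approximation properties of the Ritz projection recorded in \eqref{equ:Ritz ineq} and \eqref{equ:Ritz ineq L infty}, via the splitting $\bff{u}_h^n-\bff{u}(t_n)=\bff{\theta}^n+\bff{\rho}^n$ introduced in \eqref{equ:theta rho split 2}. Once the hard work (stability, nonlinear estimates, and the superconvergence estimate for $\bff{\theta}^n$) has been done in Section~\ref{sec:scheme 2}, the remaining proof is essentially bookkeeping with the triangle inequality.

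For the first bound, which holds in the general case $\bff{\nu}\neq\bff{0}$, I would write
\begin{align*}
\norm{\bff{u}_h^n-\bff{u}(t_n)}{\bb{H}^1}
\leq \norm{\bff{\theta}^n}{\bb{H}^1}+\norm{\bff{\rho}^n}{\bb{H}^1}
\leq C(h^r+k)+Ch^r\norm{\bff{u}(t_n)}{\bb{H}^{r+1}},
\end{align*}
using \eqref{equ:theta not super} for $\bff{\theta}^n$, \eqref{equ:Ritz ineq} with $s=1$ for $\bff{\rho}^n$, and the regularity assumption \eqref{equ:ass 2}. For the case $\bff{\nu}=\bff{0}$, I would replace \eqref{equ:theta not super} with the superconvergence estimate \eqref{equ:theta super} and use \eqref{equ:Ritz ineq} with $s=0,1$ according to the norm: this yields $\|\bff{\theta}^n\|_{\bb{H}^s}\leq\|\bff{\theta}^n\|_{\bb{H}^1}\leq C(h^{r+1}+k)$ for $s\in\{0,1\}$ and $\|\bff{\rho}^n\|_{\bb{H}^s}\leq Ch^{r+1-s}$, so the triangle inequality closes the argument.

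For the discrete $\ell^2(\bb{L}^\infty)$ bound, I would combine the third term in \eqref{equ:theta super} with the bound $\|\bff{\rho}^m\|_{\bb{L}^\infty}\leq Ch^{r+1}|\ln h|$ from \eqref{equ:Ritz ineq L infty} and the uniform bound $\|\bff{u}\|_{L^\infty(\bb{W}^{r+1,\infty})}\leq C$ ensured by \eqref{equ:ass 2} (via Sobolev embedding from the $\bb{H}^4$ regularity), summing over $m$ and using $nk\leq T$:
\begin{align*}
k\sum_{m=1}^n\norm{\bff{u}_h^m-\bff{u}(t_m)}{\bb{L}^\infty}^2
\leq 2k\sum_{m=1}^n\norm{\bff{\theta}^m}{\bb{L}^\infty}^2+2k\sum_{m=1}^n\norm{\bff{\rho}^m}{\bb{L}^\infty}^2
\leq C(h^{2(r+1)}+k^2)|\ln h|,
\end{align*}
where the logarithmic factor, which dominates the constant bound coming from the Ritz piece after absorbing into $|\ln h|$, comes from the $\bff{\theta}^m$ contribution (the Ritz contribution is of strictly higher order in $h$). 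Finally, for the pointwise $\bb{L}^\infty$ bound in two dimensions, I would use the superconvergence \eqref{equ:theta L infty} for $\bff{\theta}^n$ together with \eqref{equ:Ritz ineq L infty} for $\bff{\rho}^n$, noting that on a quasi-uniform mesh in 2D both contributions are controlled by $C(h^{r+1}+k)|\ln h|^{1/2}$ after absorbing the extra log into the constant for the (strictly higher-order) Ritz term.

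The main (and only) obstacle is making sure the logarithmic factors in the two-dimensional $\bb{L}^\infty$ bound are tracked correctly and that the time regularity $\partial_t\bff{u}\in L^\infty(\bb{H}^{r+1})$ from \eqref{equ:ass 2} indeed transfers to $\bff{\rho}^n$ uniformly in $n$; this is already handled by \eqref{equ:Ritz ineq}. No further a priori analysis of the scheme is needed beyond what Proposition~\ref{pro:est theta no cur} provides.
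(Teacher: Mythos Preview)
Your approach is correct and coincides with the paper's proof, which is the one-line observation that the result follows from Proposition~\ref{pro:est theta no cur} together with the splitting \eqref{equ:theta rho split 2} and the Ritz bounds \eqref{equ:Ritz ineq}, \eqref{equ:Ritz ineq L infty}. One small slip: in the $\ell^2(\bb{L}^\infty)$ estimate the logarithmic factor actually comes from the Ritz piece \eqref{equ:Ritz ineq L infty}, not from the $\bff{\theta}^m$ contribution (which, by \eqref{equ:theta super}, carries no $|\ln h|$).
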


\begin{proof}
	This follows from Proposition \ref{pro:est theta no cur} and the triangle inequality (noting \eqref{equ:theta rho split 2}, \eqref{equ:Ritz ineq}, and \eqref{equ:Ritz ineq L infty}).
\end{proof}

\section{Numerical experiments}\label{sec:num exp}

Numerical simulations for schemes~\eqref{equ:scheme spin} and \eqref{equ:scheme 2} are performed using the open-source package~\textsc{FEniCS}~\cite{AlnaesEtal15}. In these simulations, we take the domain $\mathscr{D}= [-0.5,0.5]^2\subset \bb{R}^2$ and use $\mathcal{P}_1$ Lagrange finite element space (corresponding to $r=1$ in \eqref{equ:Vh}). Since the exact solution of the equation is not known, we use extrapolation to verify the spatial order of convergence experimentally. To this end, let $\bff{u}_h^n$ be the finite element solution with spatial step size $h$ and time step size $k=\lfloor T/n\rfloor$. For $s\in \{0,1\}$, define the extrapolated order of convergence
\begin{equation*}
	\text{rate}_s :=  \log_2 \left[\frac{\max_n \norm{\bff{e}_{2h}}{\bb{H}^s}}{\max_n \norm{\bff{e}_{h}}{\bb{H}^s}}\right],
\end{equation*}
where $\bff{e}_h := \bff{u}_{h}^n-\bff{u}_{h/2}^n$. We expect that for both schemes, when $k$ is sufficiently small,~$\text{rate}_s \approx h^{2-s}$. To test for the temporal rate of convergence in the $\bb{L}^2$ norm, we set $k=Ch^2$ in the simulation and check that $\text{rate}_s \approx h^2$. Similarly, to test for the temporal rate of convergence in the $\bb{H}^1$ norm, we set $k=Ch$ in the simulation and check that $\text{rate}_s \approx h$. In each experiment, we plot the graph of energy vs time to show the energy evolution of the system for both numerical schemes.

\subsection{Simulation 1 (small current density)}\label{sec:simulation 1}
The coefficients in~\eqref{equ:llb a} are~$\gamma=2.2\times 10^5, \alpha=1.0\times 10^5, \beta_1=0.1, \beta_2=-0.01, \sigma=1.3\times 10^{-6}, \kappa=1.0, \mu=1.0\times 10^{-6}$, and $\lambda=1.0\times 10^{-3}$, which are of typical order of magnitude for a micromagnetic simulation. The anisotropy axis $\bff{e}=(0,1,0)^\top$. We choose the initial data $\bff{u}_0$, as in \cite{BarPro06, BenEssAyo24}, to be
\begin{equation*}
	\bff{u}_0(x,y)= \left(2x(1-2|\bff{x}|)^4, 2y(1-2|\bff{x}|)^4, \frac{(1-2|\bff{x}|)^8-|\bff{x}|^2}{(1-2|\bff{x}|)^8+|\bff{x}|^2} \right),
\end{equation*}
where $|\bff{x}|:= \sqrt{x^2+y^2}$.
Numerical simulations are performed for schemes~\eqref{equ:scheme spin} and \eqref{equ:scheme 2} with time step size $k=1.0\times 10^{-6}$ and a small current density $\bff{\nu}=(10^4,0)^\top$.

Although we establish convergence of the nonlinear scheme~\eqref{equ:scheme 2} only when $\beta_2=0$, both linear and nonlinear schemes are employed in this simulation. Snapshots of the magnetic spin field $\bff{u}$ at selected times are presented in Figure~\ref{fig:snapshots field 2d} for the linear scheme and in Figure~\ref{fig:snapshots field 2d scheme2 small} for the nonlinear scheme. The two figures exhibit close qualitative agreement, although the linear scheme achieves this with a shorter runtime. The colour scale represents the relative magnitude of the magnetisation vectors. In both simulations, a central cluster of magnetisation vectors aligned in the $+z$ direction is advected to the right by the current. This structure remains relatively stable even at elevated temperatures, illustrating its robustness and manipulability via current. After this structure exits the ferromagnetic domain, the current continues to drive a spin wave~\cite{Slo99}, which is visible as it propagates through the magnetic medium. 

Plots of $\bff{e}_h$ against $1/h$ are shown in Figures~\ref{fig:order u 1 scheme1} and~\ref{fig:order u 1 scheme2} for the linear and the nonlinear schemes, respectively. Plots of $\bff{e}_h$ against $1/h$ with $k=O(h^2)$ and $k=O(h)$ to measure temporal error of the schemes in $\bb{L}^2$ and $\bb{H}^1$ norms, respectively, are shown in Figures~\ref{fig:order k 1 scheme1} and~\ref{fig:order k 1 scheme2}. Graphs of energy vs time are plotted in Figures~\ref{fig:energy small scheme1} and \ref{fig:energy small scheme2} for the two schemes, which show the energy decay when the current density is relatively small.

\begin{figure}[!htb]
	\centering
	\begin{subfigure}[b]{0.27\textwidth}
		\centering
		\includegraphics[width=\textwidth]{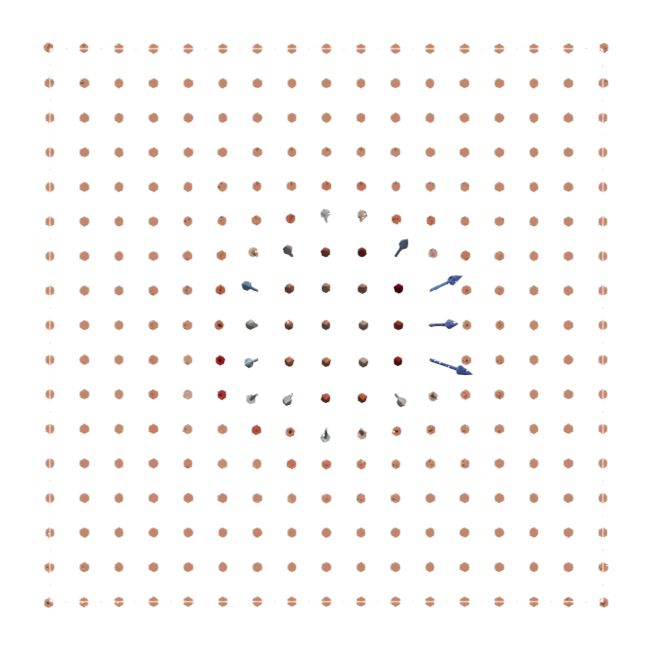}
		\caption{$t=0$}
	\end{subfigure}
	\begin{subfigure}[b]{0.27\textwidth}
		\centering
		\includegraphics[width=\textwidth]{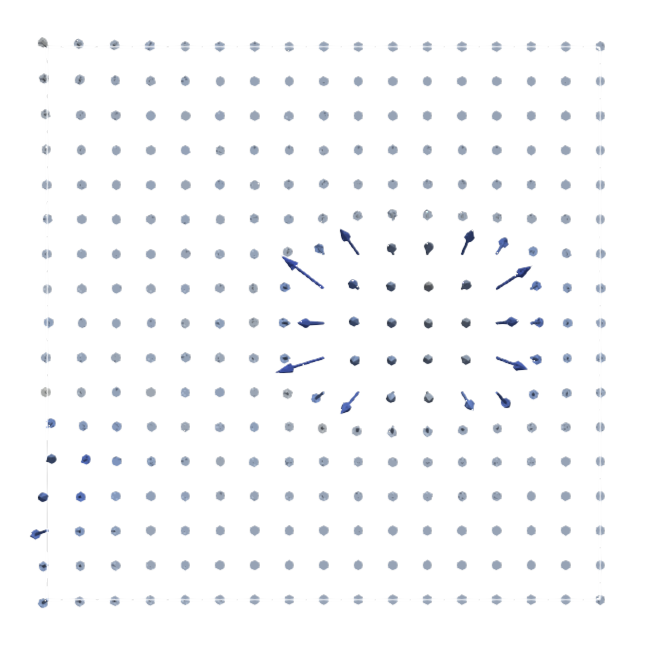}
		\caption{$t=1\times 10^{-4}$}
	\end{subfigure}
	\begin{subfigure}[b]{0.27\textwidth}
		\centering
		\includegraphics[width=\textwidth]{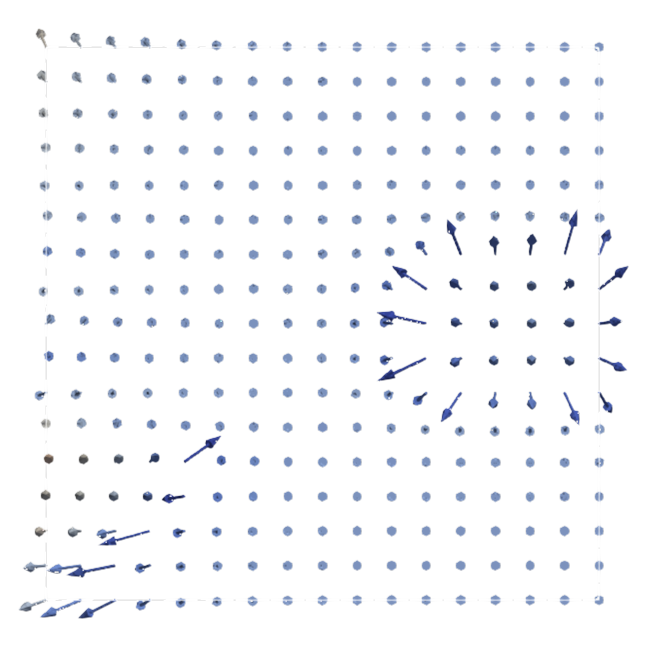}
		\caption{$t=2 \times 10^{-4}$}
	\end{subfigure}
	\begin{subfigure}[b]{0.1\textwidth}
		\centering
		\includegraphics[width=\textwidth]{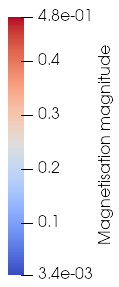}
	\end{subfigure}
	\begin{subfigure}[b]{0.27\textwidth}
		\centering
		\includegraphics[width=\textwidth]{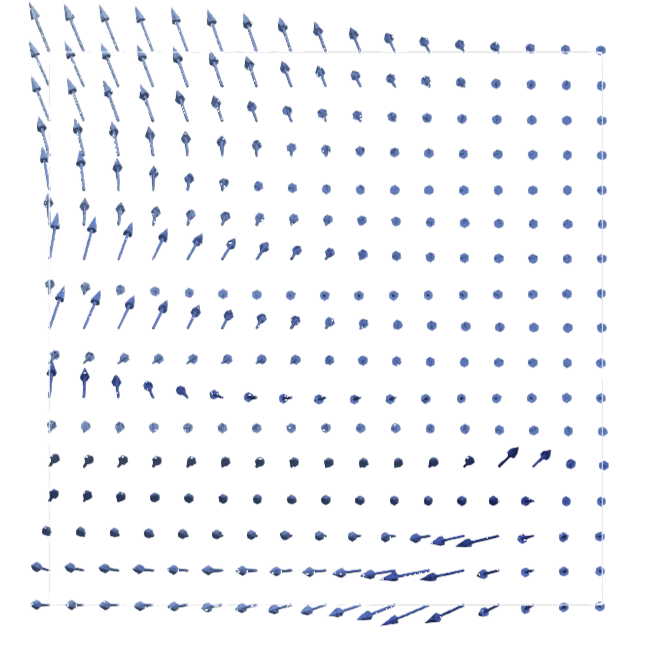}
		\caption{$t=5\times 10^{-4}$}
	\end{subfigure}
	\begin{subfigure}[b]{0.27\textwidth}
		\centering
		\includegraphics[width=\textwidth]{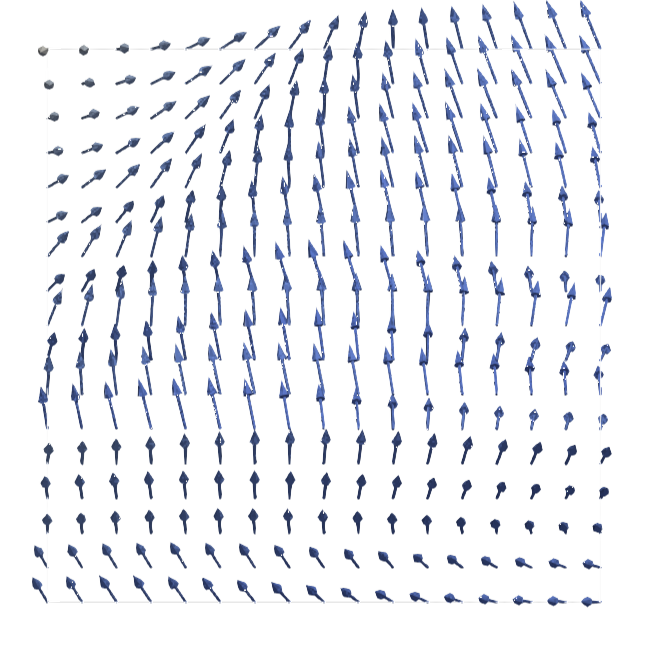}
		\caption{$t=1\times 10^{-3}$}
	\end{subfigure}
	\begin{subfigure}[b]{0.27\textwidth}
		\centering
		\includegraphics[width=\textwidth]{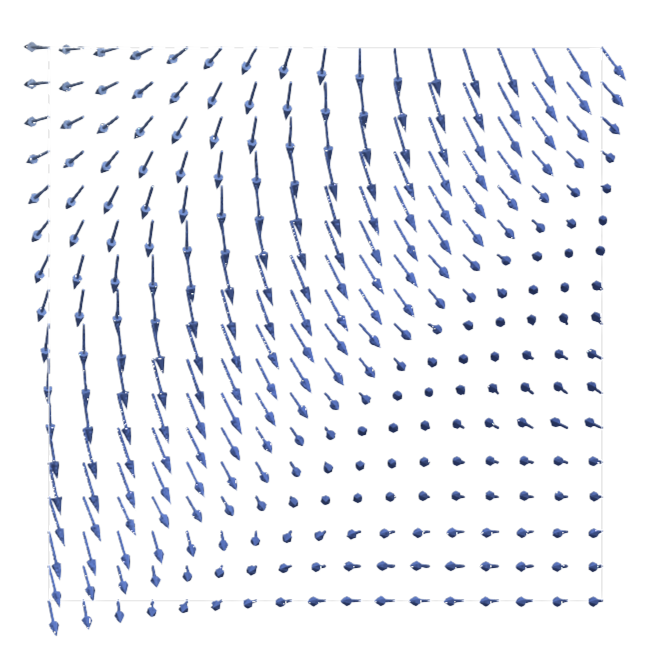}
		\caption{$t=2\times 10^{-3}$}
	\end{subfigure}
	\begin{subfigure}[b]{0.1\textwidth}
		\centering
		\includegraphics[width=\textwidth]{legend_exp1a.png}
	\end{subfigure}
	\caption{Snapshots of the spin field $\bff{u}$ (projected onto $\bb{R}^2$) for the \emph{linear} scheme~\eqref{equ:scheme spin} in Simulation 1.}
	\label{fig:snapshots field 2d}
\end{figure}

\begin{figure}[!htb]
	\centering
	\begin{subfigure}[b]{0.27\textwidth}
		\centering
		\includegraphics[width=\textwidth]{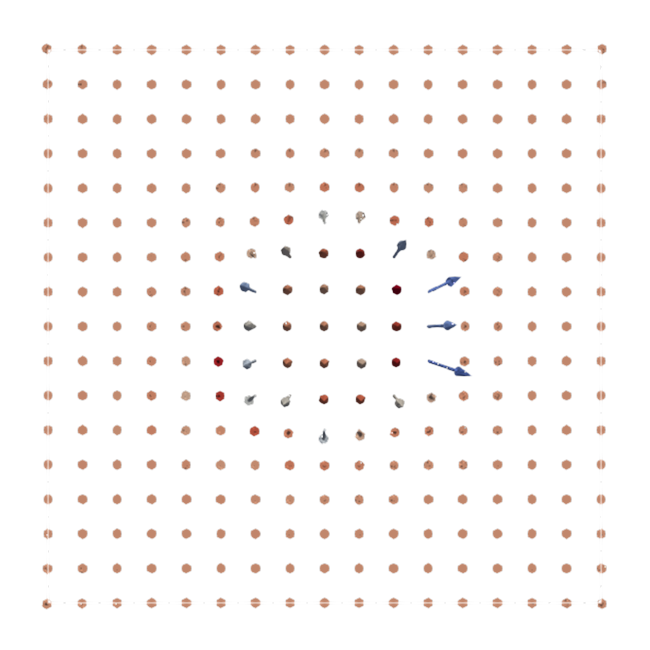}
		\caption{$t=0$}
	\end{subfigure}
	\begin{subfigure}[b]{0.27\textwidth}
		\centering
		\includegraphics[width=\textwidth]{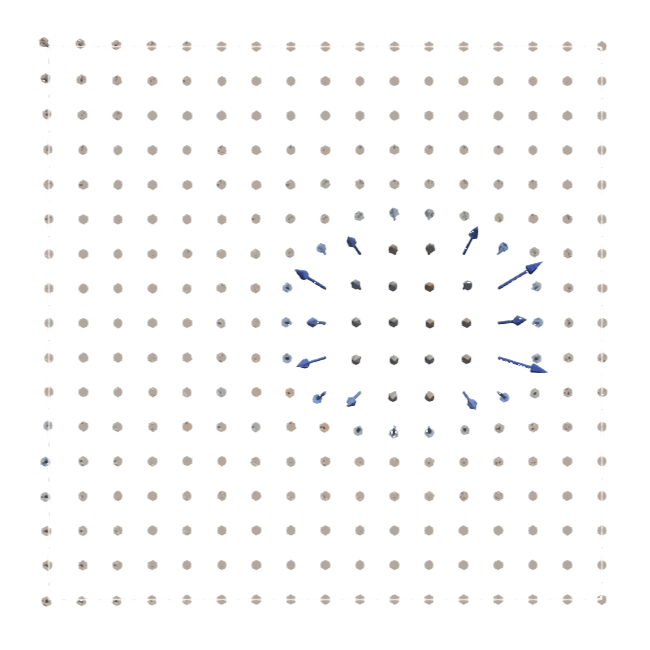}
		\caption{$t=1\times 10^{-4}$}
	\end{subfigure}
	\begin{subfigure}[b]{0.27\textwidth}
		\centering
		\includegraphics[width=\textwidth]{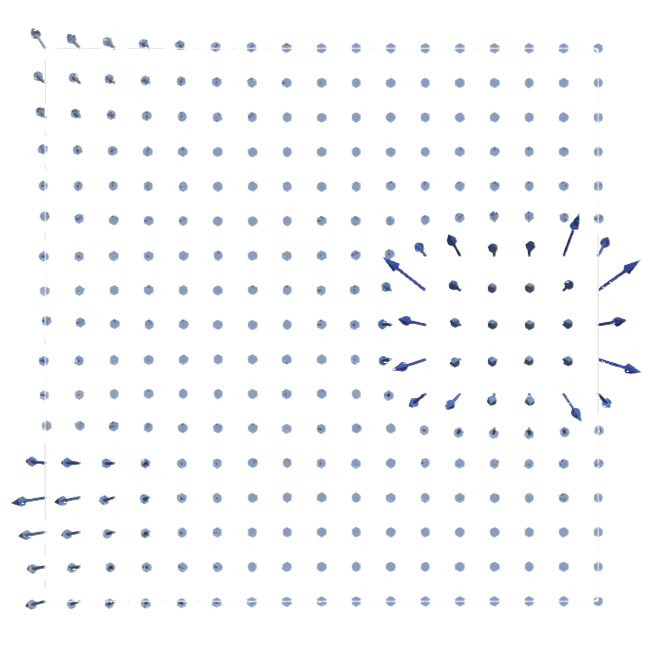}
		\caption{$t=2 \times 10^{-4}$}
	\end{subfigure}
	\begin{subfigure}[b]{0.1\textwidth}
		\centering
		\includegraphics[width=\textwidth]{legend_exp1a.png}
	\end{subfigure}
	\begin{subfigure}[b]{0.27\textwidth}
		\centering
		\includegraphics[width=\textwidth]{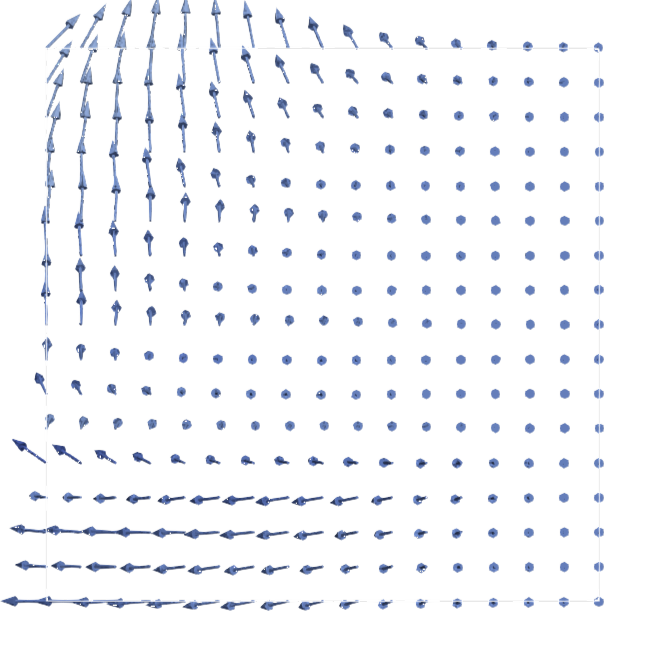}
		\caption{$t=5\times 10^{-4}$}
	\end{subfigure}
	\begin{subfigure}[b]{0.27\textwidth}
		\centering
		\includegraphics[width=\textwidth]{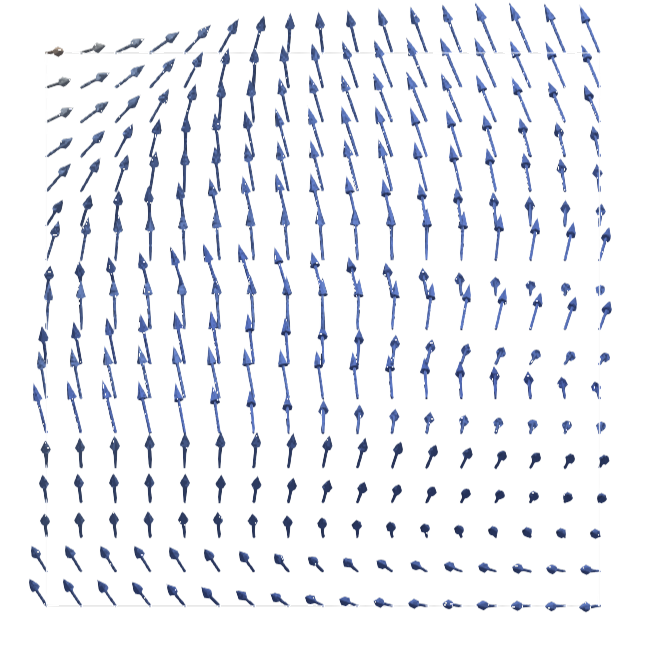}
		\caption{$t=1\times 10^{-3}$}
	\end{subfigure}
	\begin{subfigure}[b]{0.27\textwidth}
		\centering
		\includegraphics[width=\textwidth]{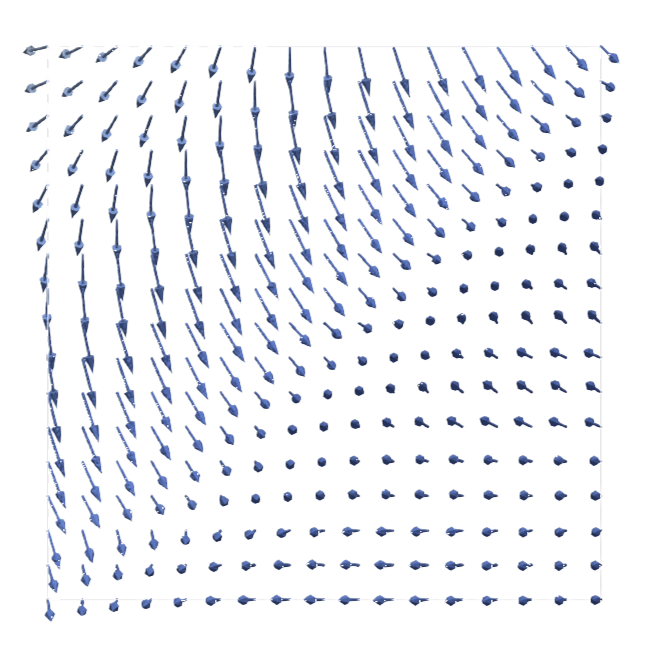}
		\caption{$t=2\times 10^{-3}$}
	\end{subfigure}
	\begin{subfigure}[b]{0.1\textwidth}
		\centering
		\includegraphics[width=\textwidth]{legend_exp1a.png}
	\end{subfigure}
	\caption{Snapshots of the spin field $\bff{u}$ (projected onto $\bb{R}^2$) for the \emph{nonlinear} scheme~\eqref{equ:scheme 2} in Simulation 1.}
	\label{fig:snapshots field 2d scheme2 small}
\end{figure}

\begin{figure}[!htb]
	\begin{subfigure}[b]{0.45\textwidth}
		\centering
		\begin{tikzpicture}
			\begin{axis}[
				title=Plot of $\bff{e}_h$ against $1/h$,
				height=1\textwidth,
				width=1\textwidth,
				xlabel= $1/h$,
				ylabel= $\bff{e}_h$,
				xmode=log,
				ymode=log,
				legend pos=south west,
				legend cell align=left,
				]
				\addplot+[mark=*,blue] coordinates {(4,2.51)(8,2.08)(16,1.48)(32,0.87)(64,0.42)(128,0.209)};
				\addplot+[mark=*,red] coordinates {(4,0.169)(8,0.0635)(16,0.022)(32,0.00585)(64,0.00148)(128,0.00037)};
				\addplot+[dashed,no marks,blue,domain=40:130]{11/x};
				\addplot+[dashed,no marks,red,domain=40:130]{2.5/x^2};
				\legend{\small{$\max_n \norm{\bff{e}_h}{\bb{H}_0^1}$}, \small{$\max_n \norm{\bff{e}_h}{\bb{L}^2}$}, \small{order 1 line}, \small{order 2 line}}
			\end{axis}
		\end{tikzpicture}
		\caption{Spatial error order of $\bff{u}$ for the \emph{linear} scheme~\eqref{equ:scheme spin} in Simulation 1.}
		\label{fig:order u 1 scheme1}
	\end{subfigure}
	\hspace{1em}
	\begin{subfigure}[b]{0.45\textwidth}
		\centering
		\begin{tikzpicture}
			\begin{axis}[
				title=Plot of $\bff{e}_h$ against $1/h$,
				height=1\textwidth,
				width=1\textwidth,
				xlabel= $1/h$,
				ylabel= $\bff{e}_h$,
				xmode=log,
				ymode=log,
				legend pos=south west,
				legend cell align=left,
				]
				\addplot+[mark=*,blue] coordinates {(4,2.06)(8,1.27)(16,0.67)(32,0.339)(64,0.17)(128,0.085)};
				\addplot+[mark=*,red] coordinates {(4,0.0653)(8,0.0226)(16,0.0061)(32,0.00155)(64,0.00039)(128,0.0001)};
				\addplot+[dashed,no marks,blue,domain=40:130]{5/x};
				\addplot+[dashed,no marks,red,domain=40:130]{0.7/x^2};
				\legend{\small{$\max_n \norm{\bff{e}_h}{\bb{H}_0^1}$}, \small{$\max_n \norm{\bff{e}_h}{\bb{L}^2}$}, \small{order 1 line}, \small{order 2 line}}
			\end{axis}
		\end{tikzpicture}
		\caption{Spatial error order of $\bff{u}$ for the \emph{nonlinear} scheme~\eqref{equ:scheme 2} in Simulation 1.}
		\label{fig:order u 1 scheme2}
	\end{subfigure}
\end{figure}

\begin{figure}[!htb]
	\begin{subfigure}[b]{0.45\textwidth}
		\centering
		\begin{tikzpicture}
			\begin{axis}[
				title=Plot of $\bff{e}_h$ against $1/h$,
				height=1\textwidth,
				width=1\textwidth,
				xlabel= $1/h$,
				ylabel= $\bff{e}_h$,
				xmode=log,
				ymode=log,
				legend pos=south west,
				legend cell align=left,
				]
				\addplot+[mark=*,blue] coordinates {(4,0.63)(8,1)(16,0.56)(32,0.23)(64,0.11)};
				\addplot+[mark=*,red] coordinates {(4,0.06)(8,0.022)(16,0.006)(32,0.00153)(64,0.00039)};
				\addplot+[dashed,no marks,blue,domain=20:70]{4/x};
				\addplot+[dashed,no marks,red,domain=20:70]{0.7/x^2};
				\legend{\small{$\max_n \norm{\bff{e}_h}{\bb{H}_0^1}$}, \small{$\max_n \norm{\bff{e}_h}{\bb{L}^2}$}, \small{order 1 line}, \small{order 2 line}}
			\end{axis}
		\end{tikzpicture}
		\caption{Error order of $\bff{u}$ for the \emph{linear} scheme~\eqref{equ:scheme spin} in Simulation 1 in $\bb{L}^2$ norm with $k=0.01h^2$, and in $\bb{H}^1_0$ norm with $k=0.01h$.}
		\label{fig:order k 1 scheme1}
	\end{subfigure}
	\hspace{1em}
	\begin{subfigure}[b]{0.45\textwidth}
		\centering
		\begin{tikzpicture}
			\begin{axis}[
				title=Plot of $\bff{e}_h$ against $1/h$,
				height=1\textwidth,
				width=1\textwidth,
				xlabel= $1/h$,
				ylabel= $\bff{e}_h$,
				xmode=log,
				ymode=log,
				legend pos=south west,
				legend cell align=left,
				]
				\addplot+[mark=*,blue] coordinates {(4,5.7)(8,5.2)(16,4)(32,1.5)(64,0.7)};
				\addplot+[mark=*,red] coordinates {(4,0.096)(8,0.038)(16,0.011)(32,0.0029)(64,0.0007)};
				\addplot+[dashed,no marks,blue,domain=20:70]{17/x};
				\addplot+[dashed,no marks,red,domain=20:70]{1/x^2};
				\legend{\small{$\max_n \norm{\bff{e}_h}{\bb{H}_0^1}$}, \small{$\max_n \norm{\bff{e}_h}{\bb{L}^2}$}, \small{order 1 line}, \small{order 2 line}}
			\end{axis}
		\end{tikzpicture}
		\caption{Error order of $\bff{u}$ for the \emph{nonlinear} scheme~\eqref{equ:scheme 2} in Simulation 1 in $\bb{L}^2$ norm with $k=0.01h^2$, and in $\bb{H}^1_0$ norm with $k=0.01h$.}
		\label{fig:order k 1 scheme2}
	\end{subfigure}
\end{figure}

\begin{figure}[!htb]
	\begin{subfigure}[b]{0.46\textwidth}
		\centering
		\includegraphics[width=\textwidth]{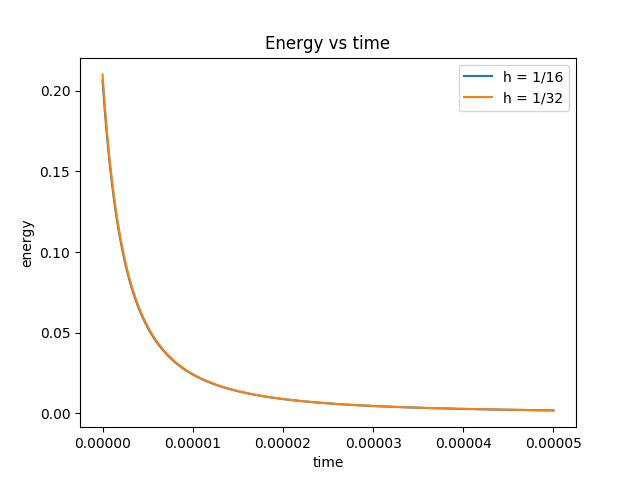}
		\caption{Plot of energy vs time for the linear scheme~\eqref{equ:scheme spin} in Simulation~1.}
		\label{fig:energy small scheme1}
	\end{subfigure}
	\hspace{1em}
	\begin{subfigure}[b]{0.46\textwidth}
		\centering
		\includegraphics[width=\textwidth]{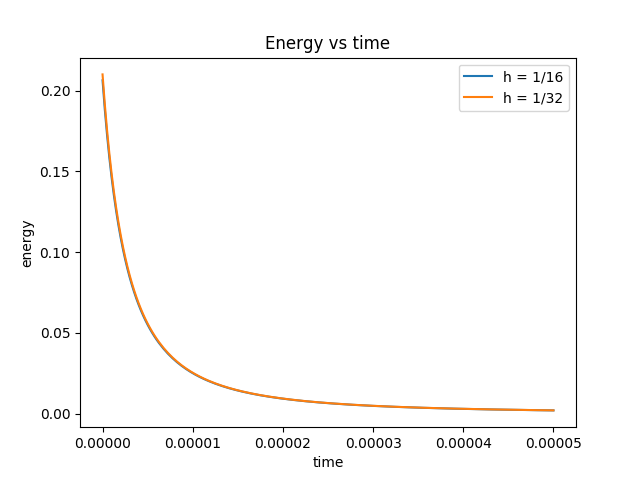}
		\caption{Plot of energy vs time for the nonlinear scheme~\eqref{equ:scheme 2} in Simulation~1.}
		\label{fig:energy small scheme2}
	\end{subfigure}
\end{figure}

\subsection{Simulation 2 (large current density)}
We repeat Simulation 1 using a larger current density $\bff{\nu}=(2\times 10^6,0)^\top$.
Snapshots of the magnetic spin field $\bff{u}$ at selected times are shown in Figure~\ref{fig:snapshots field 2d large} for the linear scheme and in Figure~\ref{fig:snapshots field 2d scheme2 large} for the nonlinear scheme. The colour scale represents the relative magnitude of the magnetisation vectors. As in Simulation 1, both figures display a central microstructure that is advected by the current, now at a higher speed due to the increased current density. The stronger current also generates a more energetic spin wave that propagates through the magnet.

Plots of $\bff{e}_h$ against $1/h$ are shown in Figures~\ref{fig:order u 1 large scheme1} and~\ref{fig:order u 1 large scheme2} for the linear and the nonlinear schemes, respectively. Plots of $\bff{e}_h$ against $1/h$ with $k=O(h^2)$ and $k=O(h)$ to measure temporal error of the schemes in $\bb{L}^2$ and $\bb{H}^1$ norms, respectively, are shown in Figures~\ref{fig:order k 1 large scheme1} and~\ref{fig:order k 1 large scheme2}. Graphs of energy vs time are plotted in Figures~\ref{fig:energy large scheme1} and \ref{fig:energy large scheme2} for the two schemes, which show an initial energy decay as ultrafast demagnetisation occurs, then a rise in energy due to a large injected current.

\begin{figure}[!htb]
	\centering
	\begin{subfigure}[b]{0.27\textwidth}
		\centering
		\includegraphics[width=\textwidth]{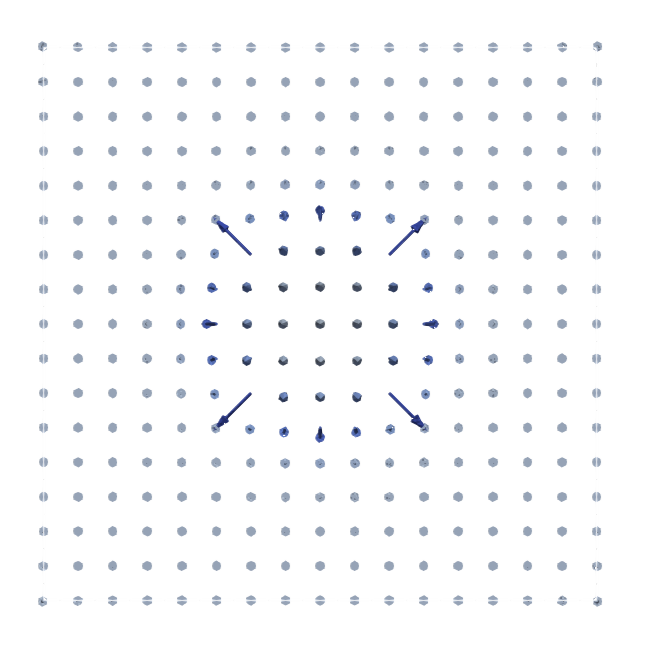}
		\caption{$t=0$}
	\end{subfigure}
	\begin{subfigure}[b]{0.27\textwidth}
		\centering
		\includegraphics[width=\textwidth]{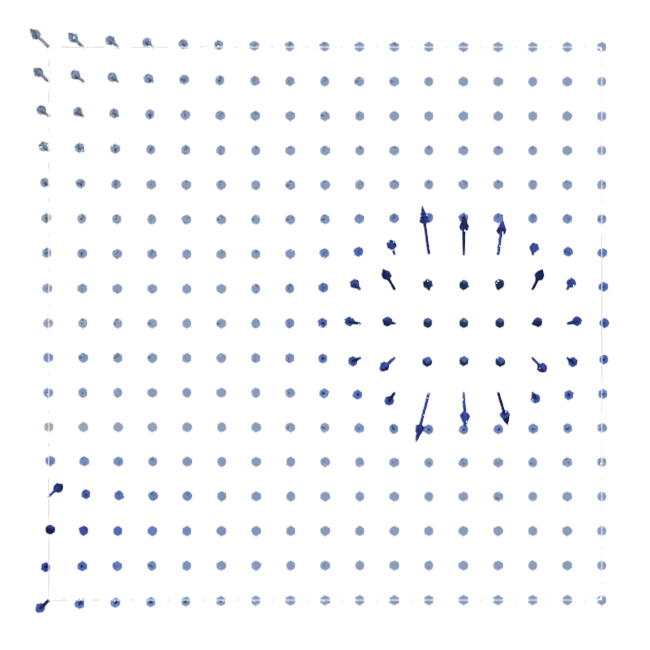}
		\caption{$t=2\times 10^{-6}$}
	\end{subfigure}
	\begin{subfigure}[b]{0.27\textwidth}
		\centering
		\includegraphics[width=\textwidth]{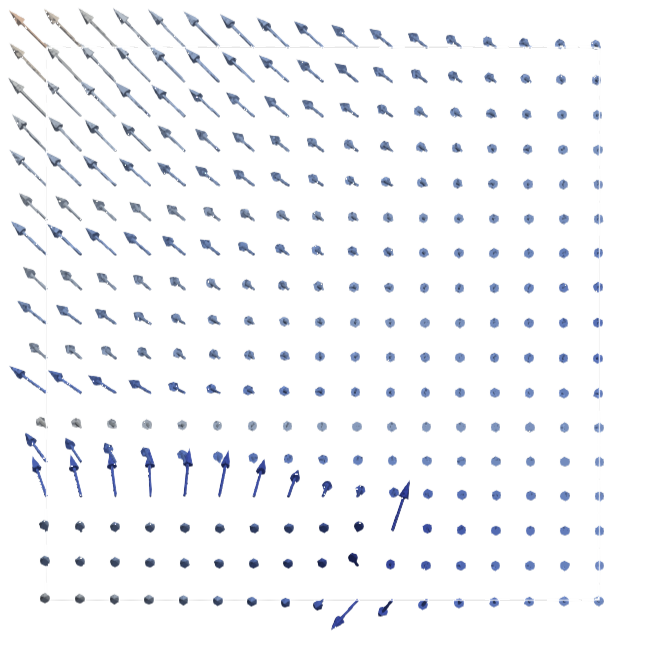}
		\caption{$t=5 \times 10^{-6}$}
	\end{subfigure}
	\begin{subfigure}[b]{0.1\textwidth}
		\centering
		\includegraphics[width=\textwidth]{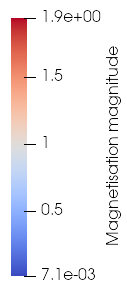}
	\end{subfigure}
	\begin{subfigure}[b]{0.27\textwidth}
		\centering
		\includegraphics[width=\textwidth]{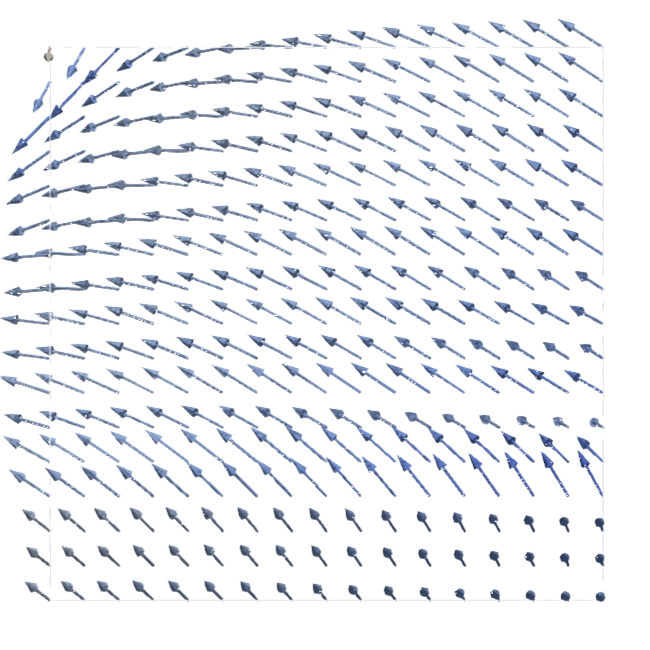}
		\caption{$t=1\times 10^{-5}$}
	\end{subfigure}
	\begin{subfigure}[b]{0.27\textwidth}
		\centering
		\includegraphics[width=\textwidth]{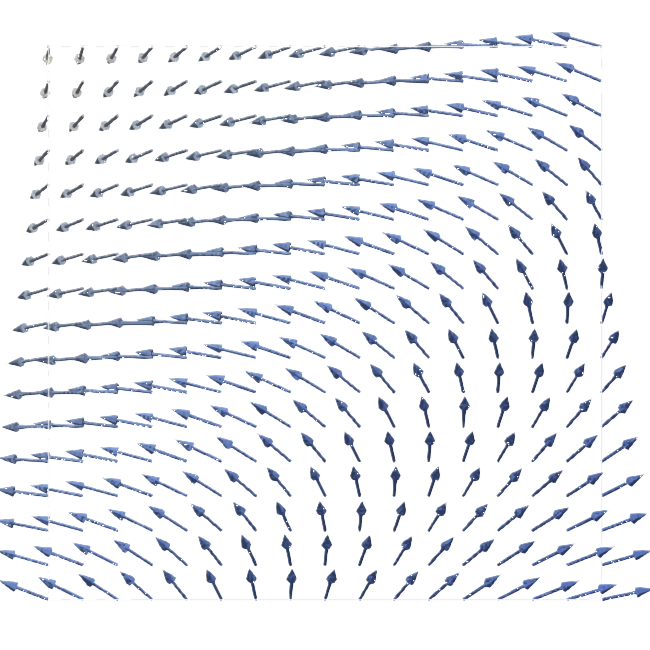}
		\caption{$t=4\times 10^{-5}$}
	\end{subfigure}
	\begin{subfigure}[b]{0.27\textwidth}
		\centering
		\includegraphics[width=\textwidth]{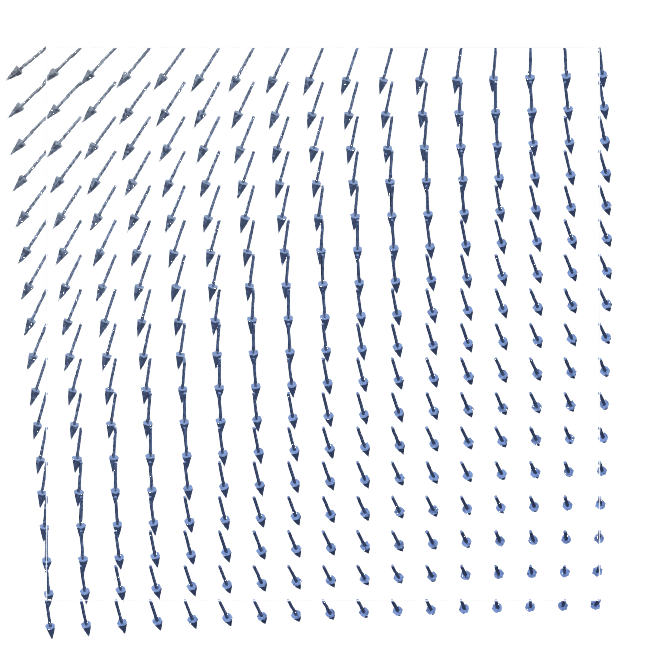}
		\caption{$t=5\times 10^{-5}$}
	\end{subfigure}
	\begin{subfigure}[b]{0.1\textwidth}
		\centering
		\includegraphics[width=\textwidth]{exp1b_legend.png}
	\end{subfigure}
	\caption{Snapshots of the spin field $\bff{u}$ (projected onto $\bb{R}^2$) for the \emph{linear} scheme~\eqref{equ:scheme spin} in Simulation 2.}
	\label{fig:snapshots field 2d large}
\end{figure}

\begin{figure}[!htb]
	\centering
	\begin{subfigure}[b]{0.27\textwidth}
		\centering
		\includegraphics[width=\textwidth]{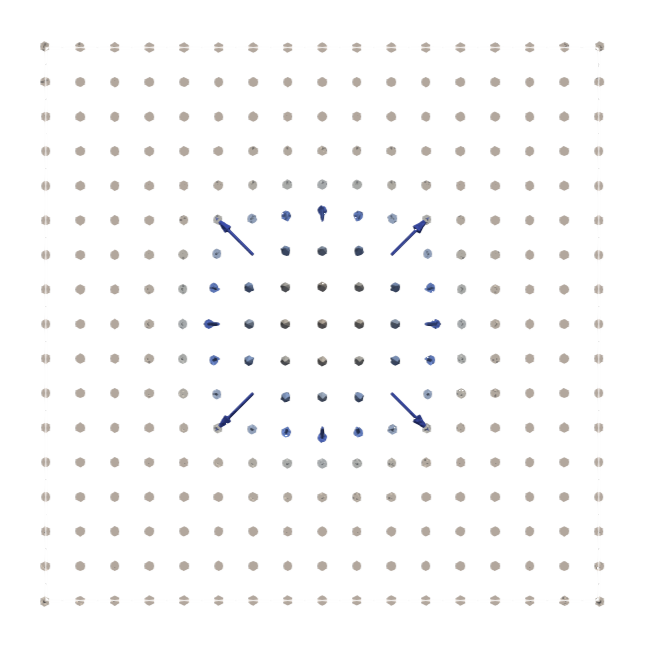}
		\caption{$t=0$}
	\end{subfigure}
	\begin{subfigure}[b]{0.27\textwidth}
		\centering
		\includegraphics[width=\textwidth]{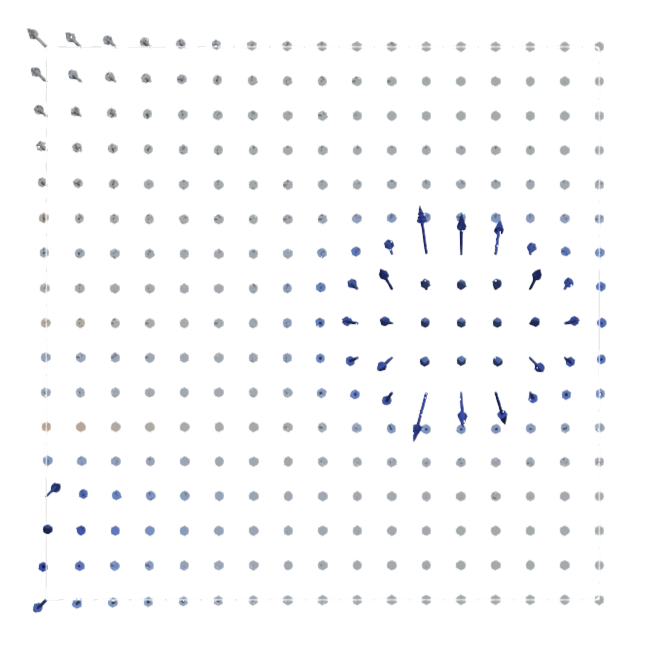}
		\caption{$t=1\times 10^{-4}$}
	\end{subfigure}
	\begin{subfigure}[b]{0.27\textwidth}
		\centering
		\includegraphics[width=\textwidth]{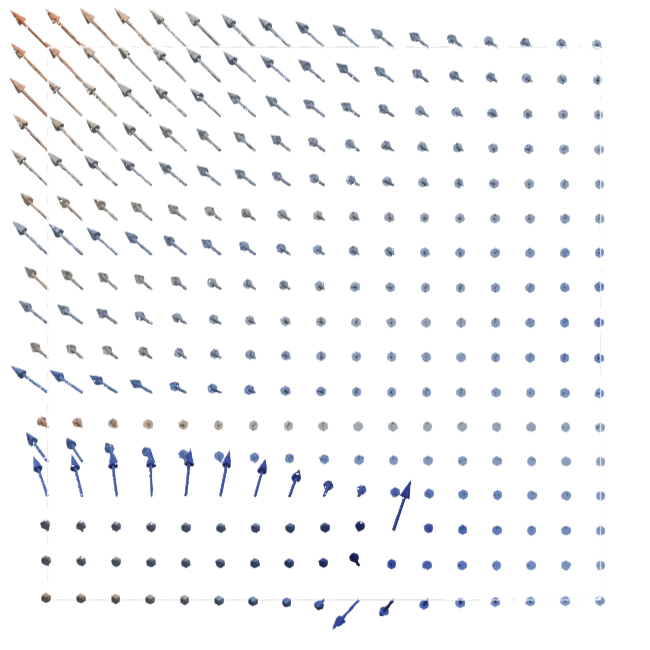}
		\caption{$t=2 \times 10^{-4}$}
	\end{subfigure}
	\begin{subfigure}[b]{0.1\textwidth}
		\centering
		\includegraphics[width=\textwidth]{exp1b_legend.png}
	\end{subfigure}
	\begin{subfigure}[b]{0.27\textwidth}
		\centering
		\includegraphics[width=\textwidth]{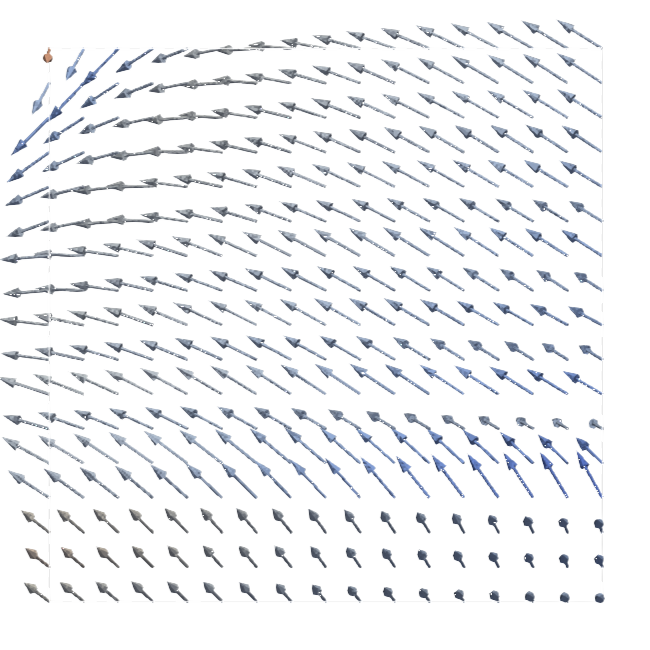}
		\caption{$t=5\times 10^{-4}$}
	\end{subfigure}
	\begin{subfigure}[b]{0.27\textwidth}
		\centering
		\includegraphics[width=\textwidth]{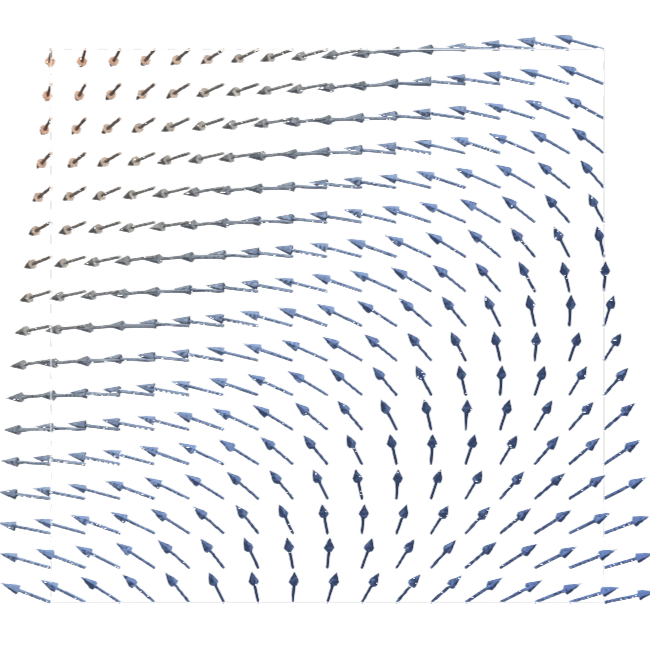}
		\caption{$t=1\times 10^{-3}$}
	\end{subfigure}
	\begin{subfigure}[b]{0.27\textwidth}
		\centering
		\includegraphics[width=\textwidth]{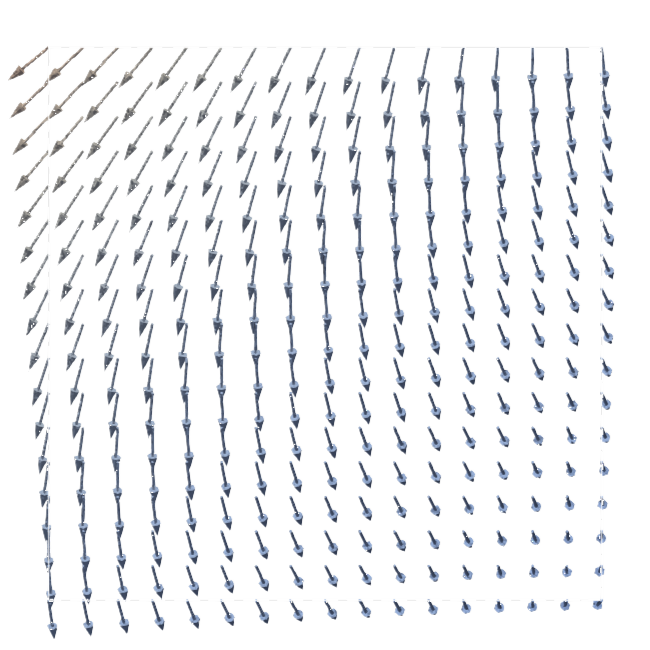}
		\caption{$t=5\times 10^{-3}$}
	\end{subfigure}
	\begin{subfigure}[b]{0.1\textwidth}
		\centering
		\includegraphics[width=\textwidth]{exp1b_legend.png}
	\end{subfigure}
	\caption{Snapshots of the spin field $\bff{u}$ (projected onto $\bb{R}^2$) for the \emph{nonlinear} scheme~\eqref{equ:scheme 2} in Simulation 2.}
	\label{fig:snapshots field 2d scheme2 large}
\end{figure}

\begin{figure}[!htb]
	\begin{subfigure}[b]{0.45\textwidth}
		\centering
		\begin{tikzpicture}
			\begin{axis}[
				title=Plot of $\bff{e}_h$ against $1/h$,
				height=1\textwidth,
				width=1\textwidth,
				xlabel= $1/h$,
				ylabel= $\bff{e}_h$,
				xmode=log,
				ymode=log,
				legend pos=south west,
				legend cell align=left,
				]
				\addplot+[mark=*,blue] coordinates {(4,2.53)(8,1.33)(16,0.655)(32,0.332)(64,0.165)};
				\addplot+[mark=*,red] coordinates {(4,0.083)(8,0.0251)(16,0.006)(32,0.00154)(64,0.00039)};
				\addplot+[dashed,no marks,blue,domain=20:70]{5/x};
				\addplot+[dashed,no marks,red,domain=20:70]{0.6/x^2};
				\legend{\small{$\max_n \norm{\bff{e}_h}{\bb{H}_0^1}$}, \small{$\max_n \norm{\bff{e}_h}{\bb{L}^2}$}, \small{order 1 line}, \small{order 2 line}}
			\end{axis}
		\end{tikzpicture}
		\caption{Spatial error order of $\bff{u}$ for the \emph{linear} scheme~\eqref{equ:scheme spin} in Simulation 2.}
		\label{fig:order u 1 large scheme1}
	\end{subfigure}
	\hspace{1em}
	\begin{subfigure}[b]{0.45\textwidth}
		\centering
		\begin{tikzpicture}
			\begin{axis}[
				title=Plot of $\bff{e}_h$ against $1/h$,
				height=1\textwidth,
				width=1\textwidth,
				xlabel= $1/h$,
				ylabel= $\bff{e}_h$,
				xmode=log,
				ymode=log,
				legend pos=south west,
				legend cell align=left,
				]
				\addplot+[mark=*,blue] coordinates {(4,2.09)(8,1.28)(16,0.665)(32,0.336)(64,0.167)};
				\addplot+[mark=*,red] coordinates {(4,0.089)(8,0.031)(16,0.0078)(32,0.0019)(64,0.0005)};
				\addplot+[dashed,no marks,blue,domain=20:70]{5/x};
				\addplot+[dashed,no marks,red,domain=20:70]{0.8/x^2};
				\legend{\small{$\max_n \norm{\bff{e}_h}{\bb{H}_0^1}$}, \small{$\max_n \norm{\bff{e}_h}{\bb{L}^2}$}, \small{order 1 line}, \small{order 2 line}}
			\end{axis}
		\end{tikzpicture}
		\caption{Spatial error order of $\bff{u}$ for the \emph{nonlinear} scheme~\eqref{equ:scheme 2} in Simulation 2.}
		\label{fig:order u 1 large scheme2}
	\end{subfigure}
\end{figure}

\begin{figure}[!htb]
	\begin{subfigure}[b]{0.45\textwidth}
		\centering
		\begin{tikzpicture}
			\begin{axis}[
				title=Plot of $\bff{e}_h$ against $1/h$,
				height=1\textwidth,
				width=1\textwidth,
				xlabel= $1/h$,
				ylabel= $\bff{e}_h$,
				xmode=log,
				ymode=log,
				legend pos=south west,
				legend cell align=left,
				]
				\addplot+[mark=*,blue] coordinates {(4,1.86)(8,1.4)(16,0.8)(32,0.45)(64,0.22)};
				\addplot+[mark=*,red] coordinates {(4,0.07)(8,0.023)(16,0.0061)(32,0.00156)(64,0.00039)};
				\addplot+[dashed,no marks,blue,domain=20:70]{6/x};
				\addplot+[dashed,no marks,red,domain=20:70]{0.7/x^2};
				\legend{\small{$\max_n \norm{\bff{e}_h}{\bb{H}_0^1}$}, \small{$\max_n \norm{\bff{e}_h}{\bb{L}^2}$}, \small{order 1 line}, \small{order 2 line}}
			\end{axis}
		\end{tikzpicture}
		\caption{Error order of $\bff{u}$ for the \emph{linear} scheme~\eqref{equ:scheme spin} in Simulation 2 in $\bb{L}^2$ norm with $k=0.01h^2$, and in $\bb{H}^1_0$ norm with $k=0.01h$.}
		\label{fig:order k 1 large scheme1}
	\end{subfigure}
	\hspace{1em}
	\begin{subfigure}[b]{0.45\textwidth}
		\centering
		\begin{tikzpicture}
			\begin{axis}[
				title=Plot of $\bff{e}_h$ against $1/h$,
				height=1\textwidth,
				width=1\textwidth,
				xlabel= $1/h$,
				ylabel= $\bff{e}_h$,
				xmode=log,
				ymode=log,
				legend pos=south west,
				legend cell align=left,
				]
				\addplot+[mark=*,blue] coordinates {(4,6.0)(8,5.0)(16,1.8)(32,0.9)(64,0.45)};
				\addplot+[mark=*,red] coordinates {(4,0.1)(8,0.03)(16,0.006)(32,0.0016)(64,0.0004)};
				\addplot+[dashed,no marks,blue,domain=20:70]{9/x};
				\addplot+[dashed,no marks,red,domain=20:70]{0.5/x^2};
				\legend{\small{$\max_n \norm{\bff{e}_h}{\bb{H}_0^1}$}, \small{$\max_n \norm{\bff{e}_h}{\bb{L}^2}$}, \small{order 1 line}, \small{order 2 line}}
			\end{axis}
		\end{tikzpicture}
		\caption{Error order of $\bff{u}$ for the \emph{nonlinear} scheme~\eqref{equ:scheme 2} in Simulation 2 in $\bb{L}^2$ norm with $k=0.01h^2$, and in $\bb{H}^1_0$ norm with $k=0.01h$.}
		\label{fig:order k 1 large scheme2}
	\end{subfigure}
\end{figure}

\begin{figure}[!htb]
	\begin{subfigure}[b]{0.46\textwidth}
		\centering
		\includegraphics[width=\textwidth]{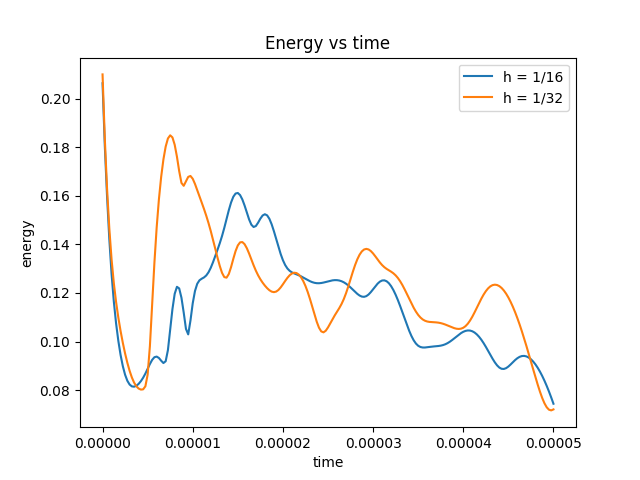}
		\caption{Plot of energy vs time for the \emph{linear} scheme~\eqref{equ:scheme spin} in Simulation~2.}
		\label{fig:energy large scheme1}
	\end{subfigure}
	\hspace{1em}
	\begin{subfigure}[b]{0.46\textwidth}
		\centering
		\includegraphics[width=\textwidth]{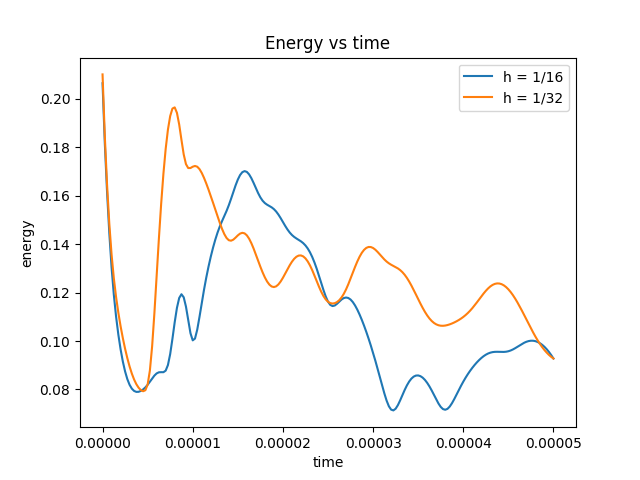}
		\caption{Plot of energy vs time for the \emph{nonlinear} scheme~\eqref{equ:scheme 2} in Simulation~2.}
		\label{fig:energy large scheme2}
	\end{subfigure}
\end{figure}

\subsection{Simulation 3 (adiabatic torque only)}
We consider scheme~\eqref{equ:scheme 2} with positive $\mu$ and take $k=1.0\times 10^{-6}$. The coefficients in~\eqref{equ:llb a} are $\gamma=2.3\times 10^5, \alpha=2.0\times 10^5, \sigma=1.0\times 10^{-6}, \kappa=2.0, \mu=2.0\times 10^{-6}$, $\lambda=0.01$, $\beta_1=0.2$, and $\beta_2=0$. The anisotropy axis vector is $\bff{e}=(0,0,1)^\top$ and the current density is $\bff{\nu}=(0, 10^4)^\top$. The initial data $\bff{u}_0$ is given by the vortex state
\begin{equation*}
	\bff{u}_0(x,y)= \big(-y, x, 0 \big).
\end{equation*}

Snapshots of the magnetic spin field $\bff{u}$ at selected times are shown in Figure~\ref{fig:snapshots exp 3 scheme1} for the linear scheme and in Figure~\ref{fig:snapshots exp 3 scheme2} for the nonlinear scheme. The two results exhibit good qualitative agreement, although the linear scheme appears to be computationally more efficient. The colour scale represents the relative magnitude of the magnetisation vectors. A vortex-like microstructure, which is widely explored as a potential means of data storage~\cite{GenJin17}, remains stable even as the object undergoes demagnetisation at temperatures exceeding the Curie temperature. The vortex is advected in the $+y$-direction by the current, suggesting that electric current may be used to transport such vortex within the magnet even at high temperatures. Following this motion, spin waves are observed propagating through the ferromagnet.

Plots of $\bff{e}_h$ against $1/h$ are shown in Figures~\ref{fig:order sim 3 scheme1} and~\ref{fig:order sim 3 scheme2} for the linear and the nonlinear schemes, respectively. Plots of $\bff{e}_h$ against $1/h$ with $k=O(h^2)$ and $k=O(h)$ to measure temporal error of the schemes in $\bb{L}^2$ and $\bb{H}^1$ norms, respectively, are shown in Figures~\ref{fig:order k sim3 scheme1} and~\ref{fig:order k sim3 scheme2}. Graphs of energy vs time are plotted in Figures~\ref{fig:energy sim3 scheme1} and \ref{fig:energy sim3 scheme2} for the two schemes, which show an a decay in energy as the current density is relatively small.

\begin{figure}[!htb]
	\centering
	\begin{subfigure}[b]{0.27\textwidth}
		\centering
		\includegraphics[width=\textwidth]{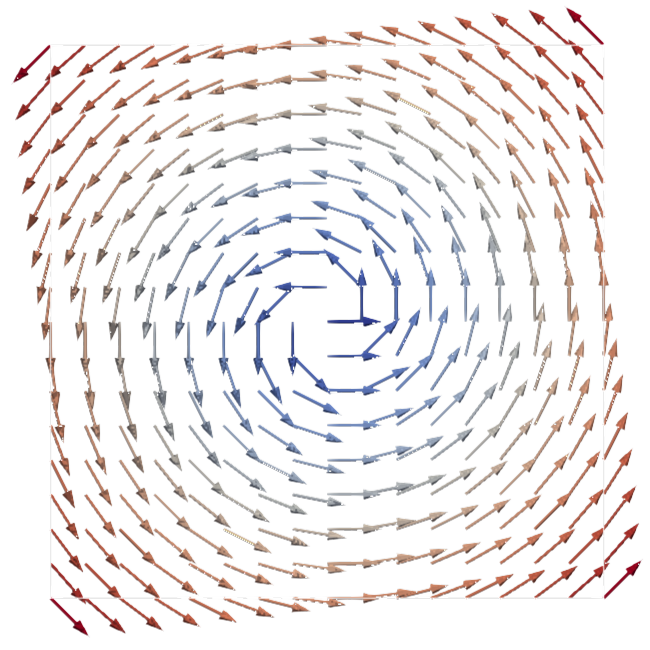}
		\caption{$t=0$}
	\end{subfigure}
	\begin{subfigure}[b]{0.27\textwidth}
		\centering
		\includegraphics[width=\textwidth]{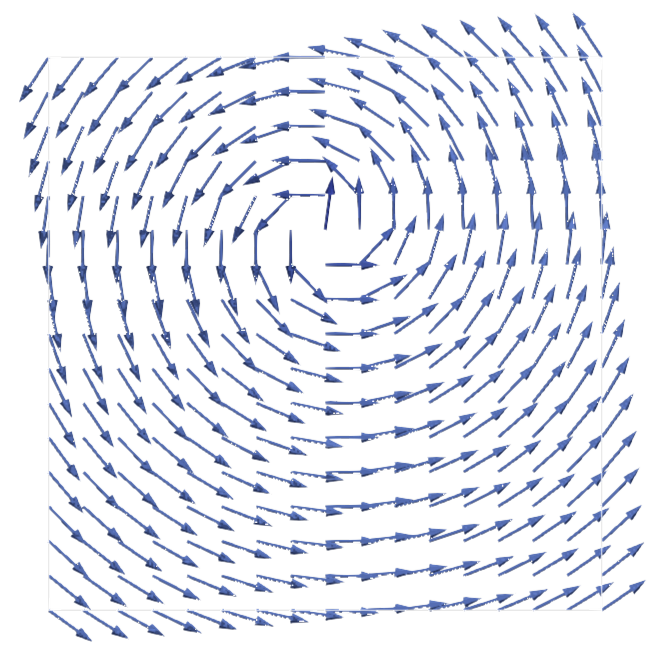}
		\caption{$t=1\times 10^{-4}$}
	\end{subfigure}
	\begin{subfigure}[b]{0.27\textwidth}
		\centering
		\includegraphics[width=\textwidth]{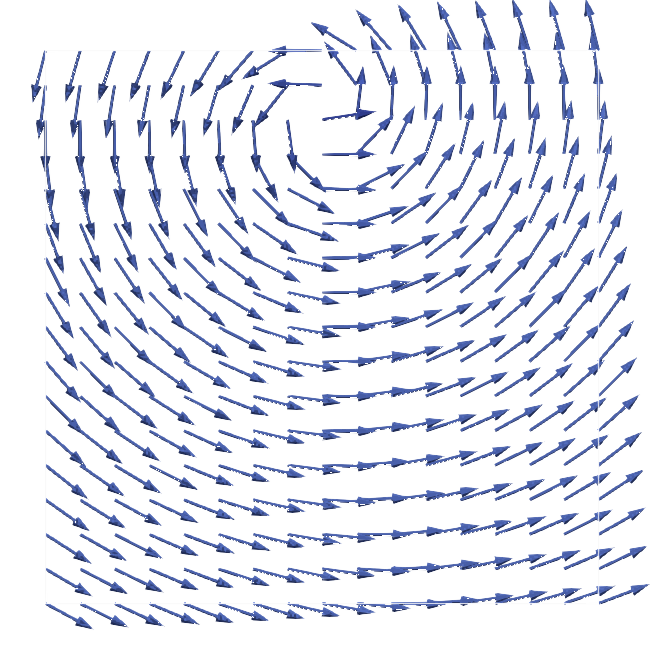}
		\caption{$t=2 \times 10^{-4}$}
	\end{subfigure}
	\begin{subfigure}[b]{0.1\textwidth}
		\centering
		\includegraphics[width=\textwidth]{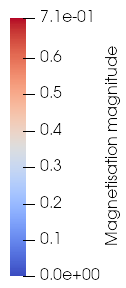}
	\end{subfigure}
	\begin{subfigure}[b]{0.27\textwidth}
		\centering
		\includegraphics[width=\textwidth]{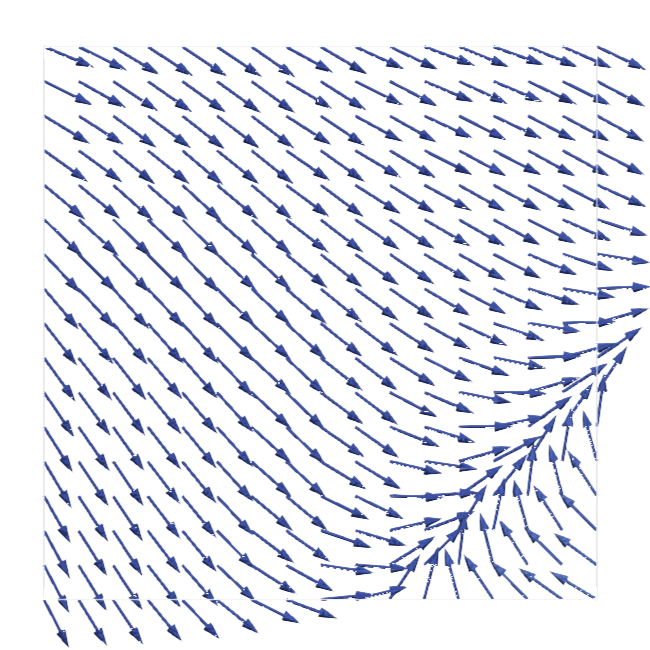}
		\caption{$t=5\times 10^{-4}$}
	\end{subfigure}
	\begin{subfigure}[b]{0.27\textwidth}
		\centering
		\includegraphics[width=\textwidth]{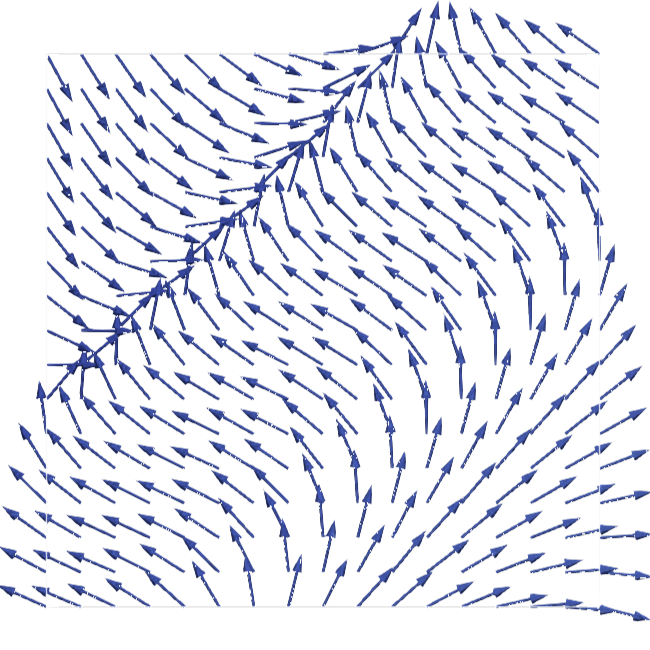}
		\caption{$t=1\times 10^{-3}$}
	\end{subfigure}
	\begin{subfigure}[b]{0.27\textwidth}
		\centering
		\includegraphics[width=\textwidth]{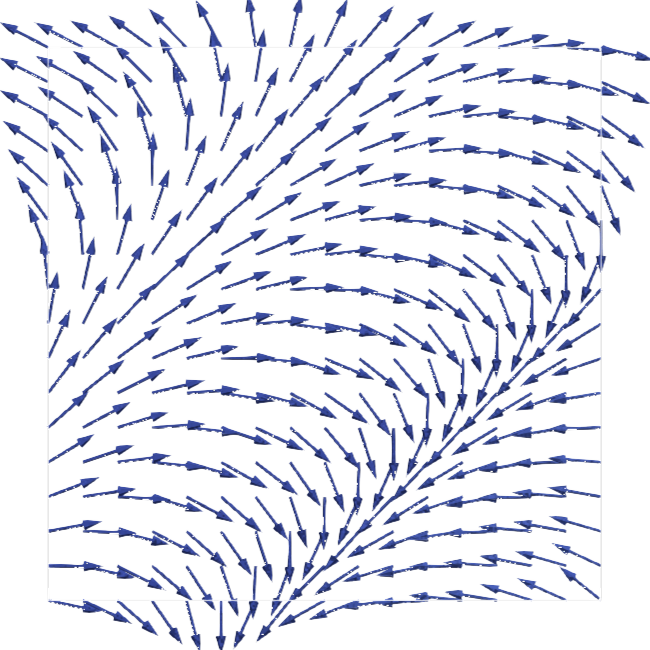}
		\caption{$t=5\times 10^{-3}$}
	\end{subfigure}
	\begin{subfigure}[b]{0.1\textwidth}
		\centering
		\includegraphics[width=\textwidth]{exp3_legend.png}
	\end{subfigure}
	\caption{Snapshots of the spin field $\bff{u}$ (projected onto $\bb{R}^2$) for the \emph{nonlinear} scheme~\eqref{equ:scheme spin} in Simulation 3.}
	\label{fig:snapshots exp 3 scheme1}
\end{figure}

\begin{figure}[!htb]
	\centering
	\begin{subfigure}[b]{0.27\textwidth}
		\centering
		\includegraphics[width=\textwidth]{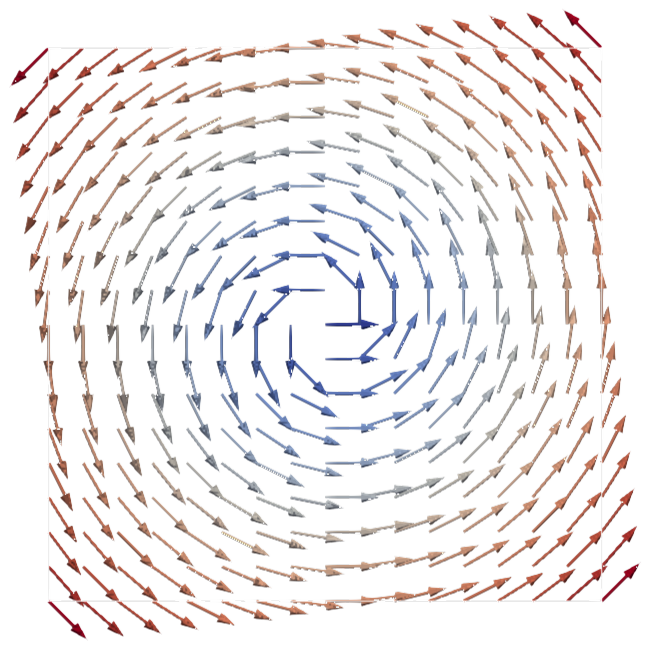}
		\caption{$t=0$}
	\end{subfigure}
	\begin{subfigure}[b]{0.27\textwidth}
		\centering
		\includegraphics[width=\textwidth]{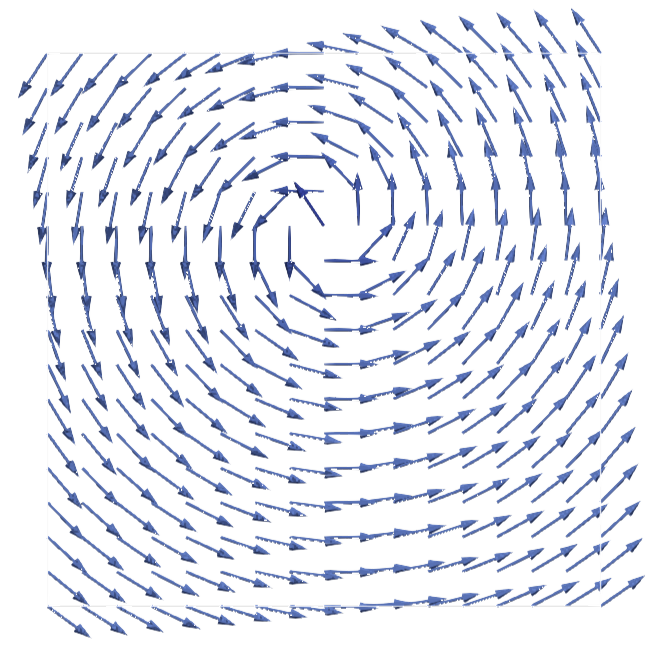}
		\caption{$t=1\times 10^{-4}$}
	\end{subfigure}
	\begin{subfigure}[b]{0.27\textwidth}
		\centering
		\includegraphics[width=\textwidth]{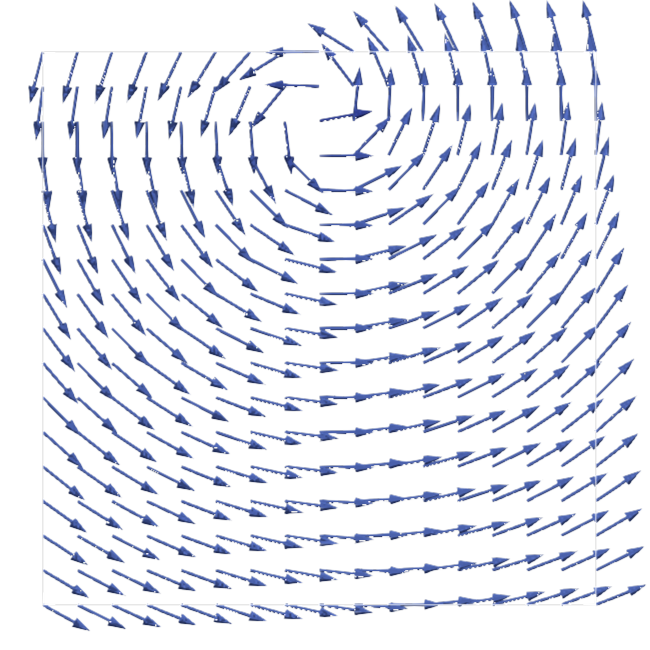}
		\caption{$t=2 \times 10^{-4}$}
	\end{subfigure}
	\begin{subfigure}[b]{0.1\textwidth}
		\centering
		\includegraphics[width=\textwidth]{exp3_legend.png}
	\end{subfigure}
	\begin{subfigure}[b]{0.27\textwidth}
		\centering
		\includegraphics[width=\textwidth]{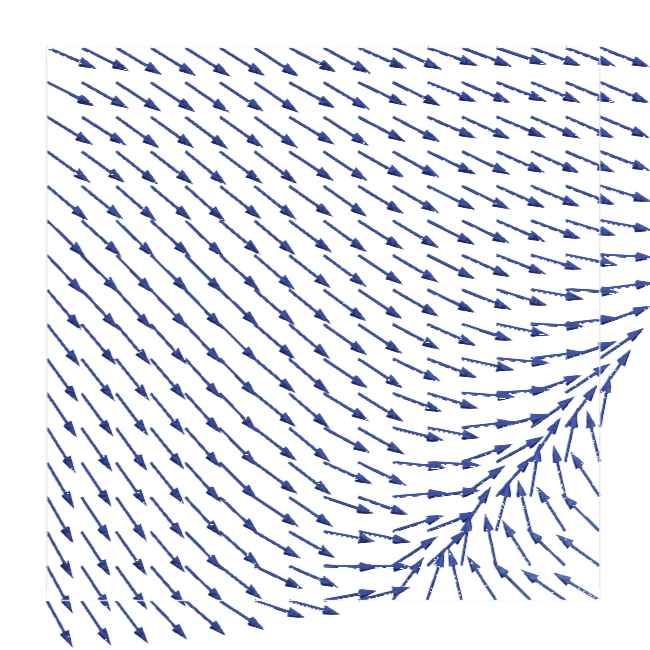}
		\caption{$t=5\times 10^{-4}$}
	\end{subfigure}
	\begin{subfigure}[b]{0.27\textwidth}
		\centering
		\includegraphics[width=\textwidth]{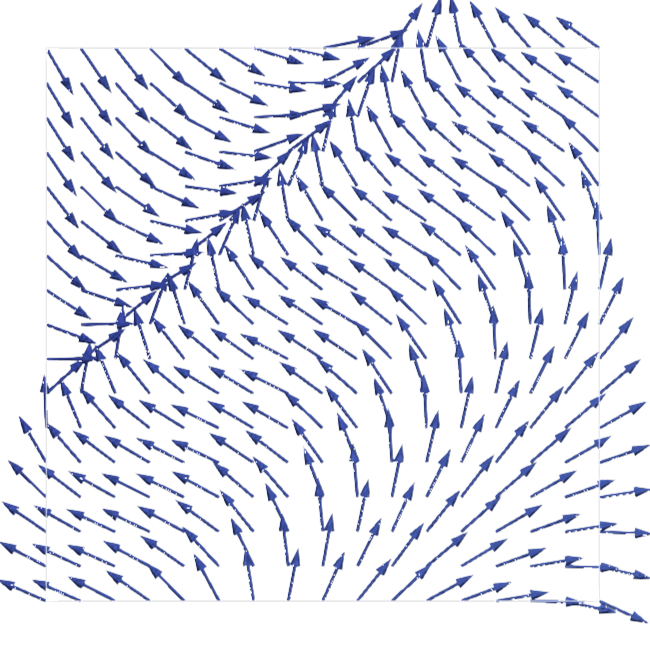}
		\caption{$t=1\times 10^{-3}$}
	\end{subfigure}
	\begin{subfigure}[b]{0.27\textwidth}
		\centering
		\includegraphics[width=\textwidth]{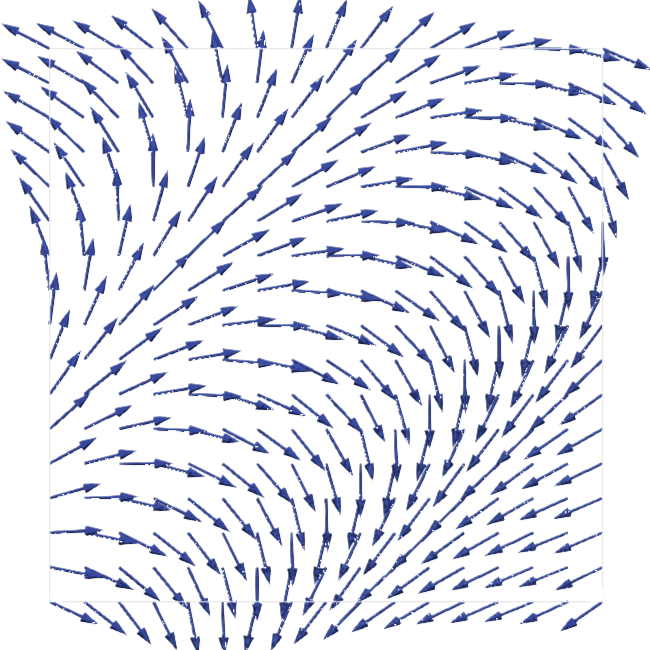}
		\caption{$t=5\times 10^{-3}$}
	\end{subfigure}
	\begin{subfigure}[b]{0.1\textwidth}
		\centering
		\includegraphics[width=\textwidth]{exp3_legend.png}
	\end{subfigure}
	\caption{Snapshots of the spin field $\bff{u}$ (projected onto $\bb{R}^2$) for the \emph{nonlinear} scheme~\eqref{equ:scheme 2} in Simulation 3.}
	\label{fig:snapshots exp 3 scheme2}
\end{figure}

\begin{figure}[!htb]
	\begin{subfigure}[b]{0.45\textwidth}
		\centering
		\begin{tikzpicture}
			\begin{axis}[
				title=Plot of $\bff{e}_h$ against $1/h$,
				height=1\textwidth,
				width=1\textwidth,
				xlabel= $1/h$,
				ylabel= $\bff{e}_h$,
				xmode=log,
				ymode=log,
				legend pos=south west,
				legend cell align=left,
				]
				\addplot+[mark=*,blue] coordinates {(2,0.047)(4,0.023)(8,0.012)(16,0.006)(32,0.0031)(64,0.0015)};
				\addplot+[mark=*,red] coordinates {(2,0.0008)(4,0.00037)(8,0.000092)(16,0.000023)(32,0.0000058)(64,0.0000014)};
				\addplot+[dashed,no marks,blue,domain=20:70]{0.03/x};
				\addplot+[dashed,no marks,red,domain=20:70]{0.0017/x^2};
				\legend{\small{$\max_n \norm{\bff{e}_h}{\bb{H}_0^1}$}, \small{$\max_n \norm{\bff{e}_h}{\bb{L}^2}$}, \small{order 1 line}, \small{order 2 line}}
			\end{axis}
		\end{tikzpicture}
		\caption{Spatial error order of $\bff{u}$ for the \emph{linear} scheme~\eqref{equ:scheme spin} in Simulation 3.}
		\label{fig:order sim 3 scheme1}
	\end{subfigure}
	\hspace{1em}
	\begin{subfigure}[b]{0.45\textwidth}
		\centering
		\begin{tikzpicture}
			\begin{axis}[
				title=Plot of $\bff{e}_h$ against $1/h$,
				height=1\textwidth,
				width=1\textwidth,
				xlabel= $1/h$,
				ylabel= $\bff{e}_h$,
				xmode=log,
				ymode=log,
				legend pos=south west,
				legend cell align=left,
				]
				\addplot+[mark=*,blue] coordinates {(2,0.047)(4,0.023)(8,0.012)(16,0.0059)(32,0.003)(64,0.0017)};
				\addplot+[mark=*,red] coordinates {(2,0.0016)(4,0.00037)(8,0.000091)(16,0.000023)(32,0.0000064)(64,0.0000018)};
				\addplot+[dashed,no marks,blue,domain=20:70]{0.03/x};
				\addplot+[dashed,no marks,red,domain=20:70]{0.0017/x^2};
				\legend{\small{$\max_n \norm{\bff{e}_h}{\bb{H}_0^1}$}, \small{$\max_n \norm{\bff{e}_h}{\bb{L}^2}$}, \small{order 1 line}, \small{order 2 line}}
			\end{axis}
		\end{tikzpicture}
		\caption{Spatial error order of $\bff{u}$ for the \emph{nonlinear} scheme~\eqref{equ:scheme 2} in Simulation 3.}
		\label{fig:order sim 3 scheme2}
	\end{subfigure}
\end{figure}

\begin{figure}[!htb]
	\begin{subfigure}[b]{0.45\textwidth}
		\centering
		\begin{tikzpicture}
			\begin{axis}[
				title=Plot of $\bff{e}_h$ against $1/h$,
				height=1\textwidth,
				width=1\textwidth,
				xlabel= $1/h$,
				ylabel= $\bff{e}_h$,
				xmode=log,
				ymode=log,
				legend pos=south west,
				legend cell align=left,
				]
				\addplot+[mark=*,blue] coordinates {(4,0.18)(8,0.099)(16,0.054)(32,0.03)(64,0.016)};
				\addplot+[mark=*,red] coordinates {(4,0.008)(8,0.0023)(16,0.0007)(32,0.00016)(64,0.00004)};
				\addplot+[dashed,no marks,blue,domain=20:70]{0.4/x};
				\addplot+[dashed,no marks,red,domain=20:70]{0.06/x^2};
				\legend{\small{$\max_n \norm{\bff{e}_h}{\bb{H}_0^1}$}, \small{$\max_n \norm{\bff{e}_h}{\bb{L}^2}$}, \small{order 1 line}, \small{order 2 line}}
			\end{axis}
		\end{tikzpicture}
		\caption{Error order of $\bff{u}$ for the \emph{linear} scheme~\eqref{equ:scheme spin} in Simulation 3 in $\bb{L}^2$ norm with $k=0.01h^2$, and in $\bb{H}^1_0$ norm with $k=0.01h$.}
		\label{fig:order k sim3 scheme1}
	\end{subfigure}
	\hspace{1em}
	\begin{subfigure}[b]{0.45\textwidth}
		\centering
		\begin{tikzpicture}
			\begin{axis}[
				title=Plot of $\bff{e}_h$ against $1/h$,
				height=1\textwidth,
				width=1\textwidth,
				xlabel= $1/h$,
				ylabel= $\bff{e}_h$,
				xmode=log,
				ymode=log,
				legend pos=south west,
				legend cell align=left,
				]
				\addplot+[mark=*,blue] coordinates {(4,0.2)(8,0.09)(16,0.05)(32,0.03)(64,0.016)};
				\addplot+[mark=*,red] coordinates {(4,0.01)(8,0.003)(16,0.0006)(32,0.00011)(64,0.00003)};
				\addplot+[dashed,no marks,blue,domain=20:70]{0.4/x};
				\addplot+[dashed,no marks,red,domain=20:70]{0.04/x^2};
				\legend{\small{$\max_n \norm{\bff{e}_h}{\bb{H}_0^1}$}, \small{$\max_n \norm{\bff{e}_h}{\bb{L}^2}$}, \small{order 1 line}, \small{order 2 line}}
			\end{axis}
		\end{tikzpicture}
		\caption{Error order of $\bff{u}$ for the \emph{nonlinear} scheme~\eqref{equ:scheme 2} in Simulation 3 in $\bb{L}^2$ norm with $k=0.01h^2$, and in $\bb{H}^1_0$ norm with $k=0.01h$.}
		\label{fig:order k sim3 scheme2}
	\end{subfigure}
\end{figure}

\begin{figure}[!htb]
	\begin{subfigure}[b]{0.46\textwidth}
		\centering
		\includegraphics[width=\textwidth]{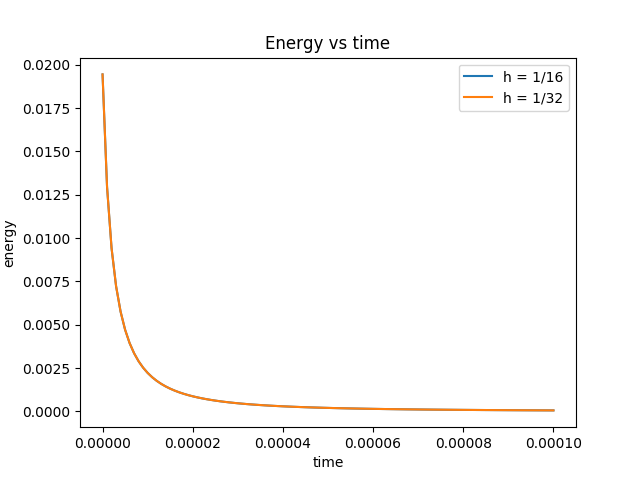}
		\caption{Plot of energy vs time for the \emph{linear} scheme~\eqref{equ:scheme spin} in Simulation~3.}
		\label{fig:energy sim3 scheme1}
	\end{subfigure}
	\hspace{1em}
	\begin{subfigure}[b]{0.46\textwidth}
		\centering
		\includegraphics[width=\textwidth]{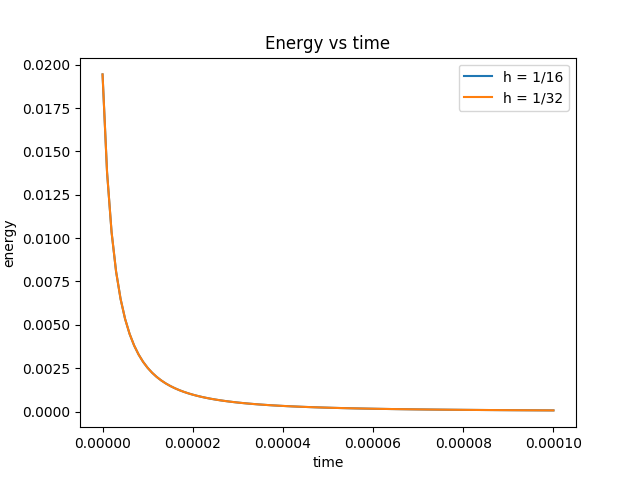}
		\caption{Plot of energy vs time for the \emph{nonlinear} scheme~\eqref{equ:scheme 2} in Simulation~3.}
		\label{fig:energy sim3 scheme2}
	\end{subfigure}
\end{figure}

\subsection{Simulation 4 (energy dissipation)}
In this last simulation, we consider an academic example where no current and no anisotropy is present. For this numerical experiment, scheme \eqref{equ:scheme 2} is seen to preserve energy dissipation at the discrete level, while scheme \eqref{equ:scheme spin} does not. The coefficients in \eqref{equ:llb a} are taken to be $\gamma=2.5\times 10^{12}$, $\alpha=0.2$, $\sigma=1.0\times 10^{-10}$, $\kappa=0.1$, $\mu=1.0\times 10^{-7}$, and $\lambda=0$. Since no current is assumed, $\bff{\nu}=\bff{0}$.
We take $k=1\times 10^{-5}$ and let the initial data $\bff{u}_0$ be given by
\begin{equation*}
	\bff{u}_0(x,y)= \big(-y, x, 0.01 \big).
\end{equation*}

Graphs of energy vs time are plotted in Figures~\ref{fig:energy sim4 scheme1} and \ref{fig:energy sim4 scheme2} for the two schemes. Physical theory indicates that energy should be dissipated by the system, and this is reflected at the discrete level for scheme~\ref{equ:scheme 2} in Figure~\ref{fig:energy sim4 scheme2}. However, as seen in Figure~\ref{fig:energy sim4 scheme1}, this is not maintained at the discrete level for scheme~\ref{equ:scheme spin}.

\begin{figure}[!htb]
	\begin{subfigure}[b]{0.46\textwidth}
		\centering
		\includegraphics[width=\textwidth]{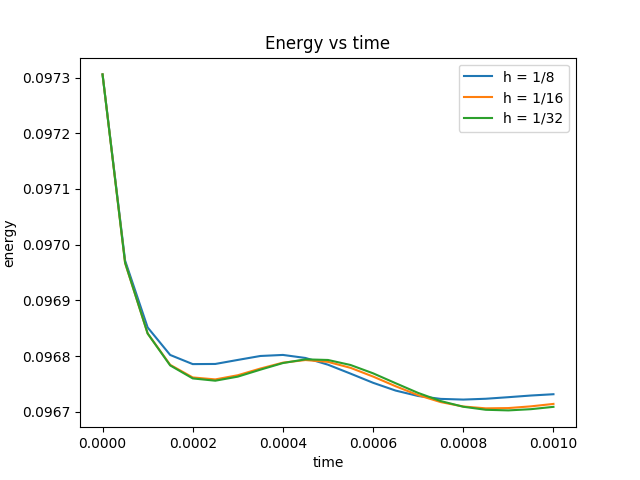}
		\caption{Plot of energy vs time for the \emph{linear} scheme~\eqref{equ:scheme spin} in Simulation~4.}
		\label{fig:energy sim4 scheme1}
	\end{subfigure}
	\hspace{1em}
	\begin{subfigure}[b]{0.46\textwidth}
		\centering
		\includegraphics[width=\textwidth]{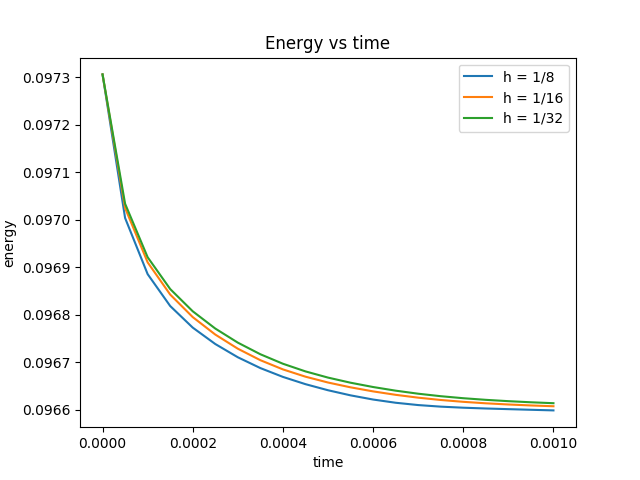}
		\caption{Plot of energy vs time for the \emph{nonlinear} scheme~\eqref{equ:scheme 2} in Simulation~4.}
		\label{fig:energy sim4 scheme2}
	\end{subfigure}
\end{figure}

\section{Conclusion}\label{sec:conclusion}

In this paper, we show the unique existence of global strong (and more regular) solution to the LLB equation with spin-torques above the Curie temperature on $\mathscr{D}\subset \bb{R}^d$, where $d\leq 3$ (with an additional small data assumption if $d=3$). Certain smoothing and decay estimates are also given. We then propose two finite element schemes to approximate the solution: a linear scheme~\ref{equ:scheme spin} and a nonlinear scheme~\ref{equ:scheme 2}. We show that the first (linear) scheme is stable and convergent (with an optimal order) in $\ell^\infty(0,T;\bb{L}^2)\cap \ell^2(0,T;\bb{H}^1)$, but unfortunately we are not able to show stability and convergence for this scheme in $\ell^\infty(0,T;\bb{H}^1)$. Furthermore, while it is linear, this scheme may not maintain the energy dissipation property of the system in the absence of current at the discrete level. On the other hand, the second scheme, albeit being nonlinear, is provably stable and convergent in $\ell^\infty(0,T;\bb{H}^1)$, but we can only show this when the non-adiabatic torque is omitted (which is actually physically justifiable in most standard cases). Furthermore, in the absence of current, this second scheme is able to maintain energy dissipativity at the discrete level. 

This begs the question: Which scheme is the `best'? At this point, we concede that each scheme has its own advantages and shortcomings. In general, when a current is present, the linear scheme exhibits shorter runtimes while producing results that are qualitatively similar to those of the nonlinear scheme (see Simulation 1, 2, and 3 in Section~\ref{sec:num exp}). In the absence of current, when simulating processes that require more precise knowledge of the energy relaxation behaviour -- such as laser- or heat-induced (de)magnetisation dynamics~\cite{AtxChuKaz07} -- the nonlinear energy-dissipative scheme~\eqref{equ:scheme 2} may be preferred (cf. Simulation 4).

Altogether, these results improve upon the existing theoretical and numerical findings on the LLB equation in the literature~\cite{AyoKotMouZak21, BenEssAyo24, Le16, LeSoeTra24, Soe24}, and further extend them to incorporate spin-torque effects. The general LLB equation~\eqref{equ:general LLB} will be addressed in a forthcoming paper. As for future work, we aim to develop a numerical scheme for the LLB equation that is both linear and energy-dissipative in the absence of current. It would also be of interest to rigorously establish convergence of this linear scheme~\eqref{equ:scheme spin} in $\ell^\infty(0,T;\bb{H}^1)$, as suggested by the numerical experiments.

\section*{Acknowledgements}
Agus L. Soenjaya is supported by the Australian Government through the Research Training Program (RTP) Scholarship awarded at the University of New South Wales, Sydney.
Financial support from the Australian Research Council under grant number DP200101866 (awarded to Prof. Thanh Tran) is gratefully acknowledged.
The author wishes to thank the referees for their careful reading and valuable comments, which led to a significant improvement in the paper.

\bibliographystyle{myabbrv}
\bibliography{mybib}

\appendix

\section{A fixed-point iteration for the nonlinear scheme}\label{sec:linearisation}

At each time step in scheme~\eqref{equ:scheme 2}, a system of nonlinear equations must be solved. To address this, we propose a simple fixed-point iteration, outlined in the following algorithm, which also demonstrates the uniqueness of the finite element solution $\bff{u}_h^n$ under certain technical conditions.

\begin{algorithm}\label{alg:iteration alg}
	Input: initial data $\bff{u}_h^0$, current $\{\bff{\nu}^n\}_{0\leq n\leq N}$, and tolerance $\mathsf{tol}>0$.
	\\
	For $n=1,2,\ldots,N$, iterate:
	\begin{enumerate}[(i)]
		\item \label{item:init} Set $\bff{u}_h^{n,0}:= \bff{u}_h^{n-1}$.
	\end{enumerate}
	\quad For $j\in \bb{N}$:
	\begin{enumerate}
		\item \label{item:iter} Compute $\bff{u}_h^{n, j}\in \bb{V}_h$ such that for all $\bff{\chi}\in \bb{V}_h$,
		\begin{align}\label{equ:inner iterate}
			\frac{1}{k} \inpro{\bff{u}_h^{n,j}- \bff{u}_h^{n-1}}{\bff{\chi}} 
			&=
			-\inpro{\bff{u}_h^{n,j}\times \bff{H}_h^{n,j-1}}{\bff{\chi}}
			+
			\alpha \inpro{\Delta_h \bff{u}_h^{n,j}}{\bff{\chi}}
			-
			\alpha \inpro{|\bff{u}_h^{n,j-1}|^2 \bff{u}_h^{n,j}}{\bff{\chi}}
			\nonumber\\
			&\quad
			-
			\alpha \inpro{\bff{u}_h^{n,j}}{\bff{\chi}}
			-
			\alpha \inpro{\bff{e}(\bff{e}\cdot \bff{u}_h^{n,j})}{\bff{\chi}}
			+
			\beta_1 \inpro{(\bff{\nu}^n\cdot\nabla)\bff{u}_h^{n-1}}{\bff{\chi}},
		\end{align}
		where $\bff{H}_h^{n,j-1}:= \Delta_h \bff{u}_h^{n,j-1} - \bff{u}_h^{n,j-1} - P_h\big(|\bff{u}_h^{n,j-1}|^2 \bff{u}_h^{n,j-1}\big)$;
		\item If $\norm{\bff{u}_h^{n,j}-\bff{u}_h^{n,j-1}}{\bb{L}^2} \geq \mathsf{tol}$, then replace $j\mapsto j+1$ and go back to \eqref{item:iter};
		\item Otherwise if $\norm{\bff{u}_h^{n,j}-\bff{u}_h^{n,j-1}}{\bb{L}^2} < \mathsf{tol}$, then set $\bff{u}_h^{n}:= \bff{u}_h^{n,j}$, replace $n\mapsto n+1$, and return to \eqref{item:init}.
	\end{enumerate}
	Output: a sequence of discrete functions $\{\bff{u}_h^n\}_{1\leq n\leq N}$.
\end{algorithm}

Note that step \eqref{equ:inner iterate} is well-posed. To see this, for a given $\bff{u}_h^{n,j-1}, \bff{u}_h^{n-1} \in \bb{V}_h$, we define a bilinear form $a:\bb{V}_h\times \bb{V}_h\to \bb{R}$ by
\begin{align*}
	a(\bff{v},\bff{\phi})
	&:=
	\inpro{\bff{v}}{\bff{\phi}}
	+
	k\inpro{\bff{v}\times \bff{H}_h^{n,j-1}}{\bff{\phi}}
	-
	\alpha k\inpro{\Delta_h \bff{v}}{\bff{\phi}}
	+
	\alpha k\inpro{|\bff{u}_h^{n,j-1}|^2 \bff{v}}{\bff{\phi}}
	+
	\alpha k \inpro{\bff{v}}{\bff{\phi}}
	+
	\alpha k\inpro{\bff{e}(\bff{e}\cdot \bff{v})}{\bff{\phi}}
\end{align*}
and a linear form $L:\bb{V}_h\to \bb{R}$ by
\begin{align*}
	L(\bff{\phi})
	&:=
	\inpro{\bff{u}_h^{n-1}}{\bff{\phi}}
	+
	\beta_1 k\inpro{(\bff{\nu}^n\cdot\nabla)\bff{u}_h^{n-1}}{\bff{\phi}}.
\end{align*}
Then \eqref{equ:inner iterate} is equivalent to solving
\begin{align}\label{equ:equiv iterate}
	a(\bff{u}_h^{n,j}, \bff{\chi})= L(\bff{\chi}), \quad \forall\bff{\chi}\in \bb{V}_h.
\end{align}
Note that the bilinear form $a$ is bounded and $\bb{V}_h$-elliptic, namely for any $\bff{v},\bff{\phi}\in \bb{V}_h$,
\begin{align*}
	|a(\bff{v},\bff{\phi})|
	&\leq
	\norm{\bff{v}}{\bb{L}^2} \norm{\bff{\phi}}{\bb{L}^2}
	+
	k\norm{\bff{v}}{\bb{L}^4} \norm{\bff{H}_h^{n,j-1}}{\bb{L}^2} \norm{\bff{\phi}}{\bb{L}^4}
	+
	\alpha k \norm{\nabla \bff{v}}{\bb{L}^2} \norm{\nabla\bff{\phi}}{\bb{L}^2}
	\\
	&\quad
	+
	\alpha k \norm{\bff{u}_h^{n,j-1}}{\bb{L}^4}^2 \norm{\bff{v}}{\bb{L}^4} \norm{\bff{\phi}}{\bb{L}^4}
	+
	2\alpha k \norm{\bff{v}}{\bb{L}^2} \norm{\bff{\phi}}{\bb{L}^2}
	\\
	&\leq
	C\norm{\bff{v}}{\bb{H}^1} \norm{\bff{\phi}}{\bb{H}^1},
\end{align*}
and
\begin{align*}
	a(\bff{v},\bff{v})
	&=
	\norm{\bff{v}}{\bb{L}^2}^2
	+
	\alpha k\norm{\nabla \bff{v}}{\bb{L}^2}^2
	+
	\alpha k\norm{|\bff{u}_h^{n,j-1}| |\bff{v}|}{\bb{L}^2}^2
	+
	\alpha \norm{\bff{v}}{\bb{L}^2}^2
	+
	\alpha k\norm{\bff{e}\cdot \bff{v}}{\bb{L}^2}^2
	\geq
	\min\left(1,\alpha k\right) \norm{\bff{v}}{\bb{H}^1}^2.
\end{align*}
For the linear form $L$ we have
\begin{align*}
	\abs{L(\bff{\phi})} \leq \left(\norm{\bff{u}_h^{n-1}}{\bb{L}^2} + \beta_1 \nu_\infty k \norm{\nabla \bff{u}_h^{n-1}}{\bb{L}^2} \right) \norm{\bff{\phi}}{\bb{L}^2}.
\end{align*}
Hence, we deduce that \eqref{equ:inner iterate} admits a unique solution $\bff{u}_h^{n,j}\in \bb{V}_h$ by the Lax--Milgram lemma. The following proposition shows that under certain conditions, Algorithm~\ref{alg:iteration alg} is guaranteed to be well-posed.

\begin{proposition}\label{pro:alg wellposed}
	If $k=o(h^\beta)$, where $\beta:= \max\left(2+\frac{d}{2}, \frac{3d}{2}\right)$, then there exists a unique sequence $\{\bff{u}_h^n\}_{1\leq n\leq N}$ solving Algorithm~\ref{alg:iteration alg}.
\end{proposition}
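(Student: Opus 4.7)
\textbf{Proof proposal for Proposition~\ref{pro:alg wellposed}.}
The plan is to apply Banach's fixed point theorem to the iteration map $\Phi_n: \bb{V}_h \to \bb{V}_h$ defined by $\Phi_n(\bff{v}) = \bff{w}$, where $\bff{w}$ solves the Lax--Milgram problem obtained from~\eqref{equ:inner iterate} upon replacing $\bff{u}_h^{n,j-1}$ by $\bff{v}$ and $\bff{u}_h^{n,j}$ by $\bff{w}$. Well-posedness of $\Phi_n$ has already been verified in the excerpt. The first step is to derive a uniform-in-$j$ a priori bound on the iterates: testing~\eqref{equ:inner iterate} with $\bff{u}_h^{n,j}$ and exploiting the anti-symmetry $\inpro{\bff{u}_h^{n,j} \times \bff{H}_h^{n,j-1}}{\bff{u}_h^{n,j}}=0$ together with the non-negativity of the $\alpha\inpro{|\bff{u}_h^{n,j-1}|^2\bff{u}_h^{n,j}}{\bff{u}_h^{n,j}}$ term yields, after Young's inequality on the spin-torque contribution, a bound of the form $\norm{\bff{u}_h^{n,j}}{\bb{H}^1}\leq C_n$, independent of $j$. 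Combining with the inverse estimate~\eqref{equ:inverse}, we obtain $\norm{\bff{u}_h^{n,j}}{\bb{L}^\infty}\leq C_n h^{-d/2}$ uniformly in $j$, which will serve as the working ball for the contraction.

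The second step is the contraction estimate. Given $\bff{v}_1,\bff{v}_2$ lying in the ball above and setting $\bff{w}_i=\Phi_n(\bff{v}_i)$, $\bff{d}=\bff{w}_1-\bff{w}_2$, $\bff{e}_v=\bff{v}_1-\bff{v}_2$, subtract the two linearised equations and test with $\bff{d}$. The cross-product term splits as $\bff{d}\times\bff{H}(\bff{v}_1)+\bff{w}_2\times(\bff{H}(\bff{v}_1)-\bff{H}(\bff{v}_2))$; the first part vanishes against $\bff{d}$, while the second, using $\bff{H}(\bff{v}_1)-\bff{H}(\bff{v}_2)=\Delta_h\bff{e}_v-\bff{e}_v-P_h(|\bff{v}_1|^2\bff{v}_1-|\bff{v}_2|^2\bff{v}_2)$ together with the identity $|\bff{v}_1|^2\bff{v}_1-|\bff{v}_2|^2\bff{v}_2=|\bff{v}_1|^2\bff{e}_v+\bff{v}_2((\bff{v}_1+\bff{v}_2)\cdot\bff{e}_v)$, decomposes into three pieces bounded by
\begin{align*}
\norm{\bff{w}_2}{\bb{L}^\infty}\norm{\Delta_h\bff{e}_v}{\bb{L}^2}\norm{\bff{d}}{\bb{L}^2}
&\leq C h^{-d/2-2}\norm{\bff{e}_v}{\bb{L}^2}\norm{\bff{d}}{\bb{L}^2},
\\
\norm{\bff{w}_2}{\bb{L}^\infty}\norm{\bff{e}_v}{\bb{L}^2}\norm{\bff{d}}{\bb{L}^2}
&\leq C h^{-d/2}\norm{\bff{e}_v}{\bb{L}^2}\norm{\bff{d}}{\bb{L}^2},
\\
\norm{\bff{w}_2}{\bb{L}^\infty}\norm{\bff{v}_1}{\bb{L}^\infty}^2\norm{\bff{e}_v}{\bb{L}^2}\norm{\bff{d}}{\bb{L}^2}
&\leq C h^{-3d/2}\norm{\bff{e}_v}{\bb{L}^2}\norm{\bff{d}}{\bb{L}^2},
\end{align*}
via Step~1, the inverse estimate~\eqref{equ:inverse disc lapl}, and $\bb{L}^2$-stability of $P_h$. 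The remaining $\alpha\inpro{|\bff{v}_1|^2\bff{w}_1-|\bff{v}_2|^2\bff{w}_2}{\bff{d}}$ term contributes the good sign $-\alpha\norm{|\bff{v}_1||\bff{d}|}{\bb{L}^2}^2$ plus a term bounded by $Ch^{-d}\norm{\bff{e}_v}{\bb{L}^2}\norm{\bff{d}}{\bb{L}^2}$. Collecting these, the dominant powers of $h^{-1}$ are $-d/2-2$ and $-3d/2$, so that
\[
\tfrac{1}{k}\norm{\bff{d}}{\bb{L}^2}^2 \leq C \bigl(h^{-d/2-2}+h^{-3d/2}\bigr) \norm{\bff{e}_v}{\bb{L}^2}\norm{\bff{d}}{\bb{L}^2},
\]
i.e.\ $\norm{\bff{d}}{\bb{L}^2}\leq C k h^{-\beta}\norm{\bff{e}_v}{\bb{L}^2}$ with $\beta=\max\bigl(2+d/2,\,3d/2\bigr)$. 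Under the hypothesis $k=o(h^\beta)$, the Lipschitz constant $C k h^{-\beta}$ is strictly less than $1$ for all sufficiently small $h$, so $\Phi_n$ is a strict contraction in $\bb{L}^2$ on the invariant ball of Step~1. Banach's fixed point theorem then produces a unique fixed point $\bff{u}_h^n$, which by passing to the limit $j\to\infty$ in~\eqref{equ:inner iterate} is precisely the solution of~\eqref{equ:scheme 2}. Inductively, the whole sequence $\{\bff{u}_h^n\}_{1\leq n\leq N}$ is defined and unique; global uniqueness of solutions to~\eqref{equ:scheme 2} (independent of the iterative scheme) follows by applying the same contraction estimate to the difference of any two candidate solutions at step~$n$.

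The main technical obstacle is ensuring that the $\bb{L}^\infty$-bound on the iterates used throughout Step~2 is genuinely uniform in $j$. This hinges on the fact that the nonlinear term $\alpha\inpro{|\bff{u}_h^{n,j-1}|^2\bff{u}_h^{n,j}}{\bff{u}_h^{n,j}}\geq 0$ contributes with a favourable sign in the energy estimate of Step~1, so that the a priori $\bb{H}^1$-bound on $\bff{u}_h^{n,j}$ does \emph{not} deteriorate with $j$ even if $\bff{u}_h^{n,j-1}$ is large; otherwise the $\bb{L}^\infty$-norms appearing in the contraction estimate could blow up and the argument would collapse. A secondary delicacy is tracking which combination of inverse estimate plus $\bb{L}^\infty$-bound produces the two competing scalings $h^{-d/2-2}$ and $h^{-3d/2}$, so that the condition $k=o(h^\beta)$ is both necessary and sufficient for the present fixed-point approach.
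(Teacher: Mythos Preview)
The proposal is correct and follows essentially the same route as the paper: a uniform-in-$j$ $\bb{L}^2$ bound on the iterates obtained by testing with $\bff{u}_h^{n,j}$ (using the antisymmetry of the cross product and the favourable sign of the cubic term), followed by a contraction estimate in $\bb{L}^2$ where inverse inequalities produce exactly the powers $h^{-(2+d/2)}$ and $h^{-3d/2}$, and then Banach's fixed point theorem. The only point you leave implicit but the paper spells out is the uniform-in-$n$ control of the constants: the bound from Step~1 reads $\norm{\bff{u}_h^{n,j}}{\bb{L}^2}^2\leq (1+Ck)\norm{\bff{u}_h^{n-1}}{\bb{L}^2}^2$, and one must iterate this in $n$ to get $(1+Ck)^{n-1}\leq e^{CT}$, so that the Lipschitz constant $Ckh^{-\beta}$ is bounded uniformly over $1\leq n\leq N$ and a single smallness condition $k=o(h^\beta)$ suffices for all steps.
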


\begin{proof}
	Observe that step \eqref{equ:inner iterate} is well-defined by the argument preceding this proposition. We start with some preliminary estimates. Firstly, for a given $\bff{u}_h^{n-1}$, we put $\bff{\chi}=\bff{u}_h^{n,j}$ in \eqref{equ:equiv iterate} and use~\eqref{equ:div thm beta1} to obtain
	\begin{align*}
		&\norm{\bff{u}_h^{n,j}}{\bb{L}^2}^2
		+
		\alpha k\norm{\nabla \bff{u}_h^{n,j}}{\bb{L}^2}^2
		+
		\alpha k\norm{|\bff{u}_h^{n,j-1}| |\bff{u}_h^{n,j}|}{\bb{L}^2}^2
		+
		\alpha k \norm{\bff{u}_h^{n,j}}{\bb{L}^2}^2
		+
		\alpha k\norm{\bff{e}\cdot \bff{u}_h^{n,j}}{\bb{L}^2}^2
		\\
		&=
		\inpro{\bff{u}_h^{n-1}}{\bff{u}_h^{n,j}}
		-
		\beta_1 k \inpro{\bff{u}_h^{n-1}\otimes \bff{\nu}^n}{\nabla \bff{u}_h^{n,j}}
		-
		\beta_1 k \inpro{(\nabla \cdot\bff{\nu}^n) \bff{u}_h^{n-1}}{\bff{u}_h^{n,j}}
		%+
		%\beta_1 k\inpro{(\bff{\nu}^n\cdot\nabla)\bff{u}_h^{n-1}}{\bff{\phi}}
		\\
		&\leq
		%C_1 (1+\beta_1 \nu_\infty k) \norm{\bff{u}_h^{n,j}}{\bb{L}^2}.
		\frac12 \norm{\bff{u}_h^{n-1}}{\bb{L}^2}^2
		+
		\frac12 \norm{\bff{u}_h^{n,j}}{\bb{L}^2}^2
		+
		\frac{\alpha k}{4} \norm{\nabla \bff{u}_h^{n,j}}{\bb{L}^2}^2
		+
		\frac{\alpha k}{4} \norm{\bff{u}_h^{n,j}}{\bb{L}^2}^2
		+
		\frac{2k (\beta_1 \nu_\infty)^2}{\alpha} \norm{\bff{u}_h^{n-1}}{\bb{L}^2}^2,
	\end{align*}
	where in the last step we used Young's inequality and the assumption that $\norm{\bff{\nu}}{L^\infty(0,T;\bb{W}^{1,\infty}(\mathscr{D};\,\bb{R}^d))} \leq \nu_\infty$.
	This implies, after rearranging the terms,
	\begin{align}\label{equ:uhnj bdd}
		\norm{\bff{u}_h^{n,j}}{\bb{L}^2}^2
		\leq
		\left(1+4k (\beta_1 \nu_\infty)^2 \alpha^{-1}\right) \norm{\bff{u}_h^{n-1}}{\bb{L}^2}^2.
	\end{align}
	
	Next, subtraction of equation~\eqref{equ:inner iterate} for two consecutive iterations yields
	\begin{align*}
		\inpro{\bff{u}_h^{n,j+1}-\bff{u}_h^{n,j}}{\bff{\chi}}
		&=
		-k \inpro{\bff{u}_h^{n,j+1}\times \bff{H}_h^{n,j}- \bff{u}_h^{n,j} \times \bff{H}_h^{n,j-1}}{\bff{\chi}}
		+
		\alpha k \inpro{\Delta_h \bff{u}_h^{n,j+1}-\Delta_h \bff{u}_h^{n,j}}{\bff{\chi}}
		\\
		&\quad
		-
		\alpha k\inpro{|\bff{u}_h^{n,j}|^2 \bff{u}_h^{n,j+1}- |\bff{u}_h^{n,j-1}|^2 \bff{u}_h^{n,j}}{\bff{\chi}}
		-
		\alpha k\inpro{\bff{e}\big(\bff{e}\cdot (\bff{u}_h^{n,j+1}-\bff{u}_h^{n,j})\big)}{\bff{\chi}}
	\end{align*}
	Putting $\bff{\chi}= \bff{u}_h^{n,j+1}-\bff{u}_h^{n,j}$ and rearranging the terms, we obtain
	\begin{align}\label{equ:subtract norm}
		&\norm{\bff{u}_h^{n,j+1}-\bff{u}_h^{n,j}}{\bb{L}^2}^2
		+
		\alpha k \norm{\nabla \bff{u}_h^{n,j+1}- \nabla \bff{u}_h^{n,j+1}}{\bb{L}^2}^2
		+
		\alpha k \norm{\bff{e}\cdot (\bff{u}_h^{n,j+1}- \bff{u}_h^{n,j})}{\bb{L}^2}^2
		\nonumber\\
		&\quad
		+
		\alpha k \norm{|\bff{u}_h^{n,j-1}| |\bff{u}_h^{n,j}|}{\bb{L}^2}^2
		+
		\alpha k \norm{|\bff{u}_h^{n,j}| |\bff{u}_h^{n,j+1}|}{\bb{L}^2}^2
		\nonumber\\
		&=
		-k\inpro{\bff{u}_h^{n,j}\times (\bff{H}_h^{n,j}-\bff{H}_h^{n,j-1})}{\bff{u}_h^{n,j+1}-\bff{u}_h^{n,j}}
		-
		\alpha k \inpro{\left(|\bff{u}_h^{n,j}|^2 - |\bff{u}_h^{n,j-1}|^2\right) \bff{u}_h^{n,j}}{\bff{u}_h^{n,j+1}}
		\nonumber\\
		&\leq
		k\norm{\bff{u}_h^{n,j}}{\bb{L}^\infty} \norm{\bff{H}_h^{n,j}-\bff{H}_h^{n,j-1}}{\bb{L}^2} \norm{\bff{u}_h^{n,j+1}- \bff{u}_h^{n,j}}{\bb{L}^2}
		\nonumber\\
		&\quad
		+
		\alpha k \norm{\bff{u}_h^{n,j}+\bff{u}_h^{n,j-1}}{\bb{L}^\infty}
		\norm{\bff{u}_h^{n,j}-\bff{u}_h^{n,j-1}}{\bb{L}^2}
		\norm{|\bff{u}_h^{n,j}| |\bff{u}_h^{n,j+1}|}{\bb{L}^2}.
	\end{align}
	Note that we have
	\begin{align*}
		\bff{H}_h^{n,j}-\bff{H}_h^{n,j-1}
		&=
		\left(\Delta_h \bff{u}_h^{n,j}- \Delta_h \bff{u}_h^{n,j-1}\right)
		-
		\left(\bff{u}_h^{n,j}-\bff{u}_h^{n,j-1} \right)
		\\
		&\quad
		-
		P_h \left(|\bff{u}_h^{n,j}|^2 (\bff{u}_h^{n,j}-\bff{u}_h^{n,j-1})\right)
		-
		P_h\left( \big(|\bff{u}_h^{n,j}|^2 - |\bff{u}_h^{n,j-1}|^2 \big) \bff{u}_h^{n,j-1} \right).
	\end{align*}
	Therefore, by H\"older's inequality and inverse estimates (\eqref{equ:inverse} and \eqref{equ:inverse disc lapl}), we obtain
	\begin{align*}
		\norm{\bff{H}_h^{n,j}-\bff{H}_h^{n,j-1}}{\bb{L}^2}
		&\leq
		\norm{\Delta_h \bff{u}_h^{n,j}- \Delta_h \bff{u}_h^{n,j-1}}{\bb{L}^2}
		+
		\norm{\bff{u}_h^{n,j}-\bff{u}_h^{n,j-1}}{\bb{L}^2}
		\\
		&\quad
		+
		\left(\norm{\bff{u}_h^{n,j}}{\bb{L}^\infty}^2 + \norm{\bff{u}_h^{n,j-1}}{\bb{L}^\infty}^2 \right) \norm{\bff{u}_h^{n,j}-\bff{u}_h^{n,j-1}}{\bb{L}^2}
		\\
		&\leq
		Ch^{-2} \norm{\bff{u}_h^{n,j}-\bff{u}_h^{n,j-1}}{\bb{L}^2}
		+
		\norm{\bff{u}_h^{n,j}-\bff{u}_h^{n,j-1}}{\bb{L}^2}
		\\
		&\quad
		+
		Ch^{-d} \left(\norm{\bff{u}_h^{n,j}}{\bb{L}^2}^2 + \norm{\bff{u}_h^{n,j-1}}{\bb{L}^2}^2 \right) \norm{\bff{u}_h^{n,j}-\bff{u}_h^{n,j-1}}{\bb{L}^2}.
	\end{align*}
	Continuing from~\eqref{equ:subtract norm}, using this inequality and inverse estimates, we then have
	\begin{align*}
		&\norm{\bff{u}_h^{n,j+1}-\bff{u}_h^{n,j}}{\bb{L}^2}^2
		+
		\alpha k \norm{\nabla \bff{u}_h^{n,j+1}- \nabla \bff{u}_h^{n,j+1}}{\bb{L}^2}^2
		+
		\alpha k \norm{\bff{e}\cdot (\bff{u}_h^{n,j+1}- \bff{u}_h^{n,j})}{\bb{L}^2}^2
		\nonumber\\
		&\quad
		+
		\alpha k \norm{|\bff{u}_h^{n,j-1}| |\bff{u}_h^{n,j}|}{\bb{L}^2}^2
		+
		\alpha k \norm{|\bff{u}_h^{n,j}| |\bff{u}_h^{n,j+1}|}{\bb{L}^2}^2
		\nonumber\\
		&\leq
		k\norm{\bff{u}_h^{n,j}}{\bb{L}^\infty} \norm{\bff{H}_h^{n,j}-\bff{H}_h^{n,j-1}}{\bb{L}^2} \norm{\bff{u}_h^{n,j+1}- \bff{u}_h^{n,j}}{\bb{L}^2}
		\nonumber\\
		&\quad
		+
		\alpha k \norm{\bff{u}_h^{n,j}+\bff{u}_h^{n,j-1}}{\bb{L}^\infty}
		\norm{\bff{u}_h^{n,j}-\bff{u}_h^{n,j-1}}{\bb{L}^2}
		\norm{|\bff{u}_h^{n,j}| |\bff{u}_h^{n,j+1}|}{\bb{L}^2}
		\nonumber\\
		&\leq
		Ckh^{-\left(2+\frac{d}{2}\right)} \norm{\bff{u}_h^{n,j}}{\bb{L}^2} \norm{\bff{u}_h^{n,j}-\bff{u}_h^{n,j-1}}{\bb{L}^2}^2
		+
		Ckh^{-\frac{d}{2}} \norm{\bff{u}_h^{n,j}}{\bb{L}^2} \norm{\bff{u}_h^{n,j}-\bff{u}_h^{n,j-1}}{\bb{L}^2}^2
		\\
		&\quad
		+
		Ckh^{-\frac{3d}{2}} \norm{\bff{u}_h^{n,j}}{\bb{L}^2} \left(\norm{\bff{u}_h^{n,j}}{\bb{L}^2}^2 + \norm{\bff{u}_h^{n,j-1}}{\bb{L}^2}^2 \right)  \norm{\bff{u}_h^{n,j}-\bff{u}_h^{n,j-1}}{\bb{L}^2}^2
		\\
		&\quad
		+
		\frac{\alpha k}{2}  \norm{|\bff{u}_h^{n,j}| |\bff{u}_h^{n,j+1}|}{\bb{L}^2}^2
		+
		\frac{\alpha kh^{-d}}{2} \norm{\bff{u}_h^{n,j}+\bff{u}_h^{n,j-1}}{\bb{L}^2}^2
		\norm{\bff{u}_h^{n,j}-\bff{u}_h^{n,j-1}}{\bb{L}^2}^2.
	\end{align*}
	Rearranging the terms and using \eqref{equ:uhnj bdd}, we infer
	\begin{align}\label{equ:uhnh1 minus uhnj}
		\norm{\bff{u}_h^{n,j+1}-\bff{u}_h^{n,j}}{\bb{L}^2}^2
		&\leq
		Ckh^{-\left(2+\frac{d}{2}\right)} \norm{\bff{u}_h^{n,j}}{\bb{L}^2} \norm{\bff{u}_h^{n,j}-\bff{u}_h^{n,j-1}}{\bb{L}^2}^2
		\nonumber\\
		&\quad
		+
		Ckh^{-\frac{d}{2}} \norm{\bff{u}_h^{n,j}}{\bb{L}^2} \norm{\bff{u}_h^{n,j}-\bff{u}_h^{n,j-1}}{\bb{L}^2}^2
		\nonumber\\
		&\quad
		+
		Ckh^{-\frac{3d}{2}} \norm{\bff{u}_h^{n,j}}{\bb{L}^2} \left(\norm{\bff{u}_h^{n,j}}{\bb{L}^2}^2 + \norm{\bff{u}_h^{n,j-1}}{\bb{L}^2}^2 \right)  \norm{\bff{u}_h^{n,j}-\bff{u}_h^{n,j-1}}{\bb{L}^2}^2
		\nonumber\\
		&\quad
		+
		Ckh^{-d} \norm{\bff{u}_h^{n,j}+\bff{u}_h^{n,j-1}}{\bb{L}^2}^2
		\norm{\bff{u}_h^{n,j}-\bff{u}_h^{n,j-1}}{\bb{L}^2}^2
		\nonumber\\
		&\leq
		Ckh^{-\beta} \left(1+4k(\beta_1 \nu_\infty)^2 \alpha^{-1}\right)^{\frac32} \norm{\bff{u}_h^{n-1}}{\bb{L}^2}^3
		\norm{\bff{u}_h^{n,j}-\bff{u}_h^{n,j-1}}{\bb{L}^2}^2,
	\end{align}
	where $\beta= \max\left(2+\frac{d}{2}, \frac{3d}{2}\right)$, and in the last step we used \eqref{equ:uhnj bdd}. 
	
	We now proceed to prove the proposition inductively. Given that Algorithm~\ref{alg:iteration alg} has produced an output $\{\bff{u}_h^{\ell}\}_{0\leq \ell \leq n-1}$, we have $\bff{u}_h^\ell:= \bff{u}_h^{\ell, j_\ell}$ for some $j_\ell$, where $\ell=1,2,\ldots, n-1$. Thus, by repeatedly applying \eqref{equ:uhnj bdd}, for $\ell=1,2,\ldots, n-1$ we have
	\begin{align*}
		\norm{\bff{u}_h^{\ell}}{\bb{L}^2}^2
		=
		\norm{\bff{u}_h^{\ell,j_{\ell}}}{\bb{L}^2}^2
		\leq 
		\left(1+4k (\beta_1 \nu_\infty)^2 \alpha^{-1}\right)^{n-1} \norm{\bff{u}_h^{0}}{\bb{L}^2}^2
		\leq
		\exp\left(4T(\beta_1\nu_\infty)^2 \alpha^{-1}\right) \norm{\bff{u}_h^0}{\bb{L}^2}^2.
	\end{align*}
	This inequality and \eqref{equ:uhnh1 minus uhnj} imply that
	\begin{align}\label{equ:contraction}
		\norm{\bff{u}_h^{n,j+1}-\bff{u}_h^{n,j}}{\bb{L}^2}^2
		&\leq
		Ckh^{-\beta} \norm{\bff{u}_h^{n,j}-\bff{u}_h^{n,j-1}}{\bb{L}^2}^2,
	\end{align}
	where $C$ is a constant which depends only on the coefficients of the equation, the initial data, and $T$. By the Banach fixed-point theorem, if $k=o(h^\beta)$ as $h,k\to 0^+$, then the contraction property~\eqref{equ:contraction} implies the existence of a unique $\bff{u}_h^n\in \bb{V}_h$ solving Algorithm~\ref{alg:iteration alg}. This completes the proof of the proposition.
\end{proof}

\begin{remark}
	The condition $k=o(h^\beta)$ in Proposition~\ref{pro:alg wellposed} is restrictive and appears to arise primarily due to the techniques employed in our analysis. In practical computations, however, this condition seems overly conservative. Although we are currently unable to establish energy dissipativity of the fully discrete scheme when the nonlinear system is solved approximately at each time step using Algorithm~\ref{alg:iteration alg}, as opposed to being solved exactly, our numerical experiments seem to suggest that the scheme retains a form of energy dissipation (in the absence of current) even under inexact solver at each time step. This issue merits further investigation.
\end{remark}

\end{document}